\newenvironment{myabstract}{\par\noindent
{\bf Abstract . } \small }
{\par\vskip8pt minus3pt\rm}
\newcounter{item}[section]
\newcounter{kirshr}
\newcounter{kirsha}
\newcounter{kirshb}
\newenvironment{enumroman}{\setcounter{kirshr}{1}
\begin{list}{(\roman{kirshr})}{\usecounter{kirshr}} }{\end{list}}
\newenvironment{enumarab}{\setcounter{kirshb}{1}
\begin{list}{(\arabic{kirshb})}{\usecounter{kirshb}} }{\end{list}}
\newenvironment{athm}[1]{\vskip3mm\par\noindent
{\bf #1 }. \slshape }
{\upshape\par\vskip10pt minus3pt}
\newtheorem{theorem}{Theorem}[section]
\newtheorem{lemma}[theorem]{Lemma}
\newtheorem{corollary}[theorem]{Corollary}
\newenvironment{demo}[1]{\noindent{\bf #1.}\upshape\mdseries}
{\nopagebreak{\hfill\rule{2mm}{2mm}\nopagebreak}\par\normalfont}
\theoremstyle{definition}
\newtheorem{remark}[theorem]{Remark}
\newtheorem{example}[theorem]{Example}
\newtheorem{definition}[theorem]{Definition}
\def\C{{\mathfrak{C}}}
\def\Fm{{\mathfrak{Fm}}}
\def\Nr{{\mathfrak{Nr}}}
\def\Fr{{\mathfrak{Fr}}}
\def\Sg{{\mathfrak{Sg}}}
\def\Zd{{\mathfrak{Z}}}
\def\Fm{{\mathfrak{Fm}}}
\def\F{{\sf F}}
\def\Co{{\sf Co}}
\def\A{{\mathfrak{A}}}
\def\B{{\mathfrak{B}}}
\def\C{{\mathfrak{C}}}
\def\D{{\mathfrak{D}}}
\def\M{{\mathfrak{M}}}
\def\Sn{{\mathfrak{Sn}}}
\def\CA{{\bf CA}}
\def\K{{\bf K}}
\def\K{{\bf K}}
\def\Rd{{\ Rd}}
\def\(R)RA{{\bf (R)RA}}
\def\P{{\mathfrak {P}}}
 \def\CA{{\sf CA}}
\def\B{{\sf B}}
\def\G{{\sf G}}
\def\K{{\sf K}}
\def\tp{{\sf tp}}
\def\Nr{{\mathfrak{Nr}}}
\def\Nr{{\mathfrak{Nr}}}
\def\A{{\mathfrak{A}}}
\def\B{{\mathfrak{B}}}
\def\C{{\mathfrak{C}}}
\def\D{{\mathfrak{D}}}
\def\A{{\mathfrak{A}}}
\def\B{{\mathfrak{B}}}
\def\C{{\mathfrak{C}}}
\def\D{{\mathfrak{D}}}
\def\Ig{{\mathfrak{Ig}}}
\def\Alg{{\mathfrak{Alg}}}
\def\Bl{{\mathfrak{Bl}}}
\def\L{{\mathfrak{L}}}
\def\Rl{{\mathfrak{Rl}}}
\def\L{{\mathfrak{L}}}
\def\At{{\sf At}}
\def\CA{{\bf CA}}
\def\G{{\bf G}}
\def\Fl{{\mathfrak{Fl}}}
\def\CM{{\bf CM}}
\title{Free algebras, amalgamation and omitting types in $BL$ algebars with operators}
\author{Tarek Sayed Ahmed \\
Department of Mathematics, Faculty of Science,\\ 
Cairo University, Giza, Egypt.
  }
\begin{document}
\maketitle
\begin{myabstract} Let $K$ be a class of $BL$ algebras with operators. $BL$ algebras are algebraisations of many fuzzy logics, 
they are extensions of both Boolean algebras, and $MV$ algebras, 
the latter algebraize many-valued logic. We study atomicity of free algebras, the amalgamation property, 
and the algebraic counterpart of omitting types theorem for $K$. 
\footnote{Mathematics Subject Classification. 03G15; 06E25}
 \end{myabstract}

\section{Introduction}

A residuated lattice is an algebra
$$(L,\cup,\cap, *, \implies 0,1)$$ 
with four binary operations and two constants such that
\begin{enumroman}
\item $(L,\cup,\cap, 0,1)$ is a lattice with largest element $1$ and the least element $0$ (with respect to the lattice ordering defined the usual way: 
$a\leq b$ iff $a\cap b=a$).
\item $(L,*,1)$ is a commutative semigroup with largest element $1$, that is $*$ is commutative, associative, $1*x=x$ for all $x$.
\item Letting $\leq$ denote the usual lattice ordering, we have $*$ and $\implies $ form an adjoint pair, i.e for all $x,y,z$
$$z\leq (x\implies y)\Longleftrightarrow x*z\leq y.$$
\end{enumroman}

$BL$ algebras, introduced and studied by Hajek \cite{H}, are what is called $MTL$ algebras satisfying the identity $x*(x\implies y)=x\cap y$.
Both are residuated lattices with extra conditions. The propositional logic $MTL$ was introduced by
Esteva and Godo \cite{E}.  It has three basic connectives $\to$, $\land$ and $\&$.
We say that $\L$ is a core fuzzy logic if $\L$ expands $MTL$, $\L$ has the Local Deduction Theorem $(LDT)$, 
and $\L$ satisfies 
(*) $\phi\equiv \psi\vdash \chi(\phi)\equiv \chi(\psi)$ for all formulas $\phi,\psi,\chi$. (Here $\equiv$ is defined via $\&$ and $\implies$).
The ($LDT)$ says that for a theory $T$ and a formula 
$\phi$, whenever  $T\cup \{\phi\}\vdash \psi$, then there exists a natural number $n$ such that $T\vdash \phi^n\to \psi$.
Here $\phi^n$ is defined inductively by $\phi^1=\phi$ and $\phi^n=\phi^{n-1}\&\phi$.
Thus core fuzzy logics are axiomatic expansions of $MTL$ having $LDT$ and obeying the substitution rule (*).
The basic notions of evaluation, tautology and model  for core fuzzy logics  are defined the usual way. 
Let $\L$ be a core fuzzy logic and $I$ the set of additional connectives of $\L$. An $\L$ algebra is a structure 
$\B=(B, \cup, \cap, *, \implies, (c_B)_{c\in I},0,1)$
such that $(B,\cup, \cap, *, \implies, 0,1)$ is an $MTL$ algebra and each additional axiom of $\L$ is a tautology of $\B$. 
Throughout the paper the operations of algebras are denoted by $\cup$, $\cap$, $\implies$ 
$*$ and the corresponding logical operations by $\lor, \land, \to, \&$.

On the other hand $MV$ algebras introduced by Chang in 1958 to provide an algebraic reflection of the completeness theorem of 
the Lukasiewicz infinite valued propositional logic, are $BL$ algebras with the law of double negation. They can also be recovered from Boolean algebras by dropping 
idempotency. In recent years the range of applications of $MV$ algebras
has been enormously extended with profound interaction with other topics, ranging from  
lattice ordered abelian groups, $C^*$ algebras, to fuzzy logic.
In this paper we study $MV$ algebras in connection to fuzzy (many valued) logic. 

An $MV$ algebra, has a dual behaviour; it  can be viewed,  
in one of its  facets, as  a `non-idempotent' generalization of a Boolean algebra possesing a strong lattice structure. 
The lack of idempotency enables $MV$ algebras  to be compared to monodial structures like monoids and abelian groups. 
Indeed, the category of $MV$ algebras has been shown to be equivalent to the category of $l$ groups. At the same time the lattice structure 
of Boolean algebras can be recovered inside $MV$ algebras, by an appropriate term definability of primitive connectives.  In this 
respect, they  have a strong lattice structure 
(distributive and bounded), which make the techniques of lattice theory readily applicable to their study.
As shown in this paper, in certain contexts when we replace the notion of a Boolean algebra with an $MV$ algebra, 
the results survive such a replacement with
some non-trivial modifications, and this can be accomplished in a somewhat  unexpected manner. 

Boolean algebras work as the equivalent semantics of classical propositional logic. To sudy classical first order logic, 
Tarski \cite{HMT1}, \cite{HMT2} introduced cylindric algebras, while Halmos introduced polyadic algebras. 
Both of those can be viewed as Boolean algebras with extra operations that reflect algebraically 
existential quantifiers. 

Boolean algebras also have a neat and intuitive depiction, modulo isomorphisms; 
any Boolean algebra is an algebra of subsets of some set endowed 
with the concrete set theoretic operations of union, intersection and complements. Such a connection, a typical duality theorem, 
is today well understood. These nice properties mentioned above is formalized through the topology of 
Stone spaces that allows to select the right objects in the full power set of some set, the underlying set of the 
associated topological space. The representation theory of cylindric algebras, on the other hand, proves much more involved, 
 and lacks such a strong well understood duality theorem like that of 
Boolean algebras. However, there is an extension of Stone duality to cylindric algebras, due to Comer, where he establishes a 
dual equivalence between cylindric algebras 
and certain categories of sheaves; but such a duality does not go deeper into the analysis of representability.
There is a version of concrete  (representable) algebras for cylindric algebras, with extra operations interpreted as projections,
but this does not coincide with the abstract class of cylindric algebras. This is in sharp contrast to 
Boolean algebras. It is not the case 
that every cylindric algebra is representable in a concrete manner with the operations being set theoretic operations on relations. Not only that, 
but in fact the class of representable algebras need an infinite axiomatization in first order logic, and for any such axiomatization, 
there is an inevitable degree of complexity.
On the other hand, polyadic algebras enjoy a strong representation theorem; every polyadic algebra is representable \cite{DM}.

Cylindric and relation algebras were introduced by Tarski to algebraize first order logic. 
The structures of free cylindric and relation algebras are quite rich 
since they are able to capture the whole of first order logic, in a sense.
One of the first things to investigate about these free algebras is whether they 
are atomic or not, i.e. whether their boolean reduct is atomic or not.
By an atomic boolean algebra we mean an algebra for which  
below every non-zero element there is an atom, i.e. a minimal non-zero 
element. Throughout $n$ will denote a countable cardinal (i.e. $n\leq \omega$). 
More often than not, $n$ will be 
finite.
$\CA_n$ stands for the class of cylindric algebras of dimension $n$.
For a class $K$ of algebras, and a cardinal $\beta>0$, 
$\Fr_{\beta}K$ stands for the $\beta$-generated free
$K$ algebra.
In particular, $\Fr_{\beta}\CA_n$ 
denotes the $\beta$-generated free cylindric algebra
of dimension 
$n$. The following is known:
If $\beta\geq \omega$, then $\Fr_{\beta}\CA_n$ is atomless (has no atoms)
 [Pigozzi \cite{HMT1} 2.5.13].
Assume that $0<\beta<\omega$. If $n <2$ then $\Fr_{\beta}\CA_n$ is finite,
hence atomic, 
\cite{HMT1} 2.5.3(i).
$\Fr_{\beta}\CA_2$ is infinite but still atomic [Henkin, \cite{HMT1} 2.5.3(ii), 2.5.7(ii).]
If $3\leq n<\omega$, then $\Fr_{\beta}\CA_n$ has infinitely many atoms
[Tarski, \cite{HMT1} 2.5.9], and it was posed as an open question, cf \cite{HMT1} 
problem 4.14, whether it is atomic or not.
Here we prove, as a partial solution of problem 4.14 in \cite{HMT1}, and among other things, 
that $\Fr_{\beta}\CA_n$ is not atomic for $\omega>\beta>0$ and $\omega>n\geq 4.$ 

In this paper we study atomicity of free $BL$ algebras with operators, we prove several amalgamation theorems for $MV$ 
algebras, obtaining several interpolation theorems for many valued logic. Intuitiionistic logic, does not belong to fuzzy logic per se, though linear Heyting algebras do.
However, here  we give a deep representation theorem for Heyting algebras, culminating in an interpolation theorem for many predicate intuitionistic 
logics. We use Sheaf theoretic duality theory as worked out by Comer for cylindric algebras, but now 
 applied to the Zarski toplogy defined on the prime spectrum of $BL$ algebras,
to obtain some results on definabality, mainly Beth definability for many valued logics.
Finally, we give a new topological  proof, using the celebrated Baire Category theorem,
 to the omitting types theorem for fuzzy logic, and we give several model 
theoretic consequence.

\section{ Basic Fuzzy Logic}

The logics we start with arise typically from $t$ norms. 
\begin{definition}A $t$ norm is a binary operation $*$ on $[0,1]$, i.e $(t:[0,1]^2\to [0,1]$) such that
\begin{enumroman}
\item  $*$ is commutative and associative,
that is for all $x,y,z\in [0,1]$,
$$x*y=y*x$$
$$(x*y)*z=x*(y*z).$$
\item $*$ is non decreasing in both arguments, that is
$$x_1\leq x_2\implies x_1*y\leq x_2*y$$
$$y_1\leq y_2\implies x*y_1\leq x*y_2.$$
\item $1*x=x$ and $0*x=0$ for all $x\in [0,1].$
\end{enumroman}
\end{definition}
The following are the most important (known) examples of continuous $t$ norms.

\begin{enumroman}
\item Lukasiewicz $t$ norm: $x*y=max(0,x+y-1)$
\item Godel $t$ norm $x*y=min(x,y)$
\item Product $t$ norm $x*y=x.y$
\end{enumroman}
We have the following known result \cite{H} lemma 2.1.6
 
\begin{theorem} Let $*$ be a continuous $t$ norm. 
Then there is a unique operation $x\implies y$ satisfying for all $x,y,z\in [0,1]$, the condition $(x*z)\leq y$ iff $z\leq (x\implies y)$, namely 
$x\implies y=max\{z: x*z\leq y\}$
\end{theorem}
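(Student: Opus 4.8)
The plan is to take the explicit formula as the definition and verify that it does the job, then show that no other operation can work. Fix $x,y\in[0,1]$ and set $S_{x,y}=\{z\in[0,1]: x*z\leq y\}$, so that the proposed value is $x\implies y=\max S_{x,y}$. The theorem then breaks into three tasks: first, that $S_{x,y}$ actually has a greatest element, so that the formula is well defined; second, that the operation so defined satisfies the stated adjointness equivalence; and third, that adjointness pins the operation down uniquely.

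For the first task I would argue that $S_{x,y}$ is a nonempty initial segment of $[0,1]$ whose supremum is attained. It is nonempty because $x*0=0\leq y$ by axiom (iii) together with commutativity, so $0\in S_{x,y}$. It is downward closed: if $z\in S_{x,y}$ and $z'\leq z$, then monotonicity (axiom (ii)) gives $x*z'\leq x*z\leq y$, so $z'\in S_{x,y}$. Hence $S_{x,y}$ is an interval with left endpoint $0$ and right endpoint $s=\sup S_{x,y}$. The crucial point is that $s\in S_{x,y}$: choosing $z_n\in S_{x,y}$ with $z_n\to s$ and using that $z\mapsto x*z$ is continuous, a consequence of the continuity of $*$, I pass to the limit in $x*z_n\leq y$ to obtain $x*s\leq y$. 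Thus $s$ is the maximum, and $x\implies y:=s$ is well defined. I expect this attainment step to be the main obstacle, since it is exactly where continuity of the $t$ norm is indispensable; for a merely monotone $*$ the supremum need not lie in $S_{x,y}$ and the candidate residuum could fail the right-to-left implication.

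Next I would verify the adjointness $x*z\leq y \iff z\leq(x\implies y)$. For the forward direction, $x*z\leq y$ means precisely $z\in S_{x,y}$, whence $z\leq \max S_{x,y}=x\implies y$. For the converse, if $z\leq x\implies y=s$, then monotonicity gives $x*z\leq x*s\leq y$, using $s\in S_{x,y}$ from the previous step. This establishes the existence of an operation with the required property.

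Finally, for uniqueness, suppose $\implies'$ is any operation satisfying the same equivalence. Applying the reflexive inequality $x\implies' y\leq x\implies' y$ in the adjointness for $\implies'$ yields $x*(x\implies' y)\leq y$, and feeding this into the adjointness for $\implies$ gives $x\implies' y\leq x\implies y$. Interchanging the roles of the two operations gives the reverse inequality, so $x\implies' y=x\implies y$ for all $x,y$. This argument never invokes the explicit formula, so it simultaneously confirms that any solution of the adjointness must coincide with $\max S_{x,y}$, completing the proof.
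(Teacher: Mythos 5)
Your proof is correct. Note that the paper itself gives no argument for this statement at all: it is quoted as a known result with a citation to Hajek's \emph{Metamathematics of Fuzzy Logic} (lemma 2.1.6), so there is no in-paper proof to compare against. What you have written is essentially the standard argument behind that citation: define $S_{x,y}$, use $x*0=0$ and monotonicity to see it is a nonempty down-set, use continuity of $z\mapsto x*z$ to pass to the limit and conclude that $\sup S_{x,y}\in S_{x,y}$, derive the adjointness from monotonicity plus attainment of the maximum, and get uniqueness by the usual Galois-connection argument that never mentions the formula. You also correctly isolate the one place where the continuity hypothesis does real work, namely the attainment of the supremum; without some continuity assumption the right-to-left direction of the adjunction can indeed fail. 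The write-up is complete and self-contained, which is more than the paper provides.
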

The operation $x\implies y$ is called the residuam of the $t$ norm. The residuam $\implies$ 
defines its corresponding unary operation of precomplement 
$(-)x=(x\implies 0)$.
The Godel negation satisfies $(-)0=1$, $(-)x=0$ for $x>0$.
Abstracting away from $t$ norms, we get $BL$ algebras as defined in the introduction.

The following variant of the completeness theorem for core fuzzy logics is known:

\begin{theorem} Let $\L$ be a core fuzzy logic, $\phi$ a formula and $T$ a theory. Then, the following conditions are equivalent
\begin{enumroman}
\item $T\vdash \phi$
\item $e(\phi)=1$ for each $\L$-algebra and each $\B$ model $e$ of theory $T$
\item $e(\phi)=1$ for each $\L$-chain $\B$ and each $\B$ model $e$ of theory $T$
\end{enumroman}
\end{theorem}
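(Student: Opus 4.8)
The plan is to establish the cycle of implications $(i) \Rightarrow (ii) \Rightarrow (iii) \Rightarrow (i)$. The first two implications are the soundness (easy) directions, and the last is the substantive completeness direction, for which the main tool is a subdirect representation of $\L$-algebras as subdirect products of $\L$-chains.

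For $(i) \Rightarrow (ii)$ I would argue by induction on the length of a derivation of $\phi$ from $T$. By the definition of an $\L$-algebra, each additional axiom of $\L$ is a tautology, so it evaluates to $1$ under every assignment; the $MTL$ axioms likewise evaluate to $1$ in every $MTL$-algebra. It then suffices to check that the rules of inference preserve the value $1$: for modus ponens one uses that $a = 1$ together with $a \implies b = 1$ force $b = 1$ via the residuation condition, while the substitution rule $(*)$ is accounted for by the congruence behaviour of $\equiv$. Since $e$ is a model of $T$, every premise drawn from $T$ already has value $1$, and the induction yields $e(\phi) = 1$.

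The implication $(ii) \Rightarrow (iii)$ is immediate: an $\L$-chain is in particular an $\L$-algebra, namely a linearly ordered one, so the statement in $(ii)$, ranging over all $\L$-algebras, specializes at once to the subclass of chains.

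The heart of the argument is $(iii) \Rightarrow (i)$, which I would prove contrapositively. Assume $T \not\vdash \phi$. Form the Lindenbaum--Tarski algebra $\A = \Fm/\!\equiv_T$, where $\psi \equiv_T \chi$ iff $T \vdash \psi \to \chi$ and $T \vdash \chi \to \psi$; the substitution rule $(*)$ guarantees that $\equiv_T$ is a congruence compatible with every connective of $\L$, so $\A$ is an $\L$-algebra and the canonical map $e_0$ sending each formula to its class is a model of $T$ with $e_0(\phi) \neq 1$, precisely because $T \not\vdash \phi$. The remaining step, and the one I expect to be the genuine obstacle, is to descend from $\A$ to a chain: one must show that every $\L$-algebra is a subdirect product of $\L$-chains. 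This rests on the prelinearity of $MTL$ --- the tautology $(x \implies y) \cup (y \implies x) = 1$ --- which forces every subdirectly irreducible $\L$-algebra to be linearly ordered, equivalently it lets one separate any element $a \neq 1$ from $1$ by a prime filter whose quotient is a chain. The delicate point is verifying that the additional operations indexed by $I$ are compatible with these prime filters, that is, that the filter--congruence correspondence survives the expansion by the new connectives; this is where the $(LDT)$ and $(*)$ re-enter. Applying the representation to $\A$, I would obtain a projection $\pi$ onto an $\L$-chain $\B$ with $\pi(e_0(\phi)) \neq 1$; then $e = \pi \circ e_0$ is a model of $T$ in $\B$ with $e(\phi) \neq 1$, contradicting $(iii)$ and closing the cycle.
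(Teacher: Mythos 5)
Your proposal is correct in outline, but it cannot be ``the same as the paper's proof'' for a simple reason: the paper does not prove this theorem at all --- its entire proof is the citation ``\cite{H}, Thm 5 p.867'' (in fact the page reference points to the H\'ajek--Cintula paper \cite{Ha} rather than the book). So you have supplied an argument where the paper supplies none. Your route is the standard algebraic one: soundness by induction on derivations (with the residuation law forcing $1\implies b=1$ iff $b=1$ for modus ponens), the trivial specialization from algebras to chains, and then completeness via the Lindenbaum--Tarski algebra $\Fm/\!\equiv_T$ followed by a subdirect representation of every $\L$-algebra as a subdirect product of $\L$-chains. The source the paper cites proves completeness by a more syntactic route: one extends the theory $T$ with $T\nvdash\phi$ to a \emph{linear} theory $T'$ still not proving $\phi$ (using $LDT$ and prelinearity, a proof-by-cases argument), and then the Lindenbaum algebra of $T'$ is itself a chain; this avoids any filter theory on abstract algebras. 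The two approaches buy different things: the linear-extension argument is shorter and stays at the level of theories (and is what generalizes smoothly to the predicate case used later in the paper), while your subdirect decomposition is a stronger algebraic statement about the whole variety of $\L$-algebras. One caution on your version: the ``delicate point'' you flag is real and is exactly where the work lies --- the prime filters produced by Zorn's lemma must be \emph{deductive} filters (closed under all rules of $\L$, in particular the rule $(*)$), or else the quotient is not well defined on the additional connectives in $I$; that the prime extension can be carried out inside the lattice of deductive filters is precisely what $LDT$ plus prelinearity (the proof-by-cases property) guarantees, so your sketch is sound but that step deserves an explicit lemma rather than a parenthetical remark.
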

\begin{demo}{Proof} \cite{H}, Thm 5 p.867.
\end{demo}
Now we pass to predicate fuzzy logics, or predicate many valued logics. Let us assume from now on that $\L$ is some fixed core fuzzy logic.
A predicate language consists of non-logical symbols and logical symbols. The non-logical symbols 
consist  of a non-empty set of predicates, each together with a positive natural number - 
its arity, and a possibly empty set of constants. The logical symbols are a countable family of variables $x_1, \ldots x_n\ldots$
connectives $\&$,  $\rightarrow$, truth constants $0,1$ and quantifiers $\forall$ $\exists$. 
Terms consist of variables and constants and nothing else.
Atomic formulas have the form $P(t_1\ldots t_n)$ where $P$ is a predicate of arity $n$ and $t_1\ldots t_n$ are terms.
If $\phi,\psi$ are formulas and $x$ is a variable, then $\phi\to \psi$, $\phi\&\psi$, $(\forall x)\psi$ $(\exists x)\phi$
are formulas.
Other connectives are defined as follows:
$$\phi\land \psi\text {  is }\phi\&(\phi\rightarrow \psi),$$
$$\phi\lor \psi\text {  is }((\phi\to \psi)\to \psi)\land ((\psi\to \phi)\to \phi),$$
$$\neg \phi\text { is }\phi\to 0,$$
$$\psi\equiv \psi\text { is } (\phi\to \psi)\&(\psi\to \phi).$$

For a linearly ordered  $\L$ algebra, an $\bold L$ structure for a predicate language is $\M=(M, P_M, c_m),$ 
where $M\neq \emptyset$, for each predicate
$P$ of arity $n$, $P_M$ is an $n$-ary $\bold L$ fuzzy relation on $M,$ that is $P_M:M^n\to \bold L$,  and for each constant $c,$ $c_m\in M$. 
One then defines
for each formula $\phi$ the truth value $||\phi||_{M,v}^{\bold L}$ of $\phi$ in $\M$ determined by the $\bold L$ chain and 
evaluation $v$ of free variables the usual 
Tarskian way. 
In more detail, an $\M$ evaluation is a map from $\omega$ to $M$. For two evaluations $v$ and $v'$ and $i\in \omega$ we write 
$v\equiv_i v'$ iff $v(j)=v'(j)$ for all
$j\neq i$. 
The value of a term given by $\M,v$ is defined as follows $||x_i||_{\M,v}=v(i)$ and $||c||_{\M,v}=m_c$.
Now we define the truth value $||\phi||^{\bold L}_{\M,v}$:
$$||P(t_1,\ldots t_n)||_{\M,v}^{\bold L}=P_M(||t_1||_{\M,v}^{\bold L},\ldots ||t_n||_{M,v}^{\bold L}),$$
$$||\phi\to \psi||_{\M.v}^{\bold L}=||\phi||_{\M,v}^{\bold L}\implies ||\psi||_{\M,v}^{\bold L},$$
$$||\phi\&\psi||_{\M,v}^{\bold L}=||\phi||_{\M,v}^{\bold L}*||\psi||_{\M,v}^{\bold L},$$
$$||(\forall x_i)\phi||_{\M,v}^{\bold L}=\bigwedge\{||\phi||_{\M,v'}^{\bold L}: v'\equiv_i v\},$$ 
$$||(\exists x_i)\phi||_{\M,v}^{\bold L}=\bigvee\{||\phi||_{\M,v'}^{\bold L}: v'\equiv_i v\}.$$
The structure $\M$ is $\bold L$ safe if the needed infima and suprema exist, i.e $||\phi||_{\M,v}^{\bold L}$ is defined for all $\phi$ and $v$.
Let $\phi$ be a formula and $\M$ be a safe $\bold L$ structure. The truth value of $\phi$ in $\M$ is
$$||\phi||_{\M}^{\bold L}=\bigwedge \{||\phi||_{M,v}^{\bold L}: v \text { is an $M$ evaluation }\}.$$
For each model $(\M,L),$ let $\Alg(\M,\bold L)$ be the subalgebra of $\bold L$ with domain $\{||\phi||_{\M,v}^{\bold L}: \phi, v\}$ of 
truth degrees of all formulas $\phi$
under all $M$ evaluations $v$ of variables. $(\M,\bold L)$ is exhaustive if $\bold L=\Alg(\M,\bold L)$.
Notions of free variables, substitution of a term for a variable, are defined like in classical first order logic.
Given a safe structure  $(\M, \bold L)$, a formula $\phi(x_1\ldots x_n)$ having free variables 
among the first $n$ and $s\in {}^nM$, $s=(a_1\ldots a_n)$, say,  we write
$||\phi(a_1\ldots a_n)||_{\M, \bold L}$ or $||\phi(s)||_{\M, \bold L}$ for the value of the formula 
$\phi$ in $L$ when replacing the variables $x_1\ldots x_n$ by $a_1\ldots a_n$ respectively.

The following are logical axioms for quantifiers.

$(\forall 1) \ \ (\forall x)\psi(x)\to \psi(t)$, $t$ is substitutable for $x$ in $\psi(x),$

$(\exists 1)\ \ \psi(t)\to (\exists x) \psi(x)$, $t$ is substitutable for $x$ in $\psi(x),$

$(\forall 2) \ \ (\forall x)(\psi\to \phi)\to (\psi\to (\forall x)\phi),$ $x$ is not free in $\psi,$

$(\exists 2)\ \ (\forall x)(\psi\to \phi)\to ((\exists x)\psi\to \psi),$ $x$ is not free in $\psi,$

$(\forall 3) \ \ (\forall x)(\psi\lor \phi)\to (\psi\lor (\forall x)\phi),$ $x$ is not free in $\psi.$

Let $\L$ be a core fuzzy logic that extends the basic propositional logic $BL$.
We associate with $\L$ the corresponding predicate calculus $\L\forall$ over a given signature $S$
by taking as logical axioms 
\begin{itemize}

\item all formulas resulting from the axioms of $\L$ by substituting arbitary formulas of $S$ for propositional variables, and
the axioms $(\forall 1)$ $(\forall 2)$ $(\forall 3)$, $(\exists 1)$ $(\exists 2)$ for quantifiers 
and taking as deduction rules,

\item modus ponens (from $\phi, \phi\to \psi$ infer $\psi$) and,

\item generalization (from $\phi$ infer $(\forall x)\phi).$

\end{itemize}
Given this, the notions of proof, provability, theory, etc. are like classical logic.

\begin{definition}
Let $T$ be a theory. $T$ is linear if for every pair $\phi$, $\psi$ of sentences we have $T\vdash \phi\to \psi$ or $T\vdash \psi\to \phi$.
We say that $T$ is Henkin if for each sentence $\phi=\forall x\psi$ such that $T\nvdash \phi$, there is a constant $c$ such that
$T\nvdash \psi(c)$.

Set $[\phi]_T=\{\psi: T\vdash \psi\equiv \phi\}$ and $L_T=\{[\phi]_T: \phi \text { a formula }\}.$
The Lindenbaum algebra of the theory $T$ $(\Fm_T)$ has domain $L_T$ and operations 
$$f_{\Fm_T}([\phi_1]_T\ldots [\phi_n]_T)=[f(\phi_1\ldots \phi_n)]_T.$$
\end{definition}
Let $T$ be a Henkin Linear theory. The canonical model of theory $T$, denoted by $\CM(T),$ is the pair $(CM(T), \Fm_T)$, where $\Fm_T$
is the Lindenbaum algebra of the theory $T$, the domain  $CM(T)$ of $\CM(T)$ consists of the constants, $c_{\CM(T)}=c$ for each constant and
$P_{\CM(T)}(t_1\ldots t_n)=[P(t_1\ldots t_n)]_T$ for each predicate symbol $P$. 

\begin{lemma}Let $\L$ be a core fuzzy logic, $T$ a linear Henkin theory, and $\phi$ a formula with only one free variable $x$. Then
\begin{enumroman}
\item $\Fm_T$ is an $\L$-chain,
\item $[\forall x \phi]_T=\bigwedge [\phi(c)],$
\item $[\exists x\phi]_T=\bigvee [\phi(c)]$.
\item If $\phi$ is a sentence, then $||\phi||^{\CM(T)}=[\phi]_T$. Thus $T\vdash \phi$ iff $\CM(T)\models \phi$.
\end{enumroman}
\end{lemma}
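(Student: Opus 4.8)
The plan is to treat the four clauses in order, letting the logical axioms $(\forall 1),(\forall 2),(\exists 1),(\exists 2)$ supply one half of each quantifier identity and letting the Henkin property supply the other half, and finally to obtain (iv) by an induction on the construction of $\phi$ that invokes (ii) and (iii) at the quantifier steps. Throughout I use the canonical order on $\Fm_T$, namely $[\phi]_T\leq[\psi]_T$ iff $T\vdash\phi\to\psi$, with top element $1=[\phi]_T$ for any theorem $\phi$.

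For (i), $\Fm_T$ is an $\L$-algebra by the general Lindenbaum construction: each axiom of $\L$ is provable, hence sent to $1$, and the substitution rule $(*)$ guarantees that the operations are well defined on $\equiv$-classes. Linearity of $T$ says precisely that for any two sentences one of $T\vdash\phi\to\psi$, $T\vdash\psi\to\phi$ holds, so the order is total and $\Fm_T$ is an $\L$-chain. For (ii) I would first note that $(\forall 1)$ gives $T\vdash(\forall x)\phi\to\phi(c)$ for every constant $c$, so $[\forall x\phi]_T\leq[\phi(c)]_T$ and $[\forall x\phi]_T$ is a lower bound of $\{[\phi(c)]_T:c\}$. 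To see it is the greatest lower bound I recast the Henkin property as an admissible $\omega$-rule: its contrapositive says that if $T\vdash\psi(c)$ for all constants $c$ then $T\vdash(\forall x)\psi$. Given any lower bound $[\chi]_T$ with $x$ not free in $\chi$, we have $T\vdash\chi\to\phi(c)$ for all $c$; the admissible $\omega$-rule yields $T\vdash(\forall x)(\chi\to\phi)$, and then $(\forall 2)$, applicable since $x$ is not free in $\chi$, gives $T\vdash\chi\to(\forall x)\phi$, i.e. $[\chi]_T\leq[\forall x\phi]_T$. Part (iii) is dual: $(\exists 1)$ makes $[\exists x\phi]_T$ an upper bound of the $[\phi(c)]_T$, and for an upper bound $[\chi]_T$ (with $x$ not free in $\chi$) the admissible $\omega$-rule gives $T\vdash(\forall x)(\phi\to\chi)$, whence $(\exists 2)$, which passes from $(\forall x)(\phi\to\chi)$ to $(\exists x)\phi\to\chi$, produces $[\exists x\phi]_T\leq[\chi]_T$.

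For (iv) I would argue by induction on the complexity of $\phi$. The atomic case is the defining clause $P_{\CM(T)}(t_1,\dots,t_n)=[P(t_1,\dots,t_n)]_T$. The cases of $\&$ and $\to$ hold because the map $\phi\mapsto[\phi]_T$ is by construction a homomorphism into $\Fm_T$, so the evaluation commutes with $*$ and $\implies$. The quantifier cases are exactly (ii) and (iii): since the domain of $\CM(T)$ consists of the constants, $\|(\forall x)\phi\|^{\CM(T)}=\bigwedge_c\|\phi(c)\|^{\CM(T)}$ and $\|(\exists x)\phi\|^{\CM(T)}=\bigvee_c\|\phi(c)\|^{\CM(T)}$, and by the induction hypothesis $\|\phi(c)\|^{\CM(T)}=[\phi(c)]_T$, so (ii) and (iii) identify these with $[\forall x\phi]_T$ and $[\exists x\phi]_T$. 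In particular the required infima and suprema exist in $\Fm_T$, so $\CM(T)$ is indeed safe. The final assertion then follows at once, since $T\vdash\phi$ iff $[\phi]_T=1$ iff $\|\phi\|^{\CM(T)}=1$ iff $\CM(T)\models\phi$.

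The main obstacle is the greatest-lower-bound and least-upper-bound halves of (ii) and (iii): the witnessing axioms $(\forall 1),(\exists 1)$ only give one inequality, and it is precisely the Henkin property, guaranteeing enough constant witnesses that no class can be squeezed strictly between $[\forall x\phi]_T$ and all the $[\phi(c)]_T$, that closes the gap. Two points of care accompany this. One must ensure the chosen representative $\chi$ of a bound has $x$ not free, so that $(\forall 2)$ and $(\exists 2)$ apply; this is arranged by renaming bound variables and choosing representatives suitably. One must also check that every substitution of a constant for $x$ is legitimate in the sense demanded by $(\forall 1)$ and $(\exists 1)$, which is automatic for closed terms.
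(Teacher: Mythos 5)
The paper does not actually prove this lemma: its entire ``proof'' is the citation ``\cite{H} lemma 6'' (i.e.\ the H\'ajek--Cintula result, cf.\ \cite{Ha}), so there is no in-paper argument to compare against. Your proposal is a correct, self-contained reconstruction of that standard proof, and it follows exactly the route the cited source takes: algebraizability of the logic plus linearity of $T$ for (i); the witnessing axioms $(\forall 1),(\exists 1)$ for the bound halves of (ii)--(iii); the contrapositive of the Henkin condition, read as an admissible $\omega$-rule, combined with $(\forall 2)$ and $(\exists 2)$ for the extremality halves (note that you tacitly use the correct form of $(\exists 2)$, namely $(\forall x)(\phi\to\psi)\to((\exists x)\phi\to\psi)$ with $x$ not free in $\psi$, rather than the garbled version printed in the paper -- that is the right thing to do); and induction on sentence complexity for (iv), where the quantifier steps reduce to (ii)--(iii) precisely because the domain of $\CM(T)$ is the set of constants, which simultaneously yields safety of the canonical model.

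One point should be stated as a fact rather than the hedge you give it. You restrict to bounds $[\chi]_T$ with $x$ not free in $\chi$ and claim the general case is ``arranged by renaming bound variables and choosing representatives suitably.'' That repair does not work: renaming \emph{bound} variables cannot remove a \emph{free} occurrence of $x$, and a class such as $[P(x)]_T$ need have no representative in which $x$ is not free. The real resolution is that $\Fm_T$ in this lemma must be the algebra of classes of \emph{sentences}, as in the cited source. The paper's definition of $L_T$ literally says ``formula,'' but its notion of linear theory quantifies only over sentences, and with open formulas admitted part (i) would already fail: for a complete Henkin theory of a two-element classical structure with $P(c_0)$ and $\neg P(c_1)$, the classes $[P(x)]_T$ and $[P(x)\to 0]_T$ are incomparable, since neither $\forall x(P(x)\to\neg P(x))$ nor $\forall x(\neg P(x)\to P(x))$ is provable. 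Under the sentence reading, every lower or upper bound is a sentence class, $x$ is automatically not free in any representative, and your argument for (ii) and (iii) goes through verbatim.
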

\begin{demo}{Proof} \cite{H} lemma 6.
\end{demo}
The previous lemma gives the following completeness theorem \cite{Ha}:

\begin{theorem} Let $\L\forall$ be the predicate calculus given by a core fuzzy logic extending $BL$. 
Let $T$ be a theory over $\L\forall$ and let $\phi$ be a formula
of the language of $T$. $T$ proves $\phi$ if and only if for each linearly ordered $\L$-algebra $\bold L$ 
and every safe $\L$-model $\M$ of $\bold L$, we have  $||\phi||_{\M}^{\bold L}=1.$
\end{theorem}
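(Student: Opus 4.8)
The plan is to prove the two implications separately: soundness (the ``only if'' direction) is a routine induction, while completeness (the ``if'' direction) is the substantial half and is obtained by a Henkin-style construction feeding into the previous Lemma.

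For soundness I would argue by induction on the length of a derivation of $\phi$ from $T$, showing that every theorem of $T$ takes value $1$ in every safe $\L$-model of every linearly ordered $\L$-algebra. The propositional logical axioms are tautologies of every $\L$-algebra and hence evaluate to $1$; the quantifier axioms $(\forall 1)$--$(\forall 3)$, $(\exists 1)$, $(\exists 2)$ are handled using the definitions of $||(\forall x_i)\phi||_{\M,v}$ and $||(\exists x_i)\phi||_{\M,v}$ as infimum and supremum over the $i$-variants of $v$, together with the adjointness law relating $*$ and $\implies$. Modus ponens preserves value $1$ since $||\phi\to\psi||_{\M,v}=||\phi||_{\M,v}\implies||\psi||_{\M,v}$ and $1\implies y=y$, and generalization preserves it because $||(\forall x)\phi||_{\M,v}$ is the infimum of the values of $\phi$ over the $i$-variants.

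For completeness I argue by contraposition: assuming $T\nvdash\phi$ I construct a linearly ordered $\L$-algebra and a safe model in which $\phi$ fails to take value $1$. I first reduce to the case that $\phi$ is a sentence, since $T\vdash\phi$ iff $T$ proves the universal closure $(\forall\bar x)\phi$, and $||\phi||_{\M}=||(\forall\bar x)\phi||_{\M}$ by the definition of the value of a formula as an infimum over evaluations. The engine is the previous Lemma: if $T$ can be extended to a linear Henkin theory $T^{*}$, over a signature enlarged by countably many fresh constants, with $T^{*}\nvdash\phi$, then $\Fm_{T^{*}}$ is an $\L$-chain and by part (iv) of the Lemma $||\psi||^{\CM(T^{*})}=[\psi]_{T^{*}}$ for every sentence $\psi$. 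The canonical model is safe precisely because its truth algebra is the whole Lindenbaum algebra, so the infima and suprema in the clauses for $\forall$ and $\exists$ are realised by the classes of the corresponding quantified formulas, which is exactly parts (ii) and (iii) of the Lemma. Since $T^{*}\nvdash\phi$ means $[\phi]_{T^{*}}\neq 1$, we obtain $||\phi||^{\CM(T^{*})}\neq 1$, the desired countermodel.

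The crux, and the step I expect to be the main obstacle, is producing the linear Henkin extension $T^{*}$ while preserving $T^{*}\nvdash\phi$. I would obtain it as the union of an increasing chain of theories built by enumerating all sentences of the countable enlarged language and interleaving two kinds of steps. In a Henkin step, for a sentence $(\forall x)\psi$ I adjoin a fresh constant $c$ together with the witnessing axiom $\psi(c)\to(\forall x)\psi$ (dually $(\exists x)\psi\to\psi(c)$); since $c$ is new this is a conservative extension, hence keeps $\phi$ unprovable, and it secures the Henkin clause needed for parts (ii)--(iv) of the Lemma. In a linearity step, for a pair of sentences $\chi_{1},\chi_{2}$ I adjoin whichever of $\chi_{1}\to\chi_{2}$ or $\chi_{2}\to\chi_{1}$ keeps $\phi$ unprovable; the prelinearity law $(\chi_{1}\to\chi_{2})\lor(\chi_{2}\to\chi_{1})$ together with the Local Deduction Theorem shows that at least one choice must do so, for if both additions proved $\phi$ then, writing the two derivations via $(LDT)$ as $T\vdash(\chi_{1}\to\chi_{2})^{n}\to\phi$ and $T\vdash(\chi_{2}\to\chi_{1})^{m}\to\phi$, prelinearity would force $T\vdash\phi$. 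The delicate bookkeeping is to meet both requirements for every sentence and every pair simultaneously while maintaining non-provability of $\phi$ at each finite stage; reducing each preservation claim to a propositional computation through $(LDT)$, and if convenient through the propositional completeness theorem via $\L$-chains, is what makes the construction go through. The resulting $T^{*}$ is linear and Henkin with $T^{*}\nvdash\phi$, so the Lemma applies and the proof is complete.
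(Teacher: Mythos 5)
Your overall architecture --- soundness by induction on derivations, then a contrapositive completeness argument that extends $T$ to a linear Henkin theory $T^{*}$ with $T^{*}\nvdash\phi$ and reads the countermodel off the canonical model via parts (i)--(iv) of the preceding Lemma --- is exactly the route the paper intends (the paper gives no proof of its own, deferring to H\'ajek--Cintula), and your linearity step via powered prelinearity and $LDT$ is sound. The genuine gap is your Henkin step. In fuzzy predicate logic, adjoining the witness axiom $\psi(c)\to(\forall x)\psi$ for a fresh constant $c$ is \emph{not} a conservative extension, and your construction can break at the very first such step. Concretely, take the case $\L=BL$, $T=\emptyset$ and $\phi=(\exists y)(P(y)\to(\forall x)P(x))$. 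Then $T\nvdash\phi$: in the standard G\"odel chain $[0,1]$ (a linearly ordered $BL$-algebra; $[0,1]$ is a complete lattice, so every model over it is safe) with domain $\omega$ and $||P(n)||=\tfrac{1}{2}+\tfrac{1}{n+2}$, the infimum $||(\forall x)P||=\tfrac{1}{2}$ is not attained, G\"odel implication gives $||P(n)\to(\forall x)P||=\tfrac{1}{2}$ for every $n$, hence $||\phi||=\tfrac{1}{2}\neq 1$, and soundness yields unprovability. But once your Henkin step adds $P(c)\to(\forall x)P$, axiom $(\exists 1)$ and modus ponens immediately prove $\phi$. So the step does not preserve $T_{n}\nvdash\phi$. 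Equivalently: if witness axioms were conservative, your argument would establish completeness with respect to \emph{witnessed} models, which is false for predicate $BL$ and for G\"odel logic (it happens to hold for Lukasiewicz logic because its implication is continuous, which is why the classical-looking step feels plausible). The hidden use in the classical conservativity proof is the ``drinker's paradox'' $(\exists y)(\psi(y)\to(\forall x)\psi)$, which is not a theorem here.

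This failure is precisely why the paper (following H\'ajek) defines ``Henkin'' in the weak, unprovability form --- if $T\nvdash(\forall x)\psi$ then $T\nvdash\psi(c)$ for some $c$ --- which is all that parts (ii)--(iv) of the Lemma actually need. The standard repair secures this without witness axioms by letting the \emph{avoided formula} grow instead of the theory: maintain the invariant $T_{n}\nvdash\phi_{n}$ with $\phi_{0}=\phi$; at the stage treating $(\forall x)\psi$, if $T_{n}\cup\{(\forall x)\psi\}\nvdash\phi_{n}$ then add $(\forall x)\psi$ to the theory, and otherwise leave $T_{n}$ unchanged but set $\phi_{n+1}=\phi_{n}\lor\psi(c)$ with $c$ fresh. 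Preservation of the invariant is where axiom $(\forall 3)$ earns its place in the axiomatization: if $T_{n}\vdash\phi_{n}\lor\psi(c)$ then, $c$ being fresh, $T_{n}\vdash(\forall y)(\phi_{n}\lor\psi(y))$, so $(\forall 3)$ gives $T_{n}\vdash\phi_{n}\lor(\forall x)\psi$; powering the disjunction (provable, since valid in all chains) and combining with $T_{n}\vdash((\forall x)\psi)^{m}\to\phi_{n}$ from the case hypothesis yields $T_{n}\vdash\phi_{n}$, a contradiction. At the end, any $(\forall x)\psi$ unproved by $T^{*}$ must have been treated in the second case, and the associated $\psi(c)$ is unprovable because it is a disjunct of some $\phi_{m}$; since $\phi$ is a disjunct of every $\phi_{m}$, also $T^{*}\nvdash\phi$. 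With this repair, and your linearity step and canonical-model argument unchanged, the proof goes through --- noting that the resulting canonical model is safe but, unavoidably, need not be witnessed.
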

The following corollary is immediate \cite{Ha}.

\begin{theorem} Let $\L$ be a core fuzzy logic, $T$ a theory and $\phi$ a formula. Then the following are equivalent:
\begin{enumroman}
\item $T\vdash \phi,$
\item $(\M,\bold L)\models \phi$ for every model $(\M,\bold L)$ of $T,$
\item $(\M,\bold L)\models \phi$ for every exhaustive model $(\M, \bold L)$ of $T.$
\end{enumroman}
\end{theorem}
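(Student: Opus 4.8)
The whole statement rests on the completeness theorem just established, which asserts that $T \vdash \phi$ if and only if $||\phi||_{\M}^{\bold L} = 1$ for every linearly ordered $\L$-algebra $\bold L$ and every safe $\L$-model $\M$ of $\bold L$. The plan is to establish the cycle (i) $\Rightarrow$ (ii) $\Rightarrow$ (iii) $\Rightarrow$ (i), reading $(\M,\bold L) \models \phi$ as the condition $||\phi||_{\M}^{\bold L} = 1$.

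The two forward implications are immediate. For (i) $\Rightarrow$ (ii), if $T \vdash \phi$ then completeness gives $||\phi||_{\M}^{\bold L} = 1$ in every model of $T$, so in particular $(\M, \bold L) \models \phi$ for each such model. For (ii) $\Rightarrow$ (iii), I would simply note that every exhaustive model is a model, so a formula that holds in all models holds a fortiori in all exhaustive ones.

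The content is in (iii) $\Rightarrow$ (i), which I would prove contrapositively, again through completeness. If $T \nvdash \phi$, then by the completeness theorem there is a model $(\M, \bold L)$ of $T$ with $||\phi||_{\M}^{\bold L} \neq 1$, and the goal is to convert it into an \emph{exhaustive} model exhibiting the same failure. The natural candidate is the restriction $(\M, \Alg(\M, \bold L))$, where $\Alg(\M, \bold L)$ is the subalgebra of $\bold L$ whose domain is the set of all truth degrees $\{||\psi||_{\M, v}^{\bold L} : \psi, v\}$; this structure is exhaustive by construction. The decisive claim is that $||\psi||_{\M, v}^{\Alg(\M, \bold L)} = ||\psi||_{\M, v}^{\bold L}$ for every formula $\psi$ and every evaluation $v$, which I would prove by induction on the complexity of $\psi$. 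Applied to $\phi$, this carries the counterexample into the exhaustive model and contradicts (iii), forcing $T \vdash \phi$.

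The main obstacle is the quantifier step of that induction, where one must check both that the restricted structure is \emph{safe} and that the infima and suprema defining $||(\forall x_i)\psi||$ and $||(\exists x_i)\psi||$ are computed identically in $\Alg(\M, \bold L)$ and in $\bold L$. The key is that $\Alg(\M, \bold L)$ was defined to contain the truth value of \emph{every} formula, quantified formulas included; hence for $(\forall x_i)\psi$ the element $\bigwedge\{||\psi||_{\M, v'}^{\bold L} : v' \equiv_i v\}$ already lies in $\Alg(\M, \bold L)$, and because a subalgebra inherits the lattice order, an infimum of $\bold L$ that happens to lie in the subalgebra is its infimum there as well. The dual remark, with suprema, settles the $\exists$ case, after which the induction closes routinely and the three conditions are seen to be equivalent.
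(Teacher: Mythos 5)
Your proof is correct, but be aware that the paper itself gives no argument here at all: it simply declares the corollary ``immediate'' from the completeness theorem proved just before it and cites Hajek--Cintula. Your cycle (i) $\Rightarrow$ (ii) $\Rightarrow$ (iii) $\Rightarrow$ (i) is precisely the intended derivation, and what you have done is supply the one step that is genuinely non-trivial, namely (iii) $\Rightarrow$ (i). Two remarks confirm that your filling-in is sound. First, since the paper defines structures only over linearly ordered $\L$-algebras, condition (ii) already quantifies over chain-based models, so (i) $\Leftrightarrow$ (ii) is literally the completeness theorem and no separate soundness argument over arbitrary $\L$-algebras is needed. Second, your treatment of the quantifier step is the right one: the truth value of each quantified formula lies in $\Alg(\M,\bold L)$ by construction, an infimum or supremum computed in $\bold L$ that belongs to a subalgebra is also the infimum or supremum relative to that subalgebra, and therefore the induction shows both that $(\M,\Alg(\M,\bold L))$ is safe and that all truth values (hence the property of being a model of $T$, and the failure $||\phi||\neq 1$) are preserved; note also that $\Alg(\M,\bold L)$ is again a chain, so the restricted structure is a legitimate model in the paper's sense. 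In short, your proposal is the proof the paper omits.
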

In this paper we prove an omitting types theorem, but only for countable languages. 
The condition of countability of the language considered is sensible, because
it is known that the omitting types theorem fails for uncountable languages for first order logic.
Our proof is topological so we formulate and prove two known topological theorems. 
We assume familiarty with basic topological concepts such as basis, regular, compact ..etc.

\begin{definition} An $MV$ algebra is an algebra
$$\A=(A, \oplus, \odot, \neg, 0,1)$$
where $\oplus$, $\odot$ are binary operations, $\neg$ is a unary operation and $0,1\in A$, such that the following identities hold: 
\begin{enumerate}
\item $a\oplus b=b\oplus a,\ \ \  a\odot b=b\odot a.$
\item $a\oplus (b \oplus c)=(a\oplus b)\oplus c$,\  \  \ $a\odot (b \odot c)=(a\odot b)\odot c.$

\item $a\oplus 0=a$ ,\ \ \ $a\odot 1=a.$
\item $a\oplus 1=1$, \ \ \ $a\odot 0=a.$
\item  $a\oplus \neg a=1$,\ \ \  $a\odot \neg a=0.$
\item $\neg (a\oplus b)=\neg a\odot \neg b,$\ \ \ $\neg (a\odot b)=\neg a\oplus \neg b.$
\item $a=\neg \neg a$\ \ \ $\neg 0=1.$
\item $\neg(\neg a\oplus b)\oplus b=\neg(\neg b\oplus a)\oplus a.$
\end{enumerate}
\end{definition}
$MV$ algebras form a variety that is a subvariety of the variety of  $BL$ algebras intoduced by Hajek, 
in fact $MV$ algebras coincide with those $BL$ algebras satisfying double negation law, 
namely that $\neg\neg x=x$, and contains all  Boolean algebras.
\begin{example} A simple numerical example is $A=[0,1]$ with operations $x\oplus y=min(x+y, 1)$, $x\odot y=max(x+y-1, 0)$,  and $\neg x=1-x$. 
In mathematical fuzzy logic, this $MV$-algebra is called the standard $MV$ algebra, 
as it forms the standard real-valued semantics of Lukasiewicz logic. $MV$ algebras  can be obtained from Boolean algebras by dropping idempotency.
\end{example}
$MV$ algebras aso arise from the study of continous $t$ norms. 
\begin{definition}A $t$ norm is a binary operation $*$ on $[0,1]$, i.e $(t:[0,1]^2\to [0,1]$) such that
\begin{enumroman}
\item  $*$ is commutative and associative,
that is for all $x,y,z\in [0,1]$,
$$x*y=y*x,$$
$$(x*y)*z=x*(y*z).$$
\item $*$ is non decreasing in both arguments, that is
$$x_1\leq x_2\implies x_1*y\leq x_2*y,$$
$$y_1\leq y_2\implies x*y_1\leq x*y_2.$$
\item $1*x=x$ and $0*x=0$ for all $x\in [0,1].$

\end{enumroman}
\end{definition}
The following are the most important (known) examples of continuous $t$ norms.

\begin{enumroman}
\item Lukasiewicz $t$ norm: $x*y=max(0,x+y-1),$
\item Godel $t$ norm $x*y=min(x,y),$
\item Product $t$ norm $x*y=x.y$.
\end{enumroman}
We have the following known result \cite{H} lemma 2.1.6
 
\begin{theorem} Let $*$ be a continuous $t$ norm. 
Then there is a unique binary operation $x\to y$ satisfying for all $x,y,z\in [0,1]$, the condition $(x*z)\leq y$ iff $z\leq (x\to y)$, namely 
$x\to y=max\{z: x*z\leq y\}.$
\end{theorem}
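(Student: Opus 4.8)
The plan is to prove the three assertions packaged into the statement in order: that the set over which the maximum is taken actually \emph{has} a maximum (so the displayed formula is meaningful), that the resulting operation satisfies the adjointness equivalence, and that it is the \emph{unique} such operation. Write $S_{x,y}=\{z\in[0,1]:x*z\leq y\}$. First I would note that $S_{x,y}$ is nonempty, since $x*0=0\leq y$ gives $0\in S_{x,y}$, and that it is downward closed: if $z\in S_{x,y}$ and $z'\leq z$, then by the monotonicity of $*$ in its second argument $x*z'\leq x*z\leq y$, so $z'\in S_{x,y}$.

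The crucial point is that $S_{x,y}$ is closed in $[0,1]$, and this is exactly where continuity of the $t$-norm is used. The map $z\mapsto x*z$ is continuous, and $[0,y]$ is closed, so $S_{x,y}$, being the preimage of $[0,y]$ under this continuous map, is closed. A nonempty, closed, downward-closed subset of the unit interval is precisely a segment $[0,s]$ with $s=\sup S_{x,y}\in S_{x,y}$; hence $\max S_{x,y}$ exists and equals this supremum, and the formula $x\to y=\max\{z:x*z\leq y\}$ is well defined. I expect the attainment of this supremum to be the only genuinely nontrivial obstacle: without continuity the supremum need not lie in $S_{x,y}$, and one would be forced to define the residuum as a mere supremum rather than an attained maximum.

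With the operation in hand, adjointness is a short two-directional check. If $x*z\leq y$, then $z\in S_{x,y}$, so $z\leq\max S_{x,y}=x\to y$. Conversely, if $z\leq x\to y=\max S_{x,y}$, then since $\max S_{x,y}\in S_{x,y}$ and $*$ is non-decreasing in its second argument, $x*z\leq x*(\max S_{x,y})\leq y$. This establishes the equivalence $(x*z)\leq y\Longleftrightarrow z\leq(x\to y)$ for all $x,y,z\in[0,1]$. For uniqueness, suppose $\Rightarrow$ is any binary operation satisfying the same equivalence. Taking $z=x\Rightarrow y$, the trivial inequality $z\leq x\Rightarrow y$ forces $x*z\leq y$, whence $z\leq x\to y$, i.e.\ $x\Rightarrow y\leq x\to y$; the symmetric argument, starting from $z=x\to y$, gives $x\to y\leq x\Rightarrow y$, so the two operations coincide. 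This completes the proof and simultaneously pins down the explicit formula.
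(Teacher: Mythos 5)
Your proof is correct and complete. Note that the paper itself does not prove this statement at all: it simply cites it as a known result (Hajek's monograph, Lemma 2.1.6), so there is no internal argument to compare against. Your argument is the standard one: nonemptiness and downward closure of $S_{x,y}=\{z: x*z\leq y\}$ follow from the t-norm axioms, continuity of $z\mapsto x*z$ makes $S_{x,y}$ closed (preimage of $[0,y]$), so the supremum is attained and equals the maximum; adjointness is then a two-line check, and uniqueness is the usual Galois-adjoint argument. You also correctly isolate the role of continuity -- with only left-continuity the residuum would exist as a supremum satisfying the adjunction, but the displayed $\max$ formula could fail to be attained -- which is precisely the content the cited lemma is meant to capture.
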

The operation $x\to y$ is called the residuam of the $t$ norm. The residuam $\to$ 
defines its corresponding unary operation of precomplement 
$\neg x=(x\to 0)$.
Abstracting away from $t$ norms, we get

\begin{definition} A residuated lattice is an algebra
$$(L,\cup,\cap, *, \to 0,1)$$ 
with four binary operations and two constants such that
\begin{enumroman}
\item $(L,\cup,\cap, 0,1)$ is a lattice with largest element $1$ and the least element $0$ (with respect to the lattice ordering defined the usual way: 
$a\leq b$ iff $a\cap b=a$).
\item $(L,*,1)$ is a commutative semigroup with largest element $1$, that is $*$ is commutative, associative, $1*x=x$ for all $x$.
\item Letting $\leq$ denote the usual lattice ordering, we have $*$ and $\to $ form an adjoint pair, i.e for all $x,y,z$
$$z\leq (x\to y)\Longleftrightarrow x*z\leq y.$$
\end{enumroman}
\end{definition}
A result of Hajek, is that an $MV$ algebra is  a prelinear commutative bounded integral residuated lattice 
satisfying the additional identity $x\cup y=(x\to y)\to y.$ In case of an $MV$ algebra, $*$ is the so-called strong conjunction which we 
denote here following standard notation in the 
literature by $\odot$. $\cap$ is called weak conjunction. The other operations are defined by 
$\neg a=a\to 0$ and $a\oplus b=\neg(\neg a\odot \neg b).$ The operation $\cup$ is called weak disjunction, while $\oplus$ is called 
strong disjunction. The presence of weak and strong conjunction is a common feature of substructural logics without the rule of contraction, to which Lukasiewicz 
logic belongs.

We now turn to describing some metalogical notions, culminating in formulating our main results in logical form. However, throughout the paper, 
our investigations will be purely algebraic,
using the well develped machinery of algebraic logic. 
There are two kinds of semantics for systems of many-valued logic.
Standard logical matrices and algebraic semantics. We shall only encounter  algebraic semantics. 
From a philosophical, especially epistemological point of view the semantic aspect of logic is more basic than the syntactic one,  
because it is mainly the semantic core which determines the choice of suitable 
syntactic versions of the corresponding system of logic.

\section{Free algebras in $BL$ algebras with operators}

In this section we stdy atomicity of $BL$ algebras with extra operations. This notion is important
in cylindric algebras, and lack of atomicity has been linked to Godels incompleteness theorem.
\begin{definition}
Let $K$ be variety  of $BAO$'s. Let $\L$ be the corresponding multimodal logic.
We say that $\L$ has the {\it Godel's incompleteness property} if there exists
a formula $\phi$ that cannot be extended to a recursive complete theory.
Such formula is called incompletable.
\end{definition}
Let $\L$ be a general modal logic, and let $\Fm_{\equiv}$ be the Tarski-Lindenbaum formula algebra on
finitely many generators.
\begin{theorem}(Essentially Nemeti's) If $\L$ has $G.I$, then the algebra $\Fm_{\equiv}$
is not atomic.
\end{theorem}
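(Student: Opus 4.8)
The plan is to argue by contraposition: assuming $\Fm_{\equiv}$ is atomic, I would manufacture from any consistent formula a recursive complete theory extending it, thereby showing that no formula is incompletable and hence that $\L$ lacks $G.I$. Concretely, let $\phi$ be an incompletable formula. Incompletability is only meaningful for a consistent $\phi$ (an inconsistent formula has no complete extension at all), so $[\phi]\neq 0$ in $\Fm_{\equiv}$. By the assumed atomicity there is an atom $[\psi]$ with $0\neq[\psi]\leq[\phi]$. The entire proof then reduces to checking that this single atom codes a recursive complete theory containing $\phi$, which directly contradicts the incompletability of $\phi$.

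First I would spell out the standard correspondence between atoms of a Lindenbaum--Tarski algebra and complete theories. The ultrafilter generated by the atom is $F_\psi=\{[\chi]:[\psi]\leq[\chi]\}$, and I set $T_\psi=\{\chi:\vdash\psi\to\chi\}$, so that $[\chi]\in F_\psi$ iff $\chi\in T_\psi$. Since $[\psi]$ is an atom, for every formula $\chi$ we have $[\psi]\cap[\chi]\in\{0,[\psi]\}$; unravelling the two cases gives $\vdash\psi\to\chi$ or $\vdash\psi\to\neg\chi$, so $T_\psi$ is complete. It is consistent because $[\psi]\neq 0$, and it extends $\phi$ because $[\psi]\leq[\phi]$ forces $\phi\in T_\psi$. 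Thus $T_\psi$ is already a consistent complete theory extending $\phi$; the only thing left to verify is that it is recursive.

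The decisive step, which I expect to carry the real content of the argument, is recursiveness of $T_\psi$. Here I would use that $\L$ is recursively axiomatised and that $\Fm_{\equiv}$ is built on finitely many generators, so that formulas are finite words over a finite alphabet and the theoremhood predicate is recursively enumerable. To decide whether a given $\chi$ lies in $T_\psi$, I dovetail a search for a proof of $\psi\to\chi$ and a proof of $\psi\to\neg\chi$. By the atom property exactly one of these two formulas is a theorem, so the search is guaranteed to halt and to return a definite verdict: $\chi\in T_\psi$ in the first case and $\chi\notin T_\psi$ in the second. This yields a total decision procedure, so $T_\psi$ is recursive.

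Putting the pieces together, $T_\psi$ is a recursive complete consistent theory with $\phi\in T_\psi$, contradicting the assumption that $\phi$ is incompletable; hence under $G.I$ the algebra $\Fm_{\equiv}$ cannot be atomic. The main obstacle is concentrated entirely in the recursiveness step: one must ensure that the atom property yields genuine termination of the dovetailed proof search, rather than mere semidecidability, and that the effective presentation of $\Fm_{\equiv}$ --- finitely many generators together with a recursive axiomatisation --- really makes theoremhood recursively enumerable. These are precisely the places where the hypotheses ``finitely many generators'' and ``recursively axiomatised modal logic'' are actually used.
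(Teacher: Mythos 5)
Your proposal is correct and follows essentially the same route as the paper's own proof: locate an atom below the class of the incompletable formula, observe that the atom generates a complete consistent theory containing $\phi$, and conclude recursiveness from the fact that the atom property makes exactly one of $\psi\to\chi$, $\psi\to\neg\chi$ provable (your dovetailed search is just the constructive form of the paper's ``r.e.\ set with r.e.\ complement is decidable'' step). The only cosmetic difference is that the paper works with $Consq(\{\tau,\phi\})$ and invokes Post's theorem, while you work with $T_\psi=\{\chi:\vdash\psi\to\chi\}$ and make the termination argument explicit; these are the same theory and the same argument.
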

\begin{proof}
Assume that $\L$ has $G.I$. Let $\phi$ be an incompletable
formula. We show that there is no atom in the Boolean algebra $\Fm$
below $\phi/\equiv.$
Note that because $\phi$ is consistent, it follows that $\phi/\equiv$ is non-zero.
Now, assume to
the contrary that there is such an atom $\tau/\equiv$ for some
formula $\tau.$
This means that .
that $(\tau\land \bar{\phi})/\equiv=\tau/\equiv$.
Then it follows that
$\vdash (\tau\land \phi)\implies \phi$, i.e.
$\vdash\tau\implies \phi$.
Let
$T=\{\tau,\phi\}$
and let
$Consq(T)=\{\psi\in Fm: T\vdash \psi\}.$
$Consq(T)$ is short for the consequences of $T$.
We show that $T$ is complete and that $Consq(T)$ is
decidable.   Let $\psi$ be an arbitrary formula in $\Fm.$
Then either $\tau/\equiv\leq \psi/\equiv$ or $\tau/\equiv\leq \neg \psi/\equiv$
because $\tau/\equiv$ is an
atom. Thus $T\vdash\psi$ or $T\vdash \neg \psi.$
Here it is the {\it exclusive or} i.e. the two cases cannot occur together.
Clearly $ConsqT$ is recursively enumerable.  By completeness of $T$ we have
$\Fm_{\equiv}\smallsetminus Consq(T)=\{\neg \psi: \psi\in Consq(T)\},$
hence the complement of $ConsqT$ is recursively enumerable as well, hence $T$
is decidable.  Here we are using the trivial fact that $\Fm$ is decidable.
This contradiction proves that $\Fm_{\equiv}$ is not atomic.
\end{proof}
In the following theorem, we give a unified perspective 
on several classes of algebras, studied in algebraic logic. Such algebras are cousins of cylindric algebras; though
the differences, in many cases, can be subtle and big.

(1) holds for diagonal free cylindric algebras, cylindric algebras, Pinter's substitution algebras
(which are replacement algebras endowed with cylindrifiers) and quasipolydic algebras
with and without equality when the dimension is $\leq 2$. (2) holds for Boolean algebras; we do not know whether it extends any further.  
(3) holds for such algebras for all finite dimensions. 

\begin{theorem} \label{free}Let $K$ be a variety of $BL$ algebras with finitely many operators.
\begin{enumarab}
\item Assume that  $K=V(Fin(K))$, and for any $\B\in K$ and $b'\in \B$, there exists a regular $b\in \B$ such that
$\Ig^{\B}\{b'\}=\Ig^{\Bl\B}\{b\}$. If $\A$ is finitely generated, then $\A$ is atomic, hence 
the finitely generated free algebras are atomic. In particular, if $K$ is a discriminator variety, with discriminator term 
$d$, then finitely generated algebras are 
atomic. (One takes $b'=d(b)$).

\item Assume that $V$ is a $BAO$ and that the condition above on principal ideals, together with the condition that
that if $b_1'$ and $b_2$'s are the generators of two given ideals happen to be a partition (of the unit), 
then $b_0, b_1$ can be chosen to be also a partition. Then
$\Fr_{\beta}K_{\alpha}\times \Fr_{\beta}K_{\alpha}\cong \Fr_{|\beta+1|}K.$ In particular if $\beta$ is infinite, and
$\A=\Fr_{\beta}K$, then $\A\times \A\cong \A$. 
\item Assume that $\beta<\omega$, and assume the above condition on principal ideals.
Suppose further that for every $k\in \omega$, there exists an algebra $\A\in K$, with at least $k$ atoms, 
that is generated by a single element. Then $\Fr_{\beta}K$ has infinitely many atoms.
\item  Assume that $K=V(Fin(K))$.
Suppose $\A$ is $K$ freely generated by a finite set $X$ and $\A=\Sg Y$ with $|Y|=|X|$. Then $\A$ is $K$ freely generated
by $Y.$
\end{enumarab}
\end{theorem}
\begin{proof}
\begin{enumarab}
\item Assume that $a\in A$ is non-zero. Let $h:\A\to \B$ be a homomorphism of $\A$ into a finite algebra $\B$ such that
$h(a)\neq 0$. Let $I=ker h.$ We claim that $I$ is a finitely generated ideal.
Let $R_I$ be the congruence relation corresponding to $I$, that is $R_I=\{(a,b)\in A\times A: h(a)=h(b)\}$.

Let $X$ be a finite set such that $X$ generates $\A$ and $h(X)=\B$. Such a set obviously exists.
Let $X'=X\cup \{x+y: x, y\in X\}\cup \{-x: x\in X\}\cup \bigcup_{f\in t}\{f(x): x\in X\}.$
Let $R=\Sg^{\A}(R_I\cap X\times X')$. Clearly $R$ is a finitely generated congruence and $R_I\subseteq R$.
We show that the converse inclusion also holds.

For this purpose we first show that $R(X)=\{a\in A: \exists x\in X (x,a)\in R\}=\A.$
Assume that $xRa$ and $yRb$, $x,y\in X$ then $x+yRa+b$, but there exists $z\in X$ such that $h(z)=h(x+y)$ and $zR(x+y)$, hence
$zR(a+b)$ , so that $a+b\in R(X)$. Similarly for all other operations. Thus $R(X)=A$.
Now assume that $a,b\in A$ such that $h(a)=h(b)$.
Then there exist $x, y\in X$ such that $xRa$ and $xRb$. Since $R\subseteq ker h$,
we have $h(x)=h(a)=h(b)=h(y)$ and so $xRy$, hence $aRb$ and $R_I\subseteq R$.
So $I=\Ig\{b'\}$ for some element $b'$.  Then there exists $b\in \A$ such that  $\Ig^{\Bl\B}\{b\}=\Ig\{b'\}.$ Since $h(b)=0$ and $h(a)\neq 0,$
we have $a.-b\neq 0$. If $a.-b=0$, then $h(a).-h(b)=0$

Now $h(\A)\cong \A/\Ig^{\Bl\B}\{b\}$ as $K$ algebras. Let $\Rl_{-b}\A=\{x: x\leq -b\}$. Let $f:\A/\Ig^{\Bl\B}\{b\}\to \Rl_{-b}\A$ be defined by
$\bar{x}\mapsto x.-b$. Then $f$ is an isomorphism of Boolean algebras (recall that the operations of $\Rl_{-b}\B$ are defined by
relativizing the Boolean operations to $-b$.)
Indeed, the map is well defined, by noting that if $x\delta y\in \Ig^{\Bl\B}\{b\}$, where $\delta$ denotes symmetric difference,
then $x.-b=y.-b$ because $x, y\leq b$.

Since $\Rl_{-b}\A$ is finite, and $a.-b\in \Rl_{-b}\A$ is non-zero, then there exists an atom $x\in \Rl_{-b}\A$ below $a$,
but clearly $\At(\Rl_{-b}\A)\subseteq \At\A$ and we are done.

\item Let $(g_i:i\in \beta+1)$ be the free generators of $\A=\Fr_{\beta+1}K$.
We first show that $\Rl_{g_{\beta}}\A$ is freely generated by
$\{g_i.g_{\beta}:i<\beta\}$. Let $\B$ be in $K$ and $y\in {}^{\beta}\B$.
Then there exists a homomorphism $f:\A\to \B$ such that $f(g_i)=y_i$ for all $i<\beta$ and $f(g_{\beta})=1$.
Then $f\upharpoonright \Rl_{g_{\beta}}\A$ is a homomorphism such that $f(g_i.g_{\beta})=y_i$. Similarly
$\Rl_{-g_{\beta}}\A$ is freely generated by $\{g_i.-g_{\beta}:i<\beta\}$.
Let $\B_0=\Rl_{g_{\beta}}\A$ and $\B_1=\Rl_{g_{\beta}}\A$.
Let $t_0=g_{\beta}$ and $t_1=-g_{\beta}$. Let $x_i$ be such that $J_i=\Ig\{t_i\}=\Ig^{Bl\A}\{x_i\}$, and $x_0.x_1=0$.
Exist by assumption. Assume that $z\in J_0\cap J_1$. Then $z\leq x_i$,
for $i=0, 1$, and so  $z=0$. Thus $J_0\cap J_1=\{0\}$. Let $y\in A\times A$, and let $z=(y_0.x_0+y_1.x_1)$, then $y_i.x_i=z.x_i$ for each $i=\{0,1\}$
and so $z\in \bigcap y_0/J_0\cap y_1/J_1$. Thus $\A/J_i\cong \B_i$, and so
$\A\cong \B_0\times \B_1$.

\item Let $\A=\Fr_{\beta}K.$ Let $\B$ have $k$ atoms and generated by a single element. Then there exists a surjective
homomorphism $h:\A\to \B$. Then, as in the first item,  $\A/\Ig^{\Bl\B}\{b\}\cong \B$, and so $\Rl_{b}\B$ has $k$ atoms.
Hence $\A$ has $k$ atoms for any $k$ and we are done.

\item Let $\A=\Fr_XK$, let $\B\in Fin(K)$ and let $f:X\to \B$. Then $f$ can extended to a homomorphism $f':\A\to \B$.
Let $\bar{f}=f'\upharpoonright Y$. If $f, g\in {}^XB$ and $\bar{f}=\bar{g}$,
then $f'$ and $g'$ agree on a generating set $Y$, so $f'=g',$ hence $f=g$.
Therefore we obtain a one to one mapping from $^XB$ to $^YB$, but $|X|=|Y|,$
hence this map is surjective. In other words for each $h\in {}^YB,$ there exists a unique
$f\in {}^XB$ such that $\bar{f}=h$, then $f'$ with domain $\A$ extends $h.$
Since $\Fr_XK=\Fr_X(Fin(K))$ we are done.
\end{enumarab}
\end{proof}
For cylindric algebras, diagonal free cylindric algebras Pinter's algebras and quasipolyadic equality, though free algebras of $>2$ dimensions
contain infinitely many atoms, they are not atomic. 
(The diagonal free case of cylindric algebras is a very recent result, due to Andr\'eka and N\'emeti, that has profound
repercussions on the foundation of mathematics.)
We, next, state two theorems that hold for such algebras, in the general context of $BAO$'s. But first a definition.

\begin{definition} Let $K$ be a class of $BAO$ with operators $(f_i: i\in I)$
Let $\A\in K$. An element $b\in A$ is called {\it hereditary closed} if for all $x\leq b$, $f_i(x)=x$.
\end{definition}
In the presence of diagonal elements $d_{ij}$ and cylindrifications $c_i$ for indices $<2$, $-c_0-d_{01},$ 
is hereditory closed. 
\begin{theorem}
\begin{enumarab}
\item Let $\A=\Sg X$ and $|X|<\omega$. Let $b\in \A$ be hereditary closed. Then $\At\A\cap \Rl_{b}\A\leq 2^{n}$.
If $\A$ is freely generated by $X$, then $\At\A\cap \Rl_{b}\A= 2^{n}.$
\item If every atom of $\A$ is below $b,$ then $\A\cong \Rl_{b}\A\times \Rl_{-b}\A$, and $|\Rl_{b}\A|=2^{2^n}$.
If in addition $\A$ is infinite, then $\Rl_{-b}\A$ is atomless.
\end{enumarab}
\end{theorem}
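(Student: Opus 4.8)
The plan is to reduce both parts to a single structural claim: because $b$ is hereditary closed, the relativisation $\Rl_b\A$ behaves like the Boolean algebra generated by the $n$ traces $b\cdot x$, $x\in X$, where $n=|X|$. For each $S\subseteq X$ put $c_S=\prod_{x\in S}x\cdot\prod_{x\notin S}(-x)$; these are the $2^n$ Boolean cells cut out by the generators. Since any atom $a$ of $\A$ satisfies $a\le x$ or $a\le-x$ for each generator $x$, every atom lies below exactly one cell $c_S$, and hence every atom below $b$ lies below exactly one element $b\cdot c_S$. Thus part (1) amounts to showing that each nonzero $b\cdot c_S$ is \emph{already} an atom, so that the atoms below $b$ are precisely the nonzero $b\cdot c_S$, of which there are at most $2^n$.

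For the first step I would prove that $\Rl_b\A$ is generated as a Boolean algebra by $\{b\cdot x:x\in X\}$, by induction on the term building an arbitrary $y\in\A$. The Boolean cases (sum and relative complement within $b$) are routine; the decisive case is $y=f_i(t)$. Splitting $t=t\cdot b+t\cdot(-b)$ and using additivity together with hereditary closedness $f_i(t\cdot b)=t\cdot b$ gives $f_i(t)\cdot b=t\cdot b+f_i(t\cdot(-b))\cdot b$. If the second summand is forced back into the Boolean algebra generated by the $b\cdot x$, then $\Rl_b\A$ has at most $2^n$ atoms and the upper bound of (1) follows.

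The hard part — the step I would scrutinise most — is exactly $f_i(t\cdot(-b))\cdot b$: a priori an operator applied to a piece of $t$ lying wholly outside $b$ can still contribute mass below $b$, in which case relativisation $y\mapsto y\cdot b$ is \emph{not} a homomorphism of the full similarity type and $\Rl_b\A$ may acquire more than $2^n$ atoms. So the crux is to establish that $b$ is a \emph{central} (factor) element, i.e.\ $f_i(t\cdot(-b))\le-b$; this is simultaneously what makes the relativisation a homomorphism and what yields the direct decomposition needed in (2). I expect this to require more than hereditary closedness plus normality and additivity alone; it holds for the intended cylindric-type operators, where the hereditary closed element $b$ exhibited just before the theorem genuinely splits off a direct factor, and I would isolate it as the governing hypothesis, deriving it from the defining identities of the class $K$ rather than from the bare $BAO$ axioms.

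Granting centrality, the remainder is bookkeeping. For the equality in (1) when $\A=\Fr_XK$, freeness lets me produce, for each $S$, a homomorphism of $\A$ onto a suitable $K$-algebra that separates $b\cdot c_S$ from $0$, so every cell $b\cdot c_S$ is a nonzero atom and there are exactly $2^n$ atoms below $b$. For (2), centrality gives $\A\cong\Rl_b\A\times\Rl_{-b}\A$ by the same mechanism as item (2) of Theorem~\ref{free}, with $b,-b$ the partition of the unit generating the two factors. If every atom of $\A$ lies below $b$, then no atom lies below $-b$, so $\Rl_{-b}\A$ is atomless; and since $b\ne1$ whenever $\A$ is infinite, this factor is nontrivial. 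Finally $\Rl_b\A$ carries all atoms of $\A$, which number $2^n$ by the free count, and being finite and atomic it is the full Boolean algebra on $2^n$ atoms, of cardinality $2^{2^n}$.
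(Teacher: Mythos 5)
You follow the same route as the paper: both arguments rest on the claim that $\Rl_b\A$ is, as a Boolean algebra, generated by the $n$ traces $\{x\cdot b:x\in X\}$, so that its atoms are among the $2^n$ cells $b\cdot c_S$; both get the equality in (1) from freeness via homomorphisms onto a two-element algebra; and both get (2) from the pair of ideals generated by $b$ and $-b$, by the same mechanism as Theorem \ref{free}(2).

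The step you single out as the crux is exactly the step the paper does not prove: its entire justification is the sentence asserting that $\Rl_b\A$ equals the subalgebra generated by the traces $x_i\cdot b$, and hence the Boolean subalgebra they generate, ``since $b$ is hereditary fixed''. Hereditary closedness does give the second identification (every relativized operator is the identity on $\Rl_b\A$), but the first one says precisely that $y\mapsto y\cdot b$ is a homomorphism for the operators, and under hereditary closedness that is \emph{equivalent} to your centrality condition $f_i(u)\le -b$ for all $u\le -b$. You are also right that this cannot be derived from normality and additivity alone; in fact the bound in (1) is false for bare $BAO$'s. Take the powerset algebra of $\{q_1,q_2,p_1,p_2,p_3\}$, put $b=p_1+p_2+p_3$, and define a normal, completely additive $f$ on atoms (extended by sums) by $f(p_i)=p_i$, $f(q_1)=q_1+p_1$, $f(q_2)=p_2$. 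Then $b$ is hereditary closed, and with the single generator $x=q_1+q_2$ one computes $b=-x$, $q_1=f(x)\cdot x$, $p_1=f(q_1)\cdot b$, $p_2=f(q_2)$, $p_3=b-p_1-p_2$, so $\A=\Sg\{x\}$ is the whole algebra and has three atoms below $b$, although $2^{n}=2$. (This $f$ is even idempotent; what fails is that it is not increasing and does not satisfy $f(x\cdot f(y))=f(x)\cdot f(y)$.) So isolating centrality as the governing hypothesis is not excess caution but a necessary repair, and the same applies to (2): the Boolean ideal $\{z:z\le -b\}$ is a congruence ideal, as the product argument requires, exactly when centrality holds. Your proposed way of discharging the hypothesis is also the right one for the intended classes: cylindrifier-type operators are increasing and satisfy $f(x\cdot f(y))=f(x)\cdot f(y)$, hence complements of closed elements are closed, so $f_i(b)=b$ (immediate from hereditary closedness) forces $f_i(-b)=-b$, which is centrality; granting that, your argument, which is the paper's, goes through.
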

\begin{proof} Assume that $|X|=m$. We have $|\At\A\cap \Rl_b\A|=|\{\prod Y \sim \sum(X\sim Y).b\}\sim \{0\}|\leq {}^{m}2.$
Let $\B=\Rl_b\A$. Then $\B=\Sg^{\B}\{x_i.b: i<m\}=\Sg^{Bl\B}\{x_i.b:i<\beta\}$ since $b$ is hereditary fixed.
For $\Gamma\subseteq m$, let
$$x_{\Gamma}=\prod_{i\in \Gamma}(x_i.b).\prod_{i\in m\sim \Gamma}(x_i.-b).$$
Let $\C$ be the two element algebra. Then for each $\Gamma\subseteq m$, there is a homomorphism $f:\A\to \C$ such that
$fx_i=1$ iff $i\in \Gamma$.This shows that $x_{\Gamma}\neq 0$ for every $\Gamma\subseteq m$,
while it is easily seen that $x_{\Gamma}$ and $x_{\Delta}$
are distinct for distinct $\Gamma, \Delta\subseteq m$. We show that $\A\cong \Rl_{b}\A\times \Rl_{-b}\A$.

Let $\B_0=\Rl_{b}\A$ and $\B_1=\Rl_{-b}\A$.
Let $t_0=b$ and $t_1=-b$. Let  $J_i=\Ig\{t_i\}$
Assume that $z\in J_0\cap J_1$. Then $z\leq t_i$,
for $i=0, 1$, and so  $z=0$. Thus $J_0\cap J_1=\{0\}$. Let $y\in A\times A$,
and let $z=(y_0.t_0+y_1.t_1)$, then $y_i.x_i=z.x_i$ for each $i=\{0,1\}$
and so $z\in \bigcap y_0/J_0\cap y_1/J_1$. Thus $\A/J_i\cong \B_i$, and so
$\A\cong \B_0\times \B_1$.

\end{proof}
The above theorem holds for free cylindric and quasi-polyadic equality algebras. The second part  (all atoms are zero-dimensional) 
is proved by Mad\'arasz and N\'emeti.

The following theorem holds for any class of $BAO$'s.
\begin{theorem}\label{atomless}
The free algebra on an infinite generating set is atomless.
\end{theorem}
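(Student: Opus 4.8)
The plan is to take an arbitrary non-zero element $a$ of $\Fr_{\beta}K$, with $\beta$ infinite, and exhibit a non-zero element lying strictly below it; since $a$ is arbitrary, this shows no non-zero element is an atom. (Here I may assume $K$ is a variety, as the free $K$-algebra coincides with the free algebra of $V(K)$.) The starting observation is that $a$, being the class of a term, involves only finitely many of the free generators $(g_i:i\in \beta)$, say $a\in \Sg\{g_i:i\in F\}$ with $F\subseteq \beta$ finite. Since $\beta$ is infinite there is an index $k\in \beta\smallsetminus F$, and the idea is to split $a$ as $a=a.g_k+a.-g_k$ and prove that both summands are non-zero. Granting this, $a.-g_k\neq 0$ forces $a.g_k\neq a$ (otherwise $a\leq g_k$, whence $a.-g_k\leq g_k.-g_k=0$), so $0<a.g_k<a$ and $a$ is not an atom.

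To see that the two summands are non-zero I would use freeness together with the fact that in a variety a term is non-zero in the free algebra exactly when the identity $a\approx 0$ fails somewhere in $K$. Thus there is a $\B\in K$ and a homomorphism $h:\Fr_{\beta}K\to \B$ with $h(a)\neq 0$. Because $a$ does not involve the generator $g_k$, I can alter the image of $g_k$ without disturbing $h(a)$: by the universal property of the free algebra, the assignment sending $g_i\mapsto h(g_i)$ for $i\neq k$ and $g_k\mapsto 1$ extends to a homomorphism $h_1:\Fr_{\beta}K\to \B$, and since $a$ depends only on generators indexed by $F$ we get $h_1(a)=h(a)$. Hence $h_1(a.g_k)=h(a).1=h(a)\neq 0$, so $a.g_k\neq 0$. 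Symmetrically, the assignment agreeing with $h$ off $g_k$ and sending $g_k\mapsto 0$ extends to a homomorphism $h_0$ with $h_0(a.-g_k)=h(a).(-0)=h(a)\neq 0$, giving $a.-g_k\neq 0$.

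Combining these yields $0<a.g_k<a$ for every non-zero $a$, so $\Fr_{\beta}K$ has no atoms. I expect no serious obstacle: the argument lives entirely in the Boolean reduct and the defining universal property, and the operators $(f_i)$ play no active role beyond being respected by $h,h_0,h_1$. The only point requiring care is the routine verification that reassigning the image of a single unused generator leaves $h(a)$ fixed, which is immediate once $a$ is presented as a term in the generators indexed by $F$, none of which is $g_k$. This is precisely why infinitude of the generating set is essential: it supplies the spare generator $g_k$ on which the two competing homomorphisms are free to disagree, and it is exactly this freedom that fails for finite generating sets, where $\Fr_{\beta}K$ may well be atomic as in Theorem~\ref{free}.
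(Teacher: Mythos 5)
Your proof is correct and is essentially the paper's own argument: both exploit a spare generator ($g_k$, the paper's $y$) not occurring in $a$, and use freeness to produce two homomorphisms agreeing on the generators of $a$ but sending the spare generator to $1$ and to $0$ respectively, concluding that $a.g_k$ and $a.-g_k$ are both non-zero, so $a$ is not an atom. The only cosmetic difference is that the paper takes the two maps to fix the generators of $a$ outright (so $f(a)=h(a)=a$), whereas you perturb an arbitrary homomorphism witnessing $a\neq 0$; the substance is identical.
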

\begin{proof} Let $X$ be the infinite freely generating set. Let $a\in A$ be non-zero. 
Then there is a finite set $Y\subseteq X$ such that $a\in \Sg^{\A} Y$. Let $y\in X\sim Y$.
Then by freeness, there exist homomorphisms $f:\A\to \B$ and $h:\A\to \B$ such that $f(\mu)=h(\mu) $ for all $\mu\in Y$ while
$f(y)=1$ and $h(y)=0$. Then $f(a)=h(a)=a$. Hence $f(a.y)=h(a.-y)=a\neq 0$ and so $a.y\neq 0$ and
$a.-y\neq 0$.
Thus $a$ cannot be an atom.
\end{proof}

\section{Amalgamation in $MV$ algebras}
Theorems \ref{supapgeneral}, \ref{CP}, \ref{weak} to come, 
give a flavour of the interconnections between the local properties of $CP$ and $SIP$ (on free algebras) and the global property of 
superamalgamation (of the entire class) . Maksimova and Mad\'arasz proved that if interpolation holds in free algebras of a variety, then the variety has the
superamalgamation property.
Using a similar argument, we prove this implication in a slightly more
general setting. But first an easy  lemma:

\begin{lemma} Let $K$ be a class of $BAO$'s. Let $\A, \B\in K$ with $\B\subseteq \A$. Let $M$ be an ideal of $\B$. We then have:
\begin{enumarab}
\item $\Ig^{\A}M=\{x\in A: x\leq b \text { for some $b\in M$}\}$
\item $M=\Ig^{\A}M\cap \B$
\item if $\C\subseteq \A$ and $N$ is an ideal of $\C$, then
$\Ig^{\A}(M\cup N)=\{x\in A: x\leq b\oplus c\ \text { for some $b\in M$ and $c\in N$}\}$
\item For every ideal $N$ of $\A$ such that $N\cap B\subseteq M$, there is an ideal $N'$ in $\A$
such that $N\subseteq N'$ and $N'\cap B=M$. Furthermore, if $M$ is a maximal ideal of $\B$, then $N'$ can be taken to be a maximal ideal of $\A$.

\end{enumarab}
\end{lemma}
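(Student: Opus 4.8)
The plan is to treat parts (1)--(3) as routine verifications that a concretely described set is the ideal generated by the given data, and to concentrate the real work on part (4), which carries the genuine content.

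For (1) I would set $S=\{x\in A: x\leq b\text{ for some }b\in M\}$ and check that $S$ is an ideal of $\A$ containing $M$. It contains $M$ since $b\leq b$; it is downward closed by transitivity of $\leq$; it is closed under $\oplus$ because $x\leq b_1$ and $y\leq b_2$ with $b_1,b_2\in M$ give $x\oplus y\leq b_1\oplus b_2\in M$; and it is closed under each operator $f_i$ because $f_i$ is monotone (additive and normal), so $x\leq b$ yields $f_i(x)\leq f_i(b)\in M$, using that $M$ is closed under the operators of $\B$, which agree with those of $\A$. Since every ideal containing $M$ must contain the downward closure of $M$, the set $S$ is the smallest such ideal, i.e. $S=\Ig^{\A}M$. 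Part (2) is then immediate: $M\subseteq \Ig^{\A}M\cap B$ is clear, and if $x\in \Ig^{\A}M\cap B$ then by (1) $x\leq b$ for some $b\in M\subseteq B$, so downward closure of $M$ inside $\B$ gives $x\in M$. Part (3) is proved exactly as (1), replacing ``$x\leq b,\ b\in M$'' by ``$x\leq b\oplus c,\ b\in M,\ c\in N$''; closure under $\oplus$ and under the additive operators uses $f_i(b\oplus c)=f_i(b)\oplus f_i(c)$ with $f_i(b)\in M$ and $f_i(c)\in N$.

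For the first assertion of (4) I would take $N'=\Ig^{\A}(M\cup N)$. Then $N\subseteq N'$ and $M\subseteq N'\cap B$ are immediate, so the point is the reverse inclusion $N'\cap B\subseteq M$. Let $x\in N'\cap B$; by (3) there are $m\in M$ and $n\in N$ with $x\leq m\oplus n$. The key move is the Boolean decomposition $x=(x\cdot m)\oplus(x\cdot -m)$: the first summand satisfies $x\cdot m\leq m\in M$, hence lies in $M$, while the second satisfies $x\cdot -m\leq (m\oplus n)\cdot -m\leq n$, hence lies in $N$; moreover $x\cdot -m\in B$ since $x,m\in B$, so $x\cdot -m\in N\cap B\subseteq M$. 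Thus $x$ is a join of two members of $M$ and so $x\in M$, giving $N'\cap B=M$.

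The main obstacle, and the only place the maximality hypothesis enters, is the ``furthermore''. Here I would apply Zorn's Lemma to the poset $\mathcal P$ of all ideals $P$ of $\A$ with $N\subseteq P$ and $P\cap B=M$, ordered by inclusion. This poset is nonempty, since it contains the $N'$ just built, and is closed under unions of chains, because a union of a chain of ideals is an ideal and intersection with $B$ commutes with such unions. Let $N^\ast$ be a maximal element of $\mathcal P$. It is a proper ideal of $\A$, for $1\notin M=N^\ast\cap B$ forces $1\notin N^\ast$. To see that $N^\ast$ is a maximal ideal of $\A$, suppose $N^\ast\subsetneq Q$ with $Q$ proper; then $Q\cap B$ is an ideal of $\B$ with $M=N^\ast\cap B\subseteq Q\cap B$, and $Q\cap B$ is proper since $1\notin Q$, so maximality of $M$ in $\B$ forces $Q\cap B=M$, whence $Q\in\mathcal P$, contradicting the maximality of $N^\ast$ in $\mathcal P$. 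The delicate point throughout is pinning the trace on $B$ exactly to $M$ while enlarging inside $\A$; this is precisely what maximality of $M$ buys in the transfer from ``maximal in $\mathcal P$'' to ``maximal ideal of $\A$''.
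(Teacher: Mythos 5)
Your proof is correct, but it handles part (4) by a genuinely different route than the paper. The paper dismisses (1)--(3) as routine and proves (4) by reduction: it first treats the special case $N=\{0\}$ (where one can simply take $N'=\Ig^{\A}M$ and invoke (2), extending to a maximal ideal of $\A$ when $M$ is maximal), and then obtains the general case by passing to the quotients $\A/N$, $\B/(N\cap \B)$ and $M/(N\cap \B)$ and pulling the resulting ideal back to $\A$. You instead work directly in $\A$ for the general case: you take $N'=\Ig^{\A}(M\cup N)$, use the description from (3), and pin down the trace with the Boolean split $x=(x\cdot m)\oplus(x\cdot -m)$, noting that the second summand lies in $N\cap B\subseteq M$; for the ``furthermore'' you run Zorn's Lemma on the poset of ideals containing $N$ with trace exactly $M$. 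What the paper's route buys is brevity, at the cost of invoking the standard correspondence between ideals of $\A/N$ and ideals of $\A$ above $N$; what your route buys is a self-contained argument that makes explicit the finite Boolean computation the quotient argument hides, and it exhibits exactly the ideal $\Ig(M\cup N)$ and trace property that the paper actually uses later in the proof of its superamalgamation theorem. One small streamlining of your last step: once $N'$ with $N'\cap B=M$ is in hand and $M$ is maximal in $\B$, any maximal ideal $Q$ of $\A$ extending $N'$ (which exists by Zorn since $1\notin N'$) already satisfies $Q\cap B=M$, because $Q\cap B$ is a proper ideal of $\B$ containing $M$; this avoids introducing the auxiliary poset $\mathcal{P}$ altogether.
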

\begin{demo}{Proof} Only (iv) deserves attention. The special case when $n=\{0\}$ is straightforward.
The general case follows from this one, by considering
$\A/N$, $\B/(N\cap \B)$ and $M/(N\cap \B)$, in place of $\A$, $\B$ and $M$ respectively.
\end{demo}
The previous lemma will be frequently used without being explicitly mentioned.

\begin{theorem}\label{supapgeneral} Let $K$ be a class of $BAO$'s such that $\mathbf{H}K=\mathbf{S}K=K$.
Assume that for all $\A, \B, \C\in K$, inclusions
$m:\C\to \A$, $n:\C\to \B$, there exist $\D$ with $SIP$ and $h:\D\to \C$, $h_1:\D\to \A$, $h_2:\D\to \B$
such that for $x\in h^{-1}(\C)$,
$$h_1(x)=m\circ h(x)=n\circ h(x)=h_2(x).$$
Then $K$ has $SUPAP$.
\bigskip
\bigskip
\bigskip
\bigskip
\bigskip
\bigskip
\bigskip
\bigskip
\bigskip
\bigskip
\bigskip
\begin{picture}(10,0)(-30,70)
\thicklines
\put (-10,0){$\D$}
\put(5,0){\vector(1,0){70}}\put(80,0){$\C$}
\put(5,5){\vector(2,1){100}}\put(110,60){$\A$}
\put(5,-5){\vector(2,-1){100}}\put(110,-60){$\B$}
\put(85,10){\vector(1,2){20}}
\put(85,-5){\vector(1,-2){20}}
\put(40,5){$h$}
\put(100,25){$m$}
\put(100,-25){$n$}
\put(50,45){$h_1$}
\put(50,-45){$h_2$}
\end{picture}
\end{theorem}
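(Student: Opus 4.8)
The plan is to follow the Maksimova--Mad\'arasz scheme that derives superamalgamation from interpolation, casting the algebra $\D$ supplied by the hypothesis in the role their argument assigns to a free algebra. Recall that $K$ has $SUPAP$ when, for every span $\A \xleftarrow{m} \C \xrightarrow{n} \B$ of embeddings in $K$, there are $\M \in K$ and embeddings $p : \A \to \M$, $q : \B \to \M$ with $p \circ m = q \circ n$ and, in addition, the order-interpolation clause: for $a \in \A$, $b \in \B$, if $p(a) \leq q(b)$ then there is $c \in \C$ with $a \leq m(c)$ and $n(c) \leq b$ (and the symmetric statement). Since $\mathbf{H}K = \mathbf{S}K = K$, the class is closed under quotients and subalgebras, so I am free to realize $\M$ as a homomorphic image of a cover and to pass to subalgebras, and the ideal lemma preceding the theorem is tailored to control exactly such quotients, in particular to prescribe the trace of an ideal on a subalgebra.

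First I would fix the amalgam. Form the amalgamated coproduct $\M$ of $\A$ and $\B$ over $\C$, which exists because $K$ is a variety, with canonical homomorphisms $p : \A \to \M$ and $q : \B \to \M$ obeying $p \circ m = q \circ n$. The compatible covering maps $h, h_1, h_2$ of the hypothesis induce a single homomorphism $\Phi : \D \to \M$ with $\Phi = p \circ h_1 = q \circ h_2$, forced by the commutativity relations tying $h_1, h_2$ to $h$ through $m, n$; since $\M$ is generated by $p(\A) \cup q(\B) = \Phi(\D)$, the map $\Phi$ is onto and $\M \cong \D / \ker \Phi$. Thus the amalgam is presented as a quotient of the $SIP$-algebra $\D$, which is what puts interpolation within reach. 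The content of the theorem is then that $p$ and $q$ are injective and that the order-interpolation clause holds, and injectivity of $p$ (resp.\ $q$) is the special case $b=0$ (resp.\ $a=0$) of the clause: from $p(a) \leq q(0) = 0$ the clause yields $c \in \C$ with $a \leq m(c)$ and $n(c) \leq 0$, whence $c=0$ by injectivity of $n$ and $a=0$. So everything reduces to establishing the single interpolation assertion.

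The heart of the argument is to discharge that assertion using $\D$. Given $a \in \A$, $b \in \B$ with $p(a) \leq q(b)$, I would read this inequality back through $\Phi$: the relation $p(a) \leq q(b)$ in $\M = \D/\ker\Phi$ says that a comparison between an $\A$-side element and a $\B$-side element of $\D$ holds \emph{modulo} the congruence $\ker\Phi$, i.e.\ is forced by the defining relations of the amalgam. Invoking $SIP$ of $\D$ in this relativized form produces an interpolant lying in the common part singled out by the covering map $h$; applying $h$ carries it to an element $c \in \C$, and the compatibility of $h_1, h_2$ with $m, n$ converts the two halves of the $SIP$-interpolation into $a \leq m(c)$ and $n(c) \leq b$. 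The symmetric inequality is treated identically, and part~(iv) of the preceding ideal lemma is used throughout to extend the relevant ideals while keeping their traces on $\A$ and on $\B$ equal to $\{0\}$, which is what certifies the interpolant descends cleanly to $\C$.

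I expect the main obstacle to be precisely the \emph{relativized} deployment of interpolation, not a bare application of $SIP$. The comparison $p(a) \leq q(b)$ does not pull back to an inequality $u \leq v$ holding outright in $\D$; it pulls back only modulo $\ker \Phi$, the congruence that codes the diagrams of $\A$, of $\B$, and the identification of the two copies of $\C$. Making $SIP$ bite on such a consequence-relation, and then verifying that the element it returns genuinely lies in the subalgebra over which $h_1$ and $h_2$ are tied together by $h$ — so that its $h$-image is a bona fide member of $\C$ realizing both inequalities downstairs — is the delicate step. This bookkeeping of the three homomorphisms against the amalgam's defining ideal, controlled by the ideal lemma, is where the proof must be carried out with care; the interpolation principle itself, once correctly relativized, does the rest.
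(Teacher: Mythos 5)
Your proposal has the right silhouette---realize the amalgam as a quotient of $\D$ and make $SIP$ do the work---but two of your steps are genuinely broken, and the second one is the heart of the theorem. First, the detour through the amalgamated coproduct is illegitimate: $K$ is only assumed closed under $\mathbf{H}$ and $\mathbf{S}$ (the paper stresses in this section that the class is \emph{not} assumed to be a variety), so a pushout of $\A\leftarrow\C\rightarrow\B$ need not exist in $K$. Worse, the map $\Phi$ with $\Phi=p\circ h_1=q\circ h_2$ that you need in order to present the pushout as $\D/\ker\Phi$ does not exist in general: $h_1$ and $h_2$ are tied to each other only on $h^{-1}(\C)$, and two homomorphisms agreeing on a common subalgebra cannot, in general, be pasted into a single homomorphism on $\D$, nor even on $\Sg^{\D}(h_1^{-1}(\A)\cup h_2^{-1}(\B))$. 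That kind of pasting is exactly what \emph{freeness} buys in the proof of Theorem \ref{CP}; here $\D$ is not assumed free, only to have $SIP$. The paper's proof needs no such $\Phi$: it sets $\D_1=h_1^{-1}(\A)$, $\D_2=h_2^{-1}(\B)$, $M=\ker h_1$, $N=\ker h_2$, observes that the compatibility condition (with $m$, $n$ injective) forces $M\cap h^{-1}(\C)=N\cap h^{-1}(\C)$, and takes the amalgam to be $\D/P$ with $P=\Ig(M\cup N)$, mapping $\A\cong\D_1/M$ and $\B\cong\D_2/N$ into it separately; no globally defined map out of $\D$ into a pre-existing amalgam is ever required.

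Second, and decisively, the step you yourself flag as ``the delicate step''---getting $SIP$ to act on an inequality that holds only modulo the congruence---is precisely the mathematical content of the theorem, and you leave it unresolved. The device the paper uses is item (iii) of the preceding ideal lemma (not item (iv), which you cite): $P=\Ig(M\cup N)$ consists exactly of the elements below $r\oplus s$ with $r\in M$, $s\in N$. Hence $x/P\leq z/P$ (with $x\in\D_1$, $z\in\D_2$) gives $x\cdot -z\leq r\oplus s$, i.e. $x\cdot -r\leq z\oplus s$, and this is an inequality holding \emph{outright} in $\D$ between $x\cdot -r\in\D_1$ and $z\oplus s\in\D_2$; a bare, unrelativized application of $SIP$ now yields an interpolant $u$ in the common part with $x\cdot -r\leq u\leq z\oplus s$, and passing to the quotients (using $r\in M$, $s\in N$ and the trace identity above) produces $t\in\C$ with $a\leq m(t)$ and $n(t)\leq b$. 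The same device applied to $x\in P\cap\D_1$ shows $P\cap\D_1=M$ and $P\cap\D_2=N$, which gives at once that $P$ is proper and that the induced maps of $\A$ and $\B$ into $\D/P$ are injective---the facts you were hoping to extract from the interpolation clause. Without this conversion of the mod-$P$ comparison into an honest inequality in $\D$, your ``relativized $SIP$'' is not a proof step but a restatement of what has to be proved.
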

\bigskip
\bigskip
\begin{proof} Let $\D_1=h_1^{-1}(\A)$ and $\D_2=h_2^{-1}(\B)$. Then $h_1:\D_1\to \A$, and $h_2:\D_2\to \B$.

Let $M=ker h_1$ and $N=ker h_2$, and let
$\bar{h_1}:\D_1/M\to \A, \bar{h_2}:\D_2/N\to \B$ be the induced isomorphisms.

Let $l_1:h^{-1}(\C)/h^{-1}(\C)\cap M\to \C$ be defined via $\bar{x}\to h(x)$, and
$l_2:h^{-1}(\C)/h^{-1}(\C)\cap N$ to $\C$ be defined via $\bar{x}\to h(x)$.
Then those are well defined, and hence
$k^{-1}(\C)\cap M=h^{-1}(\C)\cap N$.
Then we show that $\P=\Ig(M\cup N)$ is a proper ideal and $\D/\P$ is the desired algebra.
Now let $x\in \mathfrak{Ig}(M\cup N)\cap \D_1$.
Then there exist $b\in M$ and $c\in N$ such that $x\leq b\oplus c$. Thus $x-b\leq c$.
But $x-b\in \D_1$ and $c\in \D_2$, it follows that there exists an interpolant
$d\in \D_1\cap \D_2$  such that $x-b\leq d\leq c$. We have $d\in N$
therefore $d\in M$, and since $x\leq d\oplus b$, therefore $x\in M$.
It follows that
$\mathfrak{Ig}(M\cup N)\cap \D_1=M$
and similarly
$\mathfrak{Ig}(M\cup N)\cap \D_2=N$.
In particular $P=\mathfrak{Ig}(M\cup N)$ is a proper ideal.

Let $k:\D_1/M\to \D/P$ be defined by $k(a/M)=a/P$
and $h:\D_2/N\to \D/P$ by $h(a/N)=a/P$. Then
$k\circ m$ and $h\circ n$ are one to one and
$k\circ m \circ f=h\circ n\circ g$.
We now prove that $\D/P$ is actually a
superamalgam. i.e we prove that $K$ has the superamalgamation
property. Assume that $k\circ m(a)\leq h\circ n(b)$. There exists
$x\in \D_1$ such that $x/P=k(m(a))$ and $m(a)=x/M$. Also there
exists $z\in \D_2$ such that $z/P=h(n(b))$ and $n(b)=z/N$. Now
$x/P\leq z/P$ hence $x-z\in P$. Therefore  there is an $r\in M$ and
an $s\in N$ such that $x-r\leq z\oplus a$. Now $x-r\in \D_1$ and $z\oplus a\in\D_2,$ it follows that there is an interpolant
$u\in \D_1\cap \D_2$ such that $x-r\leq u\leq z\oplus $. Let $t\in \C$ such that $m\circ
f(t)=u/M$ and $n\circ g(t)=u/N.$ We have  $x/P\leq u/P\leq z/P$. Now
$m(f(t))=u/M\geq x/M=m(a).$ Thus $f(t)\geq a$. Similarly
$n(g(t))=u/N\leq z/N=n(b)$, hence $g(t)\leq b$. By total symmetry,
we are done.
\end{proof}

The intimate relationship between $CP$ on free algebras generating a certain variety and the $AP$ for such varieties, 
has been worked out extensively by Pigozzi
for various classes of cylindric algebras. Here we prove an implication in one direction for $BAO$'s.
Notice that we do not assume that our class is a variety.
\begin{lemma} Let $L\supseteq L_{BA}$ be a functional signature, and $V$ a variety of $L-BAO$'s. Let $d(x)$ be a unary $L$ term. 
Then the following are equivalent:
\begin{enumarab}
\item $d$ is a discriminator term of $SirV$, so that $V$ is a discriminator variety.
\item all equations of the following for are valid in $V$:
\begin{enumerate}
\item $x\leq d(x)$

\item $d(d(x))\leq d(x)$
\item $f(x)\leq d(x)$ for all $f\in L\sim L_{BA}$
\end{enumerate}
\end{enumarab}
\end{lemma}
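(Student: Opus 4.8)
The plan is to route everything through the correspondence between congruences and ideals that is already in force on the Boolean reducts, together with the fact that a variety is determined by its subdirectly irreducibles. Since the conditions in (2) are equations, they are preserved under $\mathbf{S}$ and $\mathbf{P}$, and since every algebra in $V$ is a subdirect product of members of $SirV$, it suffices throughout to test (2) on subdirectly irreducible algebras. So the real content is to match, on each $\A\in SirV$, the behaviour of $d$ with that of the discriminator, namely the map sending $0$ to $0$ and every nonzero element to $1$.

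For (1) $\Rightarrow$ (2) I would first recall that in a discriminator variety the subdirectly irreducible algebras are exactly the simple ones, and that on a simple $L$-$BAO$ the unary discriminator $d$ satisfies $d(0)=0$ and $d(x)=1$ for $x\neq 0$. On such an algebra each of (a), (b), (c) is then checked pointwise: (a) and (b) are immediate from $d(0)=0$, $d(1)=1$ and $d(x)=1$ for $x\neq 0$; for (c) one uses normality of the operators, $f(0)=0$, so that $f(x)\le 1=d(x)$ when $x\neq 0$, while $f(0)=0=d(0)$. Being equations valid throughout $SirV$, they hold in all of $V$.

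For the converse (2) $\Rightarrow$ (1) the key lemma I would isolate is that (a)--(c) force the identity
$$\Ig^{\A}\{x\}=\Ig^{\Bl\A}\{d(x)\},$$
which is precisely the principal-ideal condition exploited in Theorem \ref{free} (with $b'=x$, $b=d(x)$). One inclusion is routine: by the earlier ideal lemma the Boolean principal ideal $\Ig^{\Bl\A}\{d(x)\}=\{y\in A: y\le d(x)\}$ is a congruence ideal as soon as it is closed under the operators, and for $y\le d(x)$ condition (c) applied to $d(x)$ together with monotonicity of $f$ and (b) gives $f(y)\le f(d(x))\le d(d(x))\le d(x)$; since $x\le d(x)$ by (a) we conclude $\Ig^{\A}\{x\}\subseteq \Ig^{\Bl\A}\{d(x)\}$. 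Passing now to a subdirectly irreducible $\A$, I would use its monolith: once the reverse inclusion is in hand, any nonzero $x$ has $\Ig^{\A}\{x\}$ equal to a principal Boolean ideal that, by minimality of the monolith in an $SI$ algebra, can only be $A$ itself, forcing $d(x)=1$ for $x\neq 0$ and $d(0)=0$. This simultaneously shows that $SirV$ consists of simple algebras and that $d$ is the discriminator on them, so that $V$ is a discriminator variety.

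The main obstacle is exactly the reverse inclusion $d(x)\in\Ig^{\A}\{x\}$ in a subdirectly irreducible algebra: the three inequalities (a)--(c) deliver only $\Ig^{\A}\{x\}\subseteq \Ig^{\Bl\A}\{d(x)\}$, and upgrading this to equality (equivalently, to $d(x)=1$ for all nonzero $x$ on the simples) requires genuinely exploiting subdirect irreducibility to exclude proper nonzero congruence ideals. I expect this to proceed by first establishing that $SirV$ consists of simple algebras, via the congruence--ideal correspondence and the observation that the $d$-closed elements $\{c:d(c)=c\}$ are exactly the tops of congruence ideals, and only afterwards reading off the discriminator behaviour. Care is also needed with the value of $d$ at $0$, since it is here—rather than in (a)--(c) alone—that one must use that $d$ is a genuine composite of the normal operators rather than a constant term.
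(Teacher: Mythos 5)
First, a point of reference: the paper offers no proof of this lemma at all --- it is stated and immediately followed by Theorem \ref{CP} --- so your proposal cannot be checked against an argument of the author's; it must stand on its own. Your direction (1) $\Rightarrow$ (2) does stand: in a discriminator variety every subdirectly irreducible member is simple, on a simple $BAO$ the unary discriminator satisfies $d(0)=0$ and $d(x)=1$ for $x\neq 0$, the inequalities (a)--(c) are verified pointwise exactly as you say (normality $f(0)=0$ covering the case $x=0$ in (c)), and, being equations valid on $SirV$, they hold throughout $V$ by Birkhoff's subdirect decomposition theorem.

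The obstacle you flag in (2) $\Rightarrow$ (1) is, however, not a gap that can be closed: that implication is false as the lemma is printed, so no proof exists. Note first that $d$ is an arbitrary unary $L$-term, so your concluding hope of exploiting that $d$ is ``a genuine composite of the normal operators'' is not available from the hypotheses: the constant term $d(x)=1$ satisfies (a)--(c) in any variety of $BAO$'s (take $V$ to be Boolean algebras, where (c) is vacuous), yet it is not a discriminator term of $SirV$, since $d(0)=1\neq 0$ on the two-element algebra. Worse, the implication fails even when $d$ \emph{is} a normal operator term: let $V$ be the variety of closure algebras (S4 modal algebras), whose single operator $c$ is additive and normal, and put $d(x)=c(x)$. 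Then (a), (b), (c) are literally the closure axioms, but $V$ is not a discriminator variety: the four-element closure algebra with atoms $a$, $-a$ and $c(a)=a$, $c(-a)=1$ is subdirectly irreducible --- its congruence ideals form the chain $\{0\}\subset\{0,a\}\subset A$ --- yet it is not simple, and $c$ is not its discriminator. The diagnosis is that your key identity $\Ig^{\A}\{x\}=\Ig^{\Bl\A}\{d(x)\}$ is \emph{strictly weaker} than statement (1): one can check that (a)--(c) together with the equation $d(0)=0$ are exactly equivalent to that identity holding throughout $V$ (the inclusion you proved, plus the observation that $d(x)/\Ig^{\A}\{x\}=d(x/\Ig^{\A}\{x\})=d(0)=0$ in the quotient), and the closure-algebra example satisfies the identity while failing (1). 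What must be added to item (2) to make the lemma true are the equations $d(0)=0$ and $d(-d(x))\leq -d(x)$ (complements of $d$-closed elements are $d$-closed); both clearly hold on simple algebras where $d$ is the discriminator, and with them your monolith argument does close: in a subdirectly irreducible $\A$ the monolith is $\{y: y\leq m\}$ with $m=d(m)\neq 0$, and if $m\neq 1$ then $\Ig^{\A}\{-m\}=\{y: y\leq d(-m)\}=\{y: y\leq -m\}$ is a nonzero congruence ideal, hence contains the monolith, forcing $m\leq -m$, i.e.\ $m=0$, a contradiction; so $m=1$, every subdirectly irreducible member of $V$ is simple, and $d$ is its unary discriminator. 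You should therefore record the lemma with these two extra equations; only the implication actually invoked elsewhere in the paper --- that a discriminator term yields $\Ig^{\A}\{b\}=\Ig^{\Bl\A}\{d(b)\}$, as used in Theorem \ref{free} --- is correct in the form printed.
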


\begin{theorem}\label{CP}
Let $K$ be such that $\mathbf{H}K=\mathbf{S}K=K$. If $K$ has the amalgamation  property, then the $V(K)$
free algebras, on any set of generators, have $CP$.
\end{theorem}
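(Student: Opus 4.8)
The plan is to prove $CP$ directly on a free algebra $\A=\Fr_\beta V(K)$ by a generator-splitting argument that converts a missing interpolant into a violation of the amalgamation property of $K$. Split the free generators as $X=X_0\cup X_1$, put $Z=X_0\cap X_1$, and set $\A_i=\Sg^{\A}X_i$ $(i=0,1)$ and $\C=\Sg^{\A}Z$, so that $\C\subseteq\A_0$ and $\C\subseteq\A_1$. Given $a\in A_0$, $c\in A_1$ with $a\le c$, I must produce an interpolant $b\in C$ with $a\le b\le c$; discarding the trivial cases $a=0$ and $c=1$, I assume none exists. Let $F=\{b\in C:a\le b\}$, a proper Boolean filter of $\C$, and $G=\{b\in C:b\le c\}$, which by parts (i)--(ii) of the ideal lemma preceding Theorem~\ref{supapgeneral} is the trace $\Ig^{\A_1}\{c\}\cap C$ and hence a proper ideal of $\C$. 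An interpolant is exactly a member of $F\cap G$, so the assumption reads $F\cap G=\varnothing$. By Zorn's Lemma enlarge $G$ to an ideal $M$ of $\C$ maximal among ideals disjoint from $F$; then $M$ is a maximal ideal of $\C$ containing $G$, and no element of $M$ lies above $a$ (else it would lie in $F$).

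Next I lift $M$ to both sides while separating $a$ from $c$. On side $1$, apply part (iv) of the ideal lemma to $\C\subseteq\A_1$ with $N=\Ig^{\A_1}\{c\}$ (whose trace $G$ is contained in $M$): this yields a maximal ideal $M_1$ of $\A_1$ with $c\in M_1$ and $M_1\cap C=M$, so $c/M_1=0$. On side $0$, since $M\cap F=\varnothing$ gives $a\notin\Ig^{\A_0}M$, I extend $\Ig^{\A_0}M$ to an ideal $M_0$ of $\A_0$ maximal with respect to omitting $a$; any such proper ideal contains $M$, so $M_0\cap C$ is a proper ideal of $\C$ containing the maximal ideal $M$, whence $M_0\cap C=M$, while $a\notin M_0$ forces $a/M_0\ne 0$. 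Because $M_0\cap C=M_1\cap C=M$, the inclusions descend to embeddings $\C/M\hookrightarrow\A_i/M_i$ identifying the two copies of $\C/M$.

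Finally I amalgamate and contradict $a\le c$. The closure assumptions $\mathbf{H}K=\mathbf{S}K=K$ keep the (simple) quotients $\A_0/M_0$, $\A_1/M_1$, $\C/M$ inside $K$, so the amalgamation property furnishes $\D\in K$ and embeddings $e_i:\A_i/M_i\to\D$ agreeing on $\C/M$. Composing with the quotient maps gives homomorphisms $\A_i\to\D$ that coincide on the generators $Z$, and by freeness of $\A$ they glue to a single $g:\A\to\D$ with $g\restriction\A_i=e_i\circ(\,\cdot/M_i)$. Then $g(c)=e_1(c/M_1)=0$ whereas $g(a)=e_0(a/M_0)\ne 0$ (as $e_0$ is injective), so $g(a)\not\le g(c)$, contradicting that the homomorphism $g$ must preserve $a\le c$. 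Hence an interpolant exists and every free algebra of $V(K)$ has $CP$.

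The step I expect to be the real obstacle is the tacit membership and ideal-theoretic claims of the last two stages. With genuine operators present the ideals $M,M_0,M_1$ must be \emph{congruence} ideals of the $BAO$'s (closed under the operators), not merely Boolean ones, so the Zorn and extension steps have to be performed inside the lattice of congruence ideals, and one must check that enough such ideals exist to realize the separation $c\in M_1$, $a\notin M_0$ with matching traces on $\C$. Moreover the side algebras $\A_i$ a priori lie only in $V(K)$, so returning the quotients to $K$ before $AP$ applies is exactly where the simplicity of the quotients together with $\mathbf{H}K=\mathbf{S}K=K$ (and the congruence-distributivity of $BAO$'s) must be used; this is the crux on which the whole argument turns.
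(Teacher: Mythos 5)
You have not proved the theorem that is stated. In this paper $CP$ is the \emph{congruence pasting} property, not an interpolation property: as the paper's own proof makes explicit (``Let $\A$, $X_1$, $X_2$, $R$ and $S$ be as specified in the definition of $CP$''), one is \emph{given} a congruence $R$ on $\Sg^{\A}(X_1)$ and a congruence $S$ on $\Sg^{\A}(X_2)$ with $R\cap {}^2\Sg^{\A}(X_1\cap X_2)=S\cap {}^2\Sg^{\A}(X_1\cap X_2)$, and one must produce a single congruence $T$ on $\A$ with $T\cap {}^2\Sg^{\A}(X_1)=R$ and $T\cap {}^2\Sg^{\A}(X_2)=S$; this is also exactly how $CP$ is invoked in the proof of Theorem \ref{weak}. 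Accordingly, the paper's proof quotients by the given $R$ and $S$, observes that the subalgebras generated by the images of $X_1\cap X_2$ in the two quotients are isomorphic (because $R$ and $S$ have the same trace), amalgamates this diagram using $AP$, glues the two induced maps into a single homomorphism $h$ on $\A$ by freeness, and takes $T=\ker h$. Your argument never touches a given pair $R$, $S$: you fix $a\le c$ and build ideals tailored to that pair, so what you are attempting is an order-interpolation theorem. In this paper that is a genuinely different and stronger property: interpolation on free algebras is what yields superamalgamation (Theorem \ref{supapgeneral}), while $CP$ yields only the weak, term-modified interpolation of Theorem \ref{weak}.

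Moreover, the interpolation statement you do argue for does not follow from $AP$, and the obstruction is precisely the point you flag at the end and leave open. For the quotients $\A_i/M_i$ and $\C/M$ to be algebras of the similarity type, all your ideals must be congruence ideals (closed under the extra operators). But then $\Ig^{\A_1}\{c\}$ is not the Boolean downset of $c$: its trace on $C$ is $\{b\in C: b\le \tau(c^n)\text{ for some term }\tau\text{ and }n\in\omega\}$, which in general strictly contains your $G=\{b\in C: b\le c\}$. The hypothesis ``no interpolant $a\le b\le c$ exists'' does \emph{not} make $F$ disjoint from this larger trace, so the Zorn separation producing $M$ (and with it the lemma-(iv) step producing $M_1$ with $c\in M_1$ and $M_1\cap C=M$) simply fails: there may be $b\in C$ with $a\le b\le\tau(c^n)$ even though no $b\in C$ satisfies $a\le b\le c$. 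If you repair this by replacing $G$ with the full trace of $\Ig^{\A_1}\{c\}$, the contradiction you reach only refutes the failure of the weak interpolation of Theorem \ref{weak} --- which is exactly what the paper derives from $CP$, and is the most $AP$ can give: were your stronger conclusion derivable from $AP$ alone, then by the Maksimova--Madar\'asz implication recorded as Theorem \ref{supapgeneral} every such $K$ with $AP$ would have $SUPAP$, collapsing two properties that the paper (and the literature) keep distinct. (A further unrepaired point: simplicity of the quotients does not return them to $K$; by J\'onsson's lemma a simple member of $V(K)$ need only lie in ${\bf HSP_u}K$, and $K$ is not assumed closed under ultraproducts.)
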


\begin{proof}
For $R\in Co\A$ and $X\subseteq  A$, by $(\A/R)^{(X)}$ we understand the subalgebra of
$\A/R$ generated by $\{x/R: x\in X\}.$ Let $\A$, $X_1$, $X_2$, $R$ and $S$ be as specified in in the definition of $CP$.
Define $$\theta: \Sg^{\A}(X_1\cap X_2)\to \Sg^{\A}(X_1)/R$$
by $$a\mapsto a/R.$$
Then $ker\theta=R\cap {}^2\Sg^{\A}(X_1\cap X_2)$ and $Im\theta=(\Sg^{\A}(X_1)/R)^{(X_1\cap X_2)}$.
It follows that $$\bar{\theta}:\Sg^{\A}(X_1\cap X_2)/R\cap {}^2\Sg^{\A}(X_1\cap X_2)\to (\Sg^{\A}(X_1)/R)^{(X_1\cap X_2)}$$
defined by
$$a/R\cap {}^{2}\Sg^{\A}(X_1\cap X_2)\mapsto a/R$$
is a well defined isomorphism.
Similarly
$$\bar{\psi}:\Sg^{\A}(X_1\cap X_2)/S\cap {}^2\Sg^{\A}(X_1\cap X_2)\to (\Sg^{\A}(X_2)/S)^{(X_1\cap X_2)}$$
defined by
$$a/S\cap {}^{2}\Sg^{\A}(X_1\cap X_2)\mapsto a/S$$
is also a well defined isomorphism.
But $$R\cap {}^2\Sg^{\A}(X_1\cap X_2)=S\cap {}^2\Sg^{\A}(X_1\cap X_2),$$
Hence
$$\phi: (\Sg^{\A}(X_1)/R)^{(X_1\cap X_2)}\to (\Sg^{\A}(X_2)/S)^{(X_1\cap X_2)}$$
defined by
$$a/R\mapsto a/S$$
is a well defined isomorphism.
Now
$(\Sg^{\A}(X_1)/R)^{(X_1\cap X_2)}$ embeds into $\Sg^{\A}(X_1)/R$ via the inclusion map; it also embeds in $\A^{(X_2)}/S$ via $i\circ \phi$ where $i$
is also the inclusion map.
For brevity let $\A_0=(\Sg^{\A}(X_1)/R)^{(X_1\cap X_2)}$, $\A_1=\Sg^{\A}(X_1)/R$ and $\A_2=\Sg^{\A}(X_2)/S$ and $j=i\circ \phi$.
Then $\A_0$ embeds in $\A_1$ and $\A_2$ via $i$ and $j$ respectively.
Then there exists $\B\in V$ and monomorphisms $f$ and $g$ from $\A_1$ and $\A_2$ respectively to
$\B$ such that
$f\circ i=g\circ j$.
Let $$\bar{f}:\Sg^{\A}(X_1)\to \B$$ be defined by $$a\mapsto f(a/R)$$ and $$\bar{g}:\Sg^{\A}(X_2)\to \B$$
be defined by $$a\mapsto g(a/R).$$
Let $\B'$ be the algebra generated by $Imf\cup Im g$.
Then $\bar{f}\cup \bar{g}\upharpoonright X_1\cup X_2\to \B'$ is a function since $\bar{f}$ and $\bar{g}$ coincide on $X_1\cap X_2$.
By freeness of $\A$, there exists $h:\A\to \B'$ such that $h\upharpoonright_{X_1\cup X_2}=\bar{f}\cup \bar{g}$.
Let $T=kerh $. Then it is not hard to check that
$$T\cap {}^2 \Sg^{\A}(X_1)=R \text { and } T\cap {}^2\Sg^{\A}(X_2)=S.$$
\end{proof}
Finally we show that $CP$ implies a weak form of interpolation.
\begin{theorem}\label{weak}
If an algebra $\A$ has $CP,$  then for $X_1, X_2\subseteq \A$, if $x\in \Sg^{\A}X_1$ and $z\in \Sg^{\A}X_2$ are such that
$x\leq z$, then there exists $y\in \Sg^{\A}(X_1\cap X_2),$ $n\in \omega$ and a term $\tau$ such that $x\leq y\leq \tau(z^n)$.
If $Ig^{\Bl\A}\{z\}=\Ig^{\A}\{z\},$ then $\tau$ can be chosen to be the
identity term. In particular, if $z$ is closed, 
or $\A$ comes from a discriminator variety, then the latter case occurs.
\end{theorem}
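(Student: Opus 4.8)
The plan is to recast the inequality $x\le z$ as a single ideal-membership statement and then feed two carefully chosen ideals into $CP$. Since we are working with $BAO$'s, congruences correspond bijectively to operator-closed ideals, so I would read $CP$ ideal-theoretically: writing $\B_1=\Sg^{\A}X_1$, $\B_2=\Sg^{\A}X_2$ and $\B_0=\Sg^{\A}(X_1\cap X_2)$, whenever $M$ is an ideal of $\B_1$ and $N$ an ideal of $\B_2$ with $M\cap \B_0=N\cap \B_0$, there is an ideal $P$ of $\A$ with $P\cap \B_1=M$ and $P\cap \B_2=N$. The first observation is that the desired interpolant is precisely a witness to one ideal membership: by part (i) of the ideal Lemma, $x\in \Ig^{\B_1}(\Ig^{\A}\{z\}\cap \B_0)$ holds iff $x\le y$ for some $y\in \B_0\cap \Ig^{\A}\{z\}$, and since the members of the principal $BAO$-ideal $\Ig^{\A}\{z\}$ are exactly the elements lying below some term value $\tau(z^n)$, such a $y$ is an interpolant of the required shape $x\le y\le \tau(z^n)$ with $y\in \Sg^{\A}(X_1\cap X_2)$.

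Thus everything reduces to proving $x\in \Ig^{\B_1}(M_0)$, where $M_0=\Ig^{\A}\{z\}\cap \B_0$ is an ideal of $\B_0$. To this end I would put $M=\Ig^{\B_1}(M_0)$ on the $X_1$-side and $N=\Ig^{\B_2}\{z\}$ on the $X_2$-side. Using part (ii) of the ideal Lemma one checks that these agree on the common subalgebra: on the one hand $M\cap \B_0=M_0$, and on the other $N\cap \B_0=\Ig^{\A}\{z\}\cap \B_2\cap \B_0=\Ig^{\A}\{z\}\cap \B_0=M_0$, the last step because $\B_0\subseteq \B_2$. With the hypothesis $M\cap \B_0=N\cap \B_0$ verified, $CP$ supplies an ideal $P$ of $\A$ with $P\cap \B_1=M$ and $P\cap \B_2=N$.

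It then remains to read off the conclusion in $\A/P$. Since $z\in N\subseteq P$ we have $z/P=0$; as $x\le z$ and $P$ is downward closed, $x\in P$, and because $x\in \B_1$ this gives $x\in P\cap \B_1=M=\Ig^{\B_1}(M_0)$. By the reduction of the first paragraph this yields $y\in \B_0\cap \Ig^{\A}\{z\}$ with $x\le y\le \tau(z^n)$, as wanted. For the final clause, the hypothesis $\Ig^{\Bl\A}\{z\}=\Ig^{\A}\{z\}$ collapses the $BAO$-ideal generated by $z$ to the Boolean down-set $\{w:w\le z\}$, so that $y\le z$ outright and $\tau$ may be taken to be the identity; the two stated special cases then follow from the definition of a closed element and from the preceding lemma's characterisation of discriminator terms, which force the principal $BAO$-ideal to coincide with a Boolean principal ideal.

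The genuinely delicate points are bookkeeping rather than conceptual. The first is the description of $\Ig^{\A}\{z\}$ as the set of elements below the term values $\tau(z^n)$: this is what dictates the exact form of the interpolation bound and what makes the identity-term special case transparent. The second, and the step on which the whole argument turns, is the verification via parts (i) and (ii) of the ideal Lemma that $M$ and $N$ restrict to the same ideal on $\B_0$, since it is precisely this equality that licenses the single application of $CP$.
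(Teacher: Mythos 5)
Your proposal is correct and takes essentially the same route as the paper's own proof: generate the ideal of $z$ on the $X_2$-side, transfer its trace on $\Sg^{\A}(X_1\cap X_2)$ to an ideal on the $X_1$-side, invoke $CP$ (through the ideal--congruence correspondence) to obtain a single ideal $P$ of $\A$, and extract the interpolant from $x\in P\cap \Sg^{\A}(X_1)$ via the ideal lemma, with the final clause handled exactly as in the paper by collapsing $\Ig^{\A}\{z\}$ to the down-set of $z$. The only divergences are cosmetic: your $M$ and $N$ are the paper's $N$ and $M$, you reach $x\in P$ directly from $x\leq z\in P$ rather than through $\Ig^{\A}(M\cup N)$, and you explicitly verify $\Ig^{\B_2}\{z\}=\Ig^{\A}\{z\}\cap \B_2$ where the paper works with the relative ideal from the start (indeed your bookkeeping silently repairs a typo in the paper's definition of $M$).
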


\begin{proof}
Now let $x\in \Sg^{\A}(X_1)$, $z\in \Sg^{\A}(X_2)$ and assume that $x\leq z$.
Then $$x\in (\Ig^{\A}\{z\})\cap \Sg^{\A}(X_1).$$
Let $$M=\Ig^{\A^{(X_1)}}\{z\}\text { and } N=\Ig^{\Sg^{\A}(X_2)}(M\cap \Sg^{\A}(X_1\cap X_2)).$$
Then $$M\cap \Sg^{\A}(X_1\cap X_2)=N\cap \Sg^{\A}(X_1\cap X_2).$$
By identifying ideals with congruences, and using the congruence extension property,
there is a an ideal $P$ of $\A$
such that $$P\cap \Sg^{\A}(X_1)=N\text { and }P\cap \Sg^{\A}(X_2)=M.$$
It follows that
$$\Ig^{\A}(N\cup M)\cap \Sg^{\A}(X_1)\subseteq P\cap \Sg^{\A}(X_1)=N.$$
Hence
$$(\Ig^{(\A)}\{z\})\cap A^{(X_1)}\subseteq N.$$
and we have
$$x\in \Ig^{\Sg^{\A}X_1}[\Ig^{\Sg^{\A}(X_2)}\{z\}\cap \Sg^{\A}(X_1\cap X_2).]$$
This implies that there is an element $y$ such that
$$x\leq y\in \Sg^{\A}(X_1\cap X_2)$$
and $y\in \Ig^{Sg^{\A}X}\{z\}$, hence the first required. The second required follows
follows, also immediately, since $y\leq z$, because $\Ig^{\A}\{z\}=\Rl_z\A$.
\end{proof}
t of forming dilations and neat reducts (which are, in fact, dual operations.)

\section{An application to Heyting algebras.}

For an algebra $\A$, $End(\A)$ denotes the set of endomorphisms of $\A$ (homomorphisms of $\A$ into itself), 
which is a semigroup under the operation $\circ$ of composition of maps.

\begin{definition} A transformation system is a quadruple $(\A, I, G, {\sf S})$ where 
$\A$ is an algebra, $I$ is a set, $G$ is a subsemigroup of $(^II,\circ)$ 
and ${\sf S}$ is a homomorphism from
$G$ into $End(\A).$
\end{definition}
Throughout the paper, $\A$ will always be a Heyting algebra.
If we want to study predicate intuitionistic logic, then we are naturally led to expansions of Heyting algebras allowing quantification.
But we do not have negation in the classical sense, so we have to deal with existential and universal quantifiers each separately.

\begin{definition} Let $\A=(A, \lor, \land,\rightarrow,0)$ be a Heyting algebra. An existential quantifier $\exists$ on $A$ is a mapping
$\exists:\A\to \A$ such that the following hold for all $p,q\in A$: 
\begin{enumarab}
\item $\exists(0)=0,$
\item $p\leq \exists p,$
\item $\exists(p\land \exists q)=\exists p\land \exists q,$
\item $\exists(\exists p\rightarrow \exists q)=\exists p\rightarrow \exists q,$
\item $\exists(\exists p\lor \exists q)=\exists p\lor \exists q,$
\item $\exists\exists p=\exists p.$
\end{enumarab}
\end{definition}
\begin{definition}  Let $\A=(A, \lor, \land,\rightarrow,0)$ be a Heyting algebra. A universal quantifier  $\forall$ on $A$ is a mapping
$\forall:\A\to \A$ such that the following hold for all $p,q\in A$: 
\begin{enumarab}
\item $\forall 1=1,$
\item $\forall p\leq p,$
\item $\forall(p\rightarrow q)\leq \forall p\rightarrow \forall q,$
\item $\forall \forall p=\forall p.$
\end{enumarab}
\end{definition}

Now we define our algebras. Their similarity type depends on a fixed in advance semigroup. 
We write $X\subseteq_{\omega} Y$ to denote that $X$ is a finite subset 
of $Y$.
\begin{definition} Let $\alpha$ be an infinite set. Let $G\subseteq {}^{\alpha}\alpha$ be a semigroup under the operation of composition of maps. 
An $\alpha$ dimensional polyadic Heyting $G$ algebra, a $GPHA_{\alpha}$ for short, is an algebra of the following
form
$$(A,\lor,\land,\rightarrow, 0, {\sf s}_{\tau}, {\sf c}_{(J)}, {\sf q}_{(J)})_{\tau\in G, J\subseteq_{\omega} \alpha}$$
where $(A,\lor,\land, \rightarrow, 0)$ is a Heyting algebra, ${\sf s}_{\tau}:\A\to \A$ is an endomorphism of Heyting algebras,
${\sf c}_{(J)}$ is an existential quantifier, ${\sf q}_{(J)}$ is a universal quantifier, such that the following hold for all 
$p\in A$, $\sigma, \tau\in [G]$ and $J,J'\subseteq_{\omega} \alpha:$
\begin{enumarab}
\item ${\sf s}_{Id}p=p.$
\item ${\sf s}_{\sigma\circ \tau}p={\sf s}_{\sigma}{\sf s}_{\tau}p$ (so that ${\sf s}:\tau\mapsto {\sf s}_{\tau}$ defines a homomorphism from $G$ to $End(\A)$; 
that is $(A, \lor, \land, \to, 0, G, {\sf s})$ is a transformation system).
\item ${\sf c}_{(J\cup J')}p={\sf c}_{(J)}{\sf c}_{(J')}p , \ \  {\sf q}_{(J\cup J')}p={\sf q}_{(J)}{\sf c}_{(J')}p.$
\item ${\sf c}_{(J)}{\sf q}_{(J)}p={\sf q}_{(J)}p , \ \  {\sf q}_{(J)}{\sf c}_{(J)}p={\sf c}_{(J)}p.$
\item If $\sigma\upharpoonright \alpha\sim J=\tau\upharpoonright \alpha\sim J$, then
${\sf s}_{\sigma}{\sf c}_{(J)}p={\sf s}_{\tau}{\sf c}_{(J)}p$ and ${\sf s}_{\sigma}{\sf q}_{(J)}p={\sf s}_{\tau}{\sf q}_{(J)}p.$
\item If $\sigma\upharpoonright \sigma^{-1}(J)$ is injective, then
${\sf c}_{(J)}{\sf s}_{\sigma}p={\sf s}_{\sigma}{\sf c}_{\sigma^{-1}(J)}p$
and ${\sf q}_{(J)}{\sf s}_{\sigma}p={\sf s}_{\sigma}{\sf q}_{\sigma^{-1}(J)}p.$
\end{enumarab}
\end{definition}

\begin{definition} Let $\alpha$ and $G$ be as in the prevoius definition. 
By a $G$ polyadic equality algebra, a $GPHAE_{\alpha}$ for short, 
we understand an algebra  of the form  
$$(A,\lor,\land,\rightarrow, 0, {\sf s}_{\tau}, {\sf c}_{(J)}, {\sf q}_{(J)},  {\sf d}_{ij})_{\tau\in G, J\subseteq_{\omega} \alpha, i,j\in \alpha}$$
where $(A,\lor,\land,\rightarrow, 0, {\sf s}_{\tau}, {\sf c}_{(J)}, {\sf q}_{(J)})_{\tau\in G\subseteq {}^{\alpha}\alpha, J\subseteq_{\omega} \alpha}$
is a $GPHA_{\alpha}$ and ${\sf d}_{ij}\in A$ for each $i,j\in \alpha,$ such that
the following identities hold for all $k,l\in \alpha$
and all $\tau\in G:$
\begin{enumarab}
\item ${\sf d}_{kk}=1$

\item ${\sf s}_{\tau}{\sf d}_{kl}={\sf d}_{\tau(k), \tau(l)}.$

\item $x\cdot {\sf d}_{kl}\leq {\sf s}_{[k|l]}x$

\end{enumarab}
\end{definition}

Here $[k|l]$ is the replacement that sends $k$ to $l$ and otherwise is the identity.
In our definition of algebras, we depart from \cite{HMT2} by defining polyadic algebras on sets rather than on ordinals. 
In this manner, we follow the tradition of Halmos.
We refer to $\alpha$ as the dimension of $\A$ and we write $\alpha=dim\A$.
Borrowing terminology from cylindric algebras, we refer to ${\sf c}_{(\{i\})}$ by ${\sf c}_i$ and ${\sf q}_{(\{i\})}$ by ${\sf q}_i.$
However, we will have occasion to impose a well order on dimensions thereby dealing with ordinals.

\begin{remark}

When $G$ consists of all finite transformations, 
then any algebra with a Boolean reduct satisfying the above identities relating cylindrifications, diagonal elements and substitutions,
will be a quasipolyadic equality algebra of infinite dimension.
\end{remark}
Next, we collect some properties of $G$ algebras that are more handy to use in our subsequent work.
In what follows, we will be writing $GPHA$ ($GPHAE$)  for all algebras considered.
\begin{theorem}\label{axioms} Let $\alpha$ be an infinite set and $\A\in GPHA_{\alpha}$. 
Then $\A$ satisfies the following identities for $\tau,\sigma\in G$
and all $i,j,k\in \alpha$.
\begin{enumerate}

\item $x\leq {\sf c}_ix={\sf c}_i{\sf c}_ix,\ {\sf c}_i(x\lor y)={\sf c}_ix\lor {\sf c}_iy,\ {\sf c}_i{\sf c}_jx={\sf c}_j{\sf c}_ix$.

That is  ${\sf c}_i$ is an additive operator (a modality)  and ${\sf c}_i,{\sf c}_j$ commute.

\item ${\sf s}_{\tau}$ is a Heyting algebra  endomorphism.

\item  ${\sf s}_{\tau}{\sf s}_{\sigma}x={\sf s}_{\tau\circ \sigma}x$
and ${\sf s}_{Id}x=x$.

\item ${\sf s}_{\tau}{\sf c}_ix={\sf s}_{\tau[i|j]}{\sf c}_ix$.

Recall that $\tau[i|j]$ is the transformation that agrees with $\tau$ on 
$\alpha\smallsetminus\{i\}$ and $\tau[i|j](i)=j$.

\item ${\sf s}_{\tau}{\sf c}_ix={\sf c}_j{\sf s}_{\tau}x$ if $\tau^{-1}(j)=\{i\}$, 
${\sf s}_{\tau}{\sf q}_ix={\sf q}_j{\sf s}_{\tau}x$ 
if $\tau^{-1}(j)=\{i\}$.  

\item  ${\sf c}_i{\sf s}_{[i|j]}x={\sf s}_{[i|j]}x$,\ \ ${\sf q}_i{\sf s}_{[i|j]}x={\sf s}_{[i|j]}x$

\item  ${\sf s}_{[i|j]}{\sf c}_ix={\sf c}_ix$, \ \ ${\sf s}_{[i|j]}{\sf q}_ix={\sf q}_ix$.

\item ${\sf s}_{[i|j]}{\sf c}_kx={\sf c}_k{\sf s}_{[i|j]}x$,\ \ ${\sf s}_{[i|j]}{\sf q}_kx={\sf q}_k{\sf s}_{[i|j]}x$
whenever $k\notin \{i,j\}$.

\item  ${\sf c}_i{\sf s}_{[j|i]}x={\sf c}_j{\sf s}_{[i|j]}x$,\ \  ${\sf q}_i{\sf s}_{[j|i]}x={\sf q}_j{\sf s}_{[i|j]}x$.
\end{enumerate}
\end{theorem}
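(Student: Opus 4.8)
The plan is to verify the nine identities of Theorem \ref{axioms} by working from the six defining axioms of a $GPHA_{\alpha}$ together with the properties of existential and universal quantifiers on Heyting algebras, specializing the axioms from arbitrary finite $J$ to singletons $J=\{i\}$ throughout. For item (1), additivity and the inclusion $x\leq {\sf c}_ix$ come directly from the fact that ${\sf c}_{(\{i\})}$ is an existential quantifier (monotonicity and the defining inequality $p\leq \exists p$), while idempotence ${\sf c}_i{\sf c}_ix={\sf c}_ix$ is the quantifier axiom $\exists\exists p=\exists p$; the commutation ${\sf c}_i{\sf c}_jx={\sf c}_j{\sf c}_ix$ follows from axiom (3), ${\sf c}_{(J\cup J')}={\sf c}_{(J)}{\sf c}_{(J')}$, applied with $J=\{i\}$, $J'=\{j\}$ and then with the roles reversed, since $\{i\}\cup\{j\}=\{j\}\cup\{i\}$. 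Item (2) is immediate from the definition of a $GPHA_{\alpha}$, where ${\sf s}_{\tau}$ is declared to be a Heyting endomorphism, and item (3) is axioms (1) and (2) of the definition restated.

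For the substitution–quantifier interaction identities I would proceed as follows. Item (4), ${\sf s}_{\tau}{\sf c}_ix={\sf s}_{\tau[i|j]}{\sf c}_ix$, is an application of definitional axiom (5): since $\tau$ and $\tau[i|j]$ agree off $\{i\}$, i.e. $\tau\restr(\alpha\sim\{i\})=\tau[i|j]\restr(\alpha\sim\{i\})$, the axiom with $J=\{i\}$ yields the equality. Item (5) is definitional axiom (6) with $J=\{j\}$: when $\tau^{-1}(j)=\{i\}$, the restriction of $\tau$ to $\tau^{-1}(\{j\})$ is injective (it is a singleton), so ${\sf c}_{(\{j\})}{\sf s}_{\tau}={\sf s}_{\tau}{\sf c}_{(\tau^{-1}\{j\})}={\sf s}_{\tau}{\sf c}_i$, and symmetrically for ${\sf q}$. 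The remaining identities (6)–(9) I would obtain by combining (4) and (5) with the elementary observations that $[i|j]$ collapses $i$ to $j$ so that ${\sf c}_i$ is absorbed, and that $[i|j]$ is injective away from its critical points. Concretely, for item (6) I would compute ${\sf c}_i{\sf s}_{[i|j]}x$ by noting $[i|j]^{-1}(\{i\})=\emptyset$ (nothing maps to $i$ when $i\neq j$), so applying a degenerate case of the commutation law collapses the outer ${\sf c}_i$; item (7) uses that $[i|j]$ fixes the image of ${\sf c}_i$ in the relevant coordinate; items (8) and (9) are the genuine commutation and swap laws, derived by feeding the appropriate injective or index-disjoint substitution into axiom (6) of the definition.

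The main obstacle I anticipate is the careful bookkeeping in items (6), (7), and (9), where the injectivity hypothesis of definitional axiom (6), namely that $\sigma\restr\sigma^{-1}(J)$ be injective, must be checked for the specific replacement maps $[i|j]$ and $[j|i]$, and where degenerate preimages (empty or collapsed) force one to argue the boundary cases of the commutation law by hand rather than by direct substitution. In particular, establishing ${\sf c}_i{\sf s}_{[j|i]}x={\sf c}_j{\sf s}_{[i|j]}x$ of item (9) requires relating two different substitutions through their action on ${\sf c}$, and I expect this to need an intermediate step inserting a suitable ${\sf s}_{\tau}$ and invoking item (5) twice, once in each direction. Everything else reduces to routine rewriting, so I would present the quantifier-theoretic parts (1)–(3) briefly and devote the detail to verifying the injectivity side conditions that make axioms (5) and (6) applicable for the replacement transformations.
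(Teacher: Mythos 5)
Your overall plan is the right one, and it is in fact the only plan available: the paper's entire proof of this theorem is the single sentence ``The proof is tedious but fairly straightforward,'' so the intended argument is exactly the verification you outline, specializing the definitional axioms of $GPHA_{\alpha}$ to singleton index sets. Your treatment of items (1)--(5), (7) and (8) is correct, with one small caveat in (1): additivity of ${\sf c}_i$ is not itself a quantifier axiom. You first derive monotonicity from $\exists(p\land \exists q)=\exists p\land \exists q$, and then the nontrivial inequality ${\sf c}_i(x\lor y)\leq {\sf c}_ix\lor {\sf c}_iy$ comes from $\exists(\exists p\lor \exists q)=\exists p\lor \exists q$; this is routine, but it is a derivation, not a quotation of an axiom.

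There are two places where ``straightforward'' hides real content and where your outline has genuine gaps. First, your step for item (6) --- definitional axiom (6) with $\sigma=[i|j]$, $J=\{i\}$, giving ${\sf c}_i{\sf s}_{[i|j]}x={\sf s}_{[i|j]}{\sf c}_{(\emptyset)}x$ --- collapses only under the convention ${\sf c}_{(\emptyset)}x=x$ (and ${\sf q}_{(\emptyset)}x=x$). This convention is not among the axioms the paper lists, and it is not a consequence of them: all the listed axioms are satisfied if every ${\sf c}_{(J)}$, including $J=\emptyset$, is taken to be one and the same nontrivial quantifier and every ${\sf s}_\tau$ is the identity, and in that model item (6) fails. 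It is the standard polyadic convention and clearly what the author intends, but you must state it as a standing assumption, since the theorem is literally false without it. Second, item (9) is anticipated rather than proved, and your guess that item (5) alone suffices ``once in each direction'' does not quite work, because $[i|j]^{-1}(j)=\{i,j\}$ is never a singleton. A clean completion: by item (6), ${\sf c}_i{\sf s}_{[j|i]}x={\sf c}_i{\sf c}_j{\sf s}_{[j|i]}x={\sf c}_{(\{i,j\})}{\sf s}_{[j|i]}x$ and likewise ${\sf c}_j{\sf s}_{[i|j]}x={\sf c}_{(\{i,j\})}{\sf s}_{[i|j]}x$; writing $[i|j]=(i\,j)\circ [j|i]$, definitional axiom (6) applied to the transposition (which is injective) gives ${\sf c}_{(\{i,j\})}{\sf s}_{[i|j]}x={\sf s}_{(i\,j)}{\sf c}_{(\{i,j\})}{\sf s}_{[j|i]}x$, and definitional axiom (5) erases ${\sf s}_{(i\,j)}$ in front of ${\sf c}_{(\{i,j\})}$ because $(i\,j)$ agrees with the identity off $\{i,j\}$. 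Note that this argument needs $(i\,j)\in G$, which holds for the semigroups the paper actually works with (all finite transformations, rich semigroups) but not for a semigroup generated by replacements alone; in Pinter-style substitution algebras the corresponding identity has to be postulated, which is a sign that it cannot be conjured from the replacement instances of the axioms.
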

\begin{demo}{Proof} The proof is tedious but fairly straighforward.
\end{demo}
Obviously the previous equations hold in $GPHAE_{\alpha}$.
Following cylindric algebra tradition and terminology, we will be often writing ${\sf s}_j^i$ for ${\sf s}_{[i|j]}$.

\begin{remark} For $GPHA_{\alpha}$ when $G$ is rich or $G$ consists only of finite transformation it is enough to restrict our attenstion to replacements. Other substitutions are 
definable from those.
\end{remark}

\subsection{Neat reducts and dilations}

Now we recall the important notion of neat reducts, a central concept in cylindric algebra theory, strongly related to representation theorems.
This concept also occurs in polyadic algebras, but unfortunately under a different name, that of compressions.

Forming dilations of an algebra, is basically  an algebraic reflection of a Henkin 
construction; in fact, the dilation of an algebra is another algebra that has an infinite number of new dimensions (constants) 
that potentially eliminate cylindrifications (quantifiers). Forming neat reducts has to do with 
restricting or compressing dimensions (number of variables) rather than increasing them.
(Here the duality has a precise categorical sense which will be formulated in the part 3 of this paper as an adjoint situation).

\begin{definition} 
\begin{enumarab}
\item  Let $ \alpha\subseteq \beta$ be infinite sets. Let $G_{\beta}$ be a semigroup of transformations on $\beta$, 
and let $G_{\alpha}$ be a semigroup of transformations on $\alpha$ such that for all $\tau\in G_{\alpha}$, one has $\bar{\tau}=\tau\cup Id\in G_{\beta}$.
Let $\B=(B, \lor, \land, \to, 0,  {\sf c}_i, {\sf s}_{\tau})_{i\in \beta, \tau\in G_{\beta}}$ be a $G_{\beta}$ algebra.
\begin{enumroman}

\item  We denote by $\Rd_{\alpha}\B$ the $G_{\alpha}$ algebra obtained by dicarding operations in $\beta\sim \alpha$. That is 
$\Rd_{\alpha}\B=(B, \lor,  \land, \to, 0, {\sf c}_i, {\sf s}_{\bar{\tau}})_{i\in \alpha, \tau\in G_{\alpha}}$. Here ${\sf s}_{\bar{\tau}}$ is evaluated 
in $\B$.
\item For $x\in B$,  then $\Delta x,$ the dimension set of $x$, 
is defined by $\Delta x=\{i\in \beta: {\sf c}_ix\neq x\}.$
Let $A=\{x\in B: \Delta x\subseteq \alpha\}$. If $A$ is
a subuniverse of $\Rd_{\alpha}\B$, then $\A$ (the algebra with universe $A$) is a subreduct of $\B$, it is called the {\it neat $\alpha$ 
reduct} of $\B$ and is denoted by $\Nr_{\alpha}\B$.
\end{enumroman}
\item If $\A\subseteq \Nr_{\alpha}\B$, then $\B$ is called a {\it dilation} of $\A$, and we say that $\A$ {\it neatly embeds} in $\B$.
if $A$ generates $\B$ (using all operations of $\B$), then $\B$ is called a {\it minimal dilation} of $\A$.
\end{enumarab}
\end{definition}
The above definition applies equally well to $GPHAE_{\alpha}$. 

\begin{remark}
In certain contexts minimal dilations may not be unique (up to isomorphism), but what we show next is that in all the cases 
we study, they are unique, so for a given algebra $\A$, we may safely say {\it the} minimal dilation of $\A$.
\end{remark}

For an algebra $\A$, and $X\subseteq \A$, $\Sg^{\A}X$ or simply $\Sg X$, when $\A$ is clear from context,
denotes the subalgebra of $\A$ generated by $X.$
The next theorems apply equally well to $GPHAE_{\alpha}$ with easy modifications which we state as we go along. 

\begin{lemma}\label{dl} 

Let $G_{I}$ be the semigroup of finite transformations on $I$.
Let $\A\in G_{\alpha}PHA_{\alpha}$ be such that $\alpha\sim \Delta x$ is infinite for every $x\in A$. 
Then for any set $\beta$, such that $\alpha\subseteq \beta$, there exists $\B\in G_{\beta}PHA_{\beta},$ 
such that $\A\subseteq \Nr_{\alpha}\B$. 

\end{lemma}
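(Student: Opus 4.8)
The plan is to construct the dilation $\B$ by the standard ``neat embedding via Henkin dimensions'' technique adapted to the polyadic Heyting setting. First I would fix the larger index set $\beta\supseteq\alpha$ and form the free $G_\beta$-dilation of $\A$ in the following sense: take the Heyting algebra freely generated (as a $G_{\beta}PHA_{\beta}$) by the underlying set of $\A$, subject to all the equations of $\A$ together with the defining identities of Definition for $GPHA_{\beta}$, and then quotient by the congruence that forces the inclusion map on $A$ to be a homomorphism of the common $G_{\alpha}$-structure. Concretely, I would let $\B$ be generated by $A$ using the operations ${\sf s}_\tau$ for $\tau\in G_\beta$ and ${\sf c}_i$ for $i\in\beta$, imposing exactly the relations needed so that $\Rd_\alpha\B$ restricted to $A$ reproduces $\A$. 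The role of the hypothesis that $\alpha\sim\Delta x$ is infinite for every $x\in A$ is to guarantee there are always ``spare'' dimensions in $\beta$ (indeed in $\alpha$ itself) onto which elements can be shifted by some ${\sf s}_\sigma$, which is precisely what one needs to verify that the extra cylindrifications ${\sf c}_i$ for $i\in\beta\sim\alpha$ act trivially on the image of $A$, i.e.\ that $\Delta a\subseteq\alpha$ in $\B$ for each $a\in A$, so that $A\subseteq\Nr_\alpha\B$.

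Next I would verify the three things a dilation requires: (i) that $\B$ is genuinely a $G_{\beta}PHA_{\beta}$, i.e.\ all axioms (1)--(6) of the $GPHA$ definition hold, (ii) that the natural map $A\to\B$ is injective, and (iii) that its image lies in $\Nr_\alpha\B$. Step (i) is a routine check that the imposed relations are consistent with and imply the polyadic axioms, carried out by the usual term-rewriting on the generating set. For step (iii), given $a\in A$ and $i\in\beta\sim\alpha$, I would use axiom (6) relating ${\sf c}_{(J)}$ to substitutions together with the infinitude of $\alpha\sim\Delta a$: choose an injection $\sigma\in G_\beta$ moving $i$ into the (infinite) set of indices in $\alpha$ on which $a$ does not depend, apply ${\sf s}_\sigma$ and the commutation laws of Theorem~\ref{axioms}, and conclude ${\sf c}_i a=a$. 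This is the conceptual heart of why the cardinality/infinitude condition is exactly the right hypothesis.

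The main obstacle is step (ii), the faithfulness of the embedding $A\to\B$: one must show that no two distinct elements of $\A$ are identified in the freely built dilation. I would establish this by exhibiting a retraction, namely a $G_\alpha$-homomorphism $\B\to\A$ (or into some $G_{\beta}PHA_{\beta}$ containing a copy of $\A$) that restricts to the identity on $A$. The natural candidate is to send each generator ${\sf s}_{\bar\tau}a$ back to ${\sf s}_\tau a$ for $\tau\in G_\alpha$ and, for the genuinely new dimensions, to use a ``collapsing'' transformation in $G_\beta$ that folds $\beta$ back into $\alpha$ while fixing $\Delta a$; checking that such a collapse is well defined on all of $\B$ and respects the Heyting and quantifier operations is the delicate part, precisely because ${\sf c}_{(J)}$ and ${\sf q}_{(J)}$ need not commute with arbitrary (non-injective) substitutions. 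I expect to resolve this by restricting attention to finite $J$ and to the subalgebra generated by $A$, where every element mentions only finitely many dimensions, so that the relevant collapsing substitution can be taken injective on the finite support in question, bringing axiom (6) of the $GPHA$ definition back into play. Once faithfulness is secured, combining (i)--(iii) yields $\A\subseteq\Nr_\alpha\B$ as required.
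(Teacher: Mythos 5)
You have a genuine gap, and it sits at exactly the two places you flag as ``delicate.'' First, step (iii) fails outright: in an algebra presented by the diagram of $\A$ together with the $GPHA_{\beta}$ axioms, the equation ${\sf c}_i x_a=x_a$ for $i\in\beta\sim\alpha$ is \emph{not} derivable, no matter how the infinitude hypothesis is invoked. A model of your presentation is nothing more than an algebra $\D\in G_{\beta}PHA_{\beta}$ together with a homomorphism $g$ of $G_{\alpha}$-reducts from $\A$ into $\Rd_{\alpha}\D$, and such a $g$ need not land inside $\Nr_{\alpha}\D$. Concretely, let $\A$ be the four-element Boolean (hence Heyting) algebra $\{0,a,-a,1\}$ in which every ${\sf c}_{\lambda}$, ${\sf q}_{\lambda}$, ${\sf s}_{\tau}$ is the identity map; this satisfies all the $GPHA_{\alpha}$ axioms, and $\Delta x=\emptyset$ for every $x$, so the hypothesis of the lemma holds. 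Let $\D=\wp({}^{\beta}U)$ be the full set algebra with $|U|\geq 2$, fix $i\in\beta\sim\alpha$ and $\emptyset\neq V\subsetneq U$, and put $g(a)=\{s\in{}^{\beta}U: s_i\in V\}$, $g(-a)$ its complement, $g(0)=\emptyset$, $g(1)={}^{\beta}U$. Then $g$ preserves every operation of the $\alpha$-reduct (no element of its image depends on any coordinate in $\alpha$, and ${\sf s}_{\bar{\tau}}$ fixes coordinate $i$ for $\tau\in G_{\alpha}$), yet ${\sf c}_ig(a)\neq g(a)$. Since the presented algebra maps homomorphically into $\D$ under this assignment of the generators, ${\sf c}_ix_a=x_a$ cannot hold in it. So the ``conceptual heart'' you describe --- deriving $\Delta a\subseteq\alpha$ from axiom (6) plus spare dimensions --- is simply not available: from the diagram one can derive ${\sf s}_{[\lambda|i]}x_a=x_a$ for $\lambda\notin\Delta a$, but not ${\sf c}_ix_a=x_a$.

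Second, if you repair this by adding the equations ${\sf c}_ix_a=x_a$ ($i\in\beta\sim\alpha$, $a\in A$) to the presentation, then (iii) becomes trivial but (ii) becomes circular: by the universal property of presentations, the canonical map $A\to\B$ is injective if and only if there exists \emph{some} $\D\in G_{\beta}PHA_{\beta}$ with an injective $G_{\alpha}$-homomorphism of $\A$ into $\Nr_{\alpha}\D$ --- which is precisely the lemma being proved. Your proposed retraction does not break the circle: a collapsing transformation $\beta\to\alpha$ is non-injective, so (as you yourself note) it induces no homomorphism, and checking that the collapse is well defined on all of $\B$ would require a normal form for elements of the presented algebra, information that only a concrete model supplies. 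The paper sidesteps all of this by an explicit construction (adapted from \cite{HMT1}, 2.6.49(i)): one new dimension is adjoined by taking the quotient of $\alpha\times A$ by the relation $R$ identifying $(k,x)$ with $(\lambda,{\sf s}_{[k|\lambda]}x)$ for $\lambda\notin\Delta x$, and the new cylindrification is \emph{defined} by ${\sf c}_{\alpha}((\mu,x)/R)=(\mu,{\sf c}_{\mu}x)/R$, so the image of $\A$ under $h(x)=(\mu,x)/R$ with $\mu\notin\Delta x$ is automatically fixed by it. There the infinitude of $\alpha\sim\Delta x$ is what guarantees such a representative exists and that all finite substitutions are term-definable from replacements; finitely many further dimensions are added by induction, and the transfinite case is handled by ultraproducts. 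Injectivity is verified directly on this concrete object --- the step your approach leaves unsupported.
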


\begin{demo}{Proof}
Let $\alpha\subseteq \beta$. We assume, loss of generality, that $\alpha$ and $\beta$ are ordinals with $\alpha<\beta$.
The proof is a direct adaptation of the proof of Theorem 2.6.49(i)
in \cite{HMT1}. First we show that there exists $\B\in G_{\alpha+1}PHA_{\alpha+1}$ 
such that $\A$ embeds into $\Nr_{\alpha}\B,$ then we proceed inductively. 
Let $$R = Id\upharpoonright (\alpha\times A)
 \cup \{  ((k,x), (\lambda, y)) : k, \lambda <
\alpha, x, y \in A, \lambda \notin \Delta x, y = {\mathsf s}_{[k|\lambda]} x \}.$$
It is easy to see  that $R$ is an equivalence relation on $\alpha
\times A$.
Define the following operations on $(\alpha\times A)/R$ with $\mu, i, k\in \alpha$ and $x,y\in A$ :
\begin{equation*}\label{l5}
\begin{split}
(\mu, x)/R \lor (\mu, y)/R = (\mu, x \lor y)/R, 
\end{split}
\end{equation*}
\begin{equation*}\label{l6}
\begin{split}
(\mu, x)/R\land  (\mu, y)/R = (\mu, x\land  y)/R, 
\end{split}
\end{equation*}
 \begin{equation*}\label{l7}
\begin{split}
(\mu, x)/R\to  (\mu, y)/R = (\mu, x\to  y)/R, 
\end{split}
\end{equation*}
\begin{equation*}\label{l10}
\begin{split}
{\mathsf c}_i ((\mu, x)/R)  = (\mu, {\mathsf c}_i x )/R, \quad
\mu \in \alpha \smallsetminus
\{i\},
\end{split}
\end{equation*}
\begin{equation*}\label{l11}
\begin{split}
{\mathsf s}_{[j|i]} ((\mu, x)/R)  = (\mu, {\mathsf s}_{[j|i]} x )/R, \quad \mu \in \alpha
\smallsetminus \{i, j\}.
\end{split}
\end{equation*}
It can be checked that these operations are well defined.
Let $$\C=((\alpha\times A)/R, \lor, \land, \to, 0, {\sf c_i}, {\sf s}_{i|j]})_{i,j\in \alpha},$$
and
let $$h=\{(x, (\mu,x)/R): x\in A, \mu\in \alpha\sim \Delta x\}.$$
Then $h$ is an isomorphism from $\A$ into $\C$. 
Now to show that $\A$ neatly embeds into $\alpha+1$ extra dimensions, we define the operations ${\sf c}_{\alpha}, {\sf s}_{[i|\alpha]}$
and ${\sf s}_{[\alpha|i]}$ on $\C$ as follows:
$${\mathsf c}_\alpha = \{ ((\mu, x)/R, (\mu, {\mathsf c}_\mu x)/R) :
\mu \in \alpha, x \in B \},$$
$${\mathsf s}_{[i|\alpha]} = \{ ((\mu, x)/R, (\mu, {\mathsf s}_{[i|\mu]}
x)/R) : \mu \in \alpha \smallsetminus \{i\}, x \in B \},$$
$${\mathsf s}_{[\alpha|i]} = \{ ((\mu, x)/R, (\mu, {\mathsf s}_{[\mu|i]}
x)/R) : \mu \in \alpha \smallsetminus \{i\}, x \in B \}.$$
Let $$\B=((\alpha\times A)/R, \lor,\land, \to, {\sf c}_i, {\sf s}_{[i|j]})_{i,j\leq \alpha}.$$
Then $$\B\in G_{\alpha+1}PA_{\alpha+1}\text{ and }h(\A)\subseteq \Nr_{\alpha}\B.$$
It is not hard to check that the defined operations are as desired. We have our result when $G$ consists only of replacements.
But since $\alpha\sim \Delta x$ is infinite one can show that substitutions corresponding to all finite transformations are term definable.
For a finite transformation $\tau\in {}^{\alpha}\alpha$ we write $[u_0|v_0, u_1|v_1,\ldots,
u_{k-1}|v_{k-1}]$ if $sup\tau=\{u_0,\ldots ,u_{k-1}\}$, $u_0<u_1
\ldots <u_{k-1}$ and $\tau(u_i)=v_i$ for $i<k$.
Let $\A\in GPHA_{\alpha}$ be such that $\alpha\sim \Delta x$ is
infinite for every $x\in A$. If $\tau=[u_0|v_0, u_1|v_1,\ldots,
u_{k-1}|v_{k-1}]$ is a finite transformation, if $x\in A$ and if
$\pi_0,\ldots ,\pi_{k-1}$ are in this order the first $k$ ordinals
in $\alpha\sim (\Delta x\cup Rg(u)\cup Rg(v))$, then
$${\mathsf s}_{\tau}x={\mathsf s}_{v_0}^{\pi_0}\ldots
{\mathsf s}_{v_{k-1}}^{\pi_{k-1}}{\mathsf s}_{\pi_0}^{u_0}\ldots
{\mathsf s}_{\pi_{k-1}}^{u_{k-1}}x.$$
The ${\sf s}_{\tau}$'s so defined satisfy the polyadic axioms, cf \cite{HMT1} Theorem 1.11.11.
Then one proceeds by a simple induction to show that for all $n\in \omega$ there exists $\B\in G_{\alpha+n}PHA_{\alpha+n}$ 
such that $\A\subseteq \Nr_{\alpha}\B.$ For the transfinite, one uses ultraproducts \cite{HMT1} theorem 2.6.34. 
\end{demo}

\begin{lemma}\label{cylindrify} With $\A$ and $\B$ as in the previous lemmand for all $X\subseteq 
A$, one has $\Sg^{\A}X=\Nr_{\alpha}\Sg^{\B}X.$
\end{lemma}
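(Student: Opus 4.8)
The plan is to prove the equality $\Sg^{\A}X=\Nr_{\alpha}\Sg^{\B}X$ by establishing the two inclusions separately, the left-to-right one being routine and the right-to-left one carrying all the difficulty. For the inclusion $\Sg^{\A}X\subseteq \Nr_{\alpha}\Sg^{\B}X$, I would first note that $X\subseteq A\subseteq \Nr_{\alpha}\B$ and that $\Sg^{\B}X$ is a $G_{\beta}$ subalgebra of $\B$; then I would argue that $\Nr_{\alpha}\Sg^{\B}X$ is a $G_{\alpha}$ subalgebra of $\Rd_{\alpha}\Sg^{\B}X$ containing $X$, using Theorem \ref{axioms} to see that the neat reduct is closed under all $G_{\alpha}$ operations (since an element of dimension set $\subseteq\alpha$ stays so under the $\alpha$-operations). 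Because $\A\subseteq\Nr_{\alpha}\B$ by the previous lemma and the operations agree, $\Nr_{\alpha}\Sg^{\B}X$ is a subalgebra of $\A$ containing $X$, whence it contains $\Sg^{\A}X$.

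The substantive inclusion is $\Nr_{\alpha}\Sg^{\B}X\subseteq \Sg^{\A}X$. Here the natural strategy is to exploit the explicit description of $\B$ as a minimal dilation furnished by the construction in Lemma \ref{dl}: every element of $\Sg^{\B}X$ is obtained from $X$ by finitely many $G_{\beta}$-operations, i.e. operations involving the finitely many "new" dimensions in $\beta\sim\alpha$. The plan is to take an arbitrary $b\in \Nr_{\alpha}\Sg^{\B}X$, so $\Delta b\subseteq\alpha$, write $b$ as a term $t$ in elements of $X$ using cylindrifications $\cyl{i}$ and substitutions $\sub{i}{j}$ with indices ranging over $\beta$, and then show that the extra dimensions can be eliminated. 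The key device is that for each element $x$ of $\A$ we have $\alpha\sim\Delta x$ infinite, so there is always a "spare" index inside $\alpha$ onto which any new index used in building $b$ can be transported via a substitution $\sub{i}{\lambda}$ with $\lambda\in\alpha$; the polyadic axioms of Theorem \ref{axioms}, particularly items (4)--(9) governing how substitutions commute with cylindrifications and with each other, let one rewrite $t$ into an equivalent term using only indices in $\alpha$. I would carry this out by induction on the number of distinct indices of $b$ lying in $\beta\sim\alpha$, at each step choosing a fresh $\lambda\in\alpha$ outside the relevant dimension sets and applying $\sub{\lambda}{\mu}\sub{\mu}{\lambda}$-type manipulations to replace a $\beta\sim\alpha$ index $\mu$ by $\lambda$ while leaving $b$ fixed (this uses $\Delta b\subseteq\alpha$ crucially, so that $b$ is unaffected by substitutions moving indices outside its dimension set).

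The hard part will be making the index-elimination step rigorous: one must verify that replacing each occurrence of a new dimension $\mu\in\beta\sim\alpha$ by a suitable $\lambda\in\alpha$ genuinely preserves the value $b$ rather than merely producing an element congruent to it, and that the chosen $\lambda$ can be kept disjoint from all dimension sets appearing in the (finite) term and from the finitely many other reindexing choices. This is exactly where the hypothesis that $\alpha\sim\Delta x$ is infinite for every $x$ does its work, guaranteeing an inexhaustible supply of spare indices; the technical bookkeeping mirrors the neat-reduct arguments of \cite{HMT1}, notably the manipulations surrounding Theorem 2.6.67 and the term-definability of substitutions established in the proof of Lemma \ref{dl}. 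I expect the cleanest presentation to reduce the general case to the one-extra-dimension case $\beta=\alpha+1$, where $b\in\Nr_{\alpha}\Sg^{\B}X$ means $\cyl{\alpha}b=b$, and then to show by induction on the complexity of a term witnessing $b\in\Sg^{\B}X$ that $b$ already lies in the $G_{\alpha}$-subalgebra generated by $X$, transferring the full result by the same inductive step on $n$ (and ultraproducts in the transfinite) used in Lemma \ref{dl}.
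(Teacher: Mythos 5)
Your easy inclusion is essentially fine (one small repair: $\Nr_{\alpha}\Sg^{\B}X$ is a subuniverse of $\Nr_{\alpha}\B$ containing $X$, not necessarily a subalgebra of $\A$, since $\A\subseteq\Nr_{\alpha}\B$ may be proper; but generation inside the subalgebra $\A$ agrees with generation inside $\Nr_{\alpha}\B$, so the inclusion $\Sg^{\A}X\subseteq\Nr_{\alpha}\Sg^{\B}X$ still follows). For the hard inclusion, your spare-index idea is the right raw material, but the step you defer as ``bookkeeping'' is the entire content of the lemma, and the paper does not do it by pushing replacements through an arbitrary term. It instead invokes the normal form for minimally generated dilations (the $GPHA$ analogue of \cite{HMT1} lemma 2.6.66): since $\Sg^{\A}X$ generates $\Sg^{\B}X$, every element of $\Sg^{\B}X$ has the form ${\sf s}_{\sigma}^{\B}y$ with $y\in\Sg^{\A}X$ and $\sigma$ a finite transformation of $\beta$ such that $\sigma\upharpoonright\alpha$ is one-to-one. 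With this, the hard inclusion is a three-line computation: given $x$ with $\Delta x\subseteq \alpha$, write $x={\sf s}_{\sigma}^{\B}y$, choose a finite $\tau$ with $\tau\upharpoonright\alpha=Id$ and $(\tau\circ\sigma)\alpha\subseteq\alpha$; then $x={\sf s}_{\tau}^{\B}x={\sf s}_{\tau}^{\B}{\sf s}_{\sigma}^{\B}y={\sf s}_{\tau\circ\sigma}^{\B}y={\sf s}_{(\tau\circ\sigma)\upharpoonright\alpha}^{\A}y\in\Sg^{\A}X$, where the first equality holds because $\tau$ fixes $\Delta x$ pointwise and the last because only the action of a substitution on $\Delta y\subseteq\alpha$ matters. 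Your index-by-index rewriting is, in effect, an inline reproof of this normal form; to make it a proof you would have to state and verify the replacement lemma (that substituting a fresh $\lambda\in\alpha$ for $\mu\in\beta\sim\alpha$ throughout a term preserves its value, not merely produces something congruent to it), which is exactly what the citation packages.

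The step in your outline that would actually fail is the closing transfer: reduce to $\beta=\alpha+1$ and then reach transfinite $\beta$ ``by the same inductive step (and ultraproducts) used in Lemma \ref{dl}.'' The ultraproduct in Lemma \ref{dl} is used only to \emph{build} a dilation; it cannot transfer the identity $\Sg^{\A}X=\Nr_{\alpha}\Sg^{\B}X$, because $\Nr_{\alpha}$ does not commute with ultraproducts: membership in the neat reduct is an infinite conjunction of equations ${\sf c}_{j}b=b$, one for each $j\in\beta\sim\alpha$, and an ultrafilter decides only finitely many of these at a time. For instance, if each coordinate $b_i$ depends on a distinct extra index $j_i$, then $b/F$ is $\alpha$-dimensional although no coordinate is, so validity of the lemma at every finite stage gives no information about the ultraproduct dilation. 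The paper's route needs no induction on the number of extra dimensions at all: the normal form, and the displayed computation, hold in the minimal dilation uniformly for every $\beta$. So as written your proposal is missing its central lemma and its mechanism for reaching transfinite $\beta$ is unsound; both defects are cured at once by proving (or citing) the normal form and then running the short substitution computation above.
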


\begin{proof}let $\A\subseteq \Nr_{\alpha}\B$ and $A$ generates $\B$ then $\B$ consists of all elements ${\sf s}_{\sigma}^{\B}x$ such that 
$x\in A$ and $\sigma$ is a finite transformation on $\beta$ such that
$\sigma\upharpoonright \alpha$ is one to one \cite{HMT1} lemma 2.6.66. 
Now suppose $x\in \Nr_{\alpha}\Sg^{\B}X$ and $\Delta x\subseteq
\alpha$, then there exist $y\in \Sg^{\A}X$ and a finite transformation $\sigma$
of $\beta$ such that $\sigma\upharpoonright \alpha$ is one to one
and $x={\sf s}_{\sigma}^{\B}y.$  
Let $\tau$ be a finite
transformation of $\beta$ such that $\tau\upharpoonright  \alpha=Id
\text { and } (\tau\circ \sigma) \alpha\subseteq \alpha.$ Then
$x={\sf s}_{\tau}^{\B}x={\sf s}_{\tau}^{\B}{\sf s}_{\sigma}y=
{\sf s}_{\tau\circ \sigma}^{\B}y={\sf s}_{\tau\circ
\sigma\upharpoonright \alpha}^{\A}y.$
In the presence of diagonal elements, one defines them in the bigger algebra (the dilation) precisely  as in \cite{HMT1}, theorem 2.6.49(i).
The next lemma formulated only for $GPHA_{\alpha}$ will be used in proving our main (algebraic) result.
The proof works without any modifications when we add diagonal elements. The lemma says, roughly, that if we have an $\alpha$ dimensional  
algebra $\A$, and a set $\beta$ containing $\alpha$, then we can find an extension $\B$ of $\A$ in $\beta$ dimensions, specified by a 
carefully chosen  subsemigroup of $^{\beta}\beta$, such that $\A=\Nr_{\alpha}\B$ and  for all $b\in B$, $|\Delta b\sim \alpha|<\omega$.
$\B$ is not necessarily the minimal dilation of $\A$, because the large subsemigroup chosen maybe 
smaller than the semigroup used to form 
the unique dilation. It can happen that this extension is the minimal dilation, 
but in the case we consider all transformations, the constructed algebra 
is only a proper subreduct of the dilation obtained basically by discarding 
those elements $b$ in the original dilation for which $\Delta b\sim \alpha$ 
is infinite.
\end{proof}

\subsection{Algebraic Proofs of main theorems}
Our work in this section is closely related to that in 
\cite{Hung}. Our main theorem is a typical representabilty result, where we start with an abstract (free) algebra, 
and we find a non-trivial homomorphism from this algebra
to a concrete algebra based on Kripke systems (an algebraic version of Kripke frames).

The idea (at least for the equality-free case) is that we start with a theory 
(which is defined as a pair of sets of formulas, as is the case with classical intuitionistic logic), extend 
it to a saturated one in enough spare dimensions, or an appropraite dilation (lemma \ref{t2}),  
and then iterate this process countably many times forming  consecutive (countably many) dilations
in enough spare dimensions, using pairs of pairs (theories), cf. lemma \ref{t3}; finally forming an
 extension that will be used  to construct desired Kripke models (theorem \ref{main}). 
The extensions constructed are essentially conservative extensions, 
and they will actually constitute the set of worlds of our desired Kripke model.

The iteration is done by a subtle 
zig-zag process, a technique due to 
Gabbay \cite{b}. When we have diagonal elements (equality), constructing desired Kripke model, 
is substantialy different, and much more intricate.

All definitions and results up to lemma \ref{main1}, 
though formulated only for the diagonal-free case, applies equally well to the case when there are diagonal elements, 
with absolutely no modifications. (The case when diagonal elements are present will be dealt with in part 2).

\begin{definition} Let $\A\in GPHA_{\alpha}$. 
\begin{enumarab}
\item  A theory in $\A$ is a pair $(\Gamma, \Delta)$ such that $\Gamma, \Delta\subseteq \A$.
\item A theory $(\Gamma, \Delta)$ is consistent if there are no $a_1,\ldots a_n\in \Gamma$ and 
$b_1,\ldots b_m\in \Delta$ ($m,n\in \omega$) such that
$$a_1\land\ldots a_n\leq b_1\lor\ldots b_m.$$
Not that in this case, we have $\Gamma\cap \Delta=\emptyset$. Also if $F$ is a filter 
(has the finite intersection property), then it is always the case that
$(F, \{0\})$ is consistent.
\item A theory $(\Gamma, \Delta)$ is complete if for all $a\in A,$ either $a\in \Gamma$ or $a\in \Delta$.
\item A theory $(\Gamma, \Delta)$ is saturated if for all $a\in A$ and $j\in \alpha$, 
if ${\sf c}_ja\in \Gamma$,
then there exists $k\in \alpha\sim \Delta a$, such that ${\sf s}^j_ka\in \Gamma$.
Note that a saturated theory depends only on $\Gamma$.
\end{enumarab}
\end{definition}

\begin{lemma} \label{t1}Let $\A\in GPHA_{\alpha}$ and $(\Gamma,\Delta)$ be a consistent theory. 
\begin{enumroman}
\item For any $a\in A,$ either $(\Gamma\cup \{a\}, \Delta)$ or $(\Gamma, \Delta\cup\{a\})$ is consistent.
\item $(\Gamma,\Delta)$ can be extended to a complete theory in $\A.$
\end{enumroman}
\end{lemma}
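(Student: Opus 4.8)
The plan is to prove the two parts of Lemma~\ref{t1} in order, using the definition of consistency for theories $(\Gamma,\Delta)$, where consistency means there are no finite subsets $a_1,\dots,a_n\in\Gamma$ and $b_1,\dots,b_m\in\Delta$ with $a_1\land\dots\land a_n\leq b_1\lor\dots\lor b_m$. The whole argument is purely order-theoretic inside the Heyting reduct of $\A$, so I will not need the quantifier or substitution structure at all.

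For part (i), I would argue by contradiction. Suppose that both $(\Gamma\cup\{a\},\Delta)$ and $(\Gamma,\Delta\cup\{a\})$ are inconsistent. Inconsistency of the first means there exist finitely many $a_1,\dots,a_n\in\Gamma$ and $b_1,\dots,b_m\in\Delta$ with $a\land a_1\land\dots\land a_n\leq b_1\lor\dots\lor b_m$. Inconsistency of the second means there exist finitely many $a_1',\dots,a_p'\in\Gamma$ and $b_1',\dots,b_q'\in\Delta$ with $a_1'\land\dots\land a_p'\leq a\lor b_1'\lor\dots\lor b_q'$. Writing $c=\bigwedge a_i\land\bigwedge a_j'$ (the meet of all the $\Gamma$-elements appearing) and $d=\bigvee b_i\lor\bigvee b_j'$ (the join of all the $\Delta$-elements), I would derive from the first inequality $c\land a\leq d$ and from the second $c\leq a\lor d$. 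The goal is then the purely lattice-theoretic implication that these two inequalities force $c\leq d$, which would witness inconsistency of the original theory $(\Gamma,\Delta)$ — a contradiction. In a distributive lattice this is immediate: from $c\leq a\lor d$ we get $c=c\land(a\lor d)=(c\land a)\lor(c\land d)\leq d\lor d=d$, using $c\land a\leq d$ and $c\land d\leq d$. Since Heyting algebras are distributive, this closes part (i).

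For part (ii), I would use a standard Lindenbaum-style maximality argument. Consider the poset of all consistent theories $(\Gamma',\Delta')$ extending $(\Gamma,\Delta)$ (meaning $\Gamma\subseteq\Gamma'$ and $\Delta\subseteq\Delta'$), ordered by componentwise inclusion. The union of a chain of consistent theories is again consistent, since any violating inequality would involve only finitely many elements, all of which already appear at some single stage of the chain; hence Zorn's Lemma yields a maximal consistent extension $(\Gamma^*,\Delta^*)$. I then claim $(\Gamma^*,\Delta^*)$ is complete: given any $a\in A$, part (i) applied to $(\Gamma^*,\Delta^*)$ tells us that one of $(\Gamma^*\cup\{a\},\Delta^*)$ or $(\Gamma^*,\Delta^*\cup\{a\})$ is consistent, and by maximality $a$ must already lie in $\Gamma^*$ or $\Delta^*$ respectively. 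Thus every element is decided, which is exactly completeness.

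The only genuinely delicate point is the distributivity step in part (i); everything else is routine bookkeeping with finite subsets and a direct Zorn argument. I expect the main obstacle — really a sanity check rather than a difficulty — to be confirming that the combined inequalities $c\land a\leq d$ and $c\leq a\lor d$ truly collapse to $c\leq d$, which is where the Heyting (hence distributive lattice) structure is essential; in a non-distributive lattice the conclusion could fail, so it is worth noting explicitly that distributivity of $\A$ is what makes the dichotomy work.
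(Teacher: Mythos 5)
Your proof is correct, and it follows the same overall strategy as the paper's (contradiction for (i), Lindenbaum-style maximalization for (ii)), but both steps are executed differently. For (i), the paper combines the two witnessing inequalities via the Heyting-algebra tautology $(\mu_1\land a\to \delta_1)\land (\mu_2\to a\lor \delta_2)\leq (\mu_1\land \mu_2\to \delta_1\lor \delta_2)$: since both conjuncts on the left equal $1$, so does the right side, giving $\mu_1\land\mu_2\leq \delta_1\lor\delta_2$. You instead apply raw distributivity, $c=c\land(a\lor d)=(c\land a)\lor(c\land d)\leq d$, which proves the same dichotomy using only the bounded distributive lattice structure; this is marginally more general (it would survive in settings without a residuated implication) and arguably more transparent, whereas the paper's version keeps the computation inside the implication calculus it uses elsewhere. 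For (ii), the paper enumerates $A$ as $(a_i:i<\kappa)$ and decides each element by transfinite recursion, invoking (i) at every successor stage and taking unions at limits; you instead apply Zorn's Lemma to the poset of consistent extensions (using finite character of consistency for chains) and then invoke (i) exactly once to show a maximal consistent extension must be complete. The two are interchangeable forms of the Lindenbaum argument; yours shifts the bookkeeping into Zorn's Lemma, while the paper's explicit recursion is the pattern it reuses later (e.g., in Lemma \ref{t2}) where the construction must additionally thread saturation conditions through the stages.
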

\begin{demo}{Proof} \begin{enumroman}
\item Cf.  \cite{Hung}. Suppose for contradiction that both theories are inconsistent.
Then we have $\mu_1\land a\leq \delta_1$ and $\mu_2\leq a\land \delta_2$ where 
$\mu_1$ and $\mu_2$ are some conjunction of elements of $\Gamma$
and $\delta_1$, $\delta_2$ are some disjunction of elements of $\Delta$.
But from 
$(\mu_1\land a\to \delta_1)\land (\mu_2\to a\lor \delta_2)\leq (\mu_1\land \mu_2\to \delta_1\lor \delta_2),$
we get
$\mu_1\land \mu_2\leq \delta_1\lor \delta_2,$ which contradicts the consistency of $(\Gamma, \Delta)$.

\item Cf. \cite{Hung}. Assume that $|A|=\kappa$. Enumerate the elements of $\A$ as $(a_i:i<\kappa)$. 
Then we can extend $(\Gamma, \Delta)$ consecutively by adding $a_i$ either
to $\Gamma$ or $\Delta$ while preserving consistency. In more detail, we define by transfinite induction a sequence of 
theories $(\Gamma_i,\Delta_i)$ for 
$i\in \kappa$ as follows. Set $\Gamma_0=\Gamma$ and $\Delta_0=\Delta$. If $\Gamma_i,\Delta_i$ are defined for all $i<\mu$ 
where $\mu$ is a limit ordinal, let $\Gamma_{\mu}=(\bigcup_{i\in \mu} \Gamma_i, \bigcup_{i\in \mu} \Delta_i)$. Now for successor ordinals. 
Assume that $(\Gamma_i, \Delta_i)$ are defined.
Set $\Gamma_{i+1}=\Gamma_i\cup \{a_i\}, \Delta_{i+1}=\Delta_i$ in case this is consistent, else 
set $\Gamma_{i+1}=\Gamma_i$ and $\Delta_{i+1}=\Delta_i\cup \{a_i\}$.
Let $T=\bigcup_{i\in \kappa}T_i$ and  $F= \bigcup_{i\in \kappa} F_i$, then $(T, F)$ is as desired.
\end{enumroman}
\end{demo}

\begin{lemma}\label{t2} Let $\A\in GPHA_{\alpha}$ and $(\Gamma,\Delta)$  be a consistent theory of $\A$. 
Let $I$ be a set such that $\alpha\subseteq I$  and  let $\beta=|I\sim \alpha|=\max(|A|, |\alpha|).$ Then there exists a minimal dilation
$\B$ of $\A$ of dimension $I$, and a theory $(T,F)$ in $\B$, 
extending $(\Gamma,\Delta)$ such that $(T,F)$ is saturated and complete.
\end{lemma}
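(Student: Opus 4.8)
The statement combines two tasks: first, building a minimal dilation $\B$ of $\A$ of dimension $I$; and second, extending the given consistent theory $(\Gamma,\Delta)$ inside $\B$ to one that is simultaneously \emph{saturated} and \emph{complete}. The key preparatory fact is Lemma~\ref{dl} (with its companion Lemma~\ref{cylindrify}), which supplies a dilation $\B\in G_\beta PHA_\beta$ with $\A=\Nr_\alpha\B$ once we verify the hypothesis that $\alpha\sim\Delta x$ is infinite for each $x$; here the cardinality bookkeeping $\beta=|I\sim\alpha|=\max(|A|,|\alpha|)$ is arranged precisely so that we have enough spare dimensions both to form the dilation and, more importantly, to witness every existentially quantified formula during saturation. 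So the first step is simply to invoke Lemma~\ref{dl} to produce the minimal dilation $\B$ of dimension $I$, noting that $(\Gamma,\Delta)$ remains a consistent theory of $\B$ since $\A\subseteq\Nr_\alpha\B$ and consistency is witnessed by the lattice order, which is preserved.

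The heart of the argument is a single transfinite induction that interleaves two moves. I would enumerate the elements of $B$ as $(b_i:i<\beta')$ where $\beta'=|B|$, and build an increasing chain of consistent theories $(\Gamma_i,\Delta_i)$. At each stage I do two things. For \emph{completeness}, I use Lemma~\ref{t1}(i): for the next element $b_i$, at least one of $(\Gamma_i\cup\{b_i\},\Delta_i)$ or $(\Gamma_i,\Delta_i\cup\{b_i\})$ is consistent, so I place $b_i$ on the appropriate side. For \emph{saturation}, whenever a formula of the form $\cyl{j}a$ has entered (or is being placed into) $\Gamma_i$, I must pick a fresh index $k\in\alpha\sim\Delta a$ and add the witness $\sub{j}{k}a$ to $\Gamma$, checking that doing so preserves consistency. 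Because at each step only finitely many dimensions have been used up by the finitely many elements appearing in the consistency inequalities, and because $\beta$ was chosen large enough, a fresh witnessing index $k$ is always available; this is exactly where the spare-dimension hypothesis pays off.

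\textbf{The main obstacle} is verifying that adding a Henkin witness $\sub{j}{k}a$ cannot destroy consistency. Suppose for contradiction that $(\Gamma_i\cup\{\sub{j}{k}a\},\Delta_i)$ is inconsistent while $\cyl{j}a\in\Gamma_i$. Then there is an inequality $\mu\land\sub{j}{k}a\le\delta$ with $\mu$ a finite meet from $\Gamma_i$ and $\delta$ a finite join from $\Delta_i$. The plan is to apply the quantifier $\cyl{j}$ and use the axioms of Theorem~\ref{axioms}: since $k$ is chosen outside $\Delta a$ and outside the dimension sets of all elements occurring in $\mu,\delta$, the operator $\cyl{k}$ (or the appropriate substitution/cylindrification identity, e.g. $\cyl{j}\sub{j}{k}$ behaving like $\cyl{k}$ on such elements) lets me replace $\sub{j}{k}a$ by $\cyl{j}a$ without affecting $\mu$ or $\delta$, yielding $\mu\land\cyl{j}a\le\delta$ and hence the inconsistency of $(\Gamma_i,\Delta_i)$ itself — a contradiction. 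Making this substitution step rigorous, i.e. tracking exactly which identity from Theorem~\ref{axioms} justifies commuting the quantifier past $\mu$ and $\delta$ and eliminating the fresh index, is the delicate part; the Heyting (rather than Boolean) setting means one must be careful that the relevant inequalities survive, but the adjointness of $\cyl{j}$ together with freshness of $k$ should close the gap.

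Finally, taking $T=\bigcup_i\Gamma_i$ and $F=\bigcup_i\Delta_i$, the limit theory $(T,F)$ is consistent (consistency is a finitary condition, preserved under unions of chains), complete (every $b_i$ was decided), and saturated (every $\cyl{j}a\in T$ received a witness at the stage it appeared). Since $(T,F)$ extends $(\Gamma,\Delta)$ and lives in the minimal dilation $\B$, this is the desired theory.
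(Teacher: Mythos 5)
Your proposal is correct and follows essentially the same route as the paper's proof: invoke Lemma \ref{dl} for the minimal dilation, enumerate the elements of $\B$, decide each one using Lemma \ref{t1}(i), add a fresh-index witness ${\sf s}^j_u p$ to the left side whenever ${\sf c}_j p$ lands there, verify consistency via freshness of $u$, and take unions at the end. The only detail to tighten is the quantifier-elimination step: one applies ${\sf c}_u$ over the \emph{fresh} index (so that $\mu$ and $\delta$ are fixed and ${\sf c}_u{\sf s}^j_u p={\sf c}_j p$ by Theorem \ref{axioms}), not ${\sf c}_j$ — and indeed your freshness mechanism, once stated this way, is cleaner than the paper's own one-line justification of that inequality.
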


\begin{demo}{Proof}
Let $I$ be provided as in the statement of the lemma. By lemma \ref{dl}, 
there exists $\B\in GPHA_I$ such that $\A\subseteq \Nr_{\alpha}\B$ and $\A$ generates $\B$.
We also have for all $X\subseteq \A$, $\Sg^{\A}X=\Nr_{\alpha}\Sg^{\B}X$.
Let $\{b_i:i<\kappa\}$ be an enumeration of the elements of $\B$; here $\kappa=|B|.$ 
Define by transfinite recursion a sequence $(T_i, F_i)$ for  $i<\kappa$ of
theories as follows. Set $T_0=\Gamma$ and $F_0=\Delta$. We assume inductively that 
$$|\beta\sim \bigcup_{x\in T_i} \Delta x\cup \bigcup_{x\in F_i}\Delta x|\geq \omega.$$
This is clearly satisfied for $F_0$ and $T_0$. Now we need to worry only about successor ordinals.
Assume that $T_i$ and $F_i$ are defined. 
We distinguish between two cases:
\begin{enumerate}
\item $(T_i, F_i\cup \{b_i\})$ is consistent.
Then set $T_{i+1}=T_i$ and $F_{i+1}=F_i\cup \{b_i\}.$
\item  If not, that is if $(T_i, F_i\cup \{b_i\})$ is inconsistent.
In this case, we distinguish between two subcases:

(a) $b_i$ is not of the form ${\sf c}_jp.$
Then set
$T_{i+1}=T_i\cup \{b_i\}$ and $F_{i+1}=F_i$.

(b) $b_i={\sf c}_jp$ for some $j\in I$.
Then set
$T_{i+1}=T_i\cup \{{\sf c}_jp, {\sf s}_u^jp\}$ where $u\notin \Delta p\cup \bigcup_{x\in T_i}\cup \bigcup_{x\in F_i}\Delta x$ and $F_{i+1}=F_i$.
\end{enumerate}
Such a $u$ exists by the inductive assumption. Now we check by induction that each $(T_i, F_i)$ is consistent.
The only part that needs checking, in view of the previous lemma,  is subcase (b).
So assume that $(T_i,F_i)$ is consistent and $b_i={\sf c}_jp.$
If $(T_{i+1}, F_{i+1})$ is inconsistent, then we would have for some $a\in T_i$ and some $\delta\in F_i$ that
$a\land {\sf c}_jp\land {\sf s}_u^jp\leq \delta.$ From this we get
$a\land {\sf c}_jp\leq \delta,$ because ${\sf s}_u^jp\leq {\sf c}_jp.$
But this contradicts the consistency of 
$(T_i\cup \{{\sf c}_jp\}, F_i)$.
Let $T=\bigcup_{i\in \kappa}T_i$ and $F=\bigcup_{i\in \kappa} F_i$, then $(T,F)$ is consistent. 
We show that it is saturated. If ${\sf c}_jp\in T$, then ${\sf c}_jp\in T_{i+1}$ for some $i$,
hence ${\sf s}_u^jp\in T_{i+1}\subseteq T$ and $u\notin \Delta p$.
Now by lemma \ref{t1}, we can extend $(T,F)$ is $\B$ to a complete theory, 
and this will not affect saturation, since the process of completion does not take us out of 
$\B$.
\end{demo}

The next lemma constitutes the core of our construction; involving a zig-zag Gabbay construction, 
it will be used repeatedly, to construct our desired representation
via a set algebra based on a Kripke system defined in  \ref{Kripke}

\begin{lemma}\label{t3} Let $\A\in GPHA_{\alpha}$ be generated by $X$ and let $X=X_1\cup X_2$.  
Let $(\Delta_0, \Gamma_0)$, $(\Theta_0, \Gamma_0^*)$ be two consistent theories in $\Sg^{\A}X_1$ and $\Sg^{\A}X_2,$ respectively
such that $\Gamma_0\subseteq \Sg^{\A}(X_1\cap X_2)$, $\Gamma_0\subseteq \Gamma_0^*$. Assume further that 
$(\Delta_0\cap \Theta_0\cap \Sg^{\A}X_1\cap \Sg^{\A}X_2, \Gamma_0)$ is complete in $\Sg^{\A}X_1\cap \Sg^{\A}X_2$. 
Suppose that $I$ is a set such that $\alpha\subseteq I$ and $|I\sim \alpha|=max (|A|,|\alpha|)$. 
Then there exist a dilation $\B\in GPHA_I$ of $\A$, and theories $T_1=(\Delta_{\omega}, \Gamma_{\omega})$, 
$T_2=(\Theta_{\omega}, \Gamma_{\omega}^*)$ extending 
$(\Delta_0, \Gamma_0)$, $(\Theta_0, \Gamma_0^*)$, such that $T_1$ and $T_2$ are consistent and saturated in 
$\Sg^{\B}X_1$ and $\Sg^{\B}X_2,$ respectively,  
$(\Delta_{\omega}\cap \Theta_{\omega}, \Gamma_{\omega})$ is complete in $\Sg^{\B}X_1\cap \Sg^{\B}X_2,$ and
$\Gamma_{\omega}\subseteq \Gamma_{\omega}^*$.
\end{lemma}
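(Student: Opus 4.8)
The plan is to build the two dilated theories simultaneously by a back-and-forth (zig-zag) construction over $\omega$ stages, alternating between the two sides $\Sg^{\B}X_1$ and $\Sg^{\B}X_2$ while maintaining a completeness-coherence invariant on the overlap $\Sg^{\B}X_1\cap\Sg^{\B}X_2$. First I would invoke Lemma \ref{dl} (together with Lemma \ref{cylindrify}) to secure a single dilation $\B\in GPHA_I$ of $\A$ with $\A=\Nr_{\alpha}\B$, $\A$ generating $\B$, and $|\Delta b\sim\alpha|<\omega$ for all $b\in B$; crucially $\Sg^{\B}X_j$ is the dilation of $\Sg^{\A}X_j$ and $\Sg^{\B}X_1\cap\Sg^{\B}X_2$ is the dilation of $\Sg^{\A}(X_1\cap X_2)$, so that the hypothesised completeness on the overlap is inherited at the base stage. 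Because $|I\sim\alpha|=\max(|A|,|\alpha|)$ matches the cardinality bound in Lemma \ref{t2}, there are always infinitely many spare indices available, which is what makes saturation witnesses findable.

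The construction itself proceeds in stages indexed by $n<\omega$, producing increasing theories $(\Delta_n,\Gamma_n)$ in $\Sg^{\B}X_1$ and $(\Theta_n,\Gamma_n^*)$ in $\Sg^{\B}X_2$. At even stages I would work on the $X_1$-side, running one step of the saturation-and-completion procedure of Lemma \ref{t2}: enumerate the elements of $\Sg^{\B}X_1$, decide the next element into $\Delta$ or $\Gamma$ preserving consistency, and when the element to be added is a cylindrification ${\sf c}_j p$ forced into the positive part, adjoin a witness ${\sf s}^j_u p$ for a fresh index $u\notin\Delta p\cup\bigcup\Delta x$ drawn from the infinitely many spare dimensions. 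At odd stages I would do the symmetric step on the $X_2$-side. The essential extra work beyond Lemma \ref{t2} is the \emph{transfer} step: whenever a decision on one side determines the truth value of an element lying in the overlap $\Sg^{\B}X_1\cap\Sg^{\B}X_2$, I must propagate that decision to the other side so that the two theories never disagree on a common element. This is exactly where the hypothesis that $(\Delta_0\cap\Theta_0\cap\cdots,\Gamma_0)$ is \emph{complete} on the overlap is used: completeness on the overlap means every common element already has a decided value, so the propagation is forced and unambiguous, and $\Gamma_0\subseteq\Gamma_0^*$ guarantees the two sides start coherent on the negative part.

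The invariant I would carry through the induction has three clauses: (i) each $(\Delta_n,\Gamma_n)$ and $(\Theta_n,\Gamma_n^*)$ is consistent; (ii) $(\Delta_n\cap\Theta_n,\Gamma_n)$ is complete in $\Sg^{\B}X_1\cap\Sg^{\B}X_2$; and (iii) $\Gamma_n\subseteq\Gamma_n^*$. Consistency at successor stages follows just as in Lemma \ref{t2}, using $\mathsf{s}^j_u p\leq \mathsf{c}_j p$ to reduce the witness case to the non-witness case; the new point is checking that the cross-side propagation never introduces an inconsistency, which holds because by the completeness invariant the propagated element already had its value fixed consistently on the receiving side. Taking unions $\Delta_{\omega}=\bigcup_n\Delta_n$ etc., consistency is preserved because every potential inconsistency is finitary and hence appears at some finite stage, saturation holds because every cylindrified element is enumerated and handled at some stage with a witness, completeness on the overlap passes to the limit, and $\Gamma_{\omega}\subseteq\Gamma_{\omega}^*$ is immediate.

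The main obstacle I anticipate is precisely the bookkeeping of the transfer step: I must ensure that propagating a decision about a common element to the other side does not collide with a witness index or a decision already committed there, and that the fresh-index demand for saturation on \emph{both} sides can be met simultaneously without exhausting the spare dimensions. The resolution is that at each finite stage only finitely many indices have been used on either side, while $|I\sim\alpha|$ is infinite, so fresh witnesses remain available throughout; and the completeness-on-the-overlap invariant is exactly the mechanism that makes the two-sided decisions compatible rather than contradictory. Everything else is a routine elaboration of the single-theory argument in Lemma \ref{t2}.
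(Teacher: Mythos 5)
Your high-level plan (zig-zag between the two sides while keeping the overlap coherent) is the right one, but the proposal has a genuine gap at its central step: the justification of the cross-side transfer. You argue that propagating a decision about an element of $\Sg^{\B}X_1\cap\Sg^{\B}X_2$ can never create an inconsistency because ``completeness on the overlap means every common element already has a decided value.'' The hypothesis of the lemma, however, only gives completeness of $(\Delta_0\cap\Theta_0\cap\Sg^{\A}X_1\cap\Sg^{\A}X_2,\Gamma_0)$ in the $\alpha$-dimensional overlap $\Sg^{\A}X_1\cap\Sg^{\A}X_2$. The overlap in the dilation, $\Sg^{\B}X_1\cap\Sg^{\B}X_2$, is strictly larger: it contains elements whose dimension sets meet $I\sim\alpha$, and the base hypothesis says nothing about those. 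Your invariant (ii) --- completeness of $(\Delta_n\cap\Theta_n,\Gamma_n)$ in the full $\B$-level overlap at every finite stage --- is likewise unattainable: in an element-by-element construction only an initial segment of the enumeration has been decided at stage $n$, so the current theories cannot decide every element of the overlap; such completeness can only be had in the limit (and in the paper it is verified only for $(\Delta_{\omega}\cap\Theta_{\omega},\Gamma_{\omega})$, using the fact that each one-sided theory was made \emph{complete} at its stage). Since your claim that the transfer ``never introduces an inconsistency'' rests on exactly this invariant, the core of your argument is circular.

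What is actually needed --- and what the paper's Claims 1, 2 and 3 supply --- is a push-down argument. Suppose the transfer were inconsistent; then some $E$ in the current $X_1$-side theory with $E\in\Sg^{\B}X_2$ satisfies $E\leq \theta\to\mu^*$ for some $\theta\in\Theta_0$, $\mu^*\in\Gamma_0^*$. Because $|\Delta E\sim\alpha|<\omega$, Lemma \ref{cylindrify} yields a finite $D\subseteq I\sim\alpha$ with ${\sf c}_{(D)}E\in\Nr_{\alpha}\Sg^{\B}X_1\cap\Nr_{\alpha}\Sg^{\B}X_2=\Sg^{\A}X_1\cap\Sg^{\A}X_2$, while ${\sf c}_{(D)}E\leq\theta\to\mu^*$ still holds since $\theta,\mu^*$ are $\alpha$-dimensional. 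Only now does the base completeness hypothesis apply: ${\sf c}_{(D)}E$ lies either in $\Delta_0\cap\Theta_0$ or in $\Gamma_0$, and each alternative is contradictory --- the first against the consistency of $(\Theta_0,\Gamma_0^*)$, the second against the consistency of the already-completed $X_1$-side theory, using $E\leq{\sf c}_{(D)}E$ and $\Gamma_0\subseteq\Gamma_1'$; the reverse direction of the zig-zag needs the symmetric argument with the universal quantifier ${\sf q}_{(D)}$. This is the mathematical heart of the lemma and it is missing from your proposal. Relatedly, the paper does not fix one dilation and interleave single decisions: it builds an increasing chain $\A\subseteq\A(C_1)\subseteq\A(C_1)(C_2)\subseteq\cdots$ of minimal dilations over pairwise disjoint fresh blocks $C_n$, fully completes and saturates one side inside each new dilation (via Lemmas \ref{t1} and \ref{t2}), and takes $\B$ to be a limit of the chain; this block structure is what guarantees untouched witnesses at every stage and is what makes the push-down argument available stage by stage.
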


\begin{demo}{Proof}  Like the corresponding proof in \cite{Hung}, we will build the desired theories 
in a step-by-step zig-zag manner in a large enough dilation whose dimension is specified 
by $I$. The spare dimensions play a role of added witnesses, that will allow us to eliminate quantifiers, in a sense.
Let $\A=\A_0\in GPHA_{\alpha}$. The proof consists of an iteration of lemmata \ref{t1} and \ref{t2}.
Let $\beta=max(|A|, |\alpha|)$, and let $I$ be such that $|I\sim \alpha|=\beta$.

We distinguish between two cases:


Assume  that $G$ is strongly rich or $G$ contains consists of all finite transformations. In this case we only deal with minimal dilations.
We can write  
$\beta = I\sim \alpha$ as $\bigcup_{n=1}^{\infty}C_n$ where $C_i\cap C_j=\emptyset$ for distinct $i$ and $j$ and $|C_i|=\beta$
for all $i$. Then iterate first two items in lemma \ref{dl}.
Let $\A_1=\A(C_1)\in G_{\alpha\cup C_1}PHA_{\alpha\cup C_1}$ be a minimal dilation of $\A$, so that $\A=\Nr_{\alpha}\A_1$.
Let $\A_2=\A(C_1)(C_2)$ be a minimal dilation of $\A_1$ so that $\A_1=\Nr_{\alpha\cup C_1}\A_2$. 
Generally, we define inductively $\A_n=\A(C_1)(C_2)\ldots (C_n)$ to be  a minimal dilation of $\A_{n-1}$, so that 
$\A_{n-1}=\Nr_{\alpha\cup C_1\cup \ldots C_{n-1}}\A_n$.
Notice that for $k<n$, $\A_n$ is a minimal dilation of $\A_k$.
So we have a sequence of algebras 
$\A_0\subseteq \A_1\subseteq \A_2\ldots.$ Each element in the sequence is the minimal dilation of its preceding one.


Now that we have a sequence of extensions $\A_0\subseteq \A_1\ldots$ in different increasing dimensions,  
we now form a limit of this sequence
in $I$ dimensions. We can use ultraproducts, but instead we use products, and quotient algebras. First form the Heyting algebra, that is the product of the Heyting reducts of the constructed algebras, that is take   
$\C=\prod_{n=0}^{\infty}\Rd A_n$, where $\Rd \A_n$ denotes the Heyting reduct of $\A_n$ obtained by discarding substitutions and cylindrifiers. 
Let 
$$M=\{f\in C: (\exists n\in \omega)(\forall k\geq n) f_{k}=0\}.$$
Then $M$ is a Heyting ideal of $\C$. Now form the quotient Heyting  algebra $\D=\C/M.$
We want to expand this Heyting algebra algebra by cylindrifiers and substitutions, i.e to an algebra in $GPHA_{I}$.
Towards this aim, for $\tau\in {}G,$ define
$\phi({\tau})\in {} ^CC$ as follows:
$$(\phi(\tau)f)_n={\sf s}_{\tau\upharpoonright dim \A_n}^{\A_n}f_n$$
if $\tau(dim(\A_n))\subseteq dim (\A_n)$.
Otherwise $$(\phi(\tau)f)_n=f_n.$$
For $j\in I$,  define
$${\sf c}_jf_n={\sf c}_{(dim \A_n\cap \{j\})}^{\A_n}f_n,$$
and
$${\sf q}_jf _n={\sf q}_{(dim \A_n\cap \{j\})}^{\A_n}f_n.$$
Then for $\tau\in G$ and $j\in I$,  set
$${\sf s}_{\tau}(f/M)=\phi({\tau})f/M,$$
$${\sf c}_{j}(f/M)=({\sf c}_j f)/M,$$
and
$${\sf q}_{j}(f/M)=({\sf q}_j f)/M.$$
Then, it can be easily checked that,  $\A_{\infty}=(\D, {\sf s}_{\tau}, {\sf c}_{j}, {\sf q}_{j})$ is a $GPHA_I$,
in which every $\A_n$ neatly embeds. We can and will assume that $\A_n=\Nr_{\alpha\cup C_1\ldots \cup C_n}\A_{\infty}$. 
Also $\A_{\infty}$ is a minimal dilation of $\A_n$ for all $n$.
During our 'zig-zagging' 
we shall be extensively using lemma \ref{cylindrify}.

From now on, fix $\A$ to be as in the statement of lemma \ref{t3} for some time to come. 
So $\A\in GPHA_{\alpha}$ is generated by $X$ and $X=X_1\cup X_2$.  
$(\Delta_0, \Gamma_0)$, $(\Theta_0, \Gamma_0^*)$ are two consistent theories in $\Sg^{\A}X_1$ and $\Sg^{\A}X_2,$ respectively
such that $\Gamma_0\subseteq \Sg^{\A}(X_1\cap X_2)$, $\Gamma_0\subseteq \Gamma_0^*$. Finally  
$(\Delta_0\cap \Theta_0\cap \Sg^{\A}X_1\cap \Sg^{\A}X_2, \Gamma_0)$ is complete in $\Sg^{\A}X_1\cap \Sg^{\A}X_2.$ 
Now we have:
$$\Delta_0\subseteq \Sg^{\A}X_1\subseteq \Sg^{\A(C_1)}X_1\subseteq \Sg^{\A(C_1)(C_2)}X_1\subseteq \Sg^{\A(C_1)(C_2)(C_3)}X_1 \ldots\subseteq 
\Sg^{\A_{\infty}}X_1.$$ 
$$\Theta_0\subseteq \Sg^{\A}X_2\subseteq \Sg^{\A(C_1)}X_2\subseteq 
\Sg^{\A(C_1)(C_2)}X_2\subseteq \Sg^{\A(C_1)(C_2)(C_3)}X_2 \ldots\subseteq \Sg^{\A_{\infty}}X_2. $$
In view of lemmata  \ref{t1}, \ref{t2}, extend $(\Delta_0, \Gamma_0)$ 
to a complete and saturated theory $(\Delta_1, \Gamma_1')$ in $\Sg^{\A(C_1)}X_1$. Consider $(\Delta_1, \Gamma_0)$. Zig-zagging away,
we extend our theories in a step by step manner. The proofs of the coming Claims, 1, 2 and 3, 
are very similar to the proofs of the corresponding claims in \cite{Hung}, which are in turn an algebraic version of lemmata 4.18-19-20 in \cite {b},
with one major difference from the former. 
In our present situation, we can cylindrify on only finitely many indices, so we have to be 
careful, when talking about dimension sets, and in forming neat reducts (or compressions). 
Our proof then becomes substantially more involved. In the course of our proof we use extensively lemmata 
\ref{dl} and \ref{cylindrify} which are not formulated
in \cite{Hung} because we simply did not need them when we had cylindrifications on possibly 
infinite sets.

\begin{athm}{Claim 1} The theory $T_1=(\Theta_0\cup (\Delta_1\cap \Sg^{\A(C_1)}X_2), \Gamma_0^*)$ is consistent in $\Sg^{\A(C_1)}X_2.$
\end{athm}
\begin{demo}{Proof of Claim 1}
Assume that $T_1$ is inconsistent. Then 
for some conjunction $\theta_0$ of elements in $\Theta_0$, some $E_1\in \Delta_1\cap \Sg^{\A(C_1)}X_2,$
and some 
disjunction $\mu_0^*$ in $\Gamma_0^*,$ we have
$\theta_0\land E_1\leq \mu_0^*,$
and so 
$E_1\leq \theta_0\rightarrow \mu_0^*.$
Since $\theta_0\in \Theta_0\subseteq \Sg^{\A}X_2$ 
and $\mu_0^*\in \Gamma_0^*\subseteq \Sg^{\A}X_2\subseteq \Nr_{\alpha}^{\A(C_1)}\A$, 
therefore, for any finite 
set $D\subseteq C_1\sim \alpha$, we have ${\sf c}_{(D)}\theta_0=\theta_0$ and ${\sf c}_{(D)}\mu_0^*=\mu_0^*$.
Also for any finite set $D\subseteq C_1\sim \alpha,$ we have 
${\sf c}_{(D)}E_1\leq {\sf c}_{(D)}(\theta_0\to \mu_0^*)=\theta_0\to \mu^*.$
Now $E_1\in \Delta_1$, hence $E_1\in \Sg^{\A(C_1)}X_1$. 
By definition, we also have $E_1\in \Sg^{\A(C_1)}X_2.$
By lemma \ref{cylindrify} there exist finite sets $D_1$ and $D_2$ contained in $C_1\sim \alpha,$ such that
$${\sf c}_{(D_1)}E_1\in \Nr_{\alpha}\Sg^{\A(C_1)}X_1$$ and 
$${\sf c}_{(D_2)}E_1\in \Nr_{\alpha}\Sg^{\A(C_1)}X_2.$$
Le $D=D_1\cup D_2$. Then $D\subseteq  C_1\sim \alpha$ and we have: 
$${\sf c}_{(D)}E_1\in \Nr_{\alpha}\Sg^{\A(C_1)}X_1=\Sg^{\Nr_{\alpha}\A(C_1)}X_1=\Sg^{\A}X_1$$ and
$${\sf c}_{(D)}E_1\in \Nr_{\alpha}\Sg^{\A(C_1)}X_2=\Sg^{\Nr_{\alpha}\A(C_1)}X_2=\Sg^{\A}X_2,$$ 
that is to say
$${\sf c}_{(D)}E_1\in \Sg^{\A}X_1\cap \Sg^{\A}X_2.$$
Since  
$(\Delta_0\cap \Theta_0\cap \Sg^{\A}X_1\cap \Sg^{\A}X_2, \Gamma_0)$ is complete in $\Sg^{\A}X_1\cap \Sg^{\A}X_2,$
we get that ${\sf c}_{(D)}E_1$ is either in $\Delta_0\cap \Theta_0$ or $\Gamma_0$.
We show that either way leads to a contradiction, by which we will be done. Suppose it is in $\Gamma_0$. 
Recall that we extended $(\Delta_0, \Gamma_0)$ to a complete saturated extension 
$(\Delta, \Gamma')$ in $\Sg^{\A(C_1)}X_1$. Since $\Gamma_0\subseteq \Gamma_1',$ we get that ${\sf c}_{(D)}E_1\in \Gamma_1'$ hence
${\sf c}_{(D)}E_1\notin \Delta_1$  because $(\Delta_1,\Gamma_1')$ is saturated and consistent. But this 
contradicts that $E_1\in \Delta_1$ because $E_1\leq {\sf c}_{(D)}E_1.$
Thus, we can infer that  ${\sf c}_{(D)}E_1\in \Delta_0\cap \Theta_0$. In particular, it is in $\Theta_0,$ and so 
$\theta_0\rightarrow \mu_0^*\in \Theta_0$. 
But again  this contradicts the consistency of $(\Theta_0, \Gamma_0^*)$.
\end{demo}

Now we extend $T_1$ to a complete and saturated theory $(\Theta_2, \Gamma_2^*)$ in $\Sg^{\A(C_1)(C_2)}X_2$. Let 
$\Gamma_2=\Gamma_2^*\cap \Sg^{\A(C_1)(C_2)}X_1$. 

\begin{athm}{Claim 2} The theory $T_2=(\Delta_1\cup (\Theta_2\cap \Sg^{\A(C_1)(C_2))}X_1), \Gamma_2)$ 
is consistent in $\Sg^{\A(C_1)(C_2)}X_1$.
\end{athm}
\begin{demo}{Proof of Claim 2}  If the Claim fails to hold, then we would have some 
$\delta_1\in \Delta_1$, $E_2\in \Theta_2\cap \Sg^{\A(C_1)(C_2)}X_1,$  and a disjunction $\mu_2\in \Gamma_2$  such that  
$\delta_1\land E_2\rightarrow \mu_2,$
and so 
$\delta_1\leq (E_2\rightarrow \mu_2)$
since  $\delta_1\in \Delta_1\subseteq \Sg^{\A(C_1)}X_1$. But $\Sg^{\A(C_1)}X_1\subseteq \Nr_{\alpha\cup C_1}^{\A(C_1)(C_2)}X_1$, 
therefore for any finite set $D\subseteq C_2\sim C_1,$ 
we have ${\sf q}_{(D)}\delta_1=\delta_1.$
The following holds for any finite set $D\subseteq C_2\sim C_1,$
$$\delta_1\leq {\sf q}_{(D)}(E_2\rightarrow \mu_2).$$
Now, by lemma \ref{cylindrify}, there is a finite set $D\subseteq C_2\sim C_1,$ satisfying 
\begin{equation*}
\begin{split}
\delta_1\to {\sf q}_{(D)}(E_2\rightarrow \mu_2)
&\in \Nr_{\alpha\cup C_1}\Sg^{\A(C_1)(C_2)}X_2,\\
&=\Sg^{\Nr_{\alpha\cup C_1}\A(C_1)(\A(C_2)}X_2,\\
&=\Sg^{\A(C_1)}X_2.\\
\end{split}
\end{equation*}
Since $\delta_1\in \Delta_1$, and $\delta_1\leq {\sf q}_{(D)}(E_2\to \mu_2)$,  
we get that ${\sf q}_{(D)}(E_2\rightarrow \mu_2)$ is in $\Delta_1\cap \Sg^{\A(C_1)}X_2$. 
We proceed as in the previous claim replacing $\Theta_0$ by $\Theta_2$ and the existental quantifier by the universal one.
Let $E_1= {\sf q}_{(D)}(E_2\to \mu_2)$. Then $E_1\in \Sg^{\A(C_1)}X_1\cap \Sg^{\A(C_2)}X_2$.
By lemma \ref{cylindrify} there exist finite sets $D_1$ and $D_2$ contained in $C_1\sim \alpha$ such that
$${\sf q}_{(D_1)}E_1\in \Nr_{\alpha}\Sg^{\A(C_1)}X_1,$$ and 
$${\sf q}_{(D_2)}E_1\in \Nr_{\alpha}\Sg^{\A(C_1)}X_2.$$
Le $J=D_1\cup D_2$. Then $J\subseteq  C_1\sim \alpha,$ and we have: 
$${\sf q}_{(J)}E_1\in \Nr_{\alpha}\Sg^{\A(C_1)}X_1=\Sg^{\Nr_{\alpha}\A(C_1)}X_1=\Sg^{\A}X_1$$ and
$${\sf q}_{(J)}E_1\in \Nr_{\alpha}\Sg^{\A(C_1)}X_2=\Sg^{\Nr_{\alpha}\A(C_1)}X_2=\Sg^{\A}X_2.$$ 
That is to say,
$${\sf q}_{(J)}E_1\in \Sg^{\A}X_1\cap \Sg^{\A}X_2.$$
Now $(\Delta_0\cap \Theta_2\cap \Sg^{\A}X_1\cap \Sg^{\A}X_2, \Gamma_0)$ is complete in $\Sg^{\A}X_1\cap \Sg^{\A}X_2,$
we get that ${\sf q}_{(J)}E_1$ is either in $\Delta_0\cap \Theta_2$ or $\Gamma_0$.
Suppose it is in $\Gamma_0$. Since $\Gamma_0\subseteq \Gamma_1'$, we get that ${\sf q}_{(J)}E_1\in \Gamma_1'$, hence
${\sf q}_{(J)}E_1\notin \Delta_1,$  because $(\Delta_1,\Gamma_1')$ is saturated and consistent. 
Here, recall that,  $(\Delta, \Gamma')$ is a saturated complete extension of 
$(\Gamma, \Delta)$.  But this 
contradicts that $E_1\in \Delta_1$.
Thus, we can infer that  ${\sf q}_{(J)}E_1\in \Delta_0\cap \Theta_2$.  In particular, it is in $\Theta_2$. 
Hence ${\sf q}_{(D\cup J)}(E_2\to \mu_2)\in \Theta_2$, and so $E_2\to \mu_2\in \Theta_2$ since 
${\sf q}_{(D\cup J)}(E_2\to \mu_2)\leq E_1\to \mu_2$. But this is a contradiction,
since $E_2\in \Theta_2$, $\mu_2\in \Gamma_2^*$ and $(\Theta_2,\Gamma_2^*)$ is consistent. 
\end{demo}
Extend $T_2$ to a complete and saturated theory $(\Delta_3, \Gamma_3')$ in $\Sg^{\A(C_1)(C_2)(C_3)}X_1$
such that $\Gamma_2\subseteq \Gamma_3'$. Again we are interested only in $(\Delta_3, \Gamma_2)$.

\begin{athm}{Claim 3 } The theory $T_3=(\Theta_2\cup \Delta_3\cap \Sg^{\A(C_1)(C_2)(C_3)}X_2, \Gamma_2^*)$ is consistent in 
$\Sg^{\A(C_1)(C_2)(C_3)}X_2.$
\end{athm}
\begin{demo}{Proof of Claim 3}  Seeking a contradiction, assume that the Claim does not hold. Then we would get 
for some $\theta_2\in \Theta_2$, $E_3\in \Delta_3\cap \Sg^{\A(C_1)(C_2)(C_3)}X_2$ and some disjunction $\mu_2^*\in \Gamma_2^*,$  that
$\theta_2\land E_3\leq \mu_2^*.$
Hence $E_3\leq \theta_2\rightarrow \mu_2^*.$
For any finite set $D\subseteq C_3\sim (C_1\cup C_2),$ we have ${\sf c}_{(D)}E_3\leq \theta_2\rightarrow \mu_2^*$.
By lemma \ref{cylindrify},  there is a  finite set $D_3\subseteq C_3\sim (C_1\cup C_2),$ satisfying  
\begin{equation*}
\begin{split}
{\sf c}_{(D_3)}E_3
&\in \Nr_{\alpha\cup C_1\cup C_2}\Sg^{\A(C_1)(C_2)(C_3)}X_1\\
&=\Sg^{\Nr_{\alpha\cup C_1\cup C_2}\A(C_1)C_2)(C_3)}X_1\\
&= \Sg^{\A(C_1)(C_2)}X_1.\\ 
\end{split}
\end{equation*}
If ${\sf c}_{(D_3)}E_3\in \Gamma_2^*$, then it in $\Gamma_2$, 
and since $\Gamma_2\subseteq \Gamma_3'$, it cannot be in $\Delta_3$.
But this contradicts that $E_3\in \Delta_3$. So ${\sf c}_{(D_3)}E_3\in \Theta_2,$ because $E_3\leq {\sf c}_{(D_3)}E_3,$ 
and so $(\theta_2\rightarrow \mu_2^*)\in \Theta_2,$ which contradicts 
the consistency of $(\Theta_2, \Gamma_2^*).$
\end{demo}
 Likewise, now extend $T_3$ to a complete and saturated theory $(\Delta_4, \Gamma_4')$ in $\Sg^{\A(C_1)(C_2)(C_3)(C_4)}X_2$
such that $\Gamma_3\subseteq \Gamma_4'.$ As before the theory 
$(\Delta_3, \Theta_4\cap \Sg^{\A(C_1)(C_2)(C_3)(C_4)}X_1, \Gamma_4)$ is consistent in
$\Sg^{\A(C_1)(C_2)(C_3)(C_4)}X_1$.
Continue, inductively,  to construct $(\Delta_5, \Gamma_5')$, $(\Delta_5, \Gamma_4)$
and so on.
We obtain, zigzaging along, the following sequences: 
$$(\Delta_0, \Gamma_0), (\Delta_1, \Gamma_0), (\Delta_3, \Gamma_2)\ldots$$
$$(\Theta_0, \Gamma_0^*), (\Theta_2, \Gamma_2^*), (\Theta_4, \Gamma_4^*)\ldots $$
such that
\begin{enumarab}
\item $(\theta_{2n}, \Gamma_{2n}^*)$ is complete and saturated in $\Sg^{\A(C_1)\ldots (C_{2n})}X_2,$
\item $(\Delta_{2n+1}, \Gamma_{2n})$ is a saturated theory in $\Sg^{\A(C_1)\ldots (C_{2n+1})}X_1,$
\item $\Theta_{2n}\subseteq \Theta_{2n+2}$, $\Gamma_{2n}^*\subseteq \Gamma_{2n+2}^*$ and 
$\Gamma_{2n}=\Gamma_{2n}^*\cap \Sg^{\A(C_1)\ldots \A(C_{2n})}X_1,$
\item $\Delta_0\subseteq \Delta_1\subseteq \Delta_3\subseteq \ldots .$
\end{enumarab}
Now let $\Delta_{\omega}=\bigcup_{n}\Delta_n$, $\Gamma_{\omega}=\bigcup_{n}\Gamma_n$, $\Gamma_{\omega}^*=\bigcup_{n}\Gamma_n^*$ and
$\Theta_{\omega}=\bigcup_n\Theta_n$. Then we have
 $T_1=(\Delta_{\omega}, \Gamma_{\omega})$, 
$T_2=(\Theta_{\omega}, \Gamma_{\omega}^*)$ extend 
$(\Delta, \Gamma)$, $(\Theta, \Gamma^*)$, such that $T_1$ and $T_2$ are consistent and saturated in $\Sg^{\B}X_1$ and $\Sg^{\B}X_2,$ respectively, 
$\Delta_{\omega}\cap \Theta_{\omega}$ is complete in $\Sg^{\B}X_1\cap \Sg^{\B}X_2,$ and
$\Gamma_{\omega}\subseteq \Gamma_{\omega}^*$.
We check that $(\Delta_{\omega}\cap \Theta_{\omega},\Gamma_{\omega})$
is complete in $\Sg^{\B}X_1\cap \Sg^{\B}X_2$.
Let $a\in \Sg^{\B}X_1\cap \Sg^{\B}X_2$. Then there exists $n$ such that
$a\in \Sg^{\A_n}X_1\cap \Sg^{\A_n}X_2$. Now $(\Theta_{2n}, \Gamma_{2n}^*)$ is complete and so either $a\in \Theta_{2n}$ or $a\in \Gamma_{2n}^*$.
If $a\in \Theta_{2n}$ it will be in $\Delta_{2n+1}$ and if $a\in \Gamma_{2n}^*$ it will be in $\Gamma_{2n}$. In either case,
$a\in \Delta_{\omega}\cap \Theta_{\omega}$ or $a\in \Gamma_{\omega}$. 
\end{demo}
\begin{definition}
\begin{enumarab}
\item  Let $\A$ be an algebra generated by $X$ and assume that $X=X_1\cup X_2$. A pair $((\Delta,\Gamma)$  $(T,F))$ of theories in 
$\Sg^{\A}X_1$ and $\Sg^{\A}X_2$ is a matched pair of theories if 
$(\Delta\cap T\cap \Sg^{\A}X_1\cap \Sg^{\A}X_2, \Gamma\cap F\cap \Sg^{\A}X_1\cap \Sg^{\A}X_2)$
is complete in $\Sg^{\A}X_1\cap \Sg^{\A}X_2$.
\item A theory $(T, F)$ extends a theory $(\Delta, \Gamma)$ if $\Delta\subseteq T$ and $\Gamma\subseteq F$.
\item A pair $(T_1, T_2)$ of theories extend another pair  $(\Delta_1, \Delta_2)$ if $T_1$ 
extends $\Delta_1$ and $T_2$ extends $\Delta_2.$ 
\end{enumarab}
\end{definition}
The following Corollary follows directly from the proof of lemma \ref{t3}.

\begin{corollary}\label{main1} Let $\A\in GPHA_{\alpha}$ be generated by $X$ and let $X=X_1\cup X_2$.  
Let $((\Delta_0, \Gamma_0)$, $(\Theta_0, \Gamma_0^*))$ be a matched pair in $\Sg^{\A}X_1$ and $\Sg^{\A}X_2,$ respectively.
Let $I$ be  a set such that $\alpha \subseteq I$, and $|I\sim \alpha|=max(|A|, |\alpha|)$.
Then there exists a dilation $\B\in GPHA_I$ of $\A$, and a matched pair, $(T_1, T_2)$ extending 
$((\Delta_0, \Gamma_0)$, $(\Theta_0, \Gamma_0^*))$, such that $T_1$ and $T_2$ are saturated in $\Sg^{\B}X_1$ and $\Sg^{\B}X_2$, respectively.
\end{corollary}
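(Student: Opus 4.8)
The plan is to read Corollary \ref{main1} as the symmetric repackaging of Lemma \ref{t3} and to obtain it by a single appeal to that lemma, after checking that the matched-pair hypothesis supplies everything the (formally asymmetric) hypotheses of Lemma \ref{t3} actually use. Write $W=\Sg^{\A}X_1\cap \Sg^{\A}X_2$ and $W'=\Sg^{\B}X_1\cap \Sg^{\B}X_2$ for the common subalgebras. The only discrepancy between the two statements is this: Lemma \ref{t3} is phrased with a single refusal set $\Gamma_0$ assumed to sit inside $\Sg^{\A}(X_1\cap X_2)$ and to satisfy $\Gamma_0\subseteq \Gamma_0^*$, together with completeness of $(\Delta_0\cap \Theta_0\cap W,\Gamma_0)$ in $W$; a matched pair instead asks only that the symmetric theory $(\Delta_0\cap \Theta_0\cap W,\ \Gamma_0\cap \Gamma_0^*\cap W)$ be complete in $W$.

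First I would extract the completeness hypothesis of Lemma \ref{t3} from matched-pairness. For $a\in W$, the matched-pair condition places $a$ either in $\Delta_0\cap \Theta_0$ or in $\Gamma_0\cap \Gamma_0^*$; in the second case $a\in \Gamma_0$, so $(\Delta_0\cap \Theta_0\cap W,\Gamma_0)$ is complete in $W$, exactly as required. Next I would dispose of the two containment conditions. The point is that they are used in the proof of Lemma \ref{t3} only to keep the two refusal chains nested, and the cross-communication between the $X_1$- and $X_2$-sides in that proof passes exclusively through $W$ (this is what Claims 1--3 engineer, by cylindrifying or quantifying elements down into $\Sg^{\A}X_1\cap \Sg^{\A}X_2$ via Lemma \ref{cylindrify} and invoking completeness there). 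Consequently the formulas of $\Gamma_0$ lying outside $W$ never cross the cut; they ride passively on the $X_1$-side, so the requirement $\Gamma_0\subseteq \Sg^{\A}(X_1\cap X_2)$ may be dropped. The relevant remnant of $\Gamma_0\subseteq \Gamma_0^*$ is its restriction $\Gamma_0\cap W\subseteq \Gamma_0^*$, and this now follows for free: if $a\in \Gamma_0\cap W$ then $a\notin \Delta_0$ by consistency of $(\Delta_0,\Gamma_0)$, hence $a\notin \Delta_0\cap \Theta_0$, so matched-pairness forces $a\in \Gamma_0\cap \Gamma_0^*\subseteq \Gamma_0^*$.

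With these observations the construction of Lemma \ref{t3} runs verbatim and outputs a dilation $\B\in GPHA_I$ together with saturated theories $T_1=(\Delta_\omega,\Gamma_\omega)$ and $T_2=(\Theta_\omega,\Gamma_\omega^*)$ extending the given seeds, with $(\Delta_\omega\cap \Theta_\omega,\Gamma_\omega)$ complete in $W'$ and $\Gamma_\omega\subseteq \Gamma_\omega^*$. Since the lemma already asserts extension in both components, in particular $\Gamma_0\subseteq \Gamma_\omega$ holds, each final refusal set being the union of a chain whose base member is the genuine seed. Finally I would reinterpret the output as a matched pair: for $a\in W'$, completeness puts $a$ in $\Delta_\omega\cap \Theta_\omega$, or in $\Gamma_\omega$ and then in $\Gamma_\omega\cap \Gamma_\omega^*$ by the nesting; either way $a$ lands on the correct side of $(\Delta_\omega\cap \Theta_\omega\cap W',\ \Gamma_\omega\cap \Gamma_\omega^*\cap W')$, so $(T_1,T_2)$ is a matched pair extending the original one.

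The step I expect to be the real obstacle is the claim that in the proof of Lemma \ref{t3} all cross-communication between the two sides is routed through $W$, since this is what legitimizes dropping $\Gamma_0\subseteq \Sg^{\A}(X_1\cap X_2)$ and weakening $\Gamma_0\subseteq \Gamma_0^*$ to its $W$-restriction. Making this precise amounts to re-reading Claims 1--3 and checking that at each zig-zag stage the only elements tested against the opposite theory are the cylindrified (or quantified) witnesses, which by Lemma \ref{cylindrify} have been driven into $\Sg^{\A}X_1\cap \Sg^{\A}X_2$; the passively carried refusals outside $W$ then neither enter these tests nor disturb consistency of the unions. Everything else is a transcription of the already-established Lemma \ref{t3}.
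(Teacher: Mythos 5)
Your proposal is correct and is essentially the paper's own proof: the paper disposes of this corollary with the single remark that it follows directly from the proof of Lemma \ref{t3}, and your argument is exactly that re-run, correctly identifying that matched-pairness yields the completeness hypothesis and the $W$-restricted remnants of the two containment conditions, which is all the zig-zag construction and its concluding matched-pair check ever use (the final verification in the paper's proof of Lemma \ref{t3} tests only elements of the common subalgebra against the constructed complete theories $(\Theta_{2n},\Gamma_{2n}^*)$, never against the seed $\Gamma_0$). One small imprecision: under the weakened hypotheses the full nesting $\Gamma_\omega\subseteq\Gamma_\omega^*$ you assert for the output can fail (elements of $\Gamma_0$ outside $\Sg^{\B}X_1\cap\Sg^{\B}X_2$ need never enter any $\Gamma_{2n}^*$), but the restriction of that nesting to the common subalgebra does hold, by your own seed-level remnant $\Gamma_0\cap W\subseteq\Gamma_0^*$ together with $\Gamma_{2n}\subseteq\Gamma_{2n}^*$ for $n\geq 1$, and that restricted form is precisely what your matched-pair verification requires.
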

We next define set algebras based on Kripke systems. We stipulate that ubdirect products (in the univerasl algebraic sense) 
are the representable algebras, which the abstract axioms aspire to capture.
Here Kripke systems (a direct generalization of Kripke frames) are defined differently 
than those defined in \cite{Hung}, because we allow {\it relativized} semantics. In the clasical case, such algebras reduce to products of set algebras.
\footnote{The idea of relativization, similar to Henkin's semantics for second order logic, has proved a very 
fruitful idea in the theory of cylindric algebras.} 

\begin{definition}
Let $\alpha$ be an infinite set. 
A Kripke system of dimension $\alpha$ is a quadruple $\mathfrak{K}=(K, \leq \{X_k\}_{k\in K}, \{V_k\}_{k\in K}),$ such that
$V_k\subseteq {}^{\alpha}X_k,$
and
\begin{enumarab}
\item $(K,\leq)$ is preordered set,
\item For any $k\in K$, $X_k$ is a non-empty set such that
$$k\leq k'\implies X_k\subseteq  X_{k'}\text { and } V_k\subseteq V_{k'}.$$
\end{enumarab}
\end{definition}\label{Kripke} 
Let $\mathfrak{O}$ be the Boolean algebra $\{0,1\}$. Now Kripke systems define concrete polyadic Heyting algebras as follows.
Let $\alpha$ be an infinite set and $G$ be a 
semigroup of transformations on $\alpha$. Let $\mathfrak{K}=(K,\leq \{X_k\}_{k\in K}, \{V_k\}_{k\in K})$ 
be a Kripke system.
Consider the set
$$\mathfrak{F}_{\mathfrak{K}}=\{(f_k:k\in K); f_k:V_k\to \mathfrak{O}, k\leq k'\implies f_k\leq f_{k'}\}.$$
If $x,y\in {}^{\alpha}X_k$ and $j\in \alpha$ we write $x\equiv_jy$ if $x(i)=y(i)$ for all $i\neq j$.
We write $(f_k)$ instead of $(f_k:k\in K)$.
In $\mathfrak{F}_{\mathfrak{K}}$ we introduce the following operations:
$$(f_k)\lor (g_k)=(f_k\lor g_k)$$
$$(f_k)\land (g_k)=(f_k\land g_k.)$$
For any $(f_k)$ and $(g_k)\in \mathfrak{F}$, define
$$(f_k)\rightarrow (g_k)=(h_k),$$
where
$(h_k)$ is given for $x\in V_k$ by $h_k(x)=1$ if and only if for any $k'\geq k$ if $f_{k'}(x)=1$ then $g_{k'}(x)=1$.
For any $\tau\in G,$  define
$${\sf s}_{\tau}:\mathfrak{F}\to \mathfrak{F}$$ by
$${\sf s}_{\tau}(f_k)=(g_k)$$
where
$$g_k(x)=f_k(x\circ \tau)\text { for any }k\in K\text { and }x\in V_k.$$
For any $j\in \alpha$ and $(f_k)\in \mathfrak{F},$
define
$${\sf c}_{j}(f_k)=(g_k),$$
where for $x\in V_k$
$$g_k(x)=\bigvee\{f_k(y): y\in V_k,\  y\equiv_j x\}.$$
Finally, set
$${\sf q}_{j}(f_k)=(g_k)$$
where for $x\in V_k,$
$$g_k(x)=\bigwedge\{f_l(y): k\leq l, \ y\in V_k, y\equiv_j x\}.$$

The diagonal element ${\sf d}_{ij}$ is defined to be the tuple $(f_k:k\in K)$ where for $x\in V_k$, $f_k(x)=1$ iff $x_i=x_j.$ 
 
The algebra $\F_{\bold K}$ is called the set algebra based on the Kripke system $\bold K$.

\subsection{Diagonal Free case}

We now deal with the case when $G$ is the semigroup of all finite transformations on $\alpha$. In this case, 
we stipulate that $\alpha\sim \Delta x$ is infinite for all $x$ in algebras considered.
To deal with such a case, we need to define certain free algebras, called dimension restricted. 
Those algebras were introduced by Henkin, 
Monk and Tarski. 
The free algebras defined the usual way, will have
the dimensions sets of their elements equal to their dimension, but we do not want that. 
For a class $K$, ${\bf S}$ 
stands for the operation of forming subalgebras of $K$, ${\bf P}K$ that of forming direct products, 
and ${\bf H}K$ stands for the operation of taking homomorphic images.
In particular, for a class $K$, ${\bf HSP}K$ stands for the variety generated by $K$.    

Our dimension restricted free algebbras, are an instance of certain independently generated algebras, 
obtained by an appropriate relativization of the universal algebraic 
concept of free algebras. For an algebra $\A,$ we write
$R\in Con\A$ if $R$ is a congruence relation on $\A.$

\begin{definition} Assume that $K$ is a class of algebras of similarity $t$ and $S$ is any set of ordered pairs of words of $\Fr_{\alpha}^t,$
the absolutely free algebra of type $t$. Let
$$Cr_{\alpha}^{(S)}K=\cap \{R\in Con \Fr_{\alpha}^t, \Fr_{\alpha}^t/R\in SK, S\subseteq R\}$$
and let
$$\Fr_{\alpha}^{(S)}K=\Fr_{\alpha}^t/Cr_{\alpha}^{(S)}K.$$
$\Fr_{\alpha}^{(S)}K$ is called the free algebra over $K$ with $\alpha$ generators subject to the defining relations $S$.
\end{definition}
As a special case, we obtain dimension restricted free algebra, defined next.
\begin{definition}
\begin{enumarab}
\item Let $\delta$ be a cardinal. Let $\alpha$ be an ordinal, and let $G$ be the semigroup of finite transformations on $\alpha$.
Let$_{\alpha} \Fr_{\delta}$ be the absolutely free algebra on $\delta$
generators and of type $GPHA_{\alpha}$.  Let $\rho\in
{}^{\delta}\wp(\alpha)$. Let $L$ be a class having the same
similarity type as $GPHA_{\alpha}.$ Let
$$Cr_{\delta}^{(\rho)}L=\bigcap\{R: R\in Con_{\alpha}\Fr_{\delta},
{}_{\alpha}\Fr_{\delta}/R\in \mathbf{SP}L, {\mathsf
c}_k^{_{\alpha}\Fr_{\delta}}{\eta}/R=\eta/R \text { for each }$$
$$\eta<\delta \text
{ and each }k\in \alpha\smallsetminus \rho(\eta)\}$$ and
$$\Fr_{\delta}^{\rho}L={}_{\alpha}\Fr_{\beta}/Cr_{\delta}^{(\rho)}L.$$

The ordinal $\alpha$ does not figure out in $Cr_{\delta}^{(\rho)}L$
and $\Fr_{\delta}^{(\rho)}L$ though it is involved in their
definition. However, $\alpha$ will be clear from context so that no
confusion is likely to ensue.


\item Assume that $\delta$ is a cardinal, $L\subseteq GPHA_{\alpha}$, $\A\in L$,
$x=\langle x_{\eta}:\eta<\beta\rangle\in {}^{\delta}A$ and $\rho\in
{}^{\delta}\wp(\alpha)$. We say that the sequence $x$ $L$-freely
generates $\A$ under the dimension restricting function $\rho$, or
simply $x$ freely generates $\A$ under $\rho,$ if the following two
conditions hold:
\begin{enumroman}
\item $\A=\Sg^{\A}Rg(x)$ and $\Delta^{\A} x_{\eta}\subseteq \rho(\eta)$ for all $\eta<\delta$.
\item Whenever $\B\in L$, $y=\langle y_{\eta}, \eta<\delta\rangle\in
{}^{\delta}\B$ and $\Delta^{\B}y_{\eta}\subseteq \rho(\eta)$ for
every $\eta<\delta$, then there is a unique homomorphism from $\A$ to $\B$, such
that $h\circ x=y$.
\end{enumroman}
\end{enumarab}
\end{definition}
The second item says that dimension restricted free algebras has the universal property of free algebras with respect to algebras whose dimensions 
are also restricted. The following theorem can be easily distilled from the literature of cylindic algebra.

\begin{theorem}
Assume that $\delta$ is a cardinal, $L\subseteq GPHA_{\alpha}$,
$\A\in L$, $x=\langle x_{\eta}:\eta<\delta\rangle\in {}^{\delta}A$
and $\rho\in {}^{\delta}\wp(\alpha).$ Then the following hold:
\end{theorem}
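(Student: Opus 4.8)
The plan is to read the theorem as the standard characterization of dimension-restricted free generation: that $x$ $L$-freely generates $\A$ under $\rho$ if and only if $\A\in \mathbf{SP}L$, $\A=\Sg^{\A}Rg(x)$, $\Delta^{\A}x_{\eta}\subseteq \rho(\eta)$ for all $\eta<\delta$, and $\ker h=Cr_{\delta}^{(\rho)}L$, where $h:{}_{\alpha}\Fr_{\delta}\to \A$ is the unique homomorphism sending the $\eta$-th free generator $g_{\eta}$ to $x_{\eta}$; and moreover that the canonical generators $\bar g_{\eta}=g_{\eta}/Cr_{\delta}^{(\rho)}L$ of $\Fr_{\delta}^{(\rho)}L$ themselves form such a free system. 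Everything is driven by the universal property of the absolutely free algebra ${}_{\alpha}\Fr_{\delta}$, which supplies $h$ uniquely.

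The first routine observation I would record is the translation of the dimension condition into congruence language: for a congruence $R$ on ${}_{\alpha}\Fr_{\delta}$ one has $\Delta^{{}_{\alpha}\Fr_{\delta}/R}(g_{\eta}/R)\subseteq \rho(\eta)$ if and only if $({\sf c}_k^{{}_{\alpha}\Fr_{\delta}}g_{\eta},\,g_{\eta})\in R$ for every $k\in \alpha\smallsetminus \rho(\eta)$. This is immediate from the definition $\Delta y=\{i:{\sf c}_iy\neq y\}$. Applied to $R=\ker h$, it shows that the hypothesis $\Delta^{\A}x_{\eta}\subseteq \rho(\eta)$ is exactly the cylindrification clause appearing in the definition of $Cr_{\delta}^{(\rho)}L$; and since $\A\in L$ gives ${}_{\alpha}\Fr_{\delta}/\ker h\cong \Sg^{\A}Rg(x)\in \mathbf{SP}L$, the congruence $\ker h$ is one of the members of the intersection defining $Cr_{\delta}^{(\rho)}L$. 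Hence $Cr_{\delta}^{(\rho)}L\subseteq \ker h$ whenever the dimension condition holds, yielding a canonical surjection $\pi:\Fr_{\delta}^{(\rho)}L\to \A$ with $\pi(\bar g_{\eta})=x_{\eta}$. The same computation, applied to the intersection itself, shows $({\sf c}_k g_{\eta},g_{\eta})\in Cr_{\delta}^{(\rho)}L$ for $k\notin\rho(\eta)$, so $\Delta \bar g_{\eta}\subseteq \rho(\eta)$ and $\Fr_{\delta}^{(\rho)}L\in \mathbf{SP}L$, since it embeds subdirectly in a product of the quotients ${}_{\alpha}\Fr_{\delta}/R$, each of which lies in $\mathbf{SP}L$.

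A key intermediate step is that the universal property need only be verified for targets $\B\in L$ yet automatically upgrades to all $\B\in \mathbf{SP}L$: a homomorphism into a product is assembled coordinatewise from homomorphisms into the factors, a homomorphism into a subalgebra is simply one whose image lands there, and in both cases the dimension set $\Delta y_{\eta}$ is unchanged under projections and inclusions, so the restricting inequalities persist. Granting this, I would verify that $(\Fr_{\delta}^{(\rho)}L,(\bar g_{\eta}))$ satisfies the universal property: for $\B\in \mathbf{SP}L$ and $y$ with $\Delta^{\B}y_{\eta}\subseteq \rho(\eta)$, the map $g_{\eta}\mapsto y_{\eta}$ out of ${}_{\alpha}\Fr_{\delta}$ has kernel containing $Cr_{\delta}^{(\rho)}L$ by the previous paragraph, hence factors through $\Fr_{\delta}^{(\rho)}L$, uniquely because the $\bar g_{\eta}$ generate. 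Finally, for a general $(\A,x)$ satisfying clause (i) of the definition, applying its universal property to $\B=\Fr_{\delta}^{(\rho)}L$ and $y=(\bar g_{\eta})$ produces a map inverse to $\pi$ on generators; since both algebras are generated by the relevant images, these maps are mutually inverse isomorphisms, forcing $\ker h=Cr_{\delta}^{(\rho)}L$.

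I expect the genuine difficulty to lie not in any single inference but in the dimension bookkeeping that distinguishes this from the classical free-algebra theorem. The cylindrification clauses must be threaded through every step---into the definition of $Cr_{\delta}^{(\rho)}L$, into the verification that the $\bar g_{\eta}$ satisfy $\Delta \bar g_{\eta}\subseteq\rho(\eta)$, and into the upgrade of the universal property from $L$ to $\mathbf{SP}L$---and one must keep careful track of whether a dimension set is computed in a subalgebra, a product, or a quotient. Uniqueness of every homomorphism out of $\A$, and out of $\Fr_{\delta}^{(\rho)}L$, rests squarely on the generation clause $\A=\Sg^{\A}Rg(x)$, so I would invoke generation explicitly wherever a map is claimed to be determined by its values on the generators.
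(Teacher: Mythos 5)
Your proof is correct and is, in essence, the intended one: the paper supplies no argument of its own, proving the theorem by citation to \cite{HMT1}, Theorems 2.5.35--2.5.37, and the standard proof behind those results is exactly what you reconstruct --- reading the dimension restriction as the cylindrification clause on congruences, observing that $\ker h$ belongs to the family whose intersection is $Cr_{\delta}^{(\rho)}L$ (so $Cr_{\delta}^{(\rho)}L\subseteq\ker h$, with equality forced by the mutually inverse maps), getting $\Fr_{\delta}^{(\rho)}L\in \mathbf{SP}L\subseteq GPHA_{\alpha}$ from the subdirect embedding into the quotients, and upgrading the universal property from targets in $L$ to targets in $\mathbf{SP}L$ by means of the generation clause. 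The one small imprecision worth noting is that under projection to a factor a dimension set may properly shrink rather than remain ``unchanged,'' but since all that is needed is persistence of the inclusions $\Delta y_{\eta}\subseteq\rho(\eta)$, this does not affect the argument.
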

\begin{enumroman}
\item $\Fr_{\delta}^{\rho}L\in GPHA_{\alpha}$
and $x=\langle \eta/Cr_{\delta}^{\rho}L: \eta<\delta \rangle$
$\mathbf{SP}L$- freely generates $\A$ under $\rho$.
\item In order that
$\A\cong \Fr_{\delta}^{\rho}L$ it is necessary and sufficient that
there exists a sequence $x\in {}^{\delta}A$ which $L$ freely
generates $\A$ under $\rho$.
\end{enumroman}
\begin{demo}{Proof} \cite{HMT1} theorems 2.5.35, 2.5.36, 2.5.37.
\end{demo}
Note that when $\rho(i)=\alpha$ for all $i$ then $\rho$ is not restricting the dimension, and we recover the notion of ordinary free algebras.
That is for such a $\rho$, we have $\Fr_{\beta}^{\rho}GPHA_{\alpha}\cong \Fr_{\beta}GPHA_{\alpha}.$

Now we formulate the analogue of  theorem \ref{main} for dimension restricted agebras, 
which adresses infinitely many cases, 
because we have infinitely many dimension restricted free algebras having the same number of generators.

Now we formulate the analogue of  theorem \ref{main} for dimension restricted agebras, 
which adresses infinitely many cases, 
because we have infinitely many dimension restricted free algebras having the same number of generators.

\begin{theorem}\label{main2}
Let $G$ be the semigroup of finite transformations on an infinite set 
$\alpha$ and let $\delta$ be a cardinal $>0$. Let $\rho\in {}^{\delta}\wp(\alpha)$ be such that
$\alpha\sim \rho(i)$ is infinite for every 
$i\in \delta$. Let $\A$ be the free  $G$ algebra generated by $X$ restristed by $\rho$;
 that is $\A=\Fr_{\delta}^{\rho}GPHA_{\alpha},$
and suppose that $X=X_1\cup X_2$.
Let $(\Delta_0, \Gamma_0)$, $(\Theta_0, \Gamma_0^*)$ be two consistent theories in $\Sg^{\A}X_1$ and $\Sg^{\A}X_2,$ respectively.
Assume that $\Gamma_0\subseteq \Sg^{\A}(X_1\cap X_2)$ and $\Gamma_0\subseteq \Gamma_0^*$.
Assume, further, that  
$(\Delta_0\cap \Theta_0\cap \Sg^{\A}X_1\cap \Sg^{\A}X_2, \Gamma_0)$ is complete in $\Sg^{\A}X_1\cap \Sg^{\A}X_2$. 
Then there exist a Kripke system $\mathfrak{K}=(K,\leq \{X_k\}_{k\in K}\{V_k\}_{k\in K}),$ a homomorphism $\psi:\A\to \mathfrak{F}_K,$
$k_0\in K$, and $x\in V_{k_0}$,  such that for all $p\in \Delta_0\cup \Theta_0$ if $\psi(p)=(f_k)$, then $f_{k_0}(x)=1$
and for all $p\in \Gamma_0^*$ if $\psi(p)=(f_k)$, then $f_{k_0}(x)=0$.
\end{theorem}
\begin{demo}{Proof} We use lemma \ref{t3}, extensively. Assume that $\alpha$, $G$, $\A$ and $X_1$, $X_2$ and everything 
else in the hypothesis
are given.  Let $I$ be  a set containing $\alpha$ such that 
$\beta=|I\sim \alpha|=max(|A|, |\alpha|).$
If $G$ is strongly rich, let $(K_n:n\in \omega)$ be a family of pairwise disjoint sets such that $|K_n|=\beta.$
Define a sequence of algebras
$\A=\A_0\subseteq \A_1\subseteq \A_2\subseteq \A_2\ldots \subseteq \A_n\ldots,$
such that
$\A_{n+1}$ is a minimal dilation of $\A_n$ and $dim(\A_{n+1})=\dim\A_n\cup K_n$.

If $G={}^{\alpha}\alpha$, then let $(K_n:n\in \omega\}$ be a family of pairwise disjoint sets, such that $|K_1|=\beta$ 
and $|K_n|=\omega$ for $n\geq 1$, and define a sequence of algebras
$\A=\A_0\subseteq \A_1\subseteq \A_2\subseteq \A_2\ldots \subseteq \A_n\ldots,$
such that $\A_1$ is a minimal extension of $\A$, and 
$\A_{n+1}$ is a minimal dilation of $\A_n$ for $n\geq 2$, with  $dim(\A_{n+1})=\dim\A_n\cup K_n$.

We denote $dim(\A_n)$ by $I_n$ for $n\geq 1$. Recall that $dim(\A_0)=\dim\A=\alpha$.

We interrupt the main stream of the proof by two consecutive claims. Not to digress, it might be useful that the reader 
at first reading, only memorize their statements, skip their proofs, go on with the main proof, and then get back to them.
The proofs of Claims 1 and 2 to follow are completely analogous to the corresponding claims in \cite{Hung}. 
The only difference is that we deal with only finite 
cylindrifiers, and in this respect they are closer to the proofs of lemmata 4.22-23 in \cite{b}. Those two claims are essential 
in showing that the maps that will be defined shortly into concrete set algebras based on appropriate 
Kripke systems, defined via pairs of theories, in increasing extensions (dimensions),
are actually homomorphisms. In fact, they have to do with the preservation of 
the operations of implication and universal quantification. The two claims use lemma \ref{t3}.

\begin{athm}{Claim 1} Let $n\in \omega$. If $((\Delta, \Gamma), (T,F))$ 
is a matched pair of saturated theories in $\Sg^{\A_n}X_1$ and $\Sg^{\A_n}X_2$, then the following hold.
For any $a,b\in \Sg^{\A_n}X_1$ if $a\rightarrow b\notin \Delta$, then there is a matched pair $((\Delta',\Gamma'), (T', F'))$ of saturated theories in
$\Sg^{\A_{n+1}}X_1$ and $\Sg^{\A_{n+1}}X_2,$ respectively, such that $\Delta\subseteq \Delta'$, $T\subseteq T',$  
$a\in \Delta'$ and $b\notin \Delta'$.
\end{athm}
\begin{demo}{Proof of Claim 1}  Since $a\rightarrow b\notin \Delta,$
 we have $(\Delta\cup\{a\}, b)$ is consistent in $\Sg^{\A_n}X_1$.
Then by lemma \ref{t1}, it can be extended to a complete theory $(\Delta', T')$ in $\Sg^{\A_n}X_1$.
Take $$\Phi=\Delta'\cap \Sg^{\A_n}X_1\cap \Sg^{\A_n}X_2,$$ 
and $$\Psi=T'\cap \Sg^{\A_n}X_1\cap \Sg^{\A_n}X_2.$$
Then $(\Phi, \Psi)$ is complete in $\Sg^{\A_n}X_1\cap \Sg^{\A_n}X_2.$
We shall now show that $(T\cup \Phi, \Psi)$ is consistent in $\Sg^{\A_n}X_2$. If not, then there is $\theta\in T$, 
$\phi\in \Phi$ and $\psi\in \Psi$ such that
$\theta\land \phi\leq \psi$. So $\theta\leq \phi\rightarrow \psi$. Since $T$ is saturated, we get that $\phi\rightarrow \psi$ is in $T$.
Now $\phi\rightarrow \psi\in \Delta\cap \Sg^{\A_n}X_1\cap \Sg^{\A_n}X_2\subseteq \Delta'\cap \Sg^{\A}X_1\cap \Sg^{\A}X_2=\Phi$.
Since $\phi\in \Phi$ and $\phi\rightarrow \psi\in \Phi,$ we get that $\psi\in \Phi\cap \Psi$. But this means that 
$(\Phi, \Psi)$ is inconsistent which is impossible. Thus $(T\cup \Phi, \Psi)$ is consistent.
Now the pair $((\Delta', T') (T\cup \Phi, \Psi))$ satisfy the conditions of lemma \ref{t3}. 
Hence this pair can be extended to a matched pair of 
saturated theories in $\Sg^{\A_{n+1}}X_1$ and $\Sg^{\A_{n+1}}X_2$. 
This pair is as required by the conclusion of lemma \ref{t3}.
\end{demo}

\begin{athm}{Claim 2} Let $n\in \omega$. If $((\Delta, \Gamma), (T,F))$ is a matched pair of saturated theories in $\Sg^{\A_n}X_1$ and $\Sg^{\A_n}X_2$, 
then the following hold.
For $x\in \Sg^{\A_n}X_1$ and $j\in I_n=dim\A_n$,  if ${\sf q}_{j}x\notin  \Delta$, then there is a matched pair $((\Delta',\Gamma'), (T', F'))$ 
of saturated theories in
$\Sg^{\A_{n+2}}X_1$ and $\Sg^{\A_{n+2}}X_2$ respectively,  $u\in I_{n+2}$
 such that $\Delta\subseteq \Delta'$, $T\subseteq T'$ and ${\sf s}_u^j x\notin \Delta'$. 
\end{athm}
\begin{demo}{Proof}
Assume that $x\in \Sg^{\A_n}X_1$ and $j\in I_n$ such that ${\sf q}_{j}x\notin  \Sg^{\A_n}X_1$. Then there exists 
$u\in I_{n+1}\sim I_n$ such that $(\Delta, {\sf s}_u^j x)$ is consistent in $\Sg^{\A_{n+1}}X_1$.
So $(\Delta, {\sf s}_u^jx)$ can be extended to a complete theory $(\Delta', T')$ in $\Sg^{\A_{n+1}}X_1$.
Take $$\Phi=\Delta'\cap \Sg^{\A_{n+1}}X_1\cap \Sg^{\A_{n+1}}X_2,$$
and
$$\Psi=T'\cap \Sg^{\A_{n+1}}X_1\cap \Sg^{\A_{n+1}}X_2.$$
Then $(\Phi,\Psi)$ is complete in $\Sg^{\A_{n+1}}X_1\cap \Sg^{\A_{n+1}}X_2$.
We shall show that $(T\cup \Phi,\Psi)$ is consistent in $\Sg^{\A_{n+1}}X_2$. If not, then there exist $\theta\in T,$ 
$\phi\in \Phi$ and $\psi\in \Psi,$ such that
$\theta\land \phi\leq \psi$. Hence, $\theta\leq \phi\rightarrow \psi$. Now
$$\theta={\sf q}_j(\theta)\leq  {\sf q}_{j}(\phi\rightarrow \psi).$$ 
Since $(T,F)$ is saturated in $\Sg^{\A_n}X_2,$ it thus follows that 
$${\sf q}_{j}(\phi\rightarrow \psi) \in T\cap \Sg^{\A_n}X_1\cap \Sg^{\A_n}X_2=\Delta\cap \Sg^{\A_n}X_1\cap \Sg^{\A_n}X_2.$$
So ${\sf q}_{j}(\phi\rightarrow \psi)\in \Delta'$ and consequently we get ${\sf q}_{j}(\phi\rightarrow \psi)\in \Phi$. Also, we have, $\phi\in \Phi$.
But $(\Phi, \Psi)$ is complete, we get $\psi\in \Phi$ and this contradicts that $\psi\in \Psi$.
Now the pair $((\Delta', \Gamma'), (T\cup \Phi, \Psi))$ satisfies the hypothesis of lemma \ref{t3} applied to 
$\Sg^{\A_{n+1}}X_1, \Sg^{\A_{n+1}}X_2$. 
The required now follows from the concusion of lemma \ref{t3}.
\end{demo}

Now that we have proved our claims, we go on with the proof.
We prove the theorem when $G$ is a strongly rich semigroup, because in this case we deal with relativized semantics,
and during the proof we state the necessary modifications for the case when $G$ is the semigroup of all transformations. 
Let $$K=\{((\Delta, \Gamma), (T,F)): \exists n\in \omega \text { such that } (\Delta, \Gamma), (T,F)$$
$$\text { is a a matched pair of saturated theories in }
\Sg^{\A_n}X_1, \Sg^{\A_n}X_2\}.$$
We have $((\Delta_0, \Gamma_0)$, $(\Theta_0, \Gamma_0^*))$ is a matched pair but the theories are not saturated. But by lemma \ref{t3}
there are $T_1=(\Delta_{\omega}, \Gamma_{\omega})$, 
$T_2=(\Theta_{\omega}, \Gamma_{\omega}^*)$ extending 
$(\Delta_0, \Gamma_0)$, $(\Theta_0, \Gamma_0^*)$, such that $T_1$ and $T_2$ are saturated in $\Sg^{\A_1}X_1$ and $\Sg^{\A_1}X_2,$ respectively.
Let $k_0=((\Delta_{\omega}, \Gamma_{\omega}), (\Theta_{\omega}, \Gamma_{\omega}^*)).$ Then $k_0\in K.$

If $i=((\Delta, \Gamma), (T,F))$ is a matched pair of saturated theories in $\Sg^{\A_n}X_1$ and $\Sg^{\A_n}X_2$, let $M_i=dim \A_n$, 
where $n$ is the least such number, so $n$ is unique to $i$.
Before going on we introduce a piece of notation. For a set $M$ and a sequence $p\in {}^{\alpha}M$, $^{\alpha}M^{(p)}$ is the following set
$$\{s\in {}^{\alpha}M: |\{i\in \alpha: s_i\neq p_i\}|<\omega\}.$$
Let $${\mathfrak{K}}=(K, \leq, \{M_i\}, \{V_i\})_{i\in \mathfrak{K}}$$
where $V_i=\bigcup_{p\in G_n}{}^{\alpha}M_i^{(p)}$, and $G_n$ is the strongly rich semigroup determining the similarity type of $\A_n$, with $n$ 
the least number such $i$ is a saturated matched pair in $\A_n$.
The order $\leq $ is defined as follows:
If $i_1=((\Delta_1, \Gamma_1)), (T_1, F_1))$ and $i_2=((\Delta_2, \Gamma_2), (T_2,F_2))$ are in $\mathfrak{K}$, then define
$$i_1\leq i_2\Longleftrightarrow  M_{i_1}\subseteq M_{i_2}, \Delta_1\subseteq \Delta_2, T_1\subseteq T_2.$$
This is, indeed as easily checked,  a preorder on $K$.

We  define two maps on $\A_1=\Sg^{\A}X_1$ and $\A_2=\Sg^{\A}X_2$ respectively, then those will be pasted using the freeness of $\A$
to give the required single homomorphism, by noticing that they agree on the common part, that is on $\Sg^{\A}(X_1\cap X_2).$

Set $\psi_1: \Sg^{\A}X_1\to \mathfrak{F}_{\mathfrak K}$ by
$\psi_1(p)=(f_k)$ such that if $k=((\Delta, \Gamma), (T,F))\in K$ is a matched pair of saturated theories in 
$\Sg^{\A_n}X_1$ and $\Sg^{\A_n}X_2$,
and $M_k=dim \A_n$, then for $x\in V_k=\bigcup_{p\in G_n}{}^{\alpha}M_k^{(p)}$,
$$f_k(x)=1\Longleftrightarrow {\sf s}_{x\cup (Id_{M_k\sim \alpha)}}^{\A_n}p\in \Delta\cup T.$$
To avoid tiresome notation, we shall denote the map $x\cup Id_{M_k\sim \alpha}$ simply by $\bar{x}$ when $M_k$ is clear from context.
It is easily verifiable that $\bar{x}$ is in the semigroup determining the similarity type of $\A_n$ hence the map is well defined.
More concisely, we we write $$f_k(x)=1\Longleftrightarrow {\sf s}_{\bar{x}}^{\A_n}p\in \Delta\cup T.$$
The map $\psi_2:\Sg^{\A}X_2\to \mathfrak{F}_{\mathfrak K}$ is defined in exactly the same way.
Since the theories are matched pairs, $\psi_1$ and $\psi_2$ agree on the common part, i.e. on $\Sg^{\A}(X_1\cap X_2).$
Here we also make the tacit assumption that if $k\leq k'$ then $V_k\subseteq V_{k'}$ 
via the embedding $\tau\mapsto \tau\cup Id$.

When $G$ is the semigroup of all transformations, with no restrictions on cardinalities, 
we need not relativize since $\bar{\tau}$ is in the big semigroup. In more detail, 
in this case, we take for $k=((\Delta,\Gamma), (T,F))$ a matched pair of saturated theories in 
$\Sg^{\A_n}X_1,\Sg^{\A_n}X_2$,
$M_k=dim\A_n$ and $V_k={}^{\alpha}M_k$
and for $x\in {}^{\alpha}M_k$, we set 
$$f_k(x)=1\Longleftrightarrow {\sf s}_{x\cup (Id_{M_k\sim \alpha)}}^{\A_n}p\in \Delta\cup T.$$

Before proving that $\psi$ is a homomorphism, we show that 
$$k_0=((\Delta_{\omega},\Gamma_{\omega}), (\Theta_{\omega}, \Gamma^*_{\omega}))$$ 
is as desired. Let $x\in V_{k_0}$ be the identity map. Let $p\in \Delta_0\cup \Theta_0$, then 
${\sf s}_xp=p\in \Delta_{\omega}\cup \Theta_{\omega},$ 
and so if $\psi(p)=(f_k)$ then $f_{k_0}(x)=1$.
On the other hand if $p\in \Gamma_0^*$, then $p\notin \Delta_{\omega}\cup \Theta_{\omega}$, and so $f_{k_0}(x)=0$.
Then the union $\psi$ of $\psi_1$ and $\psi_2$, $k_0$ and $Id$ are as required, modulo proving that 
$\psi$ is a homomorphism from $\A$, to the set algebra based on the above defined
Kripke system, which we proceed to show. We start by $\psi_1$.
Abusing notation, we denote $\psi_1$ by $\psi$, and we write a matched pair in $\A_n$ 
instead of a matched pair of saturated theories in $\Sg^{\A_n}X_1$,
$\Sg^{\A_n}X_2$, since $X_1$ and $X_2$ are fixed. The proof that the postulated map is a homomorphism is 
similar to the proof in \cite{Hung} baring in mind that it is far from being identical because 
cylindrifiers and their duals are only finite.
\begin{enumroman}
\item We prove that $\psi$ preserves $\land$. Let $p,q\in A$. Assume that
$\psi(p)=(f_k)$ and $\psi(q)=(g_k)$. Then $\psi(p)\land \psi(q)=(f_k\land g_k)$.
We now compute $\psi(p\land q)=(h_k)$
Assume that  $x\in V_k$, where $k=((\Delta,\Gamma), (T, F))$ is a matched pair in $\A_n$ and  $M_k=dim\A_n$.
Then
$$h_k(x)=1\Longleftrightarrow {\sf s}_{\bar{x}}^{\A_n}(p\land q)\in \Delta\cup T$$
$$\Longleftrightarrow {\sf s}_{\bar{x}}^{\A_n}p\land {\sf s}_{\bar{x}}^{\A_n}q\in \Delta\cup T$$
$$\Longleftrightarrow {\sf s}_{\bar{x}}^{\A_n}p\in T\cup \Delta\text { and }{\sf s}_{\bar{x}}^{\A_n}q\in \Delta\cup T$$
$$\Longleftrightarrow f_k(x)=1 \text { and } g_k(x)=1$$
$$\Longleftrightarrow (f_k\land g_k)(x)=1$$
$$\Longleftrightarrow (\psi(p)\land \psi(q))(x)=1.$$

\item $\psi$ preserves $\rightarrow.$ (Here we use Claim 1).  Let $p,q\in A$. Let $\psi(p)=(f_k)$ and $\psi(q)=(g_k)$.
Let $\psi(p\rightarrow q)=(h_k)$ and $\psi(p)\rightarrow \psi(q)=(h'_k)$.
We shall prove that for any $k\in \mathfrak{K}$ and any $x\in V_k$, we have
$$h_k(x)=1\Longleftrightarrow h'_k(x)=1.$$
Let $x\in V_k$. Then $k=((\Delta,\Gamma),(T,F))$ is a matched pair in $\A_n$ and $M_k=dim\A_n$.
Assume that $h_k(x)=1$. Then we have $${\sf s}_{\bar{x}}^{\A_n}(p\rightarrow q)\in \Delta\cup T,$$
from which we get that $$(*) \ \ \ {\sf s}_{\bar{x}}^{\A_n}p\rightarrow {\sf s}_{\bar{x}}^{\A_n}q\in \Delta\cup T.$$
Let $k'\in K$ such that $k\leq k'$. Then $k'=((\Delta', \Gamma'), (T', F'))$ is a matched pair in $\A_m$ with $m\geq n$.
Assume that $f_{k'}(x)=1$. Then, by definition we have (**)
$${\sf s}_{\bar{x}}^{\A_m}p\in \Delta'\cup T'.$$
But $\A_m$ is a dilation of $\A_n$ and so 
$${\sf s}_{\bar{x}}^{\A_m}p={\sf s}_{\bar{x}}^{\A_n}p\text { and } {\sf s}_{\bar{x}}^{\A_m}q={\sf s}_{\bar{x}}^{\A_n}q.$$
From (*) we get that, 
$$ {\sf s}_{\bar{x}}^{\A_m}p\rightarrow {\sf s}_{\bar{x}}^{\A_m}q\in \Delta'\cup T'.$$
But, on the other hand,  from (**), we have ${\sf s}_{\bar{x}}^{\A_m}q\in \Delta'\cup T',$
so $$f_{k'}(x)=1\Longrightarrow g_{k'}(x)=1.$$
That is to say, we have  $h_{k'}(x)=1$.
Conversely, assume that $h_k(x)\neq 1,$
then
$$ {\sf s}_{\bar{x}}^{\A_n}p\rightarrow {\sf s}_{\bar{x}}^{\A_n}q\notin \Delta\cup T,$$
and consequently
$$ {\sf s}_{\bar{x}}^{\A_n}p\rightarrow {\sf s}_{\bar{x}}^{\A_n}q\notin \Delta.$$
From Claim 1, we get that there exists a matched pair $k'=((\Delta',\Gamma')((T',F'))$ in $\A_{n+2},$ such that
$$ {\sf s}_{\bar{x}}^{\A_{n+2}}p\in \Delta'\text { and } {\sf s}_{\bar{x}}^{\A_{n+2}}q\notin \Delta'.$$
We claim that ${\sf s}_{\bar{x}}^{\A_{n+2}}q\notin T'$, for otherwise, if it is in $T'$, then we would get that
$${\sf s}_{\bar{x}}^{\A_{n+2}}q\in \Sg^{\A_{n+2}}X_1\cap \Sg^{\A_{n+2}}X_2.$$
But $$(\Delta'\cap T'\cap \Sg^{\A_{n+2}}X_1\cap \Sg^{\A_{n+2}}X_2, \Gamma'\cap F'\cap\Sg^{\A_{n+2}}X_1\cap \Sg^{\A_{n+2}}X_2)$$ is complete 
in $\Sg^{\A_{n+2}}X_1\cap \Sg^{\A_{n+2}}X_2,$
and ${\sf s}_{\bar{x}}^{\A_{n+2}}q\notin \Delta'\cap T'$, hence it must be the case that 
 $${\sf s}_{\bar{x}}^{\A_{n+2}}q\in \Gamma'\cap F'.$$
In particular, we have
$${\sf s}_{\bar{x}}^{\A_{n+2}}q\in F',$$
which contradicts the consistency of $(T', F'),$ since by assumption ${\sf s}_x^{\A_{n+2}}q\in T'$.
Now we have
$${\sf s}_{\bar{x}}^{\A_{n+2}}q\notin \Delta'\cup T',$$
and
$${\sf s}_{\bar{x}}^{\A_{n+2}}p\in \Delta'\cup T'.$$
Since $\Delta'\cup T'$ extends $\Delta\cup T$, we get that $h_k'(x)\neq 1$.
\item $\psi$ preserves substitutions. Let $p\in \A$. Let $\sigma\in {}G$.
Assume that $\psi(p)=(f_k)$ and $\psi({\sf s}_{\sigma}p)=(g_k).$ 
Assume that $M_k=\dim\A_n$ where $k=((\Delta,\Gamma),(T,F))$ is a matched pair 
in $\A_n$.
Then, for $x\in V_k$, we have 
$$g_k(x)=1\Longleftrightarrow {\sf s}_{\bar{x}}^{\A_n}{\sf s}_{\sigma}^{\A}p\in \Delta\cup T$$
$$\Longleftrightarrow  {\sf s}_{\bar{x}}^{\A_n}{\sf s}_{\bar{\sigma}}^{\A_n}p\in \Delta\cup T$$
$$\Longleftrightarrow  {\sf s}_{\bar{x}\circ {\bar{\sigma}}}^{\A_n}p\in \Delta\cup T$$
$$\Longleftrightarrow {\sf s}_{\overline{x\circ \sigma}}^{\A_n}p\in \Delta\cup T$$
$$\Longleftrightarrow f_k(x\circ \sigma)=1.$$

\item $\psi$ preserves cylindrifications. Let $p\in A.$ 
Assume that $m\in I$  and assume that  $\psi({\sf c}_{m}p)=(f_k)$ and ${\sf c}_m\psi(p)=(g_k)$. 
Assume that $k=((\Delta,\Gamma),(T,F))$ is a matched pair in 
$\A_n$ and that $M_k=dim\A_n$.  Let $x\in V_k$. Then 
$$f_k(x)=1\Longleftrightarrow {\sf s}_{\bar{x}}^{\A_n}{\sf c}_{m}p\in \Delta\cup T.$$
We can assume that  $${\sf s}_{\bar{x}}^{\A_n}{\sf c}_{m}p\in \Delta.$$
For if not, that is if  $${\sf s}_{\bar{x}}^{\A_n}{\sf c}_{m}p\notin \Delta\text { and }  {\sf s}_{\bar{x}}^{\A_n}{\sf c}_{(m)}p\in T,$$
then  $${\sf s}_{\bar{x}}^{\A_n}{\sf c}_{m}p\in \Sg^{\A_n}X_1\cap \Sg^{\A_n}X_2, $$
but  $$(\Delta\cap T\cap \Sg^{\A_n}X_1\cap \Sg^{\A_n}X_2, \Gamma\cap F\cap \Sg^{\A_n}X_1\cap \Sg^{\A_n}X_2)$$ is complete in 
$\Sg^{\A_n}X_1\cap \Sg^{\A_n}X_2$, 
and $${\sf s}_{\bar{x}}^{\A_n}{\sf c}_{m}p\notin \Delta\cap T,$$ it must be the case that
$${\sf s}_{\bar{x}}^{\A_n}{\sf c}_{m}p\in \Gamma\cap F.$$
In particular, $${\sf s}_{\bar{x}}^{\A_n}{\sf c}_{m}p\in F.$$
But this contradicts the consistency of $(T,F)$.

Assuming that ${\sf s}_x{\sf c}_mp\in \Delta,$ we proceed as follows.
Let  $$\lambda\in \{\eta\in I_n: x^{-1}\{\eta\}=\eta\}\sim \Delta p.$$
Let $$\tau=x\upharpoonright I_n\sim\{m, \lambda\}\cup \{(m,\lambda)(\lambda, m)\}.$$
Then, by item (5) in theorem \ref{axioms}, we have
$${\sf c}_{\lambda}{\sf s}^{\A_n}_{\bar{\tau}}p={\sf s}_{\bar{\tau}}^{\A_n}{\sf c}_{m}p={\sf s}_{\bar{x}}^{\A_n}{\sf c}_mp\in \Delta.$$
We introduce a piece of helpful notation. For a function $f$, let $f(m\to u)$ is the function that agrees with $f$ except at $m$ 
where its value is $u$.
Since $\Delta$ is saturated, there exists $u\notin \Delta x$ such that ${\sf s}_u^{\lambda}{\sf s}_xp\in \Delta$, and so ${\sf s}_{(x(m\to u))}
p\in \Delta$. 
This implies that $x\in {\sf c}_mf(p)$ and so $g_k(x)=1$. Conversely, assume that $g_k(x)=1$ with 
$k=((\Gamma,\Delta))$, $(T,F))$ a matched pair in $\A_n$.
Let $y\in V_k$ such that $y\equiv_m x$ and  $\psi(p)y=1$. Then ${\sf s}_{\bar{y}}p\in \Delta\cup T$. 
Hence ${\sf s}_{\bar{y}}{\sf c}_mp\in \Delta\cup T$ and so ${\sf s}_{\bar{x}}{\sf c}_mp\in \Delta\cup T$, 
thus  $f_k(x)=1$ and we are done.

\item $\psi$ preserves universal quantifiers. (Here we use Claim 2). Let $p\in A$ and $m\in I$. 
Let $\psi(p)=(f_k)$, ${\sf q}_{m}\psi(p)=(g_k)$ and $\psi({\sf q}_{m}p)=(h_k).$
Assume that $h_k(x)=1$. We have $k=((\Delta,\Gamma), (T,F))$ is a matched pair in $\A_n$ and $x\in V_k$.
Then $${\sf s}_{\bar{x}}^{\A_n}{\sf q}_{m}p\in \Delta\cup T,$$
and so 
$${\sf s}_{\bar{y}}^{\A_n}{\sf q}_{m}p\in \Delta\cup T \text{ for all } y\in {}^IM_k, y\equiv_m x.$$ 
Let $k'\geq k$. Then $k'=((\Delta',\Gamma'), (T',F'))$ is a matched pair in $\A_l$ $l\geq n$, 
$\Delta\subseteq \Delta'$ and $T\subseteq T'.$ 
Since $p\geq {\sf q}_{m}p$ it follows that
$${\sf s}_{\bar{y}}^{\A_n}p\in \Delta'\cup T' \text{ for all } y\in {}^IM_k, y\equiv_mx.$$ 
Thus $g_k(x)=1$.
Now conversely, assume that $h_k(x)=0$, $k=((\Delta,\Gamma), (T,F))$ is a matched pair in $\A_n,$
then, we have
$${\sf s}_{\bar{x}}^{\A_n}{\sf q}_{m}p\notin \Delta\cup T,$$
and so $${\sf s}_{\bar{x}}^{\A_n}{\sf q}_{m}p\notin \Delta.$$
Let  $$\lambda\in \{\eta\in I_n: x^{-1}\{\eta\}=\eta\}\sim \Delta p.$$
Let $$\tau=x\upharpoonright I_n\sim\{m, \lambda\}\cup \{(m,\lambda)(\lambda, m)\}.$$
Then, like in the existential case, using polyadic axioms, 
we get
$${\sf q}_{\lambda}{\sf s}_{\tau}p={\sf s}_{\tau}{\sf q}_{m}p={\sf s}_{x}{\sf q}_mp\notin \Delta$$
Then there exists $u$ such that ${\sf s}_u^{\lambda}{\sf s}_xp\notin \Delta.$
So ${\sf s}_u^{\lambda}{\sf s}_xp\notin T$, for if it is, then by the previous reasoning since it is an element of 
$\Sg^{\A_{n+2}}X_1\cap \Sg^{\A_{n+2}}X_2$ and by completeness
of $(\Delta\cap T, \Gamma\cap F)$ we would reach a contradiction.
The we get that ${\sf s}_{(x(m\to u))}p\notin \Delta\cup T$
which means that $g_k(x)=0,$
and we are done. 
\end{enumroman}
\end{demo}

\begin{theorem}\label{main2}
Let $G$ be the semigroup of finite transformations on an infinite set 
$\alpha$ and let $\delta$ be a cardinal $>0$. Let $\rho\in {}^{\delta}\wp(\alpha)$ be such that
$\alpha\sim \rho(i)$ is infinite for every 
$i\in \delta$. Let $\A$ be the free  $G$ algebra generated by $X$ restristed by $\rho$;
 that is $\A=\Fr_{\delta}^{\rho}GPHA_{\alpha},$
and suppose that $X=X_1\cup X_2$.
Let $(\Delta_0, \Gamma_0)$, $(\Theta_0, \Gamma_0^*)$ be two consistent theories in $\Sg^{\A}X_1$ and $\Sg^{\A}X_2,$ respectively.
Assume that $\Gamma_0\subseteq \Sg^{\A}(X_1\cap X_2)$ and $\Gamma_0\subseteq \Gamma_0^*$.
Assume, further, that  
$(\Delta_0\cap \Theta_0\cap \Sg^{\A}X_1\cap \Sg^{\A}X_2, \Gamma_0)$ is complete in $\Sg^{\A}X_1\cap \Sg^{\A}X_2$. 
Then there exist a Kripke system $\mathfrak{K}=(K,\leq \{X_k\}_{k\in K}\{V_k\}_{k\in K}),$ a homomorphism $\psi:\A\to \mathfrak{F}_K,$
$k_0\in K$, and $x\in V_{k_0}$,  such that for all $p\in \Delta_0\cup \Theta_0$ if $\psi(p)=(f_k)$, then $f_{k_0}(x)=1$
and for all $p\in \Gamma_0^*$ if $\psi(p)=(f_k)$, then $f_{k_0}(x)=0$.
\end{theorem}
\begin{demo}{Proof} We state the modifications in the above proof of theorem \ref{main}.
Form the sequence of minimal dilations $(\A_n:n\in \omega)$ built on the sequence $(K_n:n\in \omega)$, with $|K_n|=\beta$,
$\beta=|I\sim \alpha|=max(|A|, \alpha)$ with $I$ is a superset of $\alpha.$ 
If $i=((\Delta, \Gamma), (T,F))$ is a matched pair of saturated theories in $\Sg^{\A_n}X_1$ and $\Sg^{\A_n}X_2$, let $M_i=dim \A_n$, 
where $n$ is the least such number, so $n$ is unique to $i$. Define $K$ as in in the proof of theorem \ref{main}, that is, 
let $$K=\{((\Delta, \Gamma), (T,F)): \exists n\in \omega \text { such that } (\Delta, \Gamma), (T,F)$$
$$\text { is a a matched pair of saturated theories in }
\Sg^{\A_n}X_1, \Sg^{\A_n}X_2\}.$$
Let $${\mathfrak{K}}=(K, \leq, \{M_i\}, \{V_i\})_{i\in \mathfrak{K}},$$
where now $V_i={}^{\alpha}M_i^{(Id)}=\{s\in {}^{\alpha}M: |\{i\in \alpha: s_i\neq i\}|<\omega\},$ and the order $\leq $ is defined by:
If $i_1=((\Delta_1, \Gamma_1)), (T_1, F_1))$ and $i_2=((\Delta_2, \Gamma_2), (T_2,F_2))$ are in $\mathfrak{K}$, then
$$i_1\leq i_2\Longleftrightarrow  M_{i_1}\subseteq M_{i_2}, \Delta_1\subseteq \Delta_2, T_1\subseteq T_2.$$
This is a preorder on $K$.
Set $\psi_1: \Sg^{\A}X_1\to \mathfrak{F}_{\mathfrak K}$ by
$\psi_1(p)=(f_k)$ such that if $k=((\Delta, \Gamma), (T,F))\in \mathfrak{K}$ 
is a matched pair of saturated theories in $\Sg^{\A_n}X_1$ and $\Sg^{\A_n}X_2$,
and $M_k=dim \A_n$, then for $x\in V_k={}^{\alpha}M_k^{(Id)}$,
$$f_k(x)=1\Longleftrightarrow {\sf s}_{x\cup (Id_{M_k\sim \alpha)}}^{\A_n}p\in \Delta\cup T.$$
Define $\psi_2$ analogously. The rest of the proof is identical to the previous one.
\end{demo}
It is known that the condition $\Gamma\subseteq \Gamma^*$ cannot be omitted.
On the other hand, to prove our completeness theorem, we need the following weaker 
version of theorem \ref{main}, with a slight modification in the proof, which is still a step-by-step technique,
though, we do not `zig-zag'.  


\begin{lemma}\label{rep}
Let $\A\in GPHA_{\alpha}$. Let $(\Delta_0, \Gamma_0)$ be consistent. 
Suppose that $I$ is a set such that $\alpha\subseteq I$  and $|I\sim \alpha|=max (|A|,|\alpha|)$. 
\begin{enumarab}
\item Then there exists a dilation $\B\in GPHA_I$ of $\A$, and theory $T=(\Delta_{\omega}, \Gamma_{\omega})$, 
extending  $(\Delta_0, \Gamma_0)$, such that $T$  is consistent and saturated in $\B$. 
\item There exists $\mathfrak{K}=(K,\leq \{X_k\}_{k\in K}\{V_k\}_{k\in K}),$ a homomorphism $\psi:\A\to \mathfrak{F}_K,$
$k_0\in K$, and $x\in V_{k_0}$,  such that for all $p\in \Delta_0$ if $\psi(p)=(f_k)$, then $f_{k_0}(x)=1$ and for all $p\in \Gamma_0$
if $\psi(p)=(g_k)$, then $g_{k_0}(x)=0.$
\end{enumarab}
\end{lemma}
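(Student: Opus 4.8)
The plan is to handle the two parts separately, with part (1) being essentially a restatement of Lemma \ref{t2} and part (2) following the architecture of the preceding representation theorem, but with the zig-zag of Lemma \ref{t3} replaced throughout by the single-theory saturation of Lemma \ref{t2}. For part (1) I would simply apply Lemma \ref{t2} to the consistent theory $(\Delta_0,\Gamma_0)$ and the index set $I$: it yields a minimal dilation $\B\in GPHA_I$ of $\A$ together with a saturated (in fact complete) theory $(\Delta_\omega,\Gamma_\omega)$ in $\B$ extending $(\Delta_0,\Gamma_0)$, and such a theory is consistent. This is precisely the assertion of (1).

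For part (2) I would first build a chain of minimal dilations $\A=\A_0\subseteq\A_1\subseteq\A_2\subseteq\cdots$ exactly as in the main proof, with $\dim\A_{n+1}=\dim\A_n\cup K_n$ for pairwise disjoint $K_n$ of the prescribed cardinality, using lemmata \ref{dl} and \ref{cylindrify} to secure $\A_n=\Nr_{\dim\A_n}\A_{n+1}$ and the compatibility of subalgebra generation with neat reducts. Since there is now no split of the generating set, the worlds are simply
$$K=\{(\Delta,\Gamma):\exists n\in\omega\text{ such that }(\Delta,\Gamma)\text{ is a saturated consistent theory in }\A_n\},$$
ordered by $(\Delta_1,\Gamma_1)\leq(\Delta_2,\Gamma_2)$ iff $M_{i_1}\subseteq M_{i_2}$ and $\Delta_1\subseteq\Delta_2$, where $M_i=\dim\A_n$ for the least witnessing $n$. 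With valuation sets $V_i={}^{\alpha}M_i^{(Id)}$ (or the relativized union over the rich semigroup, as in the main proof), this defines a Kripke system $\mathfrak{K}=(K,\leq,\{M_i\},\{V_i\})$, and I would set $\psi(p)=(f_k)$ by
$$f_k(x)=1\Longleftrightarrow {\sf s}_{\bar x}^{\A_n}p\in\Delta,\qquad k=(\Delta,\Gamma),\ M_k=\dim\A_n.$$

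The bulk of the work is verifying that $\psi$ is a homomorphism. Preservation of $\land$ and of substitutions is a direct computation from the fact that ${\sf s}_{\bar x}$ is a Heyting endomorphism composing correctly (Theorem \ref{axioms}). For $\rightarrow$ and ${\sf q}$ I would first record the single-theory analogues of Claims 1 and 2: if $(\Delta,\Gamma)$ is saturated in $\A_n$ and $a\rightarrow b\notin\Delta$ (respectively ${\sf q}_jx\notin\Delta$), then $(\Delta\cup\{a\},\{b\})$ (respectively $(\Delta,\{{\sf s}_u^jx\})$ for a spare witness $u\in\dim\A_{n+1}\sim\dim\A_n$) is consistent, and by Lemma \ref{t2} it extends to a saturated consistent theory $(\Delta',\Gamma')$ in a further dilation with $\Delta\subseteq\Delta'$, inside which $a\in\Delta'$ and $b\notin\Delta'$ (respectively ${\sf s}_u^jx\notin\Delta'$). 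These are strictly simpler than the corresponding claims of the main proof precisely because no matched-pair completeness must be maintained, so Lemma \ref{t2} suffices in place of Lemma \ref{t3}. Granting them, preservation of $\rightarrow$ follows as before: the Heyting implication in $\mathfrak{F}_{\mathfrak{K}}$ quantifies over future worlds, and the first claim supplies a $k'\geq k$ at which the antecedent holds and the consequent fails whenever ${\sf s}_{\bar x}^{\A_n}(p\rightarrow q)\notin\Delta$; preservation of ${\sf q}$ uses the second claim together with the polyadic identities of Theorem \ref{axioms} relating ${\sf q}_\lambda{\sf s}_\tau$ and ${\sf s}_\tau{\sf q}_m$.

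Finally, to exhibit the distinguished world I would extend $(\Delta_0,\Gamma_0)$ by part (1) to a saturated consistent $k_0=(\Delta_\omega,\Gamma_\omega)$ in $\A_1$, and take $x\in V_{k_0}$ to be the identity map. Then for $p\in\Delta_0$ one has ${\sf s}_xp=p\in\Delta_\omega$, so $f_{k_0}(x)=1$; and for $p\in\Gamma_0\subseteq\Gamma_\omega$, consistency of $(\Delta_\omega,\Gamma_\omega)$ forces $\Delta_\omega\cap\Gamma_\omega=\emptyset$, hence $p\notin\Delta_\omega$ and $f_{k_0}(x)=0$, as required. The main obstacle is the $\rightarrow$ case of the homomorphism check, since it is the only place where the intuitionistic future-quantification interacts with the dilation machinery and where one must confirm that the witnessing world produced by the claim genuinely refutes the implication at $x$; every other step is either a routine algebraic identity or a direct transcription of the single-theory saturation argument.
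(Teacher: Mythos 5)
Your proposal is correct and follows essentially the same route as the paper's own proof: part (1) is obtained directly from Lemma \ref{t2}, and part (2) is the single-theory simplification of the proof of the main representation theorem, with the same chain of minimal dilations, the same Kripke system of saturated theories ordered by inclusion of dimensions and left components, the same definition of $\psi$ via ${\sf s}_{\bar{x}}^{\A_n}p\in\Delta$, the same single-theory analogues of Claims 1 and 2 replacing the matched-pair zig-zag, and the same choice of $k_0$ and the identity valuation. The only difference is cosmetic: you spell out the consistency checks and the verification at $k_0$ that the paper leaves implicit.
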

\begin{demo}{Proof} We deal only with the case when $G$ is strongly rich. The other cases can be dealt with in a similar manner 
by undergoing the obvious modifications, as indicated above. 
As opposed to theorem \ref{main}, we use theories rather than pairs of theories, since we are not dealing with two subalgebras simultaneously.
(i) follows from \ref{t2}. Now we prove (ii). The proof is a simpler version of the proof of \ref{main}.
Let $I$ be  a set such that 
$\beta=|I\sim \alpha|=max(|A|, |\alpha|).$
Let $(K_n:n\in \omega)$ be a family of pairwise disjoint sets such that $|K_n|=\beta.$
Define a sequence of algebras
$\A=\A_0\subseteq \A_1\subseteq \A_2\subseteq \A_2\ldots \subseteq \A_n\ldots$
such that
$\A_{n+1}$ is a minimal dilation of $\A_n$ and $dim(\A_{n+1})=\dim\A_n\cup K_n$.
We denote $dim(\A_n)$ by $I_n$ for $n\geq 1$. 
If $(\Delta, \Gamma)$ 
is saturated in $\A_n$ then the following analogues of Claims 1 and 2 in theorem \ref{main} hold:
For any $a,b\in \A_n$ if $a\rightarrow b\notin \Delta$, then there is a saturated  theory $(\Delta',\Gamma')$ in $\A_{n+1}$ 
such that $\Delta\subseteq \Delta'$ $a\in \Delta'$ and $b\notin \Delta'$.
If $(\Delta, \Gamma)$ is saturated in $\A_n$ then for all  $x\in \A_n$ and $j\in I_n$,  if ${\sf q}_{j}x\notin  \Delta,$ 
then there $(\Delta',\Gamma')$ 
of saturated theories in
$\A_{n+2}$, $u\in I_{n+2}$
such that $\Delta\subseteq \Delta'$, and ${\sf s}_j^u x\notin \Delta'$. Now let
$$K=\{(\Delta, \Gamma): \exists n\in \omega \text { such that } (\Delta,\Gamma) \text { is saturated in }\A_n.\}$$
If $i=(\Delta, \Gamma)$ is a saturated theory in $\A_n$, let $M_i=dim \A_n$, 
where $n$ is the least such number, so $n$ is unique to $i$.
If $i_1=(\Delta_1, \Gamma_1)$ and $i_2=(\Delta_2, \Gamma_2)$ are in $K$, then set
$$i_1\leq i_2\Longleftrightarrow  M_{i_1}\subseteq M_{i_2}, \Delta_1\subseteq \Delta_2. $$
This is a preorder on $K$; define the kripke system ${\mathfrak K}$ based on the set of worlds $K$ as before.
Set $\psi: \A\to \mathfrak{F}_{\mathfrak K}$ by
$\psi_1(p)=(f_k)$ such that if $k=(\Delta, \Gamma)\in \mathfrak{K}$ is  saturated in $\A_n$,
and $M_k=dim \A_n$, then for $x\in V_k=\bigcup_{p\in G_n}{}^{\alpha}M_k^{(p)}$,
$$f_k(x)=1\Longleftrightarrow {\sf s}_{x\cup (Id_{M_k\sim \alpha)}}^{\A_n}p\in \Delta.$$
Let  $k_0=(\Delta_{\omega}, \Gamma_{\omega})$ be defined as a complete saturated extension of $(\Delta_0, \Gamma_0)$
in $\A_1$, then $\psi,$ $k_0$ and $Id$ are as desired. 
The analogues of Claims 1 and 2 in theorem \ref{main} 
are used to show that $\psi$ so defined preserves implication and 
universal quantifiers.
\end{demo}

\section{Presence of diagonal elements}

All results, in Part 1, up to the previous theorem,  are proved in the absence of diagonal elements.
Now lets see how far we can go if we have diagonal elements. 
Considering diagonal elements, as we shall see, turn out to be problematic but not hopeless.

Our representation theorem has to respect diagonal elements, 
and this seems to be an impossible task with the presence of infinitary substitutions, 
unless we make a compromise that is, from our point of view, acceptable.
The interaction of substitutions based on infinitary transformations, 
together with the existence of diagonal elements tends to make matters `blow up'; indeed this even happens in the classical case,
when the class of (ordinary) set algebras ceases to be closed under ultraproducts.
The natural thing to do is to avoid those infinitary substitutions at the start, while finding the interpolant possibly using such substitutions.
We shall also show that in some cases the interpolant has to use infinitary substitutions, even if the original implication uses only finite transformations.

So for an algebra $\A$, we let $\Rd\A$ denote its reduct when we discard infinitary substitutions. $\Rd\A$ satisfies 
cylindric algebra axioms.

\begin{theorem}\label{main4}
Let $G$ be the semigroup of finite transformations on an infinite set 
$\alpha$ and let $\delta$ be a cardinal $>0$. Let $\rho\in {}^{\delta}\wp(\alpha)$ be such that
$\alpha\sim \rho(i)$ is infinite for every 
$i\in \delta$. Let $\A$ be the free  $G$ algebra with equality generated by $X$ restristed by $\rho$;
 that is $\A=\Fr_{\delta}^{\rho}GPHAE_{\alpha},$
and suppose that $X=X_1\cup X_2$.
Let $(\Delta_0, \Gamma_0)$, $(\Theta_0, \Gamma_0^*)$ be two consistent theories in $\Sg^{\A}X_1$ and $\Sg^{\A}X_2,$ respectively.
Assume that $\Gamma_0\subseteq \Sg^{\A}(X_1\cap X_2)$ and $\Gamma_0\subseteq \Gamma_0^*$.
Assume, further, that  
$(\Delta_0\cap \Theta_0\cap \Sg^{\A}X_1\cap \Sg^{\A}X_2, \Gamma_0)$ is complete in $\Sg^{\A}X_1\cap \Sg^{\A}X_2$. 
Then there exist a Kripke system $\mathfrak{K}=(K,\leq \{X_k\}_{k\in K}\{V_k\}_{k\in K}),$ a homomorphism $\psi:\A\to \mathfrak{F}_K,$
$k_0\in K$, and $x\in V_{k_0}$,  such that for all $p\in \Delta_0\cup \Theta_0$ if $\psi(p)=(f_k)$, then $f_{k_0}(x)=1$
and for all $p\in \Gamma_0^*$ if $\psi(p)=(f_k)$, then $f_{k_0}(x)=0$.
\end{theorem}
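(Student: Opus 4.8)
The plan is to run the construction of the diagonal-free case proved above (Theorem~\ref{main2}) essentially verbatim, and then graft onto it a treatment of the diagonal elements forced on us by the requirement that $\psi$ commute with each ${\sf d}_{ij}$. First I would reuse, with no change, the whole step-by-step apparatus: by the remarks preceding Corollary~\ref{main1}, Lemmas~\ref{t1}, \ref{t2}, \ref{t3} and \ref{cylindrify} hold verbatim in the presence of diagonal elements, so one builds the same tower of minimal dilations $\A=\A_0\subseteq\A_1\subseteq\cdots$, the same index set
$$K=\{((\Delta,\Gamma),(T,F)):\exists n\ ((\Delta,\Gamma),(T,F))\text{ is a matched pair of saturated theories in }\Sg^{\A_n}X_1,\Sg^{\A_n}X_2\},$$
and the same preorder $k_1\leq k_2\iff M_{k_1}\subseteq M_{k_2},\ \Delta_1\subseteq\Delta_2,\ T_1\subseteq T_2$. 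The starting world $k_0$ saturating $(\Delta_0,\Gamma_0),(\Theta_0,\Gamma_0^{*})$ is produced exactly as before by Lemma~\ref{t3}.

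The new ingredient is the base set attached to each world. At a world $k=((\Delta,\Gamma),(T,F))$, a matched pair in $\A_n$ with $M_k=\dim\A_n$, I would define a relation $E_k$ on $M_k$ by $u\,E_k\,v\iff {\sf d}_{uv}\in\Delta\cup T$. Using ${\sf d}_{kk}=1$, the identity ${\sf s}_{\tau}{\sf d}_{kl}={\sf d}_{\tau(k),\tau(l)}$ and the completeness and saturation of the matched pair, $E_k$ is an equivalence relation, and I would take the base of world $k$ to be the quotient $X_k=M_k/E_k$, with $V_k$ the finitely supported $E_k$-respecting assignments into $X_k$. The maps $\psi_1,\psi_2$ and their amalgam $\psi$ are defined by reading the substitution $\bar x$ off a choice of representatives, $f_k(x)=1\iff {\sf s}_{\bar x}^{\A_n}p\in\Delta\cup T$; well-definedness (independence of the representatives) is exactly the Leibniz axiom $x\cdot{\sf d}_{kl}\leq{\sf s}_{[k|l]}x$ together with saturation. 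The verifications that $\psi$ preserves $\land$, $\to$, ${\sf s}_{\sigma}$, ${\sf c}_m$ and ${\sf q}_m$ then transcribe word for word from the diagonal-free proof, since passing to the quotient does not touch those arguments. Preservation of the diagonals is the one genuinely new clause: if $\psi({\sf d}_{ij})=(f_k)$, then $f_k(x)=1\iff {\sf s}_{\bar x}^{\A_n}{\sf d}_{ij}\in\Delta\cup T\iff {\sf d}_{\bar x(i),\bar x(j)}\in\Delta\cup T\iff \bar x(i)\,E_k\,\bar x(j)\iff x_i=x_j$ in $X_k$, which is precisely the set-algebraic definition of ${\sf d}_{ij}$.

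The hard part will be reconciling the quotient base sets with the Kripke monotonicity condition $X_k\subseteq X_{k'}$ for $k\leq k'$. Intuitionistically a diagonal may be false at $k$ (lie in $\Gamma$) yet become true at some $k'\geq k$ (enter $\Delta'$), so the canonical map $M_k/E_k\to M_{k'}/E_{k'}$ can collapse two classes and fail to be injective, which would break the required inclusion. To defeat this I would exploit that $\Rd\A$ satisfies the cylindric-algebra axioms: at the base world $k_0$ completeness already decides every ${\sf d}_{uv}$ with $u,v\in M_{k_0}$, and I would arrange the dilation tower so that the fresh dimensions supplied by each block $K_n$ enter with their diagonal relation to the old dimensions decided, and that decision made persistent, at the moment of their introduction. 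This forces $E_{k'}\cap(M_k\times M_k)=E_k$ whenever $k\leq k'$, so the class map $X_k\to X_{k'}$ is injective and the inclusion $X_k\subseteq X_{k'}$ is restored; once this stabilisation of the equality relation is in place, the remaining clauses close exactly as in the equality-free theorem.
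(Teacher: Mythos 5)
Your core construction is essentially the paper's: the same tower of minimal dilations, the same set $K$ of matched pairs of saturated theories with the same preorder, the equivalence relation read off the diagonals ($u\sim v$ iff ${\sf d}_{uv}\in \Delta\cup T$), well-definedness of $f_k$ under change of representatives via the Leibniz axiom (the paper does this by induction on $|\{i:\sigma i\neq \tau i\}|$ using derived cylindric identities), and the diagonal clause verified exactly as you write it. The one cosmetic difference is that the paper never quotients the base sets: it keeps $M_k$ and factors the \emph{assignment} sets instead, putting $W_k=V_k/\sim$ where $\sigma\sim\tau$ iff $\sigma(i)\sim\tau(i)$ for all $i$, and it works throughout with the reducts $\Rd\A_n$ and finitely supported assignments so that diagonals never meet infinitary substitutions.

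The genuine gap is your final paragraph. The stabilisation you propose, namely forcing $E_{k'}\cap (M_k\times M_k)=E_k$ whenever $k\leq k'$, is not merely hard; it is incompatible with Claims 1 and 2, which are exactly what the preservation of $\rightarrow$ and ${\sf q}_j$ by $\psi$ requires. Concretely: diagonals are constants, so each ${\sf d}_{uv}$ lies in $\Sg^{\Rd\A_n}X_1\cap \Sg^{\Rd\A_n}X_2$ and is decided by the completeness of the matched pair $k$; take $u,v$ with ${\sf d}_{uv}\in \Gamma\cap F$ but $({\sf d}_{uv}\rightarrow 0)\notin \Delta$ (such worlds certainly occur). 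Applying Claim 1 with $a={\sf d}_{uv}$ and $b=0$ produces a world $k'\geq k$ with ${\sf d}_{uv}\in \Delta'$, i.e. $u$ and $v$ become identified at $k'$, so $E_{k'}$ properly extends $E_k$ on $M_k\times M_k$. This is not an accident of the construction but the semantics of intuitionistic implication: $\psi({\sf d}_{uv}\rightarrow 0)$ can only be computed correctly if such successor worlds are present, so cutting $K$ down to ``diagonal-stable'' worlds deletes precisely the successors that Claims 1 and 2 must supply. Only truth is persistent in Kripke semantics; the falsity of a diagonal cannot be made persistent.

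The correct move is therefore to drop injectivity rather than enforce it, and this is what the paper (tacitly) does. Since $\Delta\cup T\subseteq \Delta'\cup T'$, the relation $\sim_k$ only grows along $\leq$, so $[x]_k\mapsto [x\cup Id]_{k'}$ is a well-defined, possibly non-injective, transition map $W_k\to W_{k'}$; and nothing in the definition of $\mathfrak{F}_{\mathfrak{K}}$ or in the verification that $\psi$ is a homomorphism uses injectivity --- all that is needed is persistence of truth, $f_k([x])=1\Rightarrow f_{k'}([x])=1$, which is automatic. In your formulation this amounts to reading the condition $X_k\subseteq X_{k'}$ of the Kripke-system definition as the existence of canonical transition maps $M_k/E_k\to M_{k'}/E_{k'}$ rather than literal inclusions; with that reading (or with the paper's device of quotienting assignments instead of domains), the rest of your argument closes as you describe.
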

\begin{demo}{Proof}
The first half of the proof is almost identical to that of  lemma \ref{main}. We highlight the main steps, 
for the convenience of the reader, except that we only deal with the case
when $G$ is strongly rich.
Assume, as usual, that $\alpha$, $G$, $\A$ and $X_1$, $X_2$, and everything else in the hypothesis are given.
Let $I$ be  a set such that  $\beta=|I\sim \alpha|=max(|A|, |\alpha|).$
Let $(K_n:n\in \omega)$ be a family of pairwise disjoint sets such that $|K_n|=\beta.$
Define a sequence of algebras
$\A=\A_0\subseteq \A_1\subseteq \A_2\subseteq \A_2\ldots \subseteq \A_n\ldots$
such that
$\A_{n+1}$ is a minimal dilation of $\A_n$ and $dim(\A_{n+1})=\dim\A_n\cup K_n$.We denote $dim(\A_n)$ by $I_n$ for $n\geq 1$. 
The proofs of Claims 1 and 2 in the proof of \ref{main} are the same.

Now we prove the theorem when $G$ is a strongly rich semigroup.
Let $$K=\{((\Delta, \Gamma), (T,F)): \exists n\in \omega \text { such that } (\Delta, \Gamma), (T,F)$$
$$\text { is a a matched pair of saturated theories in }
\Sg^{\Rd\A_n}X_1, \Sg^{\Rd\A_n}X_2\}.$$
We have $((\Delta_0, \Gamma_0)$, $(\Theta_0, \Gamma_0^*))$ is a matched pair but the theories are not saturated. But by lemma \ref{t3}
there are $T_1=(\Delta_{\omega}, \Gamma_{\omega})$, 
$T_2=(\Theta_{\omega}, \Gamma_{\omega}^*)$ extending 
$(\Delta_0, \Gamma_0)$, $(\Theta_0, \Gamma_0^*)$, such that $T_1$ and $T_2$ are saturated in $\Sg^{\Rd\A_1}X_1$ and $\Sg^{\Rd\A_1}X_2,$ 
respectively. Let $k_0=((\Delta_{\omega}, \Gamma_{\omega}), (\Theta_{\omega}, \Gamma_{\omega}^*)).$ Then $k_0\in K,$
and   $k_0$ will be the desired world and $x$ will be specified later; in fact $x$ will be the identity map on some specified 
domain.

If $i=((\Delta, \Gamma), (T,F))$ is a matched pair of saturated theories in $\Sg^{\Rd\A_n}X_1$ and $\Sg^{\Rd\A_n}X_2$, let $M_i=dim \A_n$, 
where $n$ is the least such number, so $n$ is unique to $i$.
Let $${\bf K}=(K, \leq, \{M_i\}, \{V_i\})_{i\in \mathfrak{K}},$$
where $V_i=\bigcup_{p\in G_n, p\text { a finitary transformation }}{}^{\alpha}M_i^{(p)}$ 
(here we are considering only substitutions that move only finitely many points), 
and $G_n$ 
is the strongly rich semigroup determining the similarity type of $\A_n$, with $n$ 
the least number such $i$ is a saturated matched pair in $\A_n$, and $\leq $ is defined as follows: 
If $i_1=((\Delta_1, \Gamma_1)), (T_1, F_1))$ and $i_2=((\Delta_2, \Gamma_2), (T_2,F_2))$ are in $\bold K$, then set
$$i_1\leq i_2\Longleftrightarrow  M_{i_1}\subseteq M_{i_2}, \Delta_1\subseteq \Delta_2, T_1\subseteq T_2.$$ 
We are not yet there, to preserve diagonal elements we have to factor out $\bold K$ 
by an infinite family equivalence relations, each defined on the dimension of $\A_n$, for some $n$, which will actually turn out to be 
a congruence in an exact sense. 
As usual, using freeness of $\A$, we will  define two maps on $\A_1=\Sg^{\Rd\A}X_1$ and $\A_2=\Sg^{\Rd\A}X_2$, respectively;
then those will be pasted 
to give the required single homomorphism.

Let $i=((\Delta, \Gamma), (T,F))$ be a matched pair of saturated theories in $\Sg^{\Rd\A_n}X_1$ and $\Sg^{\Rd\A_n}X_2$, let $M_i=dim \A_n$, 
where $n$ is the least such number, so $n$ is unique to $i$.
For $k,l\in dim\A_n=I_n$, set $k\sim_i l$ iff ${\sf d}_{kl}^{\A_n}\in \Delta\cup T$. This is well defined since $\Delta\cup T\subseteq \A_n$.
We omit the superscript $\A_n$.
These are infinitely many relations, one for each $i$, defined on $I_n$, with $n$ depending uniquely on $i$, 
we denote them uniformly by $\sim$ to 
avoid complicated unnecessary notation.
We hope that no confusion is likely to ensue. We claim that $\sim$ is an equivalence relation on $I_n.$
Indeed,  $\sim$ is reflexive because ${\sf d}_{ii}=1$ and symmetric 
because ${\sf d}_{ij}={\sf d}_{ji};$
finally $E$ is transitive because for  $k,l,u<\alpha$, with $l\notin \{k,u\}$, 
we have 
$${\sf d}_{kl}\cdot {\sf d}_{lu}\leq {\sf c}_l({\sf d}_{kl}\cdot {\sf d}_{lu})={\sf d}_{ku},$$
and we can assume that $T\cup \Delta$ is closed upwards.
For $\sigma,\tau \in V_k,$ define $\sigma\sim \tau$ iff $\sigma(i)\sim \tau(i)$ for all $i\in \alpha$.
Then clearly $\sigma$ is an equivalence relation on $V_k$. 

Let $W_k=V_k/\sim$, and $\mathfrak{K}=(K, \leq, M_k, W_k)_{k\in K}$, with $\leq$ defined on $K$ as above.
We write $h=[x]$ for $x\in V_k$ if $x(i)/\sim =h(i)$ for all $i\in \alpha$; of course $X$ may not be unique, but this will not matter.
Let $\F_{\mathfrak K}$ be the set algebra based on the new Kripke system ${\mathfrak K}$ obtained by factoring out $\bold K$.

Set $\psi_1: \Sg^{\Rd\A}X_1\to \mathfrak{F}_{\mathfrak K}$ by
$\psi_1(p)=(f_k)$ such that if $k=((\Delta, \Gamma), (T,F))\in K$ 
is a matched pair of saturated theories in $\Sg^{\Rd\A_n}X_1$ and $\Sg^{\Rd\A_n}X_2$,
and $M_k=dim \A_n$, with $n$ unique to $k$, then for $x\in W_k$
$$f_k([x])=1\Longleftrightarrow {\sf s}_{x\cup (Id_{M_k\sim \alpha)}}^{\A_n}p\in \Delta\cup T,$$
with $x\in V_k$ and $[x]\in W_k$ is define as above. 

To avoid cumbersome notation, we 
write ${\sf s}_{x}^{\A_n}p$, or even simply ${\sf s}_xp,$ for 
${\sf s}_{x\cup (Id_{M_k\sim \alpha)}}^{\A_n}p$.  No ambiguity should arise because the dimension $n$ will be clear from context.

We need to check that $\psi_1$ is well defined. 
It suffices to show that if $\sigma, \tau\in V_k$ if $\sigma \sim \tau$ and $p\in \A_n$,  
with $n$ unique to $k$, 
then $${\sf s}_{\tau}p\in \Delta\cup T\text { iff } {\sf s}_{\sigma}p\in \Delta\cup T.$$

This can be proved by induction on the cardinality of 
$J=\{i\in I_n: \sigma i\neq \tau i\}$, which is finite since we are only taking finite substitutions.
If $J$ is empty, the result is obvious. 
Otherwise assume that $k\in J$. We recall the following piece of notation.
For $\eta\in V_k$ and $k,l<\alpha$, write  
$\eta(k\mapsto l)$ for the $\eta'\in V$ that is the same as $\eta$ except
that $\eta'(k)=l.$ 
Now take any 
$$\lambda\in \{\eta\in I_n: \sigma^{-1}\{\eta\}= \tau^{-1}\{\eta\}=\{\eta\}\}\smallsetminus \Delta x.$$
This $\lambda$ exists, because $\sigma$ and $\tau$ are finite transformations and $\A_n$ is a dilation with enough spare dimensions.
We have by cylindric axioms (a)
$${\sf s}_{\sigma}x={\sf s}_{\sigma k}^{\lambda}{\sf s}_{\sigma (k\mapsto \lambda)}p.$$
We also have (b)
$${\sf s}_{\tau k}^{\lambda}({\sf d}_{\lambda, \sigma k}\land {\sf s}_{\sigma} p)
={\sf d}_{\tau k, \sigma k} {\sf s}_{\sigma} p,$$
and (c)
$${\sf s}_{\tau k}^{\lambda}({\sf d}_{\lambda, \sigma k}\land {\sf s}_{\sigma(k\mapsto \lambda)}p)$$
$$= {\sf d}_{\tau k,  \sigma k}\land {\sf s}_{\sigma(k\mapsto \tau k)}p.$$
and (d)
$${\sf d}_{\lambda, \sigma k}\land {\sf s}_{\sigma k}^{\lambda}{\sf s}_{{\sigma}(k\mapsto \lambda)}p=
{\sf d}_{\lambda, \sigma k}\land {\sf s}_{{\sigma}(k\mapsto \lambda)}p$$
Then by (b), (a), (d) and (c), we get,
$${\sf d}_{\tau k, \sigma k}\land {\sf s}_{\sigma} p= 
{\sf s}_{\tau k}^{\lambda}({\sf d}_{\lambda,\sigma k}\cdot {\sf s}_{\sigma}p)$$
$$={\sf s}_{\tau k}^{\lambda}({\sf d}_{\lambda, \sigma k}\land {\sf s}_{\sigma k}^{\lambda}
{\sf s}_{{\sigma}(k\mapsto \lambda)}p)$$
$$={\sf s}_{\tau k}^{\lambda}({\sf d}_{\lambda, \sigma k}\land {\sf s}_{{\sigma}(k\mapsto \lambda)}p)$$
$$= {\sf d}_{\tau k,  \sigma k}\land {\sf s}_{\sigma(k\mapsto \tau k)}p.$$
The conclusion follows from the induction hypothesis.
Now $\psi_1$ respects all quasipolyadic equality operations, that is finite substitutions (with the proof as before; 
recall that we only have finite substitutions since we are considering 
$\Sg^{\Rd\A}X_1$)  except possibly for diagonal elements. 
We check those:

Recall that for a concrete Kripke frame $\F_{\bold W}$ based on ${\bold W}=(W,\leq ,V_k, W_k),$ we have
the concrete diagonal element ${\sf d}_{ij}$ is given by the tuple $(g_k: k\in K)$ such that for $y\in V_k$, $g_k(y)=1$ iff $y(i)=y(j)$.

Now for the abstract diagonal element in $\A$, we have $\psi_1({\sf d}_{ij})=(f_k:k\in K)$, such that if $k=((\Delta, \Gamma), (T,F))$ 
is a matched pair of saturated theories in $\Sg^{\Rd\A_n}X_1$, $\Sg^{\Rd\A_n}X_2$, with $n$ unique to $i$, 
we have $f_k([x])=1$ iff ${\sf s}_{x}{\sf d}_{ij}\in \Delta \cup T$ (this is well defined $\Delta\cup T\subseteq \A_n).$
 
But the latter is equivalent to ${\sf d}_{x(i), x(j)}\in \Delta\cup T$, which in turn is equivalent to $x(i)\sim x(j)$, that is 
$[x](i)=[x](j),$ and so  $(f_k)\in {\sf d}_{ij}^{\F_{\mathfrak K}}$.  
The reverse implication is the same.

We can safely assume that $X_1\cup X_2=X$ generates $\A$.
Let $\psi=\psi_1\cup \psi_2\upharpoonright X$. Then $\psi$ is a function since, by definition, $\psi_1$ and $\psi_2$ 
agree on $X_1\cap X_2$. Now by freeness $\psi$ extends to a homomorphism, 
which we denote also by $\psi$ from $\A$ into $\F_{\mathfrak K}$.
And we are done, as usual, by $\psi$, $k_0$ and $Id\in V_{k_0}$.
\end{demo}

Theorem \ref{main2}, generalizes as is, to the expanded structures by diagonal elements. That is to say, we have:

\begin{theorem} The free dimension restricted free algebras have the interpolation property
\end{theorem}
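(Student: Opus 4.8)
The statement to establish is the Craig interpolation property for these free algebras: if $x\in\Sg^{\A}X_1$ and $z\in\Sg^{\A}X_2$ satisfy $x\leq z$, then there is an interpolant $y\in\Sg^{\A}(X_1\cap X_2)$, possibly living in a dilation (i.e.\ using spare dimensions), with $x\leq y\leq z$. The plan is to derive this from the representation theorems \ref{main2} and \ref{main4} by contraposition: I will encode a putative failure of interpolation as a pair of consistent theories meeting the hypotheses of those theorems, separate them by a homomorphism into a set algebra over a Kripke system, and then contradict $x\leq z$ using the fact that such a homomorphism is order preserving.

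Concretely, suppose toward a contradiction that no interpolant exists. Write $C=\Sg^{\A}(X_1\cap X_2)$ and set $F=\{c\in C: x\leq c\}$ and $G=\{c\in C: c\leq z\}$. Since $F$ is a filter and $G$ an ideal of $C$, and since any $c\in F\cap G$ would satisfy $x\leq c\leq z$ and hence be an interpolant, the failure of interpolation gives $F\cap G=\varnothing$. First I would check that $(\{x\}\cup F,G)$ is consistent in $\Sg^{\A}X_1$ and that $(F,\{z\}\cup G)$ is consistent in $\Sg^{\A}X_2$: in the first, a witnessing inequality collapses (using $x\leq f$ for $f\in F$) to $x\leq\bigvee G\in G$, forcing an interpolant; in the second it collapses to $\bigwedge F\leq z$, again an interpolant. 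Next I would complete the common part: by Lemma \ref{t1} extend $(F,G)$ to a complete theory $(\Phi,\Psi)$ of $C$, and then take $\Delta_0=\{x\}\cup\Phi$, $\Theta_0=\Phi$, $\Gamma_0=\Psi$, $\Gamma_0^*=\{z\}\cup\Psi$. By construction $\Gamma_0\subseteq C\subseteq\Gamma_0^*$, both pairs are consistent, and $(\Delta_0\cap\Theta_0\cap\Sg^{\A}X_1\cap\Sg^{\A}X_2,\Gamma_0)=(\Phi,\Psi)$ is complete in $C$, so the hypotheses of \ref{main2} (equality-free) or \ref{main4} (with diagonals) are met.

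Applying the relevant representation theorem yields a Kripke system $\mathfrak{K}$, a homomorphism $\psi:\A\to\mathfrak{F}_{\mathfrak K}$, a world $k_0$ and a point at which every element of $\Delta_0\cup\Theta_0$ is evaluated to $1$ and every element of $\Gamma_0^*$ to $0$. In particular $\psi(x)$ takes value $1$ there and $\psi(z)$ takes value $0$. But $\psi$ is a homomorphism of (polyadic) Heyting algebras, hence order preserving, so $x\leq z$ forces $\psi(x)\leq\psi(z)$, i.e.\ the value of $\psi(x)$ lies below that of $\psi(z)$ at every world and point; this contradicts $1\not\leq 0$. Hence an interpolant exists. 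The presence of diagonal elements changes nothing in this last step once \ref{main4} is used in place of \ref{main2}, which is why the result transfers verbatim to the equality case.

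The step I expect to be the main obstacle is completing $(F,G)$ to a complete matched theory while keeping both sides consistent. The tension is structural: $x\leq z$ is precisely the relation we intend to refute, and it makes the mixed obstruction $x\wedge\varphi\leq z\vee\psi$ hold trivially, so a naive prime-filter extension on the fixed algebra $C$ can stall. This is exactly the difficulty that the zig-zag construction of Lemma \ref{t3}, together with the tower of minimal dilations feeding \ref{main2} and \ref{main4}, is designed to absorb: the completion, and with it the interpolant, may have to be found only after passing to a dilation, so that the interpolant uses infinitary substitutions even when the implication $x\leq z$ does not. I would therefore phrase the matched-pair construction inside the dilation tower rather than on $C$ itself, invoking Lemma \ref{cylindrify} to pull the resulting interpolant back into the appropriate neat reduct.
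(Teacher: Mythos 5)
Your overall strategy coincides with the paper's: encode a putative failure of interpolation as a pair of consistent theories meeting the hypotheses of Theorem \ref{main2} (resp.\ \ref{main4}), get a homomorphism $\psi$ into a set algebra over a Kripke system separating the two sides, and contradict $x\leq z$ by order preservation. The final step is fine. The gap is in how you manufacture the matched pair. You complete $(F,G)$ to $(\Phi,\Psi)$ inside $C=\Sg^{\A}(X_1\cap X_2)$ \emph{before} looking at $x$ and $z$, and then assert that both $(\{x\}\cup\Phi,\Psi)$ and $(\Phi,\{z\}\cup\Psi)$ are consistent ``by construction''. The first assertion is in fact salvageable by residuation: if $x\land\phi\leq\psi$ with $\phi\in\Phi$, $\psi\in\Psi$, then $x\leq\phi\to\psi$, so $\phi\to\psi\in F\subseteq\Phi$, and $\phi\land(\phi\to\psi)\leq\psi$ contradicts consistency of $(\Phi,\Psi)$. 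But the second assertion is a genuine gap: inconsistency there means $\phi\leq z\lor\psi$ with $\phi\in\Phi$, $\psi\in\Psi$, and since $\Psi$ properly extends $G$, the collapse $z\lor\psi=z$ that you used to check consistency of $(F,\{z\}\cup G)$ is no longer available. In a Boolean algebra one would pass to $\phi\land\neg\psi\leq z$ and land in $\Phi\cap G$, a contradiction; but Heyting/BL algebras have no complementation and no co-residual (subtraction) dual to $\to$, so nothing converts $\phi\leq z\lor\psi$ into a contradiction. A completion chosen blindly by Lemma \ref{t1} can perfectly well place into $\Psi$ some $c\in C$ with $z\lor c=1$ in $\Sg^{\A}X_2$, which makes $(\Phi,\{z\}\cup\Psi)$ inconsistent; your construction does not exclude this. (Also, ``$\Gamma_0\subseteq C\subseteq\Gamma_0^*$'' is a slip; you mean $\Gamma_0\subseteq C$ and $\Gamma_0\subseteq\Gamma_0^*$.)

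The paper avoids exactly this trap by an \emph{asymmetric} order of operations, which is the real content of its proof: it first extends $(F,\{z\})$ to a complete theory $(\Delta_2,\Gamma_2)$ inside $\Sg^{\A}X_2$ -- so $z$ sits on the refuted side throughout the completion and consistency with $z$ is preserved at every step -- and only then restricts to $C$, obtaining the complete theory $(\Delta,\Gamma)=(\Delta_2\cap C,\Gamma_2\cap C)$ on the common part. Adding $x$ on the left of this restricted theory is then consistent by precisely the residuation argument above: $x\land\phi\leq\mu$ gives $\phi\to\mu\in F\subseteq\Delta_2$, contradicting consistency of $(\Delta_2,\Gamma_2)$. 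So the left side can always be repaired after the fact, the right side cannot, and that dictates completing on the $z$ side first. Note also that your proposed remedy -- pushing the matched-pair construction into the dilation tower via Lemmas \ref{t3} and \ref{cylindrify} -- cannot absorb the difficulty: Lemma \ref{t3} and Theorems \ref{main2}, \ref{main4} take a matched pair (two consistent theories together with completeness on the common part) as \emph{input} and extend it to saturated pairs in dilations; they do not create the initial pair. With the paper's reordering your argument goes through verbatim; as a side remark, the interpolant then found lies in $\Sg^{\A}(X_1\cap X_2)$ itself, so no passage to a dilation is needed for this theorem.
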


\begin{demo}Assume that $\theta_1\in \Sg^{\A}X_1$ and $\theta_2\in \Sg^{\A}X_2$ such that $\theta_1\leq \theta_2$.
Let
$\Delta_0=\{\theta\in \Sg^{\A}(X_1\cap X_2): \theta_1\leq \theta\}.$
If for some $\theta\in \Delta_0$ we have $\theta\leq \theta_2$, then we are done.
Else $(\Delta_0, \{\theta_2\})$ is consistent. Extend this to a complete theory $(\Delta_2, \Gamma_2)$ in $\Sg^{\A}X_2$.
Consider $(\Delta, \Gamma)=(\Delta_2\cap \Sg^{\A}(X_1\cap X_2), \Gamma_2\cap \Sg^{\A}(X_1\cap X_2))$.
Then $(\Delta\cup \{\theta_1\}), \Gamma)$ is consistent. For otherwise, for some $F\in \Delta, \mu\in \Gamma,$ 
we would have $(F\land \theta_1)\to \mu$ and $\theta_1\to (F\to \mu)$, so $(F\to \mu)\in \Delta_0\subseteq \Delta_2$ which is impossible.
Now $(\Delta\cup \{\theta_1\}, \Gamma)$ $(\Delta_2,\Gamma_2)$ are consistent with $\Gamma\subseteq \Gamma_2$ and $(\Delta,\Gamma)$
complete in $\Sg^{\A}X_1\cap \Sg^{\A}X_2$. So by theorem \ref{main2}, $(\Delta_2\cup \{\theta_1\}, \Gamma_2)$ 
is satisfiable at some world in some set algbra based on a Kripke 
system, hence consistent. 
But this contradicts that
$\theta_2\in \Gamma_2, $ and we are done.
\end{demo}

\section{Sheaf Duality, epimorphisms and omitting types}

\begin{definition} Let $\B$ be an algebra. A filter of $\B$ is a nonempty subset $F\subseteq A$ such that for all $a,b\in B$,
\begin{enumroman}
\item $a,b\in F$ implies $a*b\in F.$
\item $a\in F$ and $a\leq b$ imply $b\in F.$
\end{enumroman}
\end{definition}
It easy to check that if $F$ is a filter on $A$ then $1\in F$ and whenever $a, a\implies b\in F$ then $b\in F$.
Also $a*b\in F$ if and only if $a\cap b\in F$ iff $a\in F$ and $b\in F$. A filter $F$ is proper if $F\neq A$ and it is easy to see that a filter $F$ is proper
iff $0\notin F$. 

\begin{definition} A filter $P$ of $A$ is prime provided that it is a prime filter of the underlying lattice $L(\B)$ of $\B$, that is
$a\cup b\in P$ implies $a\in P$ or $b\in P$. This is equivalent to the statement that for all $a,b\in \B,$ $a\implies b\in P$ or $b\implies a\in P$.
A proper filter $F$ is maximal if it is not properly contained in any other proper filter. 
\end{definition}
We let $Max(\B)$ denote the set of maximal filters
and $Spec(\B)$ the family of prime filters. Then it is not hard to actually show that $Max(\B)\subseteq Spec(\B)$ \cite{spec}.
For a set $X\subseteq \B$, $\Fl^{\B}X$ denotes the filter generated by $X$. 
A filter $F$ is called principal, if $F=\Fl\{a\}=\{x\in B: x\geq a\}$.
The following notions are taken from \cite{spec}. Proofs are also found in \cite{spec}.
Let $\B$ be a non-trivial algebra. For each $X\subseteq \B$, we set
$$V(X)=\{P\in Spec(X): X\subseteq P\}.$$ Then the family $\{V(X)\}_{X\subseteq \B}$ of subsets of $spec(\B)$ 
satisfies the axioms for closed sets in a topological space. The resulting topology is called the 
Zariski topology, and the resulting topological space is called the prime spectrum of $\B$. We write $V(a)$ for the more cumbersome $V(\{a\})$.
For any $X\subseteq \B$, let
$$D(X)=\{P\in Spec(X): X\nsubseteq P\}$$
Then $\{D(X)\}_{X\subseteq A}$ is the family of open sets of the Zariski topology.
We write $D(a)$ for $D(\{a\})$.
The minimal spectrum of $\B$ is the topology induced by the Zariski topology on $Max(\B)$.
For $X\subseteq \B$ and $a\in \B$, let
$$V_M(X)=V(X)\cap Max(\B)\text { and } D_M(X)=D(X)\cap Max(\B).$$
$$V_M(a)=V(a)\cap Max(\B),\text { and } D_M(a)=D(a)\cap Max(\B).$$
In other words,
$$V_M(a)=\{F\in Max(\B): a\in F\}$$ 
and $$D_M(a)=\{F\in Max(\B): a\notin F\}.$$

\begin{lemma} Let $\B$ be an algebra. Let $a,b\in \B$.
Then the following hold:
\begin{enumroman}
\item $D_M(a)\cap D_{M}(b)=D_M(a\cup b).$
\item  $D_M(a)\cup D_M(b)=D_M(a\cap b)=D_M(a*b).$
\item $D_M(X)=Max(\B)$  iff $\Fl^{\B}X=\B.$
\item  $D_M(\bigcup_{i\in I}X_i)=\bigcup_{i\in I}D_M(X_i).$
\item $V_M(a)\cap V_M(b)=V_M(a\cap b).$
\item $a\leq b$ if and only if $V_M(a)\subseteq V_M(b).$
\end{enumroman}
\end{lemma}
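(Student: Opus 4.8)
The plan is to reduce all six assertions to elementary membership statements about a single filter, and then unwind. The engine is the remark recorded just after the definition of a filter, namely that for any filter $F$ one has $a*b\in F$ iff $a\cap b\in F$ iff ($a\in F$ and $b\in F$), together with the two facts already granted in the excerpt: every filter is upward closed, and $Max(\B)\subseteq Spec(\B)$, so that each maximal filter $F$ is prime. First I would establish the two ``bridge'' equivalences valid for a prime filter $F$: (a) $a\cup b\in F$ iff $a\in F$ or $b\in F$ (the direction $\Leftarrow$ is upward closure applied to $a\le a\cup b$ and $b\le a\cup b$, while $\Rightarrow$ is exactly primeness), and (b) $a\cap b\in F$ iff $a\in F$ and $b\in F$ (the remark above). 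Recalling $D_M(c)=\{F\in Max(\B):c\notin F\}$ and $V_M(c)=\{F\in Max(\B):c\in F\}$, item (i) is the contrapositive of (a), items (ii) and (v) are (b) read through complementation and directly, and item (iv) is pure set theory: $F\in D_M(\bigcup_i X_i)$ says some $x\in\bigcup_i X_i$ fails to lie in $F$, i.e. $X_i\not\subseteq F$ for some $i$. None of these four require anything beyond the bridge equivalences.

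For item (iii) I would argue by the usual Zorn/maximal-filter machinery. Since $\Fl^{\B}X=\B$ iff $0\in\Fl^{\B}X$, and since any filter containing $X$ contains $\Fl^{\B}X$, the condition $\Fl^{\B}X=\B$ is equivalent to saying that no \emph{proper} filter contains $X$. If this holds, then in particular no maximal filter (being proper) contains $X$, so $X\not\subseteq F$ for all $F\in Max(\B)$, i.e. $D_M(X)=Max(\B)$. Conversely, if $\Fl^{\B}X\neq\B$, then $\Fl^{\B}X$ is a proper filter, and by Zorn's lemma it extends to a maximal filter $F\supseteq X$; this $F$ lies in $V_M(X)$, so $D_M(X)\neq Max(\B)$. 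Contraposition closes the equivalence.

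The one genuinely delicate point is the reverse implication of item (vi): from $V_M(a)\subseteq V_M(b)$ to $a\le b$. The forward direction is immediate from upward closure (if $a\in F$ and $a\le b$ then $b\in F$). For the converse I would argue contrapositively: assuming $a\not\le b$, I must produce a maximal filter containing $a$ but not $b$. The natural route is a separation argument: the principal filter ${\uparrow}a=\Fl^{\B}\{a\}$ and the principal ideal ${\downarrow}b$ are disjoint, since $a\le x\le b$ would force $a\le b$; because the lattice reduct of $\B$ is distributive, the prime-filter separation theorem yields a prime filter $P\supseteq{\uparrow}a$ disjoint from ${\downarrow}b$, so $a\in P$ and $b\notin P$. \textbf{This is exactly where I expect the main obstacle.} The separation theorem delivers a \emph{prime} filter, whereas $V_M$ is indexed by \emph{maximal} filters, and in a general $BL$ algebra maximal filters need not separate order (semisimplicity can fail); so the step from a separating prime filter to a separating maximal filter is not automatic and must be supplied either by the hypotheses on the class of algebras under consideration or by the spectral apparatus of the cited source, and I would make that dependence explicit rather than assert it for arbitrary $\B$.
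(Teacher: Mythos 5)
Your treatment of items (i)--(v) is correct and is in substance what the paper relies on: the paper simply cites \cite{spec}, proposition 2.8, for those items, and your two bridge equivalences (primeness of maximal filters for $\cup$, the $*$/$\cap$ remark for meets) together with the Zorn argument for (iii) are exactly the missing details. The real divergence is item (vi). The paper does attempt a proof of the direction you flagged: it argues that if $a\not\leq b$ then $a\cap(b\implies 0)\neq 0$, then picks a maximal filter $F$ containing $a\cap(b\implies 0)$, so that $F\in V_M(a)\setminus V_M(b)$. Your suspicion about this direction is vindicated, because both steps are Boolean-algebra facts that fail in $BL$ algebras. In the G\"odel chain $[0,1]$ one has $0.6\not\leq 0.5$ yet $0.6\cap(0.5\implies 0)=0.6\cap 0=0$, so the first step fails; and in the standard $MV$ algebra a nonzero element such as $0.5$ satisfies $0.5\odot 0.5=0$, so it generates an improper filter and lies in no maximal filter at all, so the second step fails. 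Indeed the implication itself is false in general: in the standard $MV$ algebra the only proper filter is $\{1\}$, hence $V_M(0.6)=V_M(0.5)=\emptyset$ while $0.6\not\leq 0.5$. So where the paper asserts, you correctly withhold assertion; for arbitrary $\B$ no proof exists because the statement is false, and your closing remark that the claim needs extra hypotheses on $\B$ is exactly right.

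One correction to your own route, though: the obstruction appears one step earlier than you place it. In a $BL$ algebra ${\uparrow}a$ is not a filter, since filters must be closed under $*$; the filter generated by $a$ is $\Fl^{\B}\{a\}=\{x: x\geq a^n \text{ for some } n\}$, and this can meet ${\downarrow}b$ even when $a\not\leq b$ (again $a=0.6$, $b=0.5$ in the standard $MV$ algebra, where $a\odot a=0.2\leq b$). Moreover, the prime-filter separation theorem applied to the distributive lattice reduct yields a lattice filter, which need not be closed under $*$ and hence need not belong to $Spec(\B)$ as the paper defines it. So even a separating \emph{prime} filter is unavailable in general, not merely the upgrade from prime to maximal. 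The correct general statement, and what the spectral apparatus of \cite{spec} actually supports, is that $V(a)\subseteq V(b)$ over the prime spectrum iff $b\in\Fl^{\B}\{a\}$, i.e. iff $a^n\leq b$ for some $n$; this collapses to $a\leq b$ only under additional hypotheses (idempotency of $*$ on the relevant elements, Boolean or similar conditions), which is precisely the dependence you said should be made explicit.
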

\begin{demo}{Proof} \cite{spec} proposition 2.8. We only prove one side of the last item, since it is not mentioned in \cite{spec}.
Assume that $V_a\subseteq V_b$. If it is not the case that $a\leq b$, then we may assume that $a\cap (b\implies 0)$ is not $0$.
Hence there is a proper maximal filter $F$, such that $a\cap (b\implies  0)\in F.$ Hence $a\in F$ and $b\to 0$ is in $F$.
But this implies that $b\notin F$ lest $0\in F$. Hence $F\in V_a$ and $F\notin V_b.$ This is a contradiction, and the required is proved.
\end{demo}
\begin{theorem}\label{d}Let $\B$ be an algebra. 
\begin{enumroman}
\item $\{D_M(a)\}_{a\in \B}$ is a basis for a compact Hausdorff topology on $Max(\B)$
\item Furthermore if $a=\bigvee a_i$, then $V_M(a)\sim \bigcup V_M(a_i)$ is a nowhere dense subset of $Max(\B)$. 
Similarly if $a=\bigwedge a_i$, then $\bigcap V_M(a_i)\sim V_M(a).$
is nowhere dense. 
\item If $\B$ is countable, then $Max(\B)$ is a Polish space.
\end{enumroman}
\end{theorem}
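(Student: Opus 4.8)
The plan is to derive all three parts from the six identities of the preceding lemma, together with the characterization of maximal filters in a $BL$ algebra as exactly those whose quotient is simple (equivalently, archimedean).

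\textbf{Part (i).} First I would check the basis axioms. Since a maximal filter is proper, $0\notin F$ for every $F\in Max(\B)$, so $D_M(0)=Max(\B)$ and the family covers. By item (i) of the lemma, $D_M(a)\cap D_M(b)=D_M(a\cup b)$, so finite intersections of basic sets are again basic; hence $\{D_M(a)\}_{a\in\B}$ is a basis. For compactness, suppose $\bigcup_{i}D_M(a_i)=Max(\B)$. By item (iv) this equals $D_M(\{a_i\})$, and by item (iii) it forces $\Fl^{\B}\{a_i\}=\B$, so $0\in\Fl^{\B}\{a_i\}$. Because filters are generated finitarily, $a_{i_1}*\cdots*a_{i_k}\le 0$ for finitely many indices, whence $\Fl^{\B}\{a_{i_1},\dots,a_{i_k}\}=\B$ and, applying (iii) and (iv) again, $\bigcup_{j}D_M(a_{i_j})=Max(\B)$; this is the required finite subcover.

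The Hausdorff property is the crux. Given distinct $F_1,F_2\in Max(\B)$, pick $a\in F_1\setminus F_2$. Since $F_2$ is maximal its quotient is simple, so some power $a^n$ collapses to $0$ modulo $F_2$; this should yield $b_1\in F_1$ and $b_2\in F_2$ with $b_1\cap b_2=0$. Then $\neg b_1\cup\neg b_2=\neg(b_1\cap b_2)=1$, while $\neg b_1\notin F_1$ and $\neg b_2\notin F_2$. Hence $F_1\in D_M(\neg b_1)$, $F_2\in D_M(\neg b_2)$, and by item (i) $D_M(\neg b_1)\cap D_M(\neg b_2)=D_M(\neg b_1\cup\neg b_2)=D_M(1)=\emptyset$, giving disjoint basic neighborhoods. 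The step I expect to be the main obstacle is extracting a genuine algebraic orthogonality $b_1\cap b_2=0$ in $\B$ (not merely modulo $F_1$ and $F_2$ separately) from the simplicity of the quotients; this is exactly where the structure theory of \cite{spec} is needed.

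\textbf{Part (ii).} For $a=\bigvee_i a_i$, item (vi) gives $a_i\le a$, hence $V_M(a_i)\subseteq V_M(a)$, so $S=V_M(a)\setminus\bigcup_i V_M(a_i)\subseteq V_M(a)$, and $V_M(a)$ is closed (it is the complement of the basic open $D_M(a)$). To show $S$ is nowhere dense it suffices to prove $\overline S$ has empty interior, for which I would show that $\bigcup_i V_M(a_i)$ is dense in $V_M(a)$: if some nonempty basic open $D_M(c)$ met $V_M(a)$ but missed every $V_M(a_i)$, then $c$ would behave as an upper bound of all the $a_i$ lying strictly below $a$ on $D_M(c)$, contradicting that $a$ is the \emph{least} upper bound $\bigvee_i a_i$. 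The dual computation, using item (v) and $a=\bigwedge_i a_i$, handles $\bigcap_i V_M(a_i)\setminus V_M(a)$. The delicate point is converting ``upper bound on the open set $D_M(c)$'' into an honest algebraic inequality, which again leans on item (vi).

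\textbf{Part (iii).} If $\B$ is countable, the basis $\{D_M(a)\}_{a\in\B}$ from part (i) is countable, so $Max(\B)$ is second countable; being also compact Hausdorff by part (i), it is metrizable by Urysohn's metrization theorem. A compact metrizable space is separable and completely metrizable, hence Polish. This last part is routine once (i) is established.
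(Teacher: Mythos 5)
Your basis axioms, the compactness argument, and part (iii) coincide with the paper's proof (second countability plus compact Hausdorff gives Polish), but the two places you yourself flag as ``the main obstacle'' are genuine gaps, and in both cases the paper closes them by a different and much more elementary route. For the Hausdorff property, your plan requires producing $b_1\in F_1$, $b_2\in F_2$ with $b_1\cap b_2=0$. Maximality of $F_2$ only yields $c\in F_2$ and $n$ with $c*a^n=0$, and in a $BL$ algebra $*$-orthogonality is strictly weaker than $\cap$-orthogonality: $x*y\leq x\cap y$ can be strict, and nonzero elements may satisfy $x*x=0$ (take $x=1/2$ in the standard $MV$ algebra), so from $(a^n\cap c)*(a^n\cap c)\leq a^n*c=0$ you cannot conclude $a^n\cap c=0$. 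This can be repaired (for instance $a^{2n}\cap c^{2}\leq (a^n\cap c)^{2}=0$, checked on chains and transferred by subdirect representation), but you neither prove it nor can you cite it as stated. The paper needs none of this: given distinct maximal filters $M,N$, pick $x\in M\sim N$ and $y\in N\sim M$ and set $a=x\implies y$, $b=y\implies x$; closure of filters under modus ponens gives $a\notin M$ and $b\notin N$, while prelinearity gives $a\cup b=1$, whence $D_M(a)\cap D_M(b)=D_M(a\cup b)=D_M(1)=\emptyset$. Missing this one-line use of prelinearity is what forces you into the unproved orthogonality claim.

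For part (ii), your reduction is not merely incomplete but logically insufficient: density of $\bigcup_i V_M(a_i)$ in $V_M(a)$ shows only that $S=V_M(a)\sim\bigcup_i V_M(a_i)$ has empty interior, not that its \emph{closure} does (compare the irrationals in $[0,1]$, disjoint from the dense union of closed singletons $\{q\}$, $q$ rational, yet everywhere dense). Since the sets $V_M(a_i)$ are closed but not open here, the classical Stone-space argument you are implicitly imitating does not transfer. Moreover the step you defer --- converting an ``upper bound on the open set $D_M(c)$'' into an algebraic inequality --- is the entire content of the proof, and the paper carries it out concretely: starting from a nonempty $D_M(d)$ inside $S$, it gets $V_M(a_i)\subseteq V_M(a)\cap V_M(d)=V_M(a\cap d)$, hence $a_i\leq a\cap d$ for all $i$ by item (vi) of the preceding lemma, hence $a\leq d$ because $a=\bigvee a_i$, and then $D_M(d)\subseteq D_M(a)\cap V_M(a)=\emptyset$, a contradiction; the meet case is handled by a separate computation with $e=d\to 0$, not by pure duality as you suggest. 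So as written your proposal establishes the basis axioms, compactness and part (iii), but neither the Hausdorff separation nor part (ii).
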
 
\begin{demo}{Proof} 
\begin{enumroman}
\item We include the proof for self completeness and also because the `nowhere density' part is completely new, 
and as we shall see in a while it will play a pivotal role in the proof of the omitting types theorem.
That $Max(\B)$ is compact and Hausdorff  is proved in \cite{spec}, theorem 2.9, the proof goes as follows:
Assume that $$Max(\B)=\bigcup_{i\in I}D_M(a_i)=D_M(\bigcup_{i\in I}a_i).$$ Then $\B=\Fl\{\bigcup_{i\in I} a_i\}$, hence $0\in \Fl\{\bigcup_{i\in I}a_i\}$.
There is an $n\geq 1$ and $i_1,\ldots i_n\in I$ such that $a_{i_1}*\ldots a_{i_n}=0$. But 
$$Max(\B)=D_{M}(0)=D_M(a_{i_1}*\ldots a_{i_n})=D_M(a_{i_1})\cup\ldots D_M(a_{i_n}).$$
Hence every cover is reducible to a finite subcover. Hence the space is compact.
Now we show that it is Hausdorff. Let $M$, $N$ be distinct maximal filters. 
Let $x\in M\sim N$ and $y\in N\sim M$. Let $a=x\implies  y$ and $b=y\implies  x$. Then $a\notin M$ and $b\notin N$. Hence 
$M\in D_M(a)$ and $N\in D_M(b)$. Also $D_M(a)\cap D_M(b)=D_M(a\lor b)=D_M(1)=\emptyset$.
 We have proved that the space is Hausdorff.

\item Now assume that $a=\bigvee a_i$ and $V_M(a)\sim \bigcup V_M(a_i)$ is not nowhere dense. 
Then there exists $d$ such that $D_M(d)\subseteq V_M(a)\sim V_M(a_i)$
Hence $$V_M(a_i)\subseteq V_M(a)\sim D_M(d)=V_M(a)\cap V_M(d)=V_{M}(a\cap d).$$ It follows that
$a\cap d=a$ so $a\leq d$. Then $D_M(d)\subseteq D_M(a)$. So we have, 
$D_M(d)\subseteq D_M(a)\cap V_M(a)=\emptyset$ contradiction.
Conversely assume that $a=\bigwedge{a_i}$ 
and assume that $$D_M(d)\subseteq \bigcap V_M(a_i)\sim V_M(a).$$
Let $e=d\to 0$. Then $V_M(e)=D_M(d)$.
Now we have
$$V_M(e)\subseteq \bigcap V_M(a_i)\sim V_M(a).$$
Taking complements twice, we get
$$V_M(e)\subseteq D_M(a)\sim \bigcup D_M(a_i)$$
Then
$V_M(e)\subseteq D_M(a)\sim D_M(a_i)$. So $$D_M(a_i)\subseteq D_M(a)\sim V_M(e)=D_M(a)\cap D_M(e)=D_M(a\cup e).$$
Hence $V_M(a\cup e)\subseteq V_M(a_i)$. So $a\cup e\leq a_i$ for each $i$. Thus $a\cup e=a$ from 
which we get that $e\leq a$. Hence $V_M(e)\subseteq V_M(a)$.
But $V_M(e)\subseteq D_M(a)$ it follows that $V_M(e)=\emptyset$. But $V_M(e)=D_M(d)$ and we are done.
\item If $\B$ is countable, then $Max{\B}$ is second countable, so the required follows.
\end{enumroman}
\end{demo}

We start by a concrete example addressing variants and extension first order logics. 
The following discussion applies to $L_n$ (first order logic with $n$ variables), $L_{\omega,\omega}$ (usual first order logic), 
rich logics, Keislers logics with and without equality, finitray logics of infinitary
relations; the latter three logics are infinitary extensions of first order logic, 
though the former and the latter have a finitary flavour, because quantification is taken only
on finitely many variables. These logics have an extensive literature in algebraic logic.
Let us start with the concrete example of usual first order logic. $\L_n$ denotes a relational first order language (we have no function symbols)
with $n$ constants, $n\leq \omega,$
and as usual a sequence of variables of order type $\omega$.

\begin{example}

Let $\Sn_{\L_n}$ denote the set of all $\L_n$ sentences, and  fix an enumeration $(c_i: i<n)$ of the constant symbols.
We assume that $T\subseteq  \Sn_{\L_0}$. 
Let $X_T=\{\Delta\subseteq \Sn_{\L_0}: \Delta \text{ is complete }\}$.
This is simply the underlying set of the Priestly  space, equivalently  the Stone space, 
of the Boolean algebra $\Sn_{L_0}/T$. For each $\Delta\in X_T,$ let $\Sn_{\L_n}/{\Delta}$
be the corresponding Tarski-Lindenbaum quotient algebra, which is a (representable) cylindric algebra of dimension $n$. 
The $i$th cylindrifier $c_i$ is defined by 
$c_i\phi/{\Delta}=\exists \phi(c_i|x)$, where the latter is the formula
obtained by replacing the $i$th constant if present by the first variable $x$ not occurring in $\phi$, 
and then applying the existential quantifier $\exists x$.
Let $\delta T$ be the following disjoint union
$\bigcup_{\Delta\in X_T}\{\Delta\}\times Sn_{\L_n}/{\Delta}.$ 
Define the following topologies, on $X_T$ and $\delta T$, respectively.
On $X_{T}$ the Priestly (Stone) topology, and on $\delta_{\Gamma}$ 
the topology with base $B_{\psi,\phi}=\{\Delta, [\phi]_{\Delta}, \psi\in \Delta, \Delta\in \Delta_{\Gamma}\}.$
Then $(X_T, \delta T)$ is a {\it sheaf}, and its dual consisting of the continuous sections, 
$\Gamma(T,\Delta)$, with operations defined pointwise, is actually isomorphic to $\Sn_{\L_n}/T$. 

\end{example}

\begin{example}
By the same token, let $\L$ be the  predicate language for $BL$ algebras, $\Fm$ denotes the set of $L$ formulas, and $\Sn$ denotes the 
set of all sentences (formulas with no free variables). 
This for example includes $MV$ algebras; that are, in turn, algebraisations of many 
valued logics.
Let $X_T$ be the Zarski (equivalently the Priestly) topology on $\Sn/T$ based on $\{\Delta\in Spec(\Sn): a\notin \Delta\}$. 
Let $\delta T=\bigcup _{\Delta\in X_T}\{\Delta\}\times \Fm_{\Delta}$. 
Then again, we have $(X_T, \delta T)$ is a {\it sheaf}, and its dual consisting of the continuous sections with operations defined pointwise, 
$\Gamma(T,\Delta)$ is actually isomorphic to $\Fm_T$. 
\end{example}

This situation is very similar to the one in algebraic geometry of desribing the ring associated with the affine variety 
in terms of the local rings given at at point of the variety.

This needs further clarification. Let us formalize the above concrete examples in an abstract more general setting, that allows further applications.
Let $\A$ be a bounded distributive lattice with extra operations $(f_i: i\in I)$. $\Zd\A$ denotes the distributive 
bounded lattice $\Zd\A=\{x\in \A: f_ix=x,\  \forall i\in I\}$, where the operations are the natural restrictions.(Idempotency of the $f_i$s guarantees 
that this is well defined). 
If $\A$ is a locally finite  algebra of formulas of first order logic 
or predicate modal logic or intiutionistic logic, or any predicate logic where the $f_i$s are interpreted as the existential 
quantifiers, then $\Zd\A$ is the Boolean 
algebra of sentences.

Let $\K$ be class of bounded distributive lattices with extra operations $(f_i: i\in I)$.
We describe a functor that associates to each $\A\in \K$, and $J\subseteq I$, a pair of topological spaces
$(X(\A,J), \delta(\A))=\A^d$, where $\delta(\A)$ has an algebraic structure, as well; in fact it is a subdirect product
of distributive lattices, that turn out to be simple (have no proper congruences)
under favourable circumstances, in which case $\delta(\A)$ is a semi-simple lattice carrying 
a product topology. This pair is  called the dual space of $\A$.
For $J\subseteq I$, let $\Nr_J\A=\{x\in A: f_ix=x \forall i\notin j\}$, with operations $f_i: i\in J$.
$X(\A,J)$ is the usual dual space of $\Nr_J\A$, that is, the set of all prime ideals of the lattice $\Nr_J\A$, 
this becomes a Priestly space (compact, Hausdorff and totally disconnected), when we take the collection of all sets
$N_a=\{x\in X(\A,J): a\notin x\}$, and their complements, as a base for the topology. 

For a set $X$ of an algebra $\A$ we let $\Co^{\A} X$ denote the congruence relation generated by $X$ (in the universal algebraic sense).
This is defined as the intersection of all congruence relations that have $X$ as an equivalence class.
Now we turn to defining the second component; this is more involved. 
For $x\in X(\A,J)$, let $\G_x=\A/\Co^{\A}x$  and 
$\delta(\A)=\bigcup\{\G_x: x\in X(\A)\}.$
This is clearly a disjoint union, and hence 
it can also be looked upon as the following product $\prod_{x\in \A} \G_x$ of algebras. 
This is not semi-simple, because $x$ is only prime,
least maximal in $\Nr_J\A$. 
But the semi-simple case will deserve special attention.

The projection $\pi:\delta(\A)\to X(\A)$ is defined for $s\in \G_x$ by $\pi(s)=x$.Here $\G_x=\pi^{-1}x$ is the stalk over $x$. For $a\in A$, 
we define a function
$\sigma_a: X(\A)\to \delta(\A)$ by $\sigma_a(x)=a/\Ig^{\A}x\in \G_x$. 

Now we define the topology on 
$\delta(\A)$. It is the smallest topology  for which all these functions are open, so $\delta(\A)$ 
has both an algebraic structure and a topological one, and they are compatible.

We can turn the glass around. Having such a space we associate a bounded distributive lattice in $\K$.
Let $\pi:\G\to X$ denote the projection associated with the space $(X,\G)$, built on $\A$.
A function  $\sigma:X\to \G$ is a section of $(X,\G)$ if $\pi\circ \sigma$ is the identity 
on $X$. 

Dually, the inverse construction  uses the sectional functor.
The set $\Gamma(X,\G)$ of all continuous sections of 
$(X,\G)$ becomes a $BLO$ by defining the operations pointwise, recall that $\G=\prod \G_x$ is a product of bounded distributive lattices.

The mapping $\eta:\A\to \Gamma(X(\A,J), \delta(\A))$ defined by $\eta(a)=\sigma_a$ 
is as easily checked  an isomorphism. 
Note that under this map an element in $\Nr_J\A$ corresponds with the characteristic 
function $\sigma_N\in \Gamma(X, \delta)$ 
of the basic set $N_a$.

To complete the definition of the contravariant 
functor we need to define the dual of morphisms. 

Given two spaces $(Y,\G)$ and $(X,\L)$ a sheaf morphism $H:(Y,\G)\to (X,\L)$ is a pair $(\lambda,\mu)$ where $\lambda:Y\to X$ is a continous map
and $\mu$ is a continous map $Y+_{\lambda} \L\to \G$ such that $\mu_y=\mu(y,-)$ is a homomorphism of $\L_{\lambda(y)}$ into $\G_y$.
We consider $Y+_{\lambda} \L=\{(y,t)\in Y\times \L:\lambda(y)=\pi(t)\}$ as a subspace of $Y\times \L$.
That is, it inherits its topology from the product topology on $Y\times \L$.

A sheaf morphism $(\lambda,\mu)=H:(Y,\G)\to (X,\L)$ produces a homomorphism of lattices
$\Gamma(H):\Gamma(X,\L)\to \Gamma(Y,\G)$ the natural way:
for $\sigma\in \Gamma(X,\L)$ define $\Gamma(H)\sigma$ by $(\Gamma(H)\sigma)(y)=\mu(y, \sigma(\lambda y))$ for all $y\in Y$.
A sheaf morphism $h^d:\B^d\to \A^d$ can also be asociated with a homomorphism $h:\A\to \B$. 
Define $h^d=(h^*, h^o)$ where for $y\in X(\B)$, $h^*(y)=h^{-1}\cap Zd\A$ and for $y\in X(\B)$ and $a\in A$
$$h^0(h, a/\Ig^{\A}h^*(y))=h(a)/\Ig^{\B}y.$$


\begin{example} Let $\A=\prod_{i\in I}\B_i$, where $\B_i$ are directly indecomposable $BAO$s. Then
$\Zd\A= {}^I2$ and $X(\A)$ is the Stone space of this algebra.
The stalk $\delta_{M}(\A)$ of $\A^{\delta}$ over $M\in X(\A)$ is the ultraproduct 
$\prod_{i\in I}\B_i/F$  where $F$ is the ultrafilter on $\wp(I)$  corresponding to $M$.
\end{example}

\begin{definition} Let $\A\in \CA_{\omega}$ and $x\in A$. The dimension set of $x$, in symbols $\Delta x$, 
is the set $\{i\in \omega: c_ix\neq x\}.$ Let $n\in \omega$. 
Then the $n$ neat reduct of $\A$ is the cylindric algebra of dimension $n$ 
consisting only of $n$ dimensional elements (those elements such that $\Delta x\subseteq n)$,
 and with operations indexed up to $n$.
\end{definition}
\begin{example}

\begin{enumarab}

\item Let $\A\in \Nr_n\CA_{\omega}$. Then there is a sheaf ${\bf X} =(X, \delta, \pi)$ such that $\A$ is isomorphic to continous sections
$\Gamma(X;\delta)$ of $\bold X.$  Indeed, let $X(\A)$ be the Stone space of $\Zd\A$. Then for any maximal ideal 
$x$ in $\Zd\A$, $\Ig^{\A}(x)$ is maximal in $\Nr_n\A$.
Let $\delta(A)=\bigcup \G_x$, where $\G_x=\A/\Ig^{\A}x$. 
The projection $\pi:\delta(\A)\to X(\A)$ is defined 
for $s\in \G_x$ by $\pi(s)=x$. For $a\in A$, 
we define a function $\sigma_a: X(\A)\to \delta(\A)$ by $\sigma_a(x)=a/\Ig^{\A}x\in \G_x$. 
Then $\pi\circ  \sigma$ is the identity and $\delta(\A)$ has  the smallest topology such that these maps are continuous.
Then $\eta: \A\to \Gamma(X(\A)), \delta)$ defined by
$\eta(a)=\sigma_a$ is the desired isomorphism.

\item Let $\A\in \Nr_n\CA_{\omega}$. For any ultrafilters 
$\mu$ and $\Gamma$ in $\Zd\A$, the map $\lambda:\A/\mu\to \A/\Gamma$ defined via, 
$a/\mu\mapsto a/\Gamma$
maps $\Zd\A$ into $\Zd\A$. (The latter is the set of zero-dimensional elements). 
The dual morphism is $\lambda^d=(\lambda, \lambda^0) :(X_{\Gamma}, \delta(\Gamma))\to (X_{\mu}, \delta(\mu))$, 
is defined by $\lambda(\Delta)=\Delta$ and $\lambda^0(\Delta, (\Delta), a/{\Delta}))=(\Delta, a/\Delta)$.
Thus it is an isompphism from $(X_{\Gamma}, \delta(\Gamma)$ onto the restriction of 
$(X_{\mu}, \delta(\mu))$ to the closed set $ X_{\Gamma}$.
Conversley, every restriction of $(X_{\mu}, \delta(\mu))$ to a closed subset $Y$ of $X_{\mu}$ is up to isomorphism the dual space of 
$\Nr_n\A/F$ for a filter $F$ of $\Zd\A$. For if $\Gamma=\bigcap Y$, then $Y=X_{\Gamma}$ 
since $Y$ is closed and the dual space of $\Nr_n\A/{\Gamma}$ is isomorphic to 
$(Y, \delta(\mu)\upharpoonright Y)$.
\end{enumarab}
\end{example}

For an algebra $\A$ and $X\subseteq \A$, $\Ig^{\A}X$ is the ideal generated by $X$. We write briefly lattice for a $BLO$; 
hopefully no confusion is likely to ensue.
\begin{definition}
\begin{enumarab}
\item  A lattice $L$ is regular if whenever $x$ is a prime ideal in $\Zd L$, then $\Ig^{\A}x$ is a prime ideal in
$\A$.

\item A lattice $L$ is strongly regular, if whenever $x$ is a prime idea in $\Zd \L$, then $\Ig^{\A}x$ is a maximal ideal in $\A$.

\item A lattice $L$ is congruence strongly regular, if whenever $x$ is a prime ideal in $\Zd\L$, then $\Co^{\A}x$ is a maximal congruence of $\A$.
\end{enumarab}
\end{definition}
If $L$ is not relatively complemented, then (2) and (3) above are not equivalent; but if it is relatively complemented then they are equivalent. 
A lattice with the property that every interval is complemented is called a relatively complemented lattice. 
In other words, a relatively complemented lattice is characterized by the property that for every element $a$ in an interval $[c,d]=\{x: c\leq x\leq d\}$
there is an element $b$, such that $a\lor b=d$ and $a\land b=c$. 
Such an element is called a complement; it may not be unique, but if the lattice is bounded then relative complements in $[a, 1]$ are just 
complements, and in case of distributivity such complements are 
unique.
In arbitrary lattices the lattice of ideas may not be isomorphic to the lattice of congruences, the 
following theorem gives a sufficient and necessary condition for this to hold.
The theorem is  a classic due to Gratzer and Schmidt.

\begin{theorem} For the correspondence between congruences and ideals to be an 
isomorphism it is necessary and sufficient that $L$ is distributive, relatively complemented with a minimum $0$.
\end{theorem}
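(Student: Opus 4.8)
The plan is to realize the stated correspondence as a Galois connection and to reduce the biconditional to two identities. Assuming $L$ has a least element $0$, associate to each ideal $I$ the congruence $\theta_I$ that is the least congruence collapsing $I$ to $0$ (generated by $\{(a,0):a\in I\}$), and to each congruence $\theta$ its kernel $0/\theta$, which is always an ideal. A one-line check gives the adjunction $\theta_I\le\theta\iff I\subseteq 0/\theta$, so $I\mapsto\theta_I$ and $\theta\mapsto 0/\theta$ form a monotone Galois connection between the ideal lattice $\mathrm{Id}(L)$ and the congruence lattice $\mathrm{Con}(L)$. Consequently the correspondence is an isomorphism if and only if both composites are the identity, i.e. $0/\theta_I=I$ for every ideal $I$ and $\theta=\theta_{0/\theta}$ for every congruence $\theta$; an order isomorphism between lattices is automatically a lattice isomorphism, so no further verification is needed once these two equalities are in hand.

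For sufficiency I would assume $L$ is distributive and relatively complemented with $0$ and establish the two equalities separately. For $0/\theta_I=I$, I use the standard description valid in any distributive lattice: $a\equiv b\pmod{\theta_I}$ iff $a\vee i=b\vee i$ for some $i\in I$; putting $b=0$ and using that $I$ is downward closed yields $0/\theta_I=I$. For $\theta=\theta_{0/\theta}$, the inclusion $\theta_{0/\theta}\le\theta$ is the adjunction, and the reverse inclusion is where relative complementation enters. Given $a\equiv b\pmod\theta$, I reduce to the case $u\le v$ with $u\equiv v$ (replacing the pair by $a\wedge b\le a$ and $a\wedge b\le b$). Taking a complement $s$ of $u$ in the interval $[0,v]$, meeting $u\equiv v$ with $s$ gives $0=s\wedge u\equiv s\wedge v=s$, so $s\in 0/\theta$; then $v=u\vee s\equiv u\vee 0=u\pmod{\theta_{0/\theta}}$, proving $\theta\le\theta_{0/\theta}$. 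Thus both composites are identities and the correspondence is an isomorphism.

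For necessity I would assume the correspondence is an isomorphism and recover the three properties. Distributivity is the easiest: the congruence lattice of any lattice is distributive (Funayama--Nakayama), while $L$ embeds into $\mathrm{Id}(L)$ through principal ideals, so $\mathrm{Id}(L)\cong\mathrm{Con}(L)$ distributive forces $L$ distributive. The least element $0$ is forced by matching the bottom of $\mathrm{Id}(L)$ with the identity congruence $\Delta$, whose only possible kernel is a one-element ideal. The substantial point is relative complementation. Since both composites are identities, $\psi\colon\theta\mapsto 0/\theta$ is injective, so a congruence is determined by its kernel. I would argue the contrapositive: if some interval $[x,y]$ contained an element $z$ with no relative complement, then, using distributivity and the equivalence (for distributive lattices with $0$) of relative complementation with sectional complementation in $[0,y]$, one can exhibit two distinct congruences sharing the same $0$-class, contradicting injectivity of $\psi$; the three-element chain $0<a<1$, where collapsing $\{a,1\}$ and the identity congruence both have kernel $\{0\}$, is the prototype of this obstruction.

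The main obstacle I expect is precisely the necessity of relative complementation: turning the mere failure of a complement in an interval into a genuine pair of distinct congruences with identical kernels requires care, since one must produce the separating congruences explicitly and verify that neither collapses anything extra to $0$. The sufficiency direction, by contrast, is essentially the clean computation above, and the necessity of distributivity is immediate once one invokes the distributivity of $\mathrm{Con}(L)$.
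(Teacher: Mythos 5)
Your overall architecture is sound and, in two places, genuinely different from the paper's. The Galois connection $\theta_I\le\theta\iff I\subseteq 0/\theta$, and the reduction of the theorem to the two identities $0/\theta_I=I$ and $\theta=\theta_{0/\theta}$, is a clean reformulation; your sufficiency argument (the ``$a\vee i=b\vee i$ for some $i\in I$'' description of $\theta_I$ for the first identity, and the relative-complement computation $s=s\wedge v\equiv s\wedge u=0$, $v=u\vee s$ for the second) is complete and considerably more explicit than the paper's one-line ``every ideal is a congruence class under at most one congruence relation, and of course under at least one.'' Your derivation of distributivity in the necessity direction is also a different route: the paper invokes the classical fact that a lattice in which every ideal is a congruence class must be distributive, whereas you get it from Funayama--Nakayama (the congruence lattice of any lattice is distributive) together with the principal-ideal embedding $a\mapsto (a]$ of $L$ into its ideal lattice; that is correct and arguably slicker. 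The identification of the minimum $0$ from the identity congruence is essentially the same in both.

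The genuine gap is exactly the one you flag yourself: the necessity of relative complementation. You assert that a missing relative complement should yield ``two distinct congruences sharing the same $0$-class,'' but you never construct them, and that construction is where the content of this direction lives; the three-element chain is a picture, not a proof. The paper does this step directly, not by contrapositive, and you can close the hole with tools already in your proposal. Given $b<a$, let $\Theta(b,a)$ be the principal congruence collapsing $b$ and $a$, and put $I=0/\Theta(b,a)$. By your first identity (which uses only distributivity, already established at this point of the necessity direction), $0/\theta_I=I$, so $\theta_I$ and $\Theta(b,a)$ have the same kernel; injectivity of $\theta\mapsto 0/\theta$ then forces $\theta_I=\Theta(b,a)$. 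In particular $(b,a)\in\theta_I$, so there is $v\in I$ with $b\vee v=a\vee v$. Finally, in a distributive lattice the relation defined by $x\equiv y$ iff $x\wedge b=y\wedge b$ and $x\vee a=y\vee a$ is a congruence containing $(b,a)$, hence it contains $\Theta(b,a)$; consequently every $v\in I$ satisfies $v\le a$ and $v\wedge b=0$. Therefore $a=a\vee v=b\vee v$ and $b\wedge v=0$, i.e., $v$ is a relative complement of $b$ in $[0,a]$. This is precisely the paper's argument (its ideal $I_{a,b}$ and the element $v$ with $b\vee v=a$, $b\wedge v=0$), and it is the one missing ingredient in your write-up.
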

\begin{proof} {\bf Sketch} Clearly the ideal corresponding to the identity relation is the $0$ ideal. 
Since every ideal of $L$ is a congruence class under some homomorphism, we obtain distributivity.
To show relative complementedness,  it suffices to show that if $b<a$, then $b$ has a complement in the interval $[0,a]$. 
Let $I_{a,b}$ be the ideal which consists of all $u$ with 
$u\equiv 0(Theta_{a,b})$. $V_{a,b}$ is 
a congruence class under precisely one relation, hence $a\equiv b mod(\Theta[V_{a,b}])$.  Hence for some $v\in I_{a,b}$ we have 
$b\lor v=a$ and $b\land v=0$. 
Conversely, we have every ideal is a congruence class under at most one congruence relation, and of course under at least one.
\end{proof}

In case or relative complementation, we have
\begin{theorem} the following are equivalent
\begin{enumarab}
\item $L$ is strongly regular
\item Every principal ideal of $L$ is generated by a an elemnt in $\Zd L$
\item $\delta(\A)$ is semisimple
\end{enumarab}
\end{theorem}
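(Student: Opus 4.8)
The plan is to prove the three implications that close the cycle, assuming throughout (as the heading ``In case of relative complementation'' dictates) that $L$ is a bounded distributive, relatively complemented lattice with least element $0$, so that its zero-dimensional part $\Zd L$ is a Boolean algebra and prime ideals of $\Zd L$ are maximal. I would establish (1)$\iff$(3) by invoking the Gr\"atzer--Schmidt theorem quoted just above, prove (2)$\Rightarrow$(1) by a direct ideal-theoretic computation, and prove (1)$\Rightarrow$(2) through the sheaf representation $\eta:\A\cong\Gamma(X(\A),\delta(\A))$.

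For (1)$\iff$(3): since $L$ is distributive, relatively complemented with $0$, the Gr\"atzer--Schmidt theorem makes the lattice of ideals of $L$ isomorphic to the lattice of its congruences; under this isomorphism the ideal $\Ig^{\A}x$ corresponds to the congruence $\Co^{\A}x$ (the one whose $0$-block is $\Ig^{\A}x$). Consequently $\Ig^{\A}x$ is a maximal ideal if and only if $\Co^{\A}x$ is a maximal congruence, i.e. if and only if the stalk $\G_x=\A/\Co^{\A}x$ is simple. Requiring this for every $x\in X(\A)$ is precisely strong regularity on the one side and semisimplicity of $\delta(\A)$ on the other, which is the equivalence of (1) and (3). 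This is the abstract content of the remark that strong and congruence-strong regularity coincide under relative complementation.

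For (2)$\Rightarrow$(1): note first that for $x$ an ideal of $\Zd\A$ one has $\Ig^{\A}x=\{y\in A:y\le c\text{ for some }c\in x\}$, since this downset is already closed under the operators ($f_i y\le f_i c=c$ as each $f_i$ is monotone and fixes $c\in\Zd\A$). Fix a prime, hence maximal, ideal $x$ of $\Zd\A$ and suppose $\Ig^{\A}x\subsetneq N\subsetneq \A$ for some ideal $N$. Choose $a\in N\setminus\Ig^{\A}x$; by (2) we may write $\Ig^{\A}\{a\}=\Ig^{\Bl\A}\{b\}=\{y:y\le b\}$ with $b\in\Zd\A$. Then $a\le b$, so $b\in N$, while $b\notin\Ig^{\A}x$ (otherwise $b\le c\in x$ with $b,c\in\Zd\A$ forces $b\in x$, and then $a\le b$ gives $a\in\Ig^{\A}x$). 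Thus $b\in\Zd\A\setminus x$, and maximality of $x$ yields $c\in x$ with $b\lor c=1$; since $c\in\Ig^{\A}x\subseteq N$ and $b\in N$ we get $1\in N$, contradicting properness. Hence $\Ig^{\A}x$ is maximal, which is (1).

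For (1)$\Rightarrow$(2): use that under (1)$=$(3) the algebra is semisimple, so $\eta:a\mapsto\sigma_a$ is an isomorphism of $\A$ onto $\Gamma(X(\A),\delta(\A))$, where $X(\A)$ is the compact Hausdorff totally disconnected Stone space of $\Zd\A$ and every stalk $\G_x$ is simple. Given $a\in A$, consider the support $D_M(a)=\{x:a\notin\Ig^{\A}x\}$ of $\sigma_a$; by simplicity of the stalks $\sigma_a$ is nonzero exactly on $D_M(a)$, and I would show, via Lemma~\ref{d} together with compactness, that $D_M(a)$ is clopen. Stone duality for the Boolean algebra $\Zd\A$ then furnishes a unique $b\in\Zd\A$ with $D_M(b)=D_M(a)$. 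One checks $a\le b$ (so $\Ig^{\A}\{a\}\subseteq\Ig^{\Bl\A}\{b\}=\{y:y\le b\}$), and conversely that any $y\le b$ lies stalkwise in the principal ideal generated by $\sigma_a$ (it is $0$ off $D_M(a)$ and lies in the whole simple stalk on $D_M(a)$), whence $y\in\Ig^{\A}\{a\}$. Therefore $\Ig^{\A}\{a\}=\Ig^{\Bl\A}\{b\}$ with $b\in\Zd\A$, which is (2). The main obstacle is exactly this last direction: establishing that $D_M(a)$ is clopen and that stalkwise membership in a principal ideal glues to global membership. Both hinge on compactness of $X(\A)$ and on semisimplicity, which makes the relevant sections locally constant, and this is where the topological input of Lemma~\ref{d} is indispensable.
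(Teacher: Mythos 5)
Your overall architecture is reasonable --- the cycle (2)$\Rightarrow$(1)$\Leftrightarrow$(3)$\Rightarrow$(2), with Gr\"atzer--Schmidt handling (1)$\Leftrightarrow$(3) and the sheaf representation handling (1)$\Rightarrow$(2) --- and since the paper's own proof consists of the single word ``Easy,'' yours is the only actual argument on the table. But it rests on one load-bearing claim that you assert in your opening sentence and never prove, and which is false at this level of generality: that relative complementation of $L$ forces $\Zd L$ to be a Boolean algebra, so that prime ideals of $\Zd L$ are maximal. $\Zd L$ is only a bounded sublattice of $L$ (the common fixed points of the $f_i$), and a sublattice of a Boolean algebra need not be closed under complementation. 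Concretely, take $L$ to be the four-element Boolean algebra $\{0,a,-a,1\}$ with one operator $f$ defined by $f(0)=0$, $f(a)=a$, $f(-a)=f(1)=1$; this $f$ is normal, additive, idempotent and increasing, and $L$ is certainly relatively complemented, but $\Zd L=\{0,a,1\}$ is a three-element chain. In this algebra every principal (operator-closed) ideal is the downset of an element of $\Zd L$ --- indeed $\Ig^{\A}\{0\}=\downarrow 0$, $\Ig^{\A}\{a\}=\downarrow a$, and $\Ig^{\A}\{-a\}=\Ig^{\A}\{1\}=L=\downarrow 1$ because $f(-a)=1$ --- so (2) holds; yet the prime ideal $x=\{0\}$ of $\Zd L$ generates the ideal $\{0\}$, which is not maximal (it sits properly below the ideal $\{0,a\}$), and the stalk over $x$ is $L$ itself, which is not simple. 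So (1) and (3) fail while (2) holds: the implication (2)$\Rightarrow$(1) breaks precisely at your words ``prime, hence maximal,'' and no argument can close that gap from the stated hypotheses alone.

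What this shows is that the theorem needs a hypothesis that neither you nor the paper makes explicit, e.g.\ that the operators obey cylindric-style laws such as $f_i0=0$, $x\leq f_ix$ and $f_i(x\wedge f_iy)=f_ix\wedge f_iy$, which force the complement of each fixed point to be a fixed point and hence make $\Zd L$ a Boolean subalgebra of $L$. If such a hypothesis is added, your three implications do go through, but two further points should then be made explicit. First, Gr\"atzer--Schmidt as quoted concerns the pure lattice; for (1)$\Leftrightarrow$(3) you must check that its isomorphism restricts to one between operator-closed ideals and congruences of the algebra \emph{with operators} (this is where additivity and normality of the $f_i$ enter), since simplicity of the stalks $\G_x=\A/\Co^{\A}x$ refers to algebra congruences, not lattice congruences. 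Second, your step ``$f_iy\leq f_ic=c$'' in (2)$\Rightarrow$(1) already silently assumes the $f_i$ are monotone, another property the paper never states. With those caveats recorded, your sketch of (1)$\Rightarrow$(2) --- clopenness of the support of $\sigma_a$, Stone duality for $\Zd L$, and a compactness argument gluing stalkwise membership into $b\in\Ig^{\A}\{a\}$ --- can indeed be completed.
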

\begin{proof} Easy
\end{proof}

We push the duality a step futher esatablishing a correspondence between open (closed) sets of $BLO$s and open subsets of its dual.
An ideal $I$ in $\A$ is regular if $\Ig^{\A}(I\cap \Zd\A)=I$.
\begin{theorem} There is an isomomorphism between the set of all regular ideals in $\Gamma(X, \delta)$ 
onto the lattice of open subsets of $X.$
\end{theorem}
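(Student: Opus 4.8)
The plan is to factor the desired isomorphism through the ideal lattice of $\Zd\A$. I would take $X=X(\A,\emptyset)$, so that $\Nr_\emptyset\A=\Zd\A$ and $X$ is the dual space of the bounded distributive lattice $\Zd\A$ with basic open sets $N_a=\{x\in X: a\notin x\}$ for $a\in\Zd\A$, and I would identify $\Gamma(X,\delta)$ with $\A$ via the isomorphism $\eta$. The two ingredients are: first, a match between regular ideals of $\A$ and ideals of $\Zd\A$; second, the classical Stone--Priestley duality matching ideals of $\Zd\A$ with open subsets of $X$. Composing them yields the theorem.

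First I would set up the correspondence between regular ideals and $\Zd\A$-ideals. Define $r(I)=I\cap\Zd\A$ for an ideal $I$ of $\A$, and $g(K)=\Ig^{\A}K$ for an ideal $K$ of $\Zd\A$. Since the operators $f_i$ are additive, $\Zd\A$ is closed under the join of $\A$, so any ideal $K$ of $\Zd\A$ is closed under finite $\A$-joins and the generated ideal has the order form $\Ig^{\A}K=\{x\in A: x\leq b\text{ for some }b\in K\}$, exactly as in the earlier lemma on ideals (part (i)), which transfers verbatim to the distributive-lattice setting. Intersecting with $\Zd\A$ and using that $K$ is a down-set in $\Zd\A$ gives $g(K)\cap\Zd\A=K$, i.e. $r(g(K))=K$; this is the analogue of part (ii) of that lemma. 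Conversely, an ideal $I$ is regular precisely when $g(r(I))=\Ig^{\A}(I\cap\Zd\A)=I$. Hence $r$ and $g$ are mutually inverse bijections between the regular ideals of $\A$ and the ideals of $\Zd\A$, both visibly inclusion-preserving; as an order isomorphism, $r$ automatically transports all meets and joins, so it is a lattice isomorphism.

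Next I would invoke the classical duality for $\Zd\A$: the map $K\mapsto U_K:=\bigcup_{a\in K}N_a=\{x\in X: K\not\subseteq x\}$ is an isomorphism from the lattice of ideals of $\Zd\A$ onto the lattice of open subsets of $X$. Surjectivity holds because every open set is a union of basic opens $N_a$ and $N_a=U_{\Ig^{\Zd\A}\{a\}}$; injectivity and order-preservation follow from $K=\{a\in\Zd\A: N_a\subseteq U_K\}$. That this is a lattice map uses primeness of the points $x$: one checks $U_{K_1\vee K_2}=U_{K_1}\cup U_{K_2}$, and $U_{K_1\cap K_2}=U_{K_1}\cap U_{K_2}$, the latter since $a_1\notin x$ and $a_2\notin x$ force $a_1\wedge a_2\notin x$ with $a_1\wedge a_2\in K_1\cap K_2$. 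Composing, $I\mapsto U_{I\cap\Zd\A}$ is the sought isomorphism from the regular ideals of $\Gamma(X,\delta)\cong\A$ onto the open subsets of $X$.

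The hard part will be the compatibility identity $\Ig^{\A}K\cap\Zd\A=K$, which is exactly what makes $r,g$ mutually inverse; this rests on $\Zd\A$ being a join-closed sublattice so that the order description of $\Ig^{\A}K$ applies, after which everything reduces to standard distributive-lattice duality. A minor point to pin down is the intended topology on $X$: the correspondence is with the opens generated by the base $\{N_a\}$, equivalently the open down-sets of the Priestley space, which is how \emph{open subsets of $X$} should be read here.
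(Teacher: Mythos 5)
Your proof is correct, but it takes a genuinely different route from the paper. The paper's own proof is a direct, sheaf-theoretic one-liner: for a section $\sigma$ it defines the support $[\sigma]=\{x\in X:\sigma(x)\neq 0_x\}$, sends an ideal $J$ of $\Gamma(X,\delta)$ to $U[J]=\bigcup\{[\sigma]:\sigma\in J\}$ and an open $U$ to $J[U]=\{\sigma:[\sigma]\subseteq U\}$, and asserts without verification that these are mutually inverse. You instead factor the isomorphism through the ideal lattice of $\Zd\A$: regular ideals of $\A$ correspond to ideals of $\Zd\A$ via $I\mapsto I\cap\Zd\A$ and $K\mapsto\Ig^{\A}K$ (using the order description of $\Ig^{\A}K$, which does hold here since for $b\in K\subseteq\Zd\A$ and $x\leq b$ monotonicity of the additive $f_i$ gives $f_ix\leq f_ib=b$), and then classical Stone--Priestley duality matches ideals of $\Zd\A$ with the opens generated by the base $\{N_a\}$. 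The two resulting maps actually coincide: for $a\in\Zd\A$ one has $[\sigma_a]=N_a$, and regularity of $I$ forces $\bigcup_{a\in I}[\sigma_a]=\bigcup_{b\in I\cap\Zd\A}N_b$. What your route buys is precision at exactly the two points the paper glosses over: it isolates where regularity is used (for a general $a\in A$ the support $[\sigma_a]=\bigcap_{b\in\Zd\A,\,b\geq a}N_b$ need not be open, so without regularity the paper's $U[J]$ could fail to be open), and it pins down that ``open subsets of $X$'' must mean opens in the spectral topology generated by the $N_a$, not arbitrary Priestley opens, without which the statement is false. What the paper's formulation buys is that it is intrinsic to the sheaf $(X,\delta)$, making no reference back to $\A$ or $\Zd\A$, which fits the duality-theoretic narrative of that section.
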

\begin{proof} For $\sigma\in \Gamma(X,\delta),$ let $[\sigma]=\{x\in X: \sigma(x)\neq 0_x\}$. For $U\subseteq X$, let 
$J[U]=\{\sigma\in \Gamma(X, \delta): [\sigma]\subseteq U\}.$ 
Then $J\mapsto U[J]$ is an isomorphism, its inverse is $U[J]=\bigcup\{[\sigma]: \sigma\in J\}.$
\end{proof}

Note that a simple lattice is necessarily strongly regular (and hence regular), but the converse is not true, even in the case of strong regularity.
There are easy examples.
As an application to our duality theorem established above, we show that certain properties can extend from simple structures to 
strongly regular ones. The natural question that bears an answer is how far are strongly regular algebras from simple algebras; 
and the answer is: pretty 
far. 
For example in cylindric algebras any non-complete theory $T$ in a first order language gives 
rise to a strongly regular $\omega$-dimensional algebra, namely, $\Fm_T$, that is not simple.


$ES$ abreviates that epimorphisms (in the categorial sense) are surjective. Such abstract property 
is equivalent to the well-known Beth definability property 
for many abstract logics, including fragments of first order logic, and multi-modal logics. 

In fact, it applies to any algebraisable logic (corresponding to a quasi-variety) regarded
as a concrete category.  This connection was established by N\'emeti. 
As an application, to our hitherto established duality, we have: 

\begin{theorem} Let $V$ be a class of distributive bounded lattices such that the simple lattices in $V$ 
have the amalgamation property $(AP)$.
Assume that there exist strongly regular lattices $\A,\B\in V$ and an epimorphism $f:\A\to \B$ that is not onto.
Then $ES$ fails in the class of simple lattices
\end{theorem}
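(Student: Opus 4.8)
The plan is to use the sheaf duality of the previous section to \emph{localize} the epimorphism $f$ to its stalks, which are simple by strong regularity, and then to show that two things happen simultaneously: epimorphy is inherited by every stalk map, while non-surjectivity of $f$ is inherited by at least one of them. Since $\A$ and $\B$ are strongly regular, the dual presheaves $\delta(\A),\delta(\B)$ are semisimple, so for every $x\in X(\A)$ and $y\in X(\B)$ the stalks $\G_x=\A/\Ig^{\A}x$ and $\G_y=\B/\Ig^{\B}y$ are \emph{simple} members of $V$, and one has the representations $\A\cong\Gamma(X(\A),\delta(\A))$, $\B\cong\Gamma(X(\B),\delta(\B))$, with $X(\B)$ a Priestly space, hence compact.

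First I would dualize $f$ to the sheaf morphism $f^{d}=(f^{*},f^{o})$ described in the excerpt. For each $y\in X(\B)$, writing $x=f^{*}(y)$, this produces a homomorphism of simple lattices $f_y:\G_x\to\G_y$ satisfying the compatibility $f_y\circ q_{\A}=q_{\B}\circ f$, where $q_{\A},q_{\B}$ denote the canonical quotient maps onto the stalks. I would then verify that each $f_y$ is itself an epimorphism: if $g,h:\G_y\to\C$ with $\C$ simple agree on $\mathrm{im}\,f_y$, then $g\circ q_{\B}$ and $h\circ q_{\B}$ are homomorphisms $\B\to\C$ agreeing on $f(\A)$ by the compatibility relation; since $f$ is an epimorphism they coincide, and because $q_{\B}$ is surjective we conclude $g=h$. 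Thus every stalk map is an epimorphism between simple lattices.

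The decisive step is to show that the failure of $f$ to be onto is witnessed at some stalk. Pick $b\in B\smallsetminus f(A)$ and suppose, for contradiction, that every $f_y$ is surjective. Then for each $y$ there is $a_y\in A$ with $f(a_y)/\Ig^{\B}y=b/\Ig^{\B}y$, so the locus on which the sections $\sigma_{f(a_y)}$ and $\sigma_b$ agree is a clopen neighbourhood of $y$. By compactness of $X(\B)$ (cf. Theorem \ref{d}) finitely many such clopens cover the space, and I would refine them to a finite clopen partition indexed by elements of $\Zd\B$. Using the amalgamation property of the simple stalks to patch the finitely many local preimages over their overlaps, one glues them into a single global section, i.e. an element $a\in A$ with $f(a)=b$, contradicting the choice of $b$. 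Hence some stalk map $f_{y_0}$ is not surjective.

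Combining the two halves, $f_{y_0}:\G_{f^{*}(y_0)}\to\G_{y_0}$ is an epimorphism between simple lattices in $V$ that is not onto, so $ES$ fails in the class of simple lattices. I expect the gluing in the third paragraph to be the main obstacle: one must check that the clopen partition is pulled back along $f^{*}$ from $\Zd\A$, so that the patched preimage genuinely lies in $\A$, and it is exactly this amalgamation of the local data over the overlaps that consumes the hypothesis that the simple lattices in $V$ have $(AP)$.
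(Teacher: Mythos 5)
Your first two paragraphs are correct, and your epimorphism-inheritance step is actually \emph{cleaner} than the paper's own treatment: you get that each stalk map $f_y$ is an epimorphism of simple lattices directly from $g\circ f_y=h\circ f_y\Rightarrow g\circ q_{\B}\circ f=h\circ q_{\B}\circ f$, the epimorphism property of $f$, and surjectivity of the quotient $q_{\B}$, whereas the paper derives the corresponding fact (under its contradiction hypothesis that $ES$ holds) by composing sheaf morphisms out of one-point sheaves with the monomorphism dual to $f$. Structurally your argument is the paper's proof run contrapositively: the paper assumes $ES$ for simple lattices, concludes every stalk map is onto, and then derives the contradiction that $f$ itself is onto; you aim to exhibit a non-surjective stalk epimorphism outright. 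Both versions hinge on the same gluing assertion: if every stalk map is surjective, then $f$ is surjective.

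The gap is exactly there, and your closing sentence misidentifies how $(AP)$ is consumed. Once you refine the finite cover of $X(\B)$ to a clopen \emph{partition} $N_1,\ldots,N_k$, there are no overlaps left, so there is nothing for $(AP)$ to patch: amalgamation joins two algebras over a common subalgebra, it does not glue sections. What the gluing really needs is a clopen partition $M_1,\ldots,M_k$ of $X(\A)$ with $f^{*}(N_i)\subseteq M_i$, so that the element $a\in A$ whose section equals $\sigma_{a_i}$ on $M_i$ satisfies $f(a)=b$ stalkwise. Such a partition can exist only if the sets $f^{*}(N_i)$ are pairwise disjoint, i.e.\ only if $f^{*}:X(\B)\to X(\A)$ is injective; if $f^{*}(y)=f^{*}(y')$ with $y\in N_i$, $y'\in N_j$, $i\neq j$, no choice of the $M_i$ works. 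Injectivity of $f^{*}$ is precisely the lemma the paper proves from $(AP)$, and it is the only place $(AP)$ enters: if $f^{*}(y)=f^{*}(y')$, then $\G_y$, $\G_{y'}$ and $\L_{f^{*}(y)}$ are simple, so $(AP)$ provides a simple $\D$ and embeddings $g_y:\G_y\to\D$, $g_{y'}:\G_{y'}\to\D$ agreeing on the common stalk; composing with the quotient maps and using your own compatibility relation gives $g_y\circ q_{\B,y}\circ f=g_{y'}\circ q_{\B,y'}\circ f$, so the epimorphism property of $f$ yields $g_y\circ q_{\B,y}=g_{y'}\circ q_{\B,y'}$, whence $y=y'$ (the paper phrases this step dually: the two morphisms from the one-point sheaf of $\D$ agree after composing with the monomorphism dual to $f$, hence agree). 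Insert this injectivity lemma before your compactness argument and your proof closes; without it, the claim that non-surjectivity of $f$ survives at some stalk is unsupported.
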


\begin{demo}{Proof} Suppose, to the contrary that $ES$ holds for simple algebras.
Let $f^*:\A\to \B$ be the given epimorphism that is not onto. We assume that $\A^d=(X,\L)$ and $\B^d=(Y,\G)$ 
are the corresponding dual sheaves over the Priestly  spaces $X$ and $Y$ and by  duality that 
$(h,k)=H:(Y,\G)\to (X,\L)$ is a monomorphism. Recall that $X$ is the set of prime ideals in $Zd\A$, and similarly for $Y$.
We shall first prove
\begin{enumroman}
\item $h$ is one to one
\item for each $y$ a maximal ideal in $\Zd\B$, $k(y,-)$ is a surjection of the stalk over $h(y)$ onto the stalk over $y$.
\end{enumroman}
Suppose that $h(x)=h(y)$ for some $x,y\in Y$. Then $\G_x$, $\G_y$ and $\L_{hx}$ are simple algebra, 
so there exists a simple $\D\in V$ and monomorphism $f_x:\G_x\to \D$ and $f_y:\G_y\to \D$ such that
$$f_x\circ k_x=f_y\circ k_y.$$
Here we are using that the algebras considered are strongly regular, and that the simple algebras have $AP$.
Consider the sheaf $(1,D)$ over the one point space $\{0\}=1$ and sheaf morphisms 
$H_x:(\lambda_x,\mu):(1,D)\to (Y,\G)$ and $H_y=(\lambda_y, v):(1,D)\to (Y,\G)$
where $\lambda_x(0)=x$ $\lambda_y(0)=y $ $\mu_0=f_x$ and $v_0=f_y$. The sheaf $(1,\D)$ is the space dual to $\D\in V$
and we have $H\circ H_x=H\circ H_y$. Since $H$ is a monomorphism $H_x=H_y$ that is $x=y$.
We have shown that $h$ is one to one.
Fix $x\in Y$. Since, we are assuming that  $ES$ holds for simple algebras of $V,$ in order to show that 
$k_x:\L_{hx}\to \G_x$ is onto, it suffices to show that $k_x$ is an epimorphism.  
Hence suppose that $f_0:\G_x\to \D$ and $f_1:\G_x\to \D$ for some simple $\D$ such that $f_0\circ k_x=f_1\circ k_x$.
Introduce sheaf morphisms 
$H_0:(\lambda,\mu):(1,\D)\to (Y,\G)$ and $H_1=(\lambda,v):(1,\D)\to (Y,\G)$
where $\lambda(0)=x$, $\mu_0=f_0$ and $v_0=f_1$. Then $H\circ H_0=H\circ H_1$, 
but $H$ is a monomorphism, so we have $H_0=H_1$ from which 
we infer that $f_0=f_1$. 

We now show that (i) and (ii) implies that $f^*$ is onto, which is a contradiction.
Let $\A^d=(X,\L)$ and $\B^d=(Y,\G)$. It suffices to show that $\Gamma((f^*)^d)$ is onto (Here we are taking a double dual) . 
So suppose $\sigma\in \Gamma(Y,\G)$. For each $x\in Y$, 
$k(x,-)$ is onto so $k(x,t)=\sigma(x)$ for some $t\in \L_{h(x)}$. That is $t=\tau_x(h(x))$ for some 
$\tau_x\in \Gamma(X,\G)$. Hence there is a clopen neighborhood $N_x$ of $x$ such that 
$\Gamma(f^*)^d)(\tau_x)(y)=\sigma(y)$ for all $y\in N_x$. 
Since $h$ is one to one and $X,Y$ are Boolean spaces, we get that $h(N_x)$ is clopen in $h(Y)$ and there is a 
clopen set $M_x$ in $X$ such that $h(N_x)=M_x\cap h(Y)$. Using compactness, there exists a partition of
$X$ into clopen subsets $M_0\ldots M_{k-1}$ and sections $\tau_i\in \Gamma(M_i,L)$ such that
$$k(y,\tau_i(h(y))=\sigma(y)$$ 
wherever $h(x)\in M_i$ for $i<k$. Defining 
$\tau$ by $\tau(z)=\tau_i(z)$ whenever $z\in M_i$ $i<k$, it follows that $\tau\in \Gamma(X,\L)$ and $\Gamma((f^*)^d)\tau=\sigma$.
Thus $\Gamma((f^*)^d)$ is onto $\Gamma(\B^d)$, and we are done. 
\end{demo}

\section{Omitting types in non classical logics, topologically}

From now on $\L$ is a core fuzzy logic. We set up the context where omitting types apply. 
\begin{definition}
\begin{enumroman}
\item An $n$ type, or simply a type,  is a set $\Gamma$ 
whose formulas have free variables among the first $n$ variables. Fix a theory $T$ and an $n$
type $\Gamma$.
$\M=(M,\bold L)$ realizes $\Gamma$ if there is $s\in {}^nM$
such that $||\phi(s)||_{\M}^{\bold L}=1$ for all $\phi\in \Gamma$. $\M$ omits $\Gamma$ if $\M$ does not realize $\Gamma$. 
\item $\Gamma$ is a principal type in $T$ if there is a formula
$\phi(\bar{x})$ such that for all $\M\models T$ for all $v\in {}^nM$, $||\phi(v)\implies \psi(v)||_{\M}^{\bold L}=1$ for all $\psi\in \Gamma$.
Otherwise $\Gamma$ is non-principal.
\item Call a formula $\exists x\phi$ containing free variables $y_1\ldots y_n$ witnessed in $(\M, \bold L)$ if for each $a_1\ldots a_n\in M$, 
there is an element $b\in M$ such that 
$$||\exists x\phi(x,a_1\ldots a_n||_{\M}^{\bold L}=||\phi(b, a_1\ldots a_n)||_{\M}^{\bold L};$$ similarly for $\forall x\phi$. Call $(\M,\L)$ 
witnessed if each formula beginning  
with a quantifier is witnessed. 
\end{enumroman}
\end{definition}

Let $\kappa$ be a cardinal. Consider the following statement:

\begin{athm}{$OTT(\kappa)$} If $\Sigma$ is a countable theory and $\Gamma_i$, $i\in \kappa,$ are non-principal types, 
then there is a witnessed safe model of $T$
omitting these types.
\end{athm}

\begin{theorem}
\begin{enumroman}
\item  The statement $(\forall \kappa\leq \omega)$$OTT(\kappa)$ is provable in $ZFC$.
\item The statement $(\forall \kappa<{}^{\omega}2)OTT(\kappa)$ is independent of $ZFC.$
\end{enumroman}
\end{theorem}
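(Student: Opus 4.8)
The plan is to recast $OTT$ as a Baire category statement in the maximal spectrum of a countable Lindenbaum algebra, exploiting the topological machinery of Theorem \ref{d}. Given the countable theory $\Sigma=T$, I would first pass to a Henkin-style expansion of the language by countably many fresh constants, so that the Lindenbaum algebra $\Fm_T$ remains a \emph{countable} $\bold L$-algebra with operators. By Theorem \ref{d}(iii), $Max(\Fm_T)$ is then a Polish space, and its points, being maximal filters, correspond through the canonical model $\CM(T)$ and the completeness theorem to witnessed safe models of $T$ over linearly ordered $\bold L$-chains.

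For (i), I would attach to each quantified formula a witnessing requirement. Since $[\exists x\phi]_T=\bigvee_c[\phi(c)]_T$ and $[\forall x\phi]_T=\bigwedge_c[\phi(c)]_T$ by the lemma on linear Henkin theories, Theorem \ref{d}(ii) shows that the set of $F$ failing to witness a given existential (resp.\ universal) formula is nowhere dense; hence the set $W$ of fully witnessed complete filters is comeager. For each non-principal type $\Gamma_i$ and each tuple $\bar c$ of constants, the realizing set $\bigcap_{\psi\in\Gamma_i}V_M([\psi(\bar c)])$ is closed, and its interior is controlled by the order-reflection property $V_M(a)\subseteq V_M(b)\iff a\le b$: a nonempty basic open $D_M(d)=V_M(d\to 0)$ inside it would force $(d\to 0)\le[\psi(\bar c)]$ for every $\psi$, giving a nonzero element below the type, i.e.\ making $\Gamma_i$ principal. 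So non-principality is precisely what forces this realizing set to have empty interior, hence to be nowhere dense, and the omitting set $O_i$ to be comeager. Since the constants are countable there are only countably many tuples, so for $\kappa\le\omega$ the family $\{W\}\cup\{O_i:i\in\kappa\}$ is a countable family of comeager subsets of the Polish space $Max(\Fm_T)$; by the Baire category theorem its intersection is nonempty, and any point in it yields a witnessed safe model of $T$ omitting every $\Gamma_i$. This proves (i) in $ZFC$.

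For (ii) I would isolate the reduction: $OTT(\kappa)$ holds iff every family of $\kappa$ omitting-sets, together with the fixed countable family of witnessing-sets, has nonempty intersection, i.e.\ iff $\kappa$ meager sets of this special form fail to cover $Max(\Fm_T)$. Because $Max(\Fm_T)$ is a perfect Polish space, the least number of meager sets needed to cover it is $\text{cov}(\mathcal M)$; hence whenever $\kappa<\text{cov}(\mathcal M)$ the theorem holds, and in particular Martin's Axiom, which forces $\text{cov}(\mathcal M)={}^{\omega}2$, yields $(\forall\kappa<{}^{\omega}2)\,OTT(\kappa)$. Conversely, to exhibit a failure I would work in the random real model, where $\text{cov}(\mathcal M)=\aleph_1<{}^{\omega}2$, and build a countable theory whose type space is homeomorphic to ${}^{\omega}2$ in which a cofinal family of closed nowhere dense sets is realized by genuinely non-principal types; selecting $\aleph_1$ of these whose omitting-sets cover the space produces $\aleph_1<{}^{\omega}2$ non-principal types that cannot be simultaneously omitted. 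Together these show $(\forall\kappa<{}^{\omega}2)\,OTT(\kappa)$ holds under $MA$ and fails in the random model, so it is independent of $ZFC$.

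The hard part will be the converse half of (ii): converting an abstract covering of $Max(\Fm_T)$ by fewer than continuum-many meager sets into an honest family of non-principal types that cannot be omitted. This demands a theory rich enough that closed nowhere dense subsets of its type space are matched by non-principal types, while verifying both that each individual type is non-principal and that the whole family is genuinely unomittable. A secondary subtlety, already present in (i), is that fuzzy negation forces \emph{omitting} to mean membership failure ($\psi(\bar c)\notin F$) rather than $\neg\psi(\bar c)\in F$, so one must establish that the omitting-sets are genuinely open and comeager using the order-reflection property $V_M(a)\subseteq V_M(b)\iff a\le b$ rather than any classical complementation.
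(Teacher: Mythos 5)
Your part (i) is essentially the paper's own proof: the same Henkin expansion by countably many fresh constants, the same passage to the countable Lindenbaum algebra $\Fm_T$ whose maximal spectrum is Polish by Theorem \ref{d}(iii), the same three families of nowhere dense sets (failures of $\exists$-witnessing, failures of $\forall$-witnessing, and the realizing sets of the types), and the same appeal to the Baire category theorem. One step you compress deserves the explicit treatment the paper gives it: your interior argument produces a nonzero element of the \emph{expanded} algebra below every $[\psi(\bar c)]$, i.e.\ principality of $\Gamma_i$ over the Henkin theory $T$ via a sentence mentioning the new constants and the Henkin axioms; to contradict the hypothesis you must pull this back to principality over the original $\Sigma$, which the paper does by noting that the generator and the finitely many Henkin axioms involved use only finitely many constants, replacing these by fresh variables. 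Routine, but it is a lemma, not a tautology.

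In part (ii) your positive half coincides with the paper's (Lemma \ref{m} and the $covK$ theorem): under $MA$, fewer than ${}^{\omega}2$ meager sets cannot destroy the category argument of (i). Where you genuinely diverge is the failure half, and there your plan is in fact more concrete than the paper's: the paper only remarks that iterated forcing can make $covK$ small and points to Newelski \cite{N}, never exhibiting an unomittable family, whereas you name the random real model (where indeed $covK=\aleph_1<{}^{\omega}2$) and sketch the missing converse via a theory whose type space is Cantor space. Two corrections to that sketch. First, you have the covering backwards: what makes the family unomittable is that the \emph{realizing} sets (the closed nowhere dense ones) cover the space, so that every element of every model has its pattern in some member of the covering and hence realizes the corresponding type; the omitting sets are dense open and their covering the space says nothing. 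Second, executing the plan requires a specific theory---countably many independent unary predicates is the standard choice---plus the verification that closed subsets of Cantor space are exactly realizing sets of partial types and that nowhere density is exactly non-principality; you correctly flag this as the hard part but do not carry it out, which leaves your (ii) at roughly the same level of completeness as the paper's, though with the skeleton of the genuinely missing argument made visible rather than hidden behind a citation.
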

 
\begin{demo}{Proof}
We only prove (i). A similar statement is proved in \cite{fuzzy} but our proof is completely different.
(ii) will be proved later. Let $\Sigma$ be a countable theory, and $\{\Gamma_i:$ $i<\omega\}$ be a family 
of non-principal types.
Add infinitely many countably many constants, let the added constants be $C$. 
Now $\Sigma$ can be expanded to Henkin complete theory $T$ which is  a countable union $\bigcup_{m\in \omega} T_m$
where $T_0=T\subseteq T_1\subseteq T_2\subseteq\ldots T_m\ldots$
such that 
\begin{enumarab}
\item each $T_m$ is consistent and is obtained from $T$ by adding finitely many axioms using only finitey many constants,
\item for every pair of sentences in the expanded language we have
$T_m\vdash \phi\to \psi$ or $T_m\vdash \psi\to \phi,$
\item If $T_m\nvdash \forall y \phi(y)$, then $T_m\nvdash \phi(c),$
\end{enumarab} 
Consider the algebra of sentences $\B=\Fm_T$. We write $[\phi]$ for $[\phi]_T$ the equivalence clas containing $\phi$.
Let $X=Max(\B)$, be the compact Hausdorff space with countable basis $D_M(a)$ for $a\in B$.
Then in $\B$, since the extension $T$ is Henkin, we have
\begin{equation}\label{e1}
\begin{split}
[\exists x\alpha(x)]=\bigvee_{c\in C} [\alpha(c)]
\end{split}
\end{equation}
\begin{equation}\label{e2}
\begin{split}
[\forall x\alpha(x)]= \bigwedge_{c\in C}[\alpha(c)]
\end{split}
\end{equation}
Now since the types considered are non-principal, they remain non principal in the expanded language.
For if not, then $T\vdash \phi\to \Gamma_i(\bar{c})$ for some $i$, then $T_m\vdash \phi\to \Gamma_i(\bar{c})$.
Now $T_m$ and $\phi$ contain only finitely many constants, replacing those by new variables (distinct variables for distinct contants) 
such that substitutions are free,
avoiding collisions, we obtain that $\Sigma\vdash \phi'\to \Gamma_i$ which is a contradiction.
It therefore follows that 
\begin{equation}\label{e3}
\begin{split}
\forall  c_1,\ldots c_n,  \forall i\in \omega, \bigwedge_{\phi\in \Gamma_i} [\phi(c_1,\ldots c_n)]=0
\end{split}
\end{equation}
Then for every variable $x$ and formula with one free variable  $\alpha$ we have
\begin{equation}\label{e4}
\begin{split}
H_{\alpha,x}=V_M({\exists x\alpha(x))}\sim \bigcup_{c\in C} V_M({\alpha(c)}),
\end{split}
\end{equation}
\begin{equation}\label{e5}
\begin{split}
J_{\alpha,x}=\bigcap_{c\in C}V_M({\alpha(c)})\sim V_M({\forall x\alpha x}),
\end{split}
\end{equation}
and for every $i$, 
\begin{equation}\label{e6}
\begin{split}
K_{(c_1,\ldots c_n, \Gamma_i)}=\bigcap_{\phi\in \Gamma_i} V_M({\phi(c_1\ldots c_n))}
\end{split}
\end{equation}
are, by theorem \ref{d},  nowhere dense sets. 
Let $$H=\bigcup_{\alpha} \bigcup_{x} H_{\alpha,x},$$ 
$$J=\bigcup_{\alpha}\bigcup_{x} J_{\alpha,x},$$
and
$$K=\bigcup_{\bar{c}}\bigcup_{i\in \omega}K_{(\bar{c},\Gamma_i)}.$$
Then each of these sets is a countable union of nowhere dense sets. 
Given any $a=[\psi]\in \B$, let $F$ be a maximal filter in the complement of the union of these sets and in 
$D_{M}a$. this is possible since by theorem \ref{d}, $Max(\B)$ is a Polish space, and the Baire category theorem holds, 
so that the complement of a countable collection of nowhere dense sets is dense.
Let $T_1=\cup F=\{\phi: [\phi]_T\in F\}$. 
Then we have the following:
\begin{enumerate}
\item $F\notin \bigcup_{\alpha,x}H_{\alpha,x}$ implies that for any $\alpha$, for any $x$, if $\exists x\alpha(x)\in T_1$ then $\alpha(c)$ is in 
$T$ for some $c$.
\item $F\notin \bigcup_{\alpha,x}J_{\alpha,x}$, implies that for  $\alpha$ for any $x$, if not $T_1\vdash \forall x \alpha(x)$ then not $T_1\vdash \alpha(c)$. 
This means that $T_1$ is Henkin. 
\item Finally, $F\notin K$ means that for all $i\in \omega$ for all $\phi\in \Gamma_i$ there exists $\bar{c}$ such that
$\phi(\bar{c)}\notin T_1$. 
\end{enumerate}
Then $T_1$ is a Henkin theory, $T_1\nvdash \psi$ and its canonical model is as desired.
That is the canonical model of $T_1$ is safe, witnessed and omits the given types.
\end{demo}
Theories considered remain countable. However, we now ask for the omission of possibly uncountably many non-principal types.
We shall prove (ii). We will see that we are actually touching upon somewhat deep issues in set theory here.
We need two lemmas. The first is a known consequence of Martin's axiom (henceforth $MA$). 
\begin{lemma}\label{m} 
The statement $(\forall \kappa<{}^{\omega}2)$$OTT(\kappa)$ is provable in $ZFC$ +$MA$
\end{lemma}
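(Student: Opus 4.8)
The plan is to run the proof of part (i) of the theorem above essentially unchanged, replacing the appeal to the Baire category theorem by the corresponding consequence of $MA$. First I would expand $\Sigma$ to a Henkin complete theory $T=\bigcup_m T_m$ over a countable set of added constants $C$ exactly as before, form the Lindenbaum algebra $\B=\Fm_T$, and consider the compact Hausdorff space $X=Max(\B)$ with the countable basis $\{D_M(a):a\in B\}$ furnished by theorem \ref{d}. For each formula $\alpha$ with one free variable and each variable $x$ I would form the sets $H_{\alpha,x}$ and $J_{\alpha,x}$ coding the witnessing and Henkin requirements, and for each tuple $\bar c$ of constants and each index $i<\kappa$ the set $K_{(\bar c,\Gamma_i)}$ coding omission of the $i$-th type; by theorem \ref{d} all of these are nowhere dense in $X$, since the types remain non-principal after the finite, constant-only expansions, precisely as argued in (i).

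The one new ingredient is cardinality bookkeeping together with the topological form of $MA$. Because the language is countable there are only countably many pairs $(\alpha,x)$ and countably many tuples $\bar c$, so $\{H_{\alpha,x}\}\cup\{J_{\alpha,x}\}$ is countable while $\{K_{(\bar c,\Gamma_i)}\}$ has cardinality $\omega\cdot\kappa$; for $\kappa<{}^{\omega}2$ this product is again ${}<{}^{\omega}2$, so in total we must avoid a family $\mathcal N$ of fewer than continuum-many nowhere dense sets. For each $N\in\mathcal N$ the set $U_N=X\smallsetminus\overline{N}$ is dense open. Since $\B$ is countable, theorem \ref{d} gives that $X$ is a Polish space, in particular second countable and hence separable, so $X$ satisfies the countable chain condition. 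The topological form of Martin's Axiom then applies: in a ccc compact Hausdorff space the intersection of fewer than ${}^{\omega}2$ dense open sets is dense. Thus $\bigcap_{N\in\mathcal N}U_N$ meets the basic open set $D_M([\psi])$ for any prescribed $[\psi]\in\B$, and I would choose a maximal filter $F$ in this intersection lying in $D_M([\psi])$.

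Having produced such an $F$, the remainder is identical to part (i): writing $T_1=\{\phi:[\phi]_T\in F\}$, the conditions $F\notin H_{\alpha,x}$, $F\notin J_{\alpha,x}$ and $F\notin K_{(\bar c,\Gamma_i)}$ guarantee respectively that $T_1$ is witnessed, Henkin, and omits every $\Gamma_i$, so its canonical model is a witnessed safe model of $T$ omitting all the prescribed types. The substantive point — and the only place the hypothesis $MA$ enters — is the replacement of the Baire category theorem by its $MA_\kappa$ analogue; the main thing to check carefully is that $Max(\B)$ is ccc, which I expect to follow cleanly from second countability, and that the nowhere density established in theorem \ref{d} indeed yields dense open complements $U_N$ so that $MA_\kappa$ can be invoked. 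Everything else is a transcription of the countable case.
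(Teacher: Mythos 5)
Your proposal is correct and takes essentially the same route as the paper: the paper's own (very terse) proof also just re-runs the topological argument of part (i), with $MA$ supplying the one missing ingredient for families of fewer than ${}^{\omega}2$ nowhere dense sets (the paper phrases this as ``the union of $<{}^{\omega}2$ nowhere dense sets is equal to a countable union,'' i.e.\ is meager, after which the Baire category step of part (i) applies verbatim). Your version, which instead invokes the topological form of Martin's Axiom for ccc compact Hausdorff spaces and checks ccc via second countability of $Max(\B)$, is simply a more carefully spelled-out implementation of that same single step.
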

\begin{demo}{Proof} By Martin's axiom the union of $<{}^{\omega}2$ nowhere dense sets is equal to a countable union.
\end{demo}
\begin{theorem} The statement $(\forall k<{}^{\omega}2)(OTT(\kappa))$ is independent of $ZFC+\neg CH$.
\end{theorem}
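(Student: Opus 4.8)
The plan is to prove independence by producing two models of $ZFC+\neg CH$, one satisfying the statement and one refuting it. The \emph{consistency} half is essentially already in hand: by Lemma \ref{m}, $ZFC+MA$ proves $(\forall \kappa<{}^{\omega}2)OTT(\kappa)$, and by the Solovay--Tennenbaum theorem $ZFC+MA+\neg CH$ is consistent (a finite-support ccc iteration of length $\aleph_2$ over a model of $GCH$ forces $MA$ together with $2^{\aleph_0}=\aleph_2$). Hence the statement holds in some model of $ZFC+\neg CH$.

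For the \emph{refutation} half, I would first isolate the exact Baire-category content of $OTT(\kappa)$. Fix a countable consistent theory $\Sigma$ whose Lindenbaum algebra of sentences $\B=\Fm_T$ is atomless, so that by Theorem \ref{d} the space $X=Max(\B)$ is a perfect Polish space. As in the proof of part (i) of the $OTT$ theorem, a witnessed safe model of $\Sigma$ omitting a family $\{\Gamma_i:i\in\kappa\}$ of non-principal types corresponds to a maximal filter $F\in X$ lying outside the Henkin sets $H\cup J$ and outside the realization set $K=\bigcup_i\bigcup_{\bar c}K_{(\bar c,\Gamma_i)}$, where each $K_{(\bar c,\Gamma_i)}=\bigcap_{\phi\in\Gamma_i}V_M(\phi(\bar c))$ is nowhere dense precisely because $\Gamma_i$ is non-principal. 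Since $H\cup J$ is always meager and $C$ is countable, $OTT(\kappa)$ \emph{fails} as soon as one can choose the $\Gamma_i$ so that $K$ covers $X$, while it holds whenever $X$ cannot be covered by $\kappa$ meager sets. Thus the truth value of $(\forall\kappa<{}^{\omega}2)OTT(\kappa)$ is governed by the covering number $\mathrm{cov}(\mathcal{M})$ of the meager ideal: it follows from $\mathrm{cov}(\mathcal{M})={}^{\omega}2$ (which is why $MA$ suffices), and it must fail once $\mathrm{cov}(\mathcal{M})<{}^{\omega}2$.

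I would then pass to a model of $ZFC$ in which $\neg CH$ holds while $\mathrm{cov}(\mathcal{M})=\aleph_1<\aleph_2={}^{\omega}2$; the random real model (adding $\aleph_2$ random reals to a model of $GCH$) is the standard choice, since there $\mathrm{cov}(\mathcal{M})=\aleph_1$ by Cicho\'n's diagram. In this model the Cantor space $X\cong {}^{\omega}2$ is a union of $\aleph_1$ meager sets. The substantive step is to \emph{realize} such a covering family by types: over the fixed countable atomless $\Sigma$ equipped with the added Henkin constants $C$, I would build a family $\{\Gamma_i:i<\aleph_1\}$ of non-principal $1$-types such that the associated nowhere-dense closed sets $K_{(\bar c,\Gamma_i)}$ together cover $X$, using the atomlessness of $\B$ and the density of the basic clopen sets $D_M(a)$ to match each prescribed nowhere-dense piece of the cover with a suitable $\bigcap_{\phi\in\Gamma_i}V_M(\phi(\bar c))$. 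Each individual type is then omittable (non-principal, so $\bigwedge_{\phi}[\phi(\bar c)]=0$), yet every maximal filter realizes some $\Gamma_i$, so no witnessed safe model omits them all; hence $OTT(\aleph_1)$ fails, and since $\aleph_1<{}^{\omega}2$ the statement fails in a model of $ZFC+\neg CH$. Combined with the consistency half, this yields independence.

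The main obstacle is precisely this encoding lemma: converting an abstract covering of $X$ by $\aleph_1$ meager sets into a family of genuine \emph{non-principal} types whose realization sets cover $X$. One must keep $\Sigma$ and its language countable, guarantee that each $\Gamma_i$ is individually non-principal (so that the failure is genuinely a joint-omission phenomenon, not the presence of a single incompletable type), and check that the Henkin/witnessing meager sets $H$ and $J$ can be absorbed without destroying the cover, exactly as in the proof of part (i) of the $OTT$ theorem. The cardinal-characteristic bookkeeping---that $\mathrm{cov}(\mathcal{M})$ is the governing invariant and that it equals $\aleph_1$ in the random model---is standard; the logical content lies entirely in this reduction.
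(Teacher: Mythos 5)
Your proposal follows the same route as the paper: the consistency half is Lemma \ref{m} together with Solovay--Tennenbaum, and the refutation half is to come from the covering number of category (the paper's $covK$ is, modulo its inessential ``disjoint'' clause, your $\mathrm{cov}(\mathcal{M})$, since every meager set lies in a countable union of closed nowhere dense sets). Your bookkeeping on the positive side is sound, and your choice of the random real model, where $\neg CH$ holds and $\mathrm{cov}(\mathcal{M})=\aleph_1<2^{\aleph_0}=\aleph_2$, is a correct concrete instantiation of the paper's vague appeal to ``standard iterated forcing''.

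The genuine gap is exactly the one you flag yourself, and you are right that it carries all the logical content: nothing in your argument (and, it must be said, nothing in the paper's own proof either) shows that $OTT(\aleph_1)$ actually \emph{fails} in such a model. Forcing $\mathrm{cov}(\mathcal{M})=\aleph_1$ only destroys the sufficient condition ``$\kappa<\mathrm{cov}(\mathcal{M})\Rightarrow OTT(\kappa)$''; to refute $(\forall\kappa<2^{\omega})OTT(\kappa)$ one must exhibit a countable theory and $\aleph_1$ non-principal types that no witnessed safe model jointly omits, i.e.\ prove that the omitting-types number is at most $\mathrm{cov}(\mathcal{M})$. Your sketch of this step (``use atomlessness of $\B$ and density of the $D_M(a)$ to match each nowhere dense piece of a cover with some $\bigcap_{\phi\in\Gamma}V_M(\phi(\bar c))$'') is not yet an argument: for an \emph{arbitrary} closed nowhere dense $F$ one must produce a single type whose realization set absorbs $F$, and then prove that this type is non-principal in the paper's fuzzy sense; neither follows from atomlessness alone. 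The standard repair (essentially Newelski's, whom the paper cites but never uses) is to choose $\Sigma$ concretely: unary predicates $P_s$, $s\in 2^{<\omega}$, axioms making the $P_s$ a binary partition tree, plus crispness axioms $\forall x(P_s(x)\lor\neg P_s(x))$, which over any $\L$-chain force each element of each model to determine a branch in $2^{\omega}$. For closed nowhere dense $F=[T_F]$ set $\Gamma_F(x)=\{\bigvee_{s\in T_F\cap 2^n}P_s(x):n<\omega\}$; an element realizes $\Gamma_F$ precisely when its branch lies in $F$, so a covering $\{F_i:i<\aleph_1\}$ of $2^{\omega}$ by closed nowhere dense sets yields types that every nonempty model realizes, killing $OTT(\aleph_1)$; and each $\Gamma_F$ is non-principal because any formula consistent with $\Sigma$ is compatible with branches filling a nonempty clopen subset of $2^{\omega}$, which cannot sit inside the nowhere dense $F$ (this uses quantifier elimination for the tree theory, transferred through the crispness axioms). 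Until a lemma of this kind is proved, both your proposal and the paper's text stop short of the refutation half of independence.
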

\begin{demo}{Proof} We have proved consistency since $MA$ implies the required statement.  Now we prove independence. Let $covK$ 
be the least cardinal $\kappa$ such that the real line can be covered by $\kappa$ many closed disjoint 
nowhere dense sets. It is known that 
$\omega<covK\leq 2^{\omega}$. In any Polish space the intersection of $< covK$ dense sets is dense \cite{CF}. 
But then if $\kappa<covK$, then $OTT(\kappa)$ is true.
The independence can be  proved using standard iterated forcing to show that it is consistent that $covK$ could be literally anything greater 
than  $\omega$ and $\leq {}^{\omega}2$.
\end{demo}
\begin{theorem} 
The statement $(\forall \kappa<covK)OTT(\kappa)$ is provable in $ZFC$.
\end{theorem}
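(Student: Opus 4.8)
The plan is to run the Baire-category argument used to establish $OTT(\kappa)$ for $\kappa\le\omega$ almost verbatim, with the single change that the Baire Category Theorem is replaced by the $ZFC$ fact (cited as \cite{CF}) that in a Polish space the intersection of fewer than $covK$ dense sets is dense. So suppose $\kappa<covK$, let $\Sigma$ be a countable theory and $\{\Gamma_i:i<\kappa\}$ a family of non-principal types. First I would adjoin countably many fresh Henkin constants $C$ and, exactly as in the base case, expand $\Sigma$ to a Henkin complete theory $T=\bigcup_{m\in\omega}T_m$ satisfying the same three closure conditions. Form the algebra of sentences $\B=\Fm_T$; since the language stays countable after adding $C$, the algebra $\B$ is countable, so by Theorem~\ref{d}(iii) the space $Max(\B)$ is Polish, with countable basis $\{D_M(a):a\in B\}$. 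As before, the types remain non-principal after the expansion, so that \eqref{e3} continues to hold, i.e. $\bigwedge_{\phi\in\Gamma_i}[\phi(\bar c)]=0$ for every tuple $\bar c$ and every $i<\kappa$.

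Next I would introduce the same three families of sets: $H_{\alpha,x}$ and $J_{\alpha,x}$ attached to the existential and universal witnessing requirements as in \eqref{e4} and \eqref{e5}, and $K_{(\bar c,\Gamma_i)}=\bigcap_{\phi\in\Gamma_i}V_M(\phi(\bar c))$ attached to the omission of $\Gamma_i$ as in \eqref{e6}. By Theorem~\ref{d}(ii) each of these is nowhere dense in $Max(\B)$. The crucial bookkeeping step is the cardinality count. Because the language is countable, there are only countably many pairs $(\alpha,x)$, so $H=\bigcup_{\alpha,x}H_{\alpha,x}$ and $J=\bigcup_{\alpha,x}J_{\alpha,x}$ are each unions of at most $\omega$ nowhere dense sets; and since $C$ is countable there are only countably many tuples $\bar c$, so $K=\bigcup_{\bar c}\bigcup_{i<\kappa}K_{(\bar c,\Gamma_i)}$ is a union of at most $\omega\cdot\kappa$ nowhere dense sets. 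Using the $ZFC$ inequality $\omega<covK$, the whole family $H\cup J\cup K$ is a union of fewer than $covK$ nowhere dense sets.

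Finally, replacing the closure of each nowhere dense set by its dense open complement and applying \cite{CF}, the intersection of these fewer-than-$covK$ dense open sets is dense, whence the complement of $H\cup J\cup K$ is dense and meets every nonempty basic open set. Thus, given any $a=[\psi]\in\B$, I can choose a maximal filter $F\in D_M(a)$ lying outside $H\cup J\cup K$ and set $T_1=\{\phi:[\phi]_T\in F\}$. The three avoidance conditions then read off exactly as in the base case: $F\notin H$ gives the existential witness property, $F\notin J$ makes $T_1$ Henkin, and $F\notin K$ forces, for each $i<\kappa$ and each $\bar c$, some $\phi\in\Gamma_i$ with $\phi(\bar c)\notin T_1$. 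Hence the canonical model of $T_1$ is a safe, witnessed model of $T$ omitting every $\Gamma_i$, establishing $OTT(\kappa)$. I expect the only genuine obstacle to be this cardinality bookkeeping---verifying that $H$ and $J$ contribute only countably many sets and that $K$ contributes at most $\kappa$, so that the total stays strictly below $covK$; the topological content is entirely inherited from the base-case argument together with the cited density property characterizing $covK$.
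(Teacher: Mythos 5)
Your proof is correct and is essentially the paper's own argument: the paper obtains this theorem by exactly the observation you use, namely that in a Polish space the intersection of fewer than $covK$ dense sets is dense, applied to the same three families $H_{\alpha,x}$, $J_{\alpha,x}$, $K_{(\bar c,\Gamma_i)}$ of nowhere dense sets from the countable case. Your explicit cardinality bookkeeping (a total of $\max(\omega,\kappa)<covK$ nowhere dense sets) just spells out what the paper leaves implicit.
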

Note that  Martin's axiom implies that $covK =2^{\omega}$ which reproves \ref{m}. 
This is mentioned in \cite{CF}. The connection between omitting types and 
combinatorics of the real line was first 
discovered by Newelski \cite{N}.   

\subsection{Some model theoretic consequences of the omitting types theorem}

In classical first order logic, the omitting types theorem is used to construct what is known as atomic and prime models, which are minimal models.
In this section we define atomic, prime and saturated models in the fuzzy context and work out some connections between them. 
All languages considered are 
countable.

Now, having an omitting types theorem at our disposal for fuzzy 
logic, such investigations can be carried out in this more general context.
We give a sample by studying the so called atomic models. 
We define atomic models. Let $\L$ be a core fuzzy predicate logic.

\begin{definition} 
\begin{enumroman} Let $T$ be a theory.
\item  A formula $\phi(x_1\ldots x_n)$ is said to be complete in $T$ if $\phi$ is consistent with $T$ and whenever 
$\psi$ is consistent with $T$ and
$T\models \psi\implies \phi$ then $T\models \phi\implies  \psi$. 
\item A formula $\theta$ is completable if there is a complete formula $\psi$ such that $T\models \psi\implies \theta$.
\item $T$ is atomic if every formula consistent with $T$ is completable. 
\item For a model $\M$, let $Th(\M)$ be the set of all sentences in the language of $\M$ that are valid. That is $\phi\in Th(\M)$ iff $||\phi||_{\M}=1$.
A model $\M=(M,\bold L)$ is an atomic model, if for 
every $a_1,\ldots a_n\in M$, there exists a 
complete formula $\phi(x_1\ldots x_n)$, with respect to $Th(\M)$, such that $||\phi(a_1\ldots a_n)||_{\M}^{\bold L}=1.$
\end{enumroman}
\end{definition}
\begin{lemma}\label{sup} Let $\B$ be a $BL$ algebra. Let $X\subseteq B$, be such that for all non-zero $b\in \B$, there exists a non-zero $x\in X$, such that $x\leq a$.
Then $\bigvee X=1$.
\end{lemma}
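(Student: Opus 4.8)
The plan is to prove that $1$ is the \emph{least} upper bound of $X$. Since $1$ is the top element it is automatically an upper bound of $X$, so it suffices to show that every upper bound $u\in\B$ of $X$ must equal $1$. I will argue by contraposition: assuming $u\neq 1$, I will produce a nonzero $x\in X$ with $x\not\leq u$, which exhibits $u$ as \emph{not} being an upper bound of $X$ and thereby forces the least upper bound of $X$ to be $1$.

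Fix $u$ with $u\neq 1$. The crux is to manufacture a nonzero element $b$ that is disjoint from $u$, in the sense that every nonzero $y\leq b$ satisfies $y\not\leq u$. Granting such a $b$, the hypothesis of the lemma applied to this nonzero $b$ yields a nonzero $x\in X$ with $x\leq b$, and disjointness then gives $x\not\leq u$, as required. In the Boolean reduct the element $b$ is simply the complement $\neg u=(u\to 0)$: one has $b\neq 0$ because $u\neq 1$, and for nonzero $y\leq b$ the relation $y\leq u$ would give $y\leq u\wedge\neg u=0$, a contradiction. That is the whole argument in the classical case.

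The main obstacle is precisely this disjointness step, since in a genuine $BL$ algebra the negation need not behave as a complement and $u\wedge\neg u$ need not vanish. To handle it I would pass to the prime spectrum developed above and work topologically: by the order criterion $a\leq b\iff V_M(a)\subseteq V_M(b)$ established above, the inequality $u\neq 1$ gives $V_M(u)\neq Max(\B)$, so $D_M(u)$ is a nonempty open subset of the compact Hausdorff space $Max(\B)$ of Theorem~\ref{d}. I would then look for a nonzero $b$ whose support $V_M(b)$ lies inside $D_M(u)$, equivalently $V_M(b)\cap V_M(u)=V_M(b\wedge u)=\emptyset$, i.e. $b\wedge u=0$, and rerun the disjointness argument with this $b$. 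Establishing the existence of such a $b$ --- that every nonempty open subset of $Max(\B)$ carries the support of a nonzero algebra element --- is the delicate point on which the lemma turns, and it is exactly here that the Hausdorffness and the behaviour of the $D_M(\cdot)$ basis, rather than any Boolean complementation, must be brought to bear.
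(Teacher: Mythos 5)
Your proposal is not yet a proof, and the step you defer is exactly where everything breaks down. You reduce the lemma to producing, for each upper bound $u\neq 1$, a nonzero $b$ such that no nonzero $y\leq b$ satisfies $y\leq u$; note that this property is the same as $b\wedge u=0$ (if $b\wedge u$ were nonzero, it would itself be such a $y$). In a general $BL$ algebra no such $b$ need exist, and in fact the lemma as stated is false there. Every $BL$ chain has $\min$ as its lattice meet, so two nonzero elements never meet to $0$; thus for $0<u<1$ there is no admissible $b$ at all. Concretely, in the standard MV algebra on $[0,1]$ take $X=(0,1/2]$: for every nonzero $b$ the element $\min(b,1/2)$ is a nonzero member of $X$ below $b$, so the density hypothesis holds, yet $\bigvee X=1/2\neq 1$. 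Hence no amount of topology can rescue the deferred step. Your spectral reformulation also contains a separate error: in an MV algebra $V_M(c)=\emptyset$ does not imply $c=0$ (for $c=1/2$ any filter containing $c$ contains $c\odot c=0$, so no maximal filter contains $c$), so your identification of $V_M(b\wedge u)=\emptyset$ with $b\wedge u=0$ fails, as does the order criterion $a\leq b\iff V_M(a)\subseteq V_M(b)$ that you invoke.

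For comparison, the paper's own proof is precisely the Boolean computation you sketched in your second paragraph: it sets $b=\bigvee X$ (tacitly assuming the supremum exists), claims $-b\neq 0$ whenever $b<1$, picks a nonzero $x\in X$ below $-b$, and concludes from $x\leq b$ and $x\leq -b$ that $x\leq b\cap -b=0$. Both claims are complementation facts that fail in $BL$ algebras: $\neg b\neq 0$ for $b<1$ fails in G\"odel chains (where $\neg b=0$ for all $b>0$), and $b\cap\neg b=0$ fails in MV chains (take $b=1/2$, so $b\cap\neg b=1/2$). So your instinct that disjointness from $u$ is the crux was exactly right --- it is the point the paper's argument silently assumes --- but neither your route nor the paper's closes it, and it cannot be closed at this level of generality: the statement and the Boolean argument are sound only when $\neg$ is a genuine complement, e.g. over Boolean algebras, where your second paragraph already constitutes a complete proof.
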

\begin{demo}{Proof} Let $\bigvee X=b$. Assume that $b<1$. Then $1\cap -b\neq 0$. Let $x\in X$ be non zero below $1\cap -b$. 
Then $x\leq -b$ and $x\leq b$, hence $x\leq b\cap -b=0$ 
which is a contradiction. 
\end{demo}
\begin{theorem}\label{atomic} Let $\Sigma$ be a theory. Then $\Sigma$ has a countable safe witnessed atomic model
if and only if $\Sigma$ is atomic.
\end{theorem}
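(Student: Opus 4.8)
The plan is to prove Theorem \ref{atomic} by establishing the two implications separately, using the omitting types theorem (part (i) of the theorem stating $(\forall \kappa \leq \omega)OTT(\kappa)$) as the main engine for the harder direction. I would first set up the correspondence between completeness of formulas and principality of types, since an atomic theory is precisely one in which every consistent formula is ``captured'' by a complete formula, and a complete formula generates a principal type.

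For the easy direction, suppose $\Sigma$ has a countable safe witnessed atomic model $\M$. Given any formula $\theta$ consistent with $\Sigma$, I would argue that since $\theta$ is consistent there is a model of $\Sigma \cup \{\theta(\bar a)\}$, and by appealing to the completeness machinery (Theorems in \S2) together with the atomicity of $\M$, there is an assignment $\bar a \in {}^n M$ realizing $\theta$; atomicity of $\M$ then supplies a complete formula $\phi$ with $||\phi(\bar a)||_{\M}^{\bold L}=1$ and $\Sigma \models \phi \implies \theta$. This shows every consistent formula is completable, i.e.\ $\Sigma$ is atomic. The verification that the complete formula associated to $\bar a$ indeed entails $\theta$ is the only point requiring care here, and it follows from the defining property of complete formulas.

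For the hard direction, assume $\Sigma$ is atomic; I would construct the desired model by omitting, for each $n$, the type of ``non-completable'' or ``non-atomic'' behavior. Concretely, for each $n$ consider the type $\Gamma_n$ consisting of the negations (in the appropriate fuzzy sense, via $\implies 0$) of all complete formulas $\phi(x_1,\ldots,x_n)$; realizing $\Gamma_n$ at a tuple $\bar a$ would mean no complete formula holds at $\bar a$, which is exactly what an atomic model must avoid. The key step is to verify that each $\Gamma_n$ is non-principal: this is where atomicity of $\Sigma$ is used, since if $\Gamma_n$ were principal, witnessed by some $\psi$, then $\psi$ would be consistent yet not completable, contradicting that $\Sigma$ is atomic. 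Here Lemma \ref{sup} is likely invoked to pass from ``below every nonzero element there is a complete formula'' to the supremum statement controlling principality in the Lindenbaum algebra $\Fm_T$.

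Having checked non-principality, I would apply $OTT(\omega)$ to $\Sigma$ together with the countably many non-principal types $\{\Gamma_n : n \in \omega\}$, obtaining a countable safe witnessed model $\M$ of $\Sigma$ omitting every $\Gamma_n$. Omitting $\Gamma_n$ means precisely that for every $\bar a \in {}^n M$ some complete formula $\phi$ satisfies $||\phi(\bar a)||_{\M}^{\bold L}=1$, which is the definition of an atomic model. The main obstacle I anticipate is the correct formulation of the ``negation'' of a complete formula inside a fuzzy (many-valued) setting, since the law of double negation fails in general $BL$ algebras; I expect to rely on the $MV$-structure or on working with truth value $1$ versus $0$ directly in $\Fm_T$ rather than on classical complementation, so that the non-principality argument via Lemma \ref{sup} goes through. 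Once the types are set up correctly, the rest is a direct application of the already-proved omitting types theorem.
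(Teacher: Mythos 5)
Your left-to-right (easy) direction is essentially the paper's argument and is fine. The genuine gap is in the converse: the reduction to $OTT(\omega)$ via the types $\Gamma_n=\{\neg\phi:\phi \text{ complete in } n \text{ variables}\}$ does not work in this setting, and this is not a deferrable technicality but the crux of the matter. In the paper's semantics, a tuple realizes a type only when every member takes value $1$; hence a model $\M$ omits your $\Gamma_n$ iff for every $\bar a\in{}^nM$ there is a complete $\phi$ with $||\neg\phi(\bar a)||_{\M}^{\bold L}\neq 1$, which (since $\neg\phi=\phi\implies 0$) says only that $||\phi(\bar a)||_{\M}^{\bold L}\neq 0$. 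Atomicity of $\M$ demands a complete $\phi$ with $||\phi(\bar a)||_{\M}^{\bold L}=1$, and in a $BL$ chain a formula may take any intermediate value, so ``some complete formula is nonzero at $\bar a$'' is strictly weaker than ``some complete formula equals $1$ at $\bar a$''. Worse, there is in general no formula whose taking value $1$ expresses ``$\phi$ does not take value $1$'', so no choice of types handed to $OTT$ can encode the failure of atomicity. This is precisely what the paper means when it says that the classical proof of this theorem ``depends on negation''; your own flagged obstacle is real, and it kills the $OTT$ route rather than merely complicating it. (Your non-principality argument is therefore aimed at the wrong target: even granting that the negated types are non-principal, omitting them does not produce an atomic model.)

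The repair is what the paper actually does: abandon the reduction to $OTT$ and rerun its Baire-category proof directly, with the complete formulas appearing positively. Form a Henkin complete extension $T$ of $\Sigma$ with new constants $C$, let $\B=\Fm_T$, and let $\Gamma_n$ be the set of \emph{complete} formulas in $Fm_n$. Atomicity of $\Sigma$ says every nonzero $[\psi]$ lies above some $[\phi]$ with $\phi$ complete, so Lemma \ref{sup} gives $\bigvee_{\phi\in\Gamma_n}[\phi(c_1,\ldots,c_n)]=1$ — note this is a join equal to $1$, not a statement about negations or a meet equal to $0$ as in the omitting types theorem. By Theorem \ref{d}(ii) the sets $Max(\B)\sim\bigcup_{\phi\in\Gamma_n}V_M(\phi(\bar c))$ are then nowhere dense, as are the Henkin sets $H_{\alpha,x}$ and $J_{\alpha,x}$; the Baire category theorem for the Polish space $Max(\B)$ yields a maximal filter $F$ avoiding all of them, and the canonical model of $\bigcup F$ is countable, safe, witnessed, and atomic, since avoiding the third family of sets means that for every tuple $\bar c$ of constants some complete $\phi$ has $[\phi(\bar c)]\in F$, i.e.\ value $1$ in the canonical model.
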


\begin{demo}{Proof} Assume that $\Sigma$ has an atomic model $\M$.
Let $\phi(x_1,\dots x_n)$ be a consistent with $\Sigma$. Then $\psi=\exists x_1\ldots \exists x_n\phi(x_1,\dots x_n)\in \Sigma$. 
For if not, then since $\Sigma$ is complete, then
$\Sigma\cup \psi\vdash \bot$ and this is impossible since $\phi$ is consistent with $\Sigma.$
Let $a_1,\ldots a_n\in M$ be satisfied by $\phi$. Let $\theta$ be a complete formula satisfiable by $a_1, \ldots a_n$.
Then we have by completeness of $\Sigma$ that $\Sigma\vdash \theta\implies \phi$ or 
$\Sigma\vdash \phi\implies \theta$, but in the second case we will also have 
$\Sigma\vdash \theta\implies \phi$ since $\theta$ is complete.

Now for the converse. In the classical case, the omitting types theorem is used, but the proof depends on negation. Here
we give a direct proof, which is very similar to that of the omitting types theorem. 
Let $\Sigma$ be the given atomic theory. Like in the proof of the omiting types theorem 
add a set $C$ of countably many constants, form a Henkin complete extension $T$ of $\Sigma$,  and let $\B=\Fm_T$.
Now in this present case, we consider the following meets and joins:
Then in $\B$ we have
\begin{equation}\label{e1}
\begin{split}
[\exists x\alpha(x)]=\bigvee_{c\in C} [\alpha(c)]
\end{split}
\end{equation}
\begin{equation}\label{e2}
\begin{split}
[\forall x\alpha(x)]= \bigwedge_{c\in C}[\alpha(c)]
\end{split}
\end{equation}
Let $Fm_n$ denote the set of formulas where at most the $n$ first variables can be free.
Now since the theory is atomic,  if we let $\Gamma_n=\{\phi\in Fm_n: \phi \text { is complete }\}$, then we  have from lemma \ref{sup}
\begin{equation}\label{e3}
\begin{split}
\forall  c_1,\ldots c_n,  \forall i\in \omega, \bigvee_{\phi\in \Gamma_i} [\phi(c_1,\ldots c_n)]=1
\end{split}
\end{equation}

Then for every variable $x$ and formula with one free variable  $\alpha$ we have
$$H_{\alpha,x}=V_M(\exists x\alpha(x))\sim \bigcup_{c\in C} V_M(\alpha(c)$$
$$J_{\alpha,x}=\bigcap_{c\in C} V_M(\alpha(c))\sim V_M({\forall x\alpha x})$$
and for every $i$, 
$$K_{(c_1,\ldots c_n, \Gamma_i)}=Max(\B)\sim \bigcup_{\phi\in \Gamma_i} V_M(\phi(c_1\ldots c_n))$$ 
are nowhere dense sets. As in the omitting types theorem, define $H$, $J$ and $K$.
Then each of these sets is a countable union of nowhere dense sets in $Max(\B)$. 
Let $F$ be a maximal filter in the complement of these sets. Let $T=\bigcup F$.
Then $T$ is a Henkin theory and its canonical model is as desired. Lets check this.  The first two conditions imply that $T$ is a Henkin theory.
Finally, $F\notin K$ means that for all $i\in \omega$ for all $\phi\in \Gamma_i$ there exists $\bar{c}$ such that
$\phi(\bar{c)}\in T$.

\end{demo}

\begin{definition} Let $S$ be a signature. 
\begin{enumarab}
\item Two $S$ structures $(\M_1,{\bold L_1})$ and $(\M_2,{\bold L_2})$ are elementary equivalent, if for each sentence $\phi$, we have
$\M\models \phi$ iff 
$\M\models \phi$.

\item An elementary embedding of $(\M_1,\bold L_1)$ into $(\M_2, \bold L_2)$ is a pair $(f,g)$ such that

(i) $f$ is an injection from $M_1$ into $M_2,$

(ii) $g$ is an embedding of $\bold L_1$ into $\bold L_2,$

(ii) $g(||\phi(a_1\ldots a_n)||_{\M_1}^{\bold L_1}=||\phi(f(a_1)\ldots f(a_n)||_{\M_2}^{\bold L_2}.$

\item $(\M_1,\bold L_1)$ and $(\M_2,\bold L_2)$ are isomorphic if there is an elementary embedding $(f,g)$ such that
$f$ is a bijection and $g$ is an isomorphism.

\end{enumarab}
\end{definition}

From now on, we fix one algebra $\bold L$, so that all models are of the form $(\M, \bold L),$ which we sometimes write as $\M$. 
Furthermore we assume that all models are witnessed.

\begin{definition} Let $T$ be a theory.
\begin{enumerate}
\item  
A set $\Gamma$ in the variables $x_1\ldots x_n$ is consistent
with $T$ if there exists a model $(\M,\bold L)$ of $T$ and $s\in {}^nM$ such that $||\phi(s)||_{M}^{\bold L}=1$ for all $\phi\in \Gamma$.
\item $\Gamma$ is a complete type if it a maximal consistent set, that is $\alpha\notin \Gamma$, then $T\cup \alpha\vdash \bot$.
$S_n(T)$ denotes the set of complete $n$ -types, that is types using only $n$ variables.
\item For a model $(\M,\bold L)$ and $a_1\ldots a_n\in M$, $\tp^{\M}(a_1\ldots a_n)=\{\phi\in Fm_n:||\phi(a_1\ldots a_n)||=1\}.$
We may write $\tp^{\M}(\bar{a})$. We omit the superscript $\M$ when clear from context.
\item For a model $\M=(M,L)$ and $Y\subseteq M$, $\M_Y$ is the model obtained by adding a constant $c_a$ for each $a\in Y$
and interpreting $c_a$ as $a$.
\item A model $(\M,\bold L)$ 
is $\omega$ or countably saturated if for each finite set $Y\subseteq M$, every set $\Gamma(x)$ consistent with $Th(\M_Y)$
is realized in $\M_Y$.
\end{enumerate}
\end{definition}

Notice that $S_n(T)$ is a compact Hausdorff space, since it is the set of maximal filters in the algebra
$\Fm_n/T$ where $\Fm_n$ is the set of formulas which has only at most $n$ free variables.

\begin{theorem} Let $T$ be a complete theory. 
Then $T$ has a countably saturated model if and only if for each $n<\omega$, $T$ has countably many complete $n$ types 
in $n$ variables
\end{theorem}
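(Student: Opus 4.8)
The plan is to prove both implications, reading ``countably saturated'' as \emph{countable and $\omega$-saturated}, which is the notion that makes the equivalence correct: an uncountable $\omega$-saturated model still realises every type in $S_n(T)$ but need not force $S_n(T)$ to be countable.

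First I would dispose of the easy direction ($\Rightarrow$). Suppose $\M=(M,\bold L)$ is a countable $\omega$-saturated model of $T$. Since $T$ is complete and $\M\models T$ we have $Th(\M)=T$, so each $p\in S_n(T)$ is a set of formulas consistent with $Th(\M_{\emptyset})$; as $\emptyset$ is finite, saturation realises $p$ by some $\bar a\in{}^nM$. Hence $\bar a\mapsto \tp^{\M}(\bar a)$ maps ${}^nM$ \emph{onto} $S_n(T)$, and since $M$ is countable so is ${}^nM$, giving $|S_n(T)|\le\aleph_0$.

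For the converse ($\Leftarrow$), assume every $S_n(T)$ is countable. I would first record a counting lemma: for any finite tuple $\bar a$ from a model, the type space $S_m(Th(\M_{\bar a}))$ is countable, because a complete $m$-type over the $k$ parameters $\bar a$ is determined by a complete $(k+m)$-type in $S_{k+m}(T)$ consistent with $\tp(\bar a)$, and $S_{k+m}(T)$ is countable by hypothesis. Then I would build an elementary chain $\M_0\preceq \M_1\preceq\cdots$ of countable models of $T$, starting from a countable model supplied by the completeness theorem via a Henkin canonical model, where at stage $k+1$ one more pending type is realised: given a finite $\bar a\in M_k$ and a complete type $p(x_1,\dots,x_m)$ consistent with $Th((\M_k)_{\bar a})$, the set $\mathrm{ElDiag}(\M_k)\cup\{p(c_1,\dots,c_m)\}$ with new constants $c_i$ is finitely satisfiable in $\M_k$ itself — each finite fragment forces an existential $\exists\bar x\bigwedge_i\phi_i$ lying in the complete theory $Th((\M_k)_{\bar a})$, which is therefore witnessed in $\M_k$ — so by the completeness theorem it is consistent, and its countable witnessed canonical model $\M_{k+1}\succeq\M_k$ realises $p$. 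A bookkeeping over all stages, all finite parameter tuples (countably many, since each $M_k$ is countable) and all types over them (countably many, by the counting lemma) ensures every such task is eventually performed.

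Finally I would set $\M_{\omega}=\bigcup_k\M_k$. It is countable, and by the fuzzy Tarski--Vaught/elementary-chain lemma (proved by induction on formulas, the quantifier step using that all models are witnessed, so the relevant suprema and infima are attained and stabilise along the chain) one has $\M_k\preceq\M_{\omega}$ for all $k$; in particular $Th((\M_{\omega})_Y)=Th((\M_k)_Y)$ whenever $Y\subseteq M_k$. Given finite $Y\subseteq M_{\omega}$ and $\Gamma(x)$ consistent with $Th((\M_{\omega})_Y)$, pick $k$ with $Y\subseteq M_k$ and extend $\Gamma$ to a complete type $p$ over $Th((\M_k)_Y)$, which is possible since that type space is the set of maximal filters of $\Fm/Th((\M_k)_Y)$ and is compact Hausdorff; this $p$ was handled at some later stage, so $\Gamma\subseteq p$ is realised in $\M_{\omega}$, and $\M_{\omega}$ is $\omega$-saturated. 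The main obstacle will be this converse direction, specifically the realising-types step and the verification that the union of the elementary chain is again an elementary extension over the fixed $\bold L$; both hinge on transporting the classical arguments through the completeness theorem and the witnessedness hypothesis rather than on first-order compactness, which is not available here in its usual form.
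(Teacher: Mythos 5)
Your left-to-right direction (which the paper does not even prove) is fine, modulo reading ``complete'' as maximal consistent so that $Th(\M)=T$. The right-to-left direction, however, has a genuine gap, located exactly where you flagged ``the main obstacle'': both the realising-types step and the chain-union step rest on classical facts that fail in the fuzzy setting. First, the finite-satisfiability claim. You argue that each finite fragment $\phi_1,\dots,\phi_r$ of $p$ forces $\exists\bar x\bigwedge_i\phi_i$ to lie in $Th((\M_k)_{\bar a})$. In classical logic this holds because the theory of a model is maximal: anything consistent with it belongs to it, by completeness via negation. In a core fuzzy logic this inference fails. Take G\"odel logic over the standard chain $[0,1]$, a one-element model $\M$ with one unary predicate $P$ and $\|P(a)\|_{\M}=1/2$. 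The map $h(x)=\min(2x,1)$ is an endomorphism of the G\"odel chain commuting with $\min$, $\max$, $\rightarrow$ and with all suprema and infima, so composing the valuation of $\M$ with $h$ gives a model $\N$ on the same domain with $\|\phi\|_{\N}=h(\|\phi\|_{\M})$ for every formula. Then $\N\models Th(\M)$, indeed $\N\models \mathrm{ElDiag}(\M)$, and the type $\{P(x)\}$ is consistent with $Th(\M)$ (it is realised in $\N$), yet $\exists x P(x)$ has value $1/2$ in $\M$; and $\M$ is safe and witnessed, so witnessedness does not rescue the step. Second, and more fundamentally, the same example shows that $\N\models\mathrm{ElDiag}(\M_k)$ does not give $\M_k\preceq\N$ in the paper's sense of elementary embedding, which demands preservation of \emph{all} truth values via an embedding of the algebras: the diagram only pins down which sentences have value $1$, and values strictly below $1$ can move. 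Hence your chain is not elementary, truth values of a fixed parameter formula need not stabilise along it, the union $\M_{\omega}$ is not a well-defined structure over the fixed $\bold L$, and there is no fuzzy Tarski--Vaught lemma to invoke.

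The paper avoids both problems by never forming a chain of models at all: it runs one global Henkin construction at the level of theories. It adds countably many fresh constants, enumerates all complete types of $T$ in all expansions by finitely many of these constants (countable, by the same counting observation you make), and builds a single increasing sequence of consistent theories that is simultaneously made linear, Henkin, and such that each enumerated type consistent with the current stage is realised at a fresh constant; only at the very end is the canonical model of the union taken, and saturation is read off directly from the construction. If you repair your argument, the repair naturally collapses to this: replace the models $\M_k$ by complete linear Henkin theories $T^{(k)}$, realise types syntactically by adjoining $\Gamma(d)$ for a fresh constant $d$ (consistency is then immediate from the definition of consistency of a type, with no appeal to finite satisfiability inside a fixed model), and take the canonical model of $\bigcup_k T^{(k)}$ once at the end.
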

\begin{demo}{Proof} We prove the harder direction. Add a countable list $\{c_1, c_2\ldots\}$ of new constants forming $\L$. 
For each finite subset $Y\subseteq C$, the types $\Gamma(x)$
of in $L_Y$ are countable.
Let
$$\Gamma_1(x)\ldots, \Gamma_n(x),\ldots $$
be an enumeration of all types of $T$ in all expansions $L_Y$, $Y$ a finite subset of $C$.
Let
$$\phi_1\ldots, \phi_n, \dots$$ be an enumeration of all sentences of $\L$.
Define inductively an increasing sequence
$$T=T_0\subseteq T_1\subseteq T_2\subseteq \ldots $$
of theories of $\L$ such that for each $m<\omega$:
\begin{enumarab}
\item each $T_m$ is consistent and is obtained from $T$ by adding finitely many axioms using only finitey many constants,
\item If $\phi_m=\alpha\to \beta$, then either $\alpha\to \beta\in T_m$  or $\beta\to \alpha\in T_m,$
\item If $\phi_m=\forall x\psi$ is not in $T_{m+1}$ then $\psi(c)$ is not in $T_{m+1},$
\item if $\Gamma_m(x)$ is consistent with $T_{m+1}$ then $\Gamma_m(d)\subseteq T_{m+1}$ for some $d\in C.$
\end{enumarab}
The first three items are like the proof of completeness forming a complete Henkin extension. 
It is clear that the last task can be 
implemented without interfering wth the first three.
For assume inductively that $T'_{m+1}$ have been constructed satisfying (1), (2) and (3). Then if 
$\Gamma_m(x)$ is consistent with $T'_{m+1}$ then one chooses a constant $d$ not occuring in $T_{m+1}$ 
nor $\Gamma_m(x)$, this is possible, since only finitely many constants are in use and puts $T_{m+1}=T'_{m+1}\cup \Gamma_m(d).$ Else he puts
$T_{m+1}=T'_{m+1}.$
The union $T_{\omega}$ is a Henkin complete theory, and its canonical model is as required.
Let $Y\subseteq M$ be finite and let $\Sigma(x)$ be consistent with $Th(\M_Y)$. Then extend $\Sigma(x)$ to a type $\Gamma(x)$ in $Th(\M_Y)$.
Then for some $m$, $\Gamma(x)=\Gamma_m(x) $, and the later is consistent with $T_{m+1}$. Hence 
$\Gamma_m(c)\subseteq T_{m+1}$, then $c$ realizes $\Gamma(x)$ in $\M_Y$..
\end{demo}
Now saturated models are the large models. Now we investigate their dual, the small models.
\begin{definition} 
\begin{enumarab}
\item A model $\M\models T$ is a prime model, if it is elementary embeddable  in every model of $T.$
\item A model $\M\models T$ is atomic, if for every $n\in \omega$, for every consistent formula $\psi$ using $n$ free varibales,
there exists a minimal 
formula $\psi$, also using only $n$ free variables such that $T\models \psi\to \phi$.
Here minimal means that for any formula $\xi$ with only $n$ free variables, 
whenever $T\models \xi\to \psi$,  then $\xi\to \bot$ or $T\models \psi\to \xi$.
\end{enumarab}
\end{definition}

It is easy to see that countable atomic models are prime.  The proof goes like the classical case.

\begin{theorem} Let $\L$ be a countable language and let $T$ be a complete theory. Then the following are equivalent
\begin{enumroman}
\item $T$ has a prime model
\item $T$ has an atomic model
\item The principal types in $S_n(T)$ are dense for all $n$
\end{enumroman}
\end{theorem}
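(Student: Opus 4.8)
The plan is to establish the cycle of implications (i)$\Rightarrow$(iii)$\Rightarrow$(ii)$\Rightarrow$(i), so that all three conditions become equivalent. The implication (ii)$\Rightarrow$(i) is the soft one: I would first note that an atomic model realizes only principal types, whence $T$ is atomic, and then invoke Theorem \ref{atomic} to produce a \emph{countable} safe witnessed atomic model $\M$. Such a countable atomic model is prime by the usual back-and-forth argument, which I would spell out by enumerating $M=\{a_k:k<\omega\}$ and building an elementary embedding into an arbitrary $\N\models T$ stage by stage: at each step $\tp^{\M}(a_0,\dots,a_k)$ is principal, hence isolated by a complete formula, and by completeness of $T$ this formula is satisfiable in $\N$, which lets us choose the image of $a_{k+1}$ extending the partial embedding.

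For (iii)$\Rightarrow$(ii) the key observation is that density of the principal types in $S_n(T)$ for every $n$ is, once the definitions are unravelled, exactly the assertion that $T$ is atomic in the sense defined before Theorem \ref{atomic}. Indeed, a basic open set $[\phi]=\{p\in S_n(T):\phi\in p\}$ is nonempty precisely when $\phi$ is consistent with $T$, and it contains a principal type iff there is a complete formula $\psi$ with $T\models\psi\to\phi$, i.e. iff $\phi$ is completable. Thus the principal types are dense for all $n$ iff every formula consistent with $T$ is completable, which is the definition of an atomic theory; Theorem \ref{atomic} then hands us a countable safe witnessed atomic model, giving (ii).

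The substantive direction is (i)$\Rightarrow$(iii), and here the omitting types theorem does the work. Let $\M$ be a prime model of $T$. I would first show that every type realized in $\M$ is principal: if $\bar a\in{}^nM$ realized a non-principal type $p=\tp^{\M}(\bar a)$, then by the omitting types theorem proved above (in particular $OTT(1)$, provable in $ZFC$) there is a witnessed safe model $\N\models T$ omitting $p$; since $\M$ is prime there is an elementary embedding $(f,g):\M\to\N$, and as $g$ is an embedding of $\bold L$ with $g(1)=1$ we get $\|\chi(f\bar a)\|_{\N}=g(\|\chi(\bar a)\|_{\M})=1$ for every $\chi\in p$, so $f\bar a$ realizes $p$ in $\N$, contradicting that $\N$ omits $p$. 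Now, given any $\phi(\bar x)$ consistent with the complete theory $T$, we have $T\vdash\exists\bar x\,\phi$, hence $\|\exists\bar x\,\phi\|_{\M}=1$, and because $\M$ is witnessed there is $\bar a\in{}^nM$ with $\|\phi(\bar a)\|_{\M}=1$. The type $\tp^{\M}(\bar a)$ then lies in $[\phi]$ and is principal by the previous step, so $[\phi]$ meets the principal types; as $\phi$ and $n$ were arbitrary, the principal types are dense.

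The main obstacle I anticipate is the fuzzy-logical bookkeeping around types. Unlike the classical case, the set $\tp^{\M}(\bar a)$ of value-$1$ formulas need not be a maximal consistent set, so I must be careful to pass to the point of $S_n(T)$ it determines (the maximal filter of $\Fm_n/T$ it generates) and to check that the notions of ``principal'' and ``non-principal'' appearing in the hypothesis of the omitting types theorem match the topological notion of an isolated point of $S_n(T)$ used in (iii). Verifying that $g(1)=1$ together with the elementary-embedding clause genuinely transfers realization of a whole type from $\M$ to $\N$, and keeping the completeness and witnessing hypotheses on $T$ in force throughout, are the points requiring the most care.
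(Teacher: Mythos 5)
Your proof is correct at the paper's own level of rigor, but you orient the cycle the opposite way, (i)$\Rightarrow$(iii)$\Rightarrow$(ii)$\Rightarrow$(i), where the paper proves (i)$\Rightarrow$(ii)$\Rightarrow$(iii)$\Rightarrow$(i); the ingredients, however, are exactly the same, so the difference is one of packaging rather than substance. The omitting types theorem drives the implication out of (i) in both arguments: the paper applies it to conclude that every tuple of the prime model satisfies a complete formula (so the prime model is itself atomic, giving (i)$\Rightarrow$(ii)), while you apply it to conclude that every type realized in the prime model is principal and then use completeness of $T$ plus witnessing to realize any consistent formula, which yields density directly; the paper runs that same realization argument one step later, inside its (ii)$\Rightarrow$(iii). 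Closing the cycle, both proofs funnel through the equivalence ``principal types are dense $\Leftrightarrow$ every consistent formula is completable $\Leftrightarrow$ $T$ is atomic'' and then through Theorem \ref{atomic}: the paper reruns the construction from the proof of Theorem \ref{atomic} inside (iii)$\Rightarrow$(i) and finishes with ``atomic hence prime'', whereas you quote Theorem \ref{atomic} as a black box for (iii)$\Rightarrow$(ii) and isolate ``countable atomic implies prime'' as your (ii)$\Rightarrow$(i); your arrangement is arguably cleaner since the theorem never has to be reproved. Be aware that both your proof and the paper's rest on the same two facts not actually proved in the paper: that countable atomic models are prime (the paper asserts this just before the theorem; your back-and-forth sketch only matches value-$1$ formulas, whereas an elementary embedding $(f,g)$ must preserve \emph{all} truth values, so the sketch inherits the same fuzzy-logic gap), and that a principal point of $S_n(T)$ is generated by a complete formula (for which the paper cites a nonexistent lemma). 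Your closing caveats about $\tp^{\M}(\bar{a})$ possibly failing to be a maximal consistent set identify precisely the points the paper also glosses over.
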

\begin{demo}{Proof} The proof is like the classical case, see \cite{CK}, so we will be sketchy:
\begin{enumerate}
\item $(i)\to (ii)$ Here we use the omiting types theorem. Assume that $\M$ is countably prime. Let $a_1\ldots a_n\in M$ and let
$\Gamma(x_1\ldots x_n)$ be the set of formulas $\mu(x_1\ldots x_n)$ such that 
$$||\mu(a_1\ldots a_n)||_{\M}=1.$$
For any countable model $\B$ of $T$ we have an elementary embedding
$f:\M\to \B$ whence $f(a_1),\ldots f(a_n)$ satisfy $\Gamma$. Therefore $\Gamma$ is realized in every model of $T$.
By the omitting types theorem,  there is a formula $\phi$ that isolates $\Gamma$. 
Then $\phi$ is complete and is satisfied by $a_1\ldots a_n$.

\item $(ii)\to (iii)$ Let $\phi$ be an $\L$ formula such that $[\phi]$ 
is a non empty open set in $S_n(T)$. Let $M\models T$ be atomic. Then, as above since $T$ is complete, we have 
$T\models \exists\bar{v}\phi(\bar{v})$. 
There is an $\bar{a}\in M^n$ such that $\M\models \phi[\bar{a}]$.
Then $\tp^{\M}(\bar{a})\in [\phi]$ and is isolated since $M$ is atomic.
 
\item $(iii)\to (i)$ Suppose that the isolated types in $T$ are dense. Add a countable set of constants to form a Henkin extension of $T$. 
Then $\bigvee_{\phi\in \Gamma_i} [\phi(\bar{c})]=1$
where $\Gamma_n=\{\phi\in Fm_n: \phi\text { is complete }\}$. This follows from the fact that every $[\phi]$ contains a principal type, 
and principal types are generated by complete formulas \ref{max}, since they are 
maximal. In other words every formula is completable.
Next proceed as in the proof of Theorem \ref{atomic} constructing  an atomic hence prime model.
\end{enumerate}
\end{demo}
\begin{corollary} The following are equivalent for a theory $T$.
\begin{enumroman}
\item Every formula is completable.
\item The isolated types are dense in $S_n(T)$ for every $n$.
\end{enumroman}
\end{corollary}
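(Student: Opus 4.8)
The plan is to read the corollary purely as a dictionary between the syntactic notion of completability and the topology of the type space $S_n(T)$, and then to extract it from the material already assembled for the preceding theorem. Recall that ``every formula is completable'' means precisely that $T$ is atomic, i.e. every formula $\theta$ consistent with $T$ lies below a complete formula $\psi$ in the sense $T\models\psi\implies\theta$; an inconsistent $\theta$ is never completable and need not be considered. On the topological side, $S_n(T)$ is the space of maximal filters of $\Fm_n/T$, whose basic open sets are the $[\phi]=\{p\in S_n(T):\phi\in p\}$ for formulas $\phi$ with at most $n$ free variables, so that $[\theta]\neq\emptyset$ exactly when $\theta$ is consistent with $T$. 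The two ingredients I would import are: the order duality $[\psi]\subseteq[\theta]\iff T\models\psi\implies\theta$, which is the maximal-filter lemma of \S5 (the clause $a\leq b\iff V_M(a)\subseteq V_M(b)$ applied with $[\phi]=V_M([\phi]_T)$); and the correspondence of lemma \ref{max}, namely that a type $p$ is isolated (principal) if and only if $\{p\}=[\psi]$ for some complete formula $\psi$, in which case $p=\{\chi:T\models\psi\implies\chi\}$ is the principal type generated by $\psi$.

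Given this dictionary, I would prove (i)$\implies$(ii) first. Fix $n$ and a nonempty basic open set $[\theta]$, so $\theta$ is consistent with $T$. By (i) there is a complete $\psi$ with $T\models\psi\implies\theta$; by the order duality $[\psi]\subseteq[\theta]$, and by lemma \ref{max} the set $[\psi]=\{p_\psi\}$ is a single isolated point. Hence $p_\psi\in[\theta]$, so every nonempty basic open set meets the isolated points, which is exactly density of the isolated types in $S_n(T)$. For the converse (ii)$\implies$(i), take any $\theta$ consistent with $T$, with at most $n$ free variables; then $[\theta]$ is a nonempty open subset of $S_n(T)$, so by (ii) it contains an isolated type $p$. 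By lemma \ref{max}, $p=\{\chi:T\models\psi\implies\chi\}$ for a complete $\psi$, and since $\theta\in p$ we obtain $T\models\psi\implies\theta$. Thus $\theta$ is completable, and as $\theta$ ranges over all consistent formulas, every formula is completable.

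There is no serious obstacle here; the content of the corollary is the dictionary itself, and once it is set up the argument is an immediate two-direction restatement. The only points that require care are the two imported facts. The order-duality clause must be applied in the (possibly non-Boolean) $BL$ setting, where the lemma of \S5 is exactly what guarantees $[\psi]\subseteq[\theta]\iff\psi\leq\theta$ over maximal filters; and lemma \ref{max} must be invoked to identify isolated points with complete formulas, since it is this identification that converts ``$[\theta]$ contains an isolated type'' into ``$\theta$ lies above a complete formula.'' With those in hand the corollary simply records the purely syntactic core of the equivalence (ii)$\Leftrightarrow$(iii) already exploited in the proof of the preceding theorem on prime and atomic models.
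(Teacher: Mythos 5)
Your proof is correct, but it follows a genuinely different route from the paper's. The paper states this corollary without its own proof: it is meant to fall out of the two preceding model-theoretic results, namely Theorem \ref{atomic} (a theory is atomic, i.e.\ every formula is completable, iff it has a countable atomic model) combined with the equivalence (ii)$\Leftrightarrow$(iii) of the theorem on prime models (a complete theory has an atomic model iff the principal types are dense in $S_n(T)$ for all $n$). That route passes through the existence of atomic models and hence through the omitting-types/Baire-category machinery. You instead give a direct syntactic--topological dictionary argument that never mentions models: a complete formula $\psi$ below a consistent $\theta$ yields an isolated point of $[\theta]$, and conversely an isolated type inside $[\theta]$ is generated by a complete formula lying below $\theta$. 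Your direction (ii)$\implies$(i) is in fact the very argument the paper embeds inside its proof of (iii)$\to$(i) of the preceding theorem (``every $[\phi]$ contains a principal type, and principal types are generated by complete formulas\dots in other words every formula is completable''), but your direction (i)$\implies$(ii) is new relative to the paper: the paper would obtain it only by first building an atomic model and then realizing $\theta$ in it, whereas you get it in one line from the order duality. What your approach buys is self-containedness and generality: it needs neither countability of the language nor completeness of $T$, both of which the paper's model-theoretic chain requires. Two caveats. First, both you and the paper lean on the identification of isolated types with types generated by complete formulas, cited in the paper as lemma \ref{max} --- a reference that is never actually stated or proved anywhere in the paper; so this is a shared debt, not a defect of yours alone. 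Second, your claim that $[\psi]$ is a singleton for complete $\psi$ deserves the one-line verification, which goes through in the $BL$ setting: if $p\neq q$ are maximal filters both containing $\psi$, pick $a\in p\smallsetminus q$; then $\psi\cap a\in p$ is consistent with $T$ and $T\models(\psi\cap a)\implies\psi$, so completeness of $\psi$ gives $T\models\psi\implies(\psi\cap a)$, hence $\psi\leq a$ and $a\in q$, a contradiction. With that inserted, your (i)$\implies$(ii) is fully self-contained and does not even need the cited lemma.
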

\begin{definition} A complete theory $T$ is $\omega$ categorial iff it has up to isomorphism only one countable model.
\end{definition} 
\begin{theorem}Let $T$ be a complete theory.
Then the following are equivalent:
\begin{enumroman}
\item $T$ is $\omega$ categorial.
\item  $T$ has a model which is both atomic and saturated.
\item  Every type $\Gamma(x_1\ldots x_n)$ is principal.
\item  All countable models of $T$ are atomic.
\end{enumroman}
\end{theorem}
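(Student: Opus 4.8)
The plan is to run the four conditions as the cycle $(i)\Rightarrow(iii)\Rightarrow(iv)\Rightarrow(i)$ and then attach $(ii)$ by proving $(ii)\Leftrightarrow(iii)$ separately; this is the fuzzy analogue of the Ryll--Nardzewski theorem, and the tools assembled earlier in this section (the omitting types theorem, the atomic/prime model characterizations, and the saturated model theorem) are exactly what is needed. Throughout I use that, by the remark following the definition of $S_n(T)$, the type space $S_n(T)=Max(\Fm_n/T)$ is a compact Hausdorff space, and that a type is principal precisely when the corresponding point of $S_n(T)$ is isolated, i.e. isolated by a complete formula.

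For $(i)\Rightarrow(iii)$ I would argue by contraposition. Suppose some complete type $p\in S_n(T)$ is non-principal, hence non-isolated. Since $p$ is consistent, adjoining constants $\bar c$, extending $T\cup p(\bar c)$ to a Henkin complete theory and passing to its canonical model (the construction used repeatedly above, e.g. in the proof of Theorem \ref{atomic}) produces a countable witnessed safe model $\N\models T$ realizing $p$. On the other hand, because $p$ is non-principal, the omitting types theorem in the case $\kappa=1$ of $OTT(\kappa)$ yields a countable witnessed safe model $\M\models T$ that omits $p$. Since we have fixed a single algebra $\bold L$, an isomorphism of models preserves truth values and hence carries realized types to realized types, so $\M\not\cong\N$, contradicting $\omega$-categoricity; thus every type is principal.

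The implication $(iii)\Rightarrow(iv)$ is then immediate from the definitions: if $\N\models T$ is any countable model and $\bar a\in{}^{n}N$, then $\tp^{\N}(\bar a)\in S_n(T)$ is principal, so it is isolated by a complete formula $\phi(x_1\ldots x_n)$; as $\phi\in\tp^{\N}(\bar a)$ we have $||\phi(\bar a)||_{\N}^{\bold L}=1$, whence $\N$ is atomic. For $(iv)\Rightarrow(i)$ I would invoke uniqueness of countable atomic models: any two countable atomic models of the complete theory $T$ are isomorphic, so $(iv)$ makes all countable models of $T$ pairwise isomorphic, i.e. $T$ is $\omega$-categorical. Finally, for $(ii)\Leftrightarrow(iii)$: if $\M$ is atomic and saturated then, being saturated it realizes every $p\in S_n(T)$, and being atomic every realized type is principal, giving $(iii)$; conversely, under $(iii)$ every point of the compact Hausdorff space $S_n(T)$ is isolated, so the open singletons form a cover and compactness forces $S_n(T)$ to be finite, hence $T$ has only countably many complete types and, by the saturated model theorem proved above, a countably saturated model $\M$, which by the already established $(iii)\Rightarrow(iv)$ is also atomic and therefore witnesses $(ii)$.

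I expect the genuine work to be concentrated in $(iv)\Rightarrow(i)$, namely the back-and-forth proof that countable atomic models are isomorphic in the witnessed fuzzy setting. Extending a finite partial isomorphism $\bar a\mapsto\bar b$ by a new point $a'$ requires that the principal type $\tp(\bar a a')$, isolated by a complete $\psi(\bar x,x')$, forces $\exists x'\,\psi(\bar b,x')$ to hold in the target model and, crucially, to be \emph{witnessed} there, so that a matching $b'$ with $\tp(\bar b b')=\tp(\bar a a')$ exists. Since the paper fixes one algebra $\bold L$ and considers only witnessed models, the truth-value bookkeeping that separates the fuzzy case from the classical one stays controlled; the care needed lies in checking that completeness of $\psi$ really pins down the extended type and that the union of the resulting chain of finite partial isomorphisms is an isomorphism of the full structures.
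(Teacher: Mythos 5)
Your cycle $(i)\Rightarrow(iii)\Rightarrow(iv)\Rightarrow(i)$ is sound and stays close to the paper: your $(iii)\Rightarrow(iv)$ and $(iv)\Rightarrow(i)$ are exactly the paper's steps (the paper, too, disposes of $(iv)\Rightarrow(i)$ by citing the back-and-forth argument of Chang--Keisler), your $(ii)\Rightarrow(iii)$ is the paper's argument verbatim, and your $(i)\Rightarrow(iii)$ via the $\kappa=1$ case of $OTT(\kappa)$ is a legitimate variant of the paper's route, which instead goes $(i)\Rightarrow(ii)\Rightarrow(iii)$.

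The gap is in your $(iii)\Rightarrow(ii)$. You assert that a principal type is the same thing as an isolated point of $S_n(T)$ and conclude by compactness that $S_n(T)$ is finite. First, that identification is nowhere established in the paper: the remark you cite only says $S_n(T)=Max(\Fm_n/T)$ is compact Hausdorff. Its basic open sets are the sets $D_M(a)=\{F: a\notin F\}$, and in a $BL$ algebra negation is not classical, so the Boolean argument (``$\{p\}=V_M(\phi)=D_M(\neg\phi)$ for a generating complete formula $\phi$'') does not transfer: to show $D_M(\neg\phi)=\{p\}$ you must show every maximal filter other than $p$ contains $\neg\phi$, and for maximal $G\not\ni a$ one only gets $\neg(a^n)\in G$ for some $n$ depending on $G$; fixing this requires $BL$-specific facts (e.g. that a principal maximal filter has a $*$-idempotent generator), together with care about the fact that ``consistent'' in the sense of being nonzero in $\Fm_n/T$ and ``realizable with truth value $1$'' come apart in fuzzy logic. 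Second, and more tellingly, your conclusion that $(iii)$ forces $S_n(T)$ to be finite contradicts the paper's remark placed immediately after this theorem: there it is claimed that in the $BL$ setting the finiteness of the number of types (and of formulas) modulo $T$ is \emph{strictly stronger} than conditions $(i)$--$(iv)$, precisely because the classical principle ``all maximal filters principal implies finiteness'' fails for $BL$ algebras (the infinite Heyting chain is the paper's counterexample). So your step either proves something the paper asserts to be false, or needs a careful argument that the paper deliberately avoids; either way it cannot be waved through. The repair is cheap and is what the paper does: attach $(ii)$ through $(i)$ rather than through $(iii)$. Once $(iv)\Rightarrow(i)$ is in place, the unique countable model $\M$ is prime, hence atomic; and since every complete type is realized in some countable model (the Henkin/canonical-model construction), hence in $\M$, while a countable model realizes only countably many types, $S_n(T)$ is countable for each $n$; the countably-saturated-model existence theorem proved earlier then shows $\M$ is countably saturated, which yields $(ii)$ without any appeal to isolated points.
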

\begin{demo}{Proof} 
\begin{enumarab}
\item $(i)\to (ii)$. Let $\M$ be the unique countable model of $T$. Then $\M$ is countable prime and so is atomic.
Since $T$ has only one countable model, it has a countably saturated model.
Hence $\M$ is countably saturated.

\item $(ii)\to (iii)$ Since $\M$ is $\omega$ saturated, the type $\Gamma$ is realized by some $n$ tuple $a_1\ldots a_n$. 
Since $\M$ is atomic, $a_1\ldots a_n$ satisfies an atomic formula
$\phi$. Clearly $\phi\in \Gamma$.

\item $(v)\to (vi)$ Direct

\item $(vi)\to (i)$ We show that any two models that are atomic and elementary equivalent are isomorphic. But this follows from a back and forth 
argument as in \cite{CK}.

\end{enumarab}
\end{demo}
In the classical case the two more equivalences can be added. That the number of types in $Fm_n/T$ is finite, and that there are finitely 
many formulas modulo $T$ for each $n$.
This follows from the algebraic property of Boolean algebras that if in an algebra all maximal filters are principal, then both the algebra 
and hence the set of maximal filters are finite.
The above does not work for any $BL$ algebra. Consider for example the Heyting algebra, which is an infinite linear order. Then
the algebra has one maximal filter but it is not finite.
However these statements imply the other 4 formulas, but are not equivalent to any of them.

The following results follows like the classical case:

\begin{theorem} Any complete theory $T$ which has a countably saturated model, has a countable atomic model.
\end{theorem}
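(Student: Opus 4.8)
The plan is to deduce this from the two characterization theorems already established earlier in this section, bridging them by a purely topological fact about countable compact spaces, in exactly the Baire-category spirit used above for the omitting types theorem. Concretely, I would argue in three links: a countably saturated model forces each type space $S_n(T)$ to be countable; a countable compact Hausdorff space with a countable basis has a dense set of isolated points; and density of the isolated (principal) types yields a countable atomic model.

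First I would invoke the earlier theorem characterizing countably saturated models. Since $T$ is complete and has a countably saturated model, that theorem gives, for each $n<\omega$, that the set $S_n(T)$ of complete $n$-types is \emph{countable}. Recall that $S_n(T)$ was noted to be the space of maximal filters of the algebra $\Fm_n/T$, hence a compact Hausdorff space; as the language is countable it carries a countable basis, so $S_n(T)$ is a Polish space and in particular a Baire space.

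The crux is then a topological lemma, which I would prove directly: in a nonempty countable Baire space $X$, the set $D$ of isolated points is dense. The argument runs as follows. Each non-isolated point $p$ yields a nowhere dense singleton $\{p\}$, since $\{p\}$ is closed by Hausdorffness and has empty interior precisely because $p$ fails to be isolated. If some nonempty open set $U$ satisfied $U\cap D=\emptyset$, then writing $U=\bigcup_{p\in U}\{p\}$ would exhibit $U$ as a countable union of nowhere dense sets, i.e. as a meager set; but a nonempty open subset of a Baire space is nonmeager, a contradiction. Hence $D$ is dense. Applying this to $X=S_n(T)$ for each $n$, the principal (isolated) types are dense in every $S_n(T)$.

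Finally I would appeal to the earlier equivalence theorem for prime and atomic models: density of the principal types in $S_n(T)$ for all $n$ is exactly the third condition there, which is equivalent to $T$ possessing a countable atomic model. This completes the proof. The only nonroutine step — and thus the main obstacle — is the topological lemma; the remaining work is to cite the two prior theorems correctly and to confirm that $S_n(T)$ is genuinely a countable Polish (hence Baire) space, which is immediate from its description as the space of maximal filters of the countable algebra $\Fm_n/T$.
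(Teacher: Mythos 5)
Your proof is correct, but it follows a genuinely different route from the paper's. The paper argues by contradiction with a binary-tree construction: if $T$ had no atomic model then, by the earlier characterization of atomic theories, some consistent formula would be incompletable; below any incompletable formula one chooses two incompatible consistent formulas, and iterating this splitting gives a tree with ${}^{\omega}2$ branches, hence ${}^{\omega}2$ distinct complete types, contradicting the countability of the type spaces forced by the countably saturated model. You instead argue positively: the same countability of each $S_n(T)$ (same first step, citing the same equivalence theorem), then your topological lemma that in a nonempty countable Baire space the isolated points are dense --- whose proof, via nowhere dense singletons and the meagerness of a putative open set missing all isolated points, is correct (singletons are closed since $S_n(T)$ is Hausdorff, and $S_n(T)$ is compact Polish, hence Baire) --- and finally the implication ``principal types dense implies an atomic model exists'' from the prime/atomic equivalence theorem. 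What your route buys: it stays squarely within the paper's announced Baire-category theme, and it avoids the one delicate, unproved step of the tree argument, namely producing two consistent mutually incompatible formulas below an incompletable formula in a logic without classical negation. What the paper's route buys: it is combinatorially self-contained and never needs the topology of $S_n(T)$. One caveat to flag: your bridge silently identifies isolated points of $S_n(T)$ with principal types; this is exactly the identification the paper itself makes (its proof of the relevant equivalence opens with ``suppose that the isolated types in $T$ are dense''), so the step is legitimate within the paper's framework, but in a bare $BL$ setting the equivalence of ``isolated in the $D_M$-topology'' with ``principal'' would merit a line of justification.
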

\begin{demo}{Proof} Using a binary tree argument and lemma \ref{max}. Assume that $T$ has no atomic model. Then $T$ has a 
consistent formula that is not completable. For each incompletable formula we can choose two formulas below it that are incompatible. 
This can be done
infinitely many times giving a tree of incompletable formulas. Each branch gives a consistent set of formulas and there are $^{\omega}2$ 
branches, which can be extended to obtain ${\omega}2$ types contrary to the assumption that it has a saturated model.
\end{demo}

\end{document}